\definecolor{darkergreen}{rgb}{0.0, 0.5, 0.0}
\long\def\zhu#1{{\color{red}\footnotesize Zhu:\ #1}}
\numberwithin{equation}{section}
\def\theequation{\arabic{section}.\arabic{equation}}
\newcommand{\be}{\begin{eqnarray}}
\newcommand{\ee}{\end{eqnarray}}
\newcommand{\ce}{\begin{eqnarray*}}
\newcommand{\de}{\end{eqnarray*}}
\newtheorem{theorem}{Theorem}[section]
\newtheorem{lemma}[theorem]{Lemma}
\newtheorem{remark}[theorem]{Remark}
\newtheorem{definition}[theorem]{Definition}
\newtheorem{proposition}[theorem]{Proposition}
\newtheorem{Examples}[theorem]{Example}
\newtheorem{corollary}[theorem]{Corollary}
\newtheorem{assumption}{Assumption}[section]
\newcommand{\rmk}[1]{\textcolor{red}{#1}}
\newcommand{\LL}{\mathscr{L}}
\newcommand{\UU}{\mathscr{U}}
\newcommand{\SZ}{\mathscr{Z}_N}
\def\Wick#1{\,\colon\!\! #1 \!\colon}
\def\PPhi{\mathbf{\Phi}}
\def\eps{\varepsilon}
\def\p{\partial}
\def\[{{\Big[}}
\def\]{{\Big]}}
\def\<{{\langle}}
\def\>{{\rangle}}
\def\({{\Big(}}
\def\){{\Big)}}
\def\bx{{\mathbf{x}}}
\def\dif{{\mathord{{\rm d}}}}
\def\no{\nonumber}
\def\={&\!\!=\!\!&}
 \newcommand{\eqdef}{\stackrel{\mbox{\tiny def}}{=}}
\def\bC{{\mathbf C}}
\def\mZ{{\mathbb Z}}
\def\1{{\mathbf{1}}}
\def\E{\mathbf E}
\def\geq{\geqslant}
\def\leq{\leqslant}
\def\ge{\geqslant}
\def\le{\leqslant}
\def\eps{\varepsilon}
\def\p{\partial}
\def\[{{\Big[}}
\def\]{{\Big]}}
\def\<{{\langle}}
\def\>{{\rangle}}
\def\({{\Big(}}
\def\){{\Big)}}
\def\bx{{\mathbf{x}}}
\def\dif{{\mathord{{\rm d}}}}
\def\no{\nonumber}
\def\={&\!\!=\!\!&}
\def\bt{\begin{theorem}}
\def\et{\end{theorem}}
\def\bl{\begin{lemma}}
\def\el{\end{lemma}}
\def\br{\begin{remark}}
\def\er{\end{remark}}
\def\bx{\begin{Examples}}
\def\ex{\end{Examples}}
\def\bd{\begin{definition}}
\def\ed{\end{definition}}
\def\bp{\begin{proposition}}
\def\ep{\end{proposition}}
\def\bc{\begin{corollary}}
\def\ec{\end{corollary}}
\def\geq{\geqslant}
\def\leq{\leqslant}
\def\ge{\geqslant}
\def\le{\leqslant}
 \def\R{\mathbb R}
 \def\R{\mathbb R}    
\def\N{\mathbb N}  
\def\<{\langle} \def\>{\rangle}
\begin{document}

\title{Large $N$ Limit of the $O(N)$ Linear Sigma Model  via Stochastic Quantization}

\author{Hao Shen}
\address[H. Shen]{Department of Mathematics, University of Wisconsin - Madison, USA}
\email{pkushenhao@gmail.com}

\author{Scott Smith}
\address[S. Smith]{Department of Mathematics, University of Wisconsin - Madison, USA
}
\email{ssmith74@wisc.edu}

\author{Rongchan Zhu}
\address[R. Zhu]{Department of Mathematics, Beijing Institute of Technology, Beijing 100081, China; Fakult\"at f\"ur Mathematik, Universit\"at Bielefeld, D-33501 Bielefeld, Germany}
\email{zhurongchan@126.com}

\author{Xiangchan Zhu}
\address[X. Zhu]{ Academy of Mathematics and Systems Science,
Chinese Academy of Sciences, Beijing 100190, China; Fakult\"at f\"ur Mathematik, Universit\"at Bielefeld, D-33501 Bielefeld, Germany}
\email{zhuxiangchan@126.com}

\begin{abstract}
This article studies large $N$ limits of a coupled system of $N$ interacting $\Phi^4$ equations posed over $\mathbb{T}^{d}$ for $d=2$, known as the $O(N)$ linear sigma model.  Uniform in $N$ bounds on the dynamics are established, allowing us to show convergence to a mean-field singular SPDE, also proved to be globally well-posed.  Moreover, we show tightness of the invariant measures in the large $N$ limit.
For large enough mass, they converge to the (massive) Gaussian free field, the unique invariant measure of the mean-field dynamics, at a rate of order $1/\sqrt{N}$ with respect to the Wasserstein distance.  We also consider fluctuations and obtain tightness results for certain $O(N)$ invariant observables, along with an exact description of the limiting correlations.
\end{abstract}

\subjclass[2010]{60H15; 35R60}
\keywords{}

\date{\today}

\maketitle

\setcounter{tocdepth}{1}
\tableofcontents

\section{Introduction}

%\scott{Should the title reflect that we only work on 1d? or 2d?}
%\hao{I think it's fine to just say it in abstract}%\scott{For example `Large N Limit of the Linear Sigma Model on $\mathbb T^{2N}$ via Stochastic Quantization''; not sure if this is better, mostly just forward thinking in case we also want to do 3d later }

In this paper, we consider the following system of equations on the $d$-dimensional torus $\mathbb T^d$ for $d=2$  %\hao{I made the equation formal, and immediately mention Wick in 2d, because I feel a bit weird to say ``in 1d ''   $\Wick{\Phi_j^2\Phi_i}$ means the usual product - hope this is fine for you!}
\begin{equation}\label{eq:21}
\LL \Phi_i= -\frac{1}{N}\sum_{j=1}^N \Phi_j^2\Phi_i+\xi_i,\quad \Phi_i(0)=\phi_i,
\end{equation}
where $\LL=\p_t-\Delta+m$ with $m\geq0$, $N\in \N$, and $i\in\{1,\cdots,N\}$.
The collection $(\xi_i)_{i=1}^N$ consists of $N $ independent space-time white noises on a stochastic basis, i.e. $(\Omega,\mathcal{F},\mathbf{P})$ with a filtration,
and  $(\phi_i)_{i=1}^{N}$ are random initial datum independent of  $(\xi_i)_{i=1}^{N}$.
In $d=2$, the system \eqref{eq:21} requires renormalization, and the formal product $\Phi_j^2\Phi_i$ will be interpreted
as the Wick product
  $\Wick{\Phi_j^2\Phi_i}$ whose definition is postponed to Section \ref{sec:nonlinear}.
  % means the renormalization of $\Phi_j^2\Phi_i$. For $d=1$, $\Wick{\Phi_j^2\Phi_i}$ is the usual product.

This system arises as the stochastic quantization of the following $N$-component generalization of the $\Phi^4_d$ model, given by the  (formal) measure %\hao{not sure if there should be the $2$}\zhu{we are sure otherwise there's $\sqrt{2}$ before the noise}
\begin{equation}\label{e:Phi_i-measure}
\dif\nu^N(\Phi)\eqdef \frac{1}{C_N}\exp\bigg(-\int_{\mathbb T^d} \sum_{j=1}^N|\nabla \Phi_j|^2+m \sum_{j=1}^N\Phi_j^2
+\frac{1}{2N} \Big(\sum_{j=1}^N\Phi_j^2\Big)^2 \dif x\bigg)\mathcal D \Phi
\end{equation}
over $\R^N$ valued fields $\Phi=(\Phi_1,\Phi_2,...,\Phi_N)$ and $C_N$ is a normalization constant.
In $d=2$, the  interaction should be Wick renormalized   $\Wick{\big(\sum_{j=1}^N\Phi_j^2\big)^2}$
for the measure to make sense.
This is also referred to as the $O(N)$ linear sigma model, since this formal measure is invariant
under a rotation of the $N$ components of $\Phi$. \footnote{The word ``linear'' here only means that the target space $\mathbb R^N$ is a linear space. ``Nonlinear'' sigma models on the other hand refers to similar models where the target space is subject to certain nonlinear constraints, e.g. $\Phi$ takes value in a sphere in $\mathbb R^N$ or more generally in a manifold.} This symmetry will play an important role throughout the paper.

Our focus in this article is on the asymptotic behavior as $N\to \infty$ of the system \eqref{eq:21} and its invariant measures \eqref{e:Phi_i-measure} as well as observables which preserve the $O(N)$ symmetry.  Note that a factor $1/N$ has been introduced in front of the nonlinearity (resp. the quartic term in the measure),
and heuristically, this compensates the sum of $N$ terms so that one could hope to obtain an interesting limit as $N\to \infty$. The study of physically meaningful quantities associated with a quantum field theory model such as \eqref{e:Phi_i-measure} as $N\to \infty$ is generally referred to as a {\it large $N$ problem};  see Section~\ref{sec:LargeN} where we introduce more background, references in physics and mathematics, and different approaches to this problem. To the best of our knowledge, the present article provides the first rigorous results on large $N$ problems in the formulation of stochastic quantization.

In Theorem \ref{th:1} below, we study the $N \to \infty$ limit of each component in the Wick renormalized version of \eqref{eq:21} in $d=2$, c.f. \eqref{eq:Phi2d} below, and show that a suitable mean-field singular SPDE governs the limiting dynamics.  Before giving the statement, let us first comment on the notion of solution used.  Recall that the well-posedness of \eqref{eq:21} in the case $N=1$ and $d=2$ (i.e. the dynamical $\Phi^4_2$ model) is now well developed: two classical works being \cite{MR1113223} where martingale solutions are constructed and \cite{DD03} where strong solutions are addressed, as well as the more recent approach to global well-posedness in \cite{MW17}.  These results can be generalized to the vector case (with fixed $N>1$) without much extra effort. As in  \cite{DD03} and \cite{MW17} the solutions are defined by the decomposition $\Phi_i=Z_i+Y_i$, where
 \begin{align}\label{e:1:li}
\LL Z_i&=\xi_i,
\\
\LL Y_i& =-\frac{1}{N}\sum_{j=1}^N(Y_j^2Y_i+Y_j^2Z_i+2Y_jY_iZ_j+2Y_j\Wick{Z_iZ_j}
+ \Wick{Z_j^2}Y_i+\Wick{Z_i Z_j^2 })\label{e:1:Y}
\end{align}
and $\Wick{Z_iZ_j}$, $\Wick{Z_i Z_j^2 }$ are Wick renormalized products (see Section \ref{sec:nonlinear}).  For the uninitiated reader, note that \eqref{e:1:Y} arises by inserting the decomposition of $\Phi_{i}$ into \eqref{eq:21} and re-interpreting the ill-defined products $Z_{i}Z_{j}$ and $Z_{i}Z_{j}^{2}$ that appear.

The mean-field SPDE formally associated to \eqref{eq:21} takes the form
\begin{equation}
\LL \Psi_{i}=-\E[\Psi_{i}^{2}]\Psi_{i}+\xi_{i}, \quad \Psi_i(0)=\psi_i\label{eq:formPsi}.
\end{equation}
On the formal level this equation arises naturally: assuming the initial conditions $\{\phi_{i}\}_{i=1}^{N}$ are exchangeable, 
\footnote{This means that the sequence of random variables
$(\phi_1,\cdots,\phi_N)$ has the same joint probability distribution
as $(\phi_{\pi(1)},\cdots,\phi_{\pi(N)})$
for any permutation $\pi$.}
the components $\{\Phi_{i}\}_{i=1}^{N}$ will have identical laws, so that replacing the empirical average $\frac{1}{N}\sum_{j=1}^{N}\Phi_{j}^{2}$ in \eqref{eq:21} by its mean and re-labelling $\Phi$ as $\Psi$ leads us to \eqref{eq:formPsi}.  In two space dimensions, \eqref{eq:formPsi} is a singular SPDE where the ill-defined non-linearity depends on the law of the solution and similar to \eqref{eq:21} it also requires a renormalization.  Postponing for the moment a more complete discussion of this point, we now state our first main result.
 % One of our goals in the study of the present problem is to make some progress in establishing global bounds for singular SPDE's in which  strong damping is not at hand;
%\scott{I wonder if we can argue that it bears some resemblance to the general open problem of getting global bounds on singular SPDE's without using the strong damping; what we do is somehow of intermediary difficulty; not 100 percent sure how to sell this, but its not completely false.  In fact, one might be able to make this argument for the $\Psi$ equation. }

%For most of the paper, we will focus on two spatial dimensions,
%where  the system \eqref{e:Phi_i} needs to be Wick renormalized.
%We will postpone to Section~\ref{sec:nonlinear} for the precise definition of Wick renormalization
%in the vector setting.

%The first part of this paper (Section~\ref{sec:nonlinear}--Section~\ref{sec:dif}) studies the large $N$ limit of the dynamics in non-equilibrium setting.
%Let  $\Phi_i=Z_i+Y_i$ be the solution to the Wick renormalized equation  \eqref{eq:21}.
%Our main result states that in the large $N$ limit, we obtain a linear SPDE but with a {\it nontrivial} drift
%which is defined {\it self-consistently}, namely, the ``drift term'' depends on the law of the solution itself.
%\zhu{explain exchangeable}\hao{Added a footnote and updated letter}
\begin{theorem}[Large $N$ limit of the dynamics for $d=2$]\label{th:1}
Let $\{(\phi_{i}^{N},\psi_{i}) \}_{i=1}^{N}$ be  random initial datum with components in $\bC^{-\kappa}$ for some small $\kappa>0$ and all moments finite, where $\bC^{-\kappa}$ denotes the Besov space introduced in Section \ref{s:not}.  Assume that for each $i \in \N$,  $\phi_{i}^{N}$ converges to $\psi_{i}$ in $L^{p}(\Omega; \bC^{-\kappa})$ for all $p>1$, $\frac1N\sum_{i=1}^N\|\phi_i^N-\psi_i\|_{\bC^{-\kappa}}^p\to^{\mathbf{P}}0$ and $(\psi_{i})_{i}$ are iid. Here $\to^{\mathbf{P}}0$ means the convergence in probability. 

Then for each component $i$ and all $T>0$, the solution $\Phi_{i}^{N}$ defined by \eqref{e:1:li}-\eqref{e:1:Y} with initial datum $\phi_{i}^{N}$ converges in probability to $\Psi_i$ in $C([0,T], \bC^{-1}(\mathbb T^2))$ as $N\to \infty$, where $\Psi_i$ is the unique solution to the mean-field SPDE formally described by
\begin{equation}\label{eq:Psi2}
\LL \Psi_i= -\mathbf{E}[\Psi_i^2-Z_i^2] \Psi_i+\xi_i,\quad \Psi_i(0)=\psi_i,
\end{equation}
and $Z_i$ is the stationary solution to \eqref{e:1:li}.
Furthermore, under the additional hypothesis that $(\phi_{i}^{N},\psi_{i})_{i=1}^{N}$ are exchangeable, for each $t>0$ it holds that
\begin{equation}
\lim_{N \to \infty}\E\|\Phi_i^N(t)-\Psi_i(t)\|_{L^2(\mathbb{T}^2)}^2=0.
\end{equation}
 %\hao{The above two convergence statements are not in $L^2 (\mathbb T^2)$, right?}
\end{theorem}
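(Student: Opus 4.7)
The plan is a synchronous coupling argument: build each $\Psi_i$ using the same space-time white noise $\xi_i$ driving $\Phi_i^N$, so that both admit a Da Prato--Debussche decomposition around the common Ornstein--Uhlenbeck process $Z_i$, namely $\Phi_i^N = Z_i + Y_i^N$ and $\Psi_i = Z_i + W_i$. A preliminary step is to establish global well-posedness and moment bounds for the mean-field SPDE \eqref{eq:Psi2} via a fixed point whose unknown is the pair consisting of $W_i$ and the deterministic coefficient $\mathbf{E}[\Psi_i^2 - Z_i^2]$, determined self-consistently. The difference $D_i^N := \Phi_i^N - \Psi_i = Y_i^N - W_i$ then satisfies a mild-form equation obtained by subtracting the equation for $W_i$ from \eqref{e:1:Y}.

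After adding and subtracting mean-field quantities, the right-hand side of the $D_i^N$ equation splits, for each cubic term in \eqref{e:1:Y}, into three pieces. The first is a uniformly bounded stochastic multiplier (such as $\frac{1}{N}\sum_j \Wick{Z_j^2}$ or $\frac{1}{N}\sum_j Y_j^2$) acting on $D_i^N$, which directly supplies the Gronwall coefficient. The second is linear in the differences, schematically $\bigl(\frac{1}{N}\sum_j (\Wick{\Phi_j^2}-\Wick{\Psi_j^2})\bigr)\Psi_i$, controlled by the empirical average $\frac{1}{N}\sum_j \|D_j^N\|_{\bC^{-\kappa}}$ via multilinearity of the Wick cube in the Besov scale. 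The third is a mean-field residue such as
\[
\frac{1}{N}\sum_{j=1}^{N}\Wick{\Psi_j^2} -\mathbf{E}[\Wick{\Psi_i^2}] \quad\text{or}\quad \frac{1}{N}\sum_{j=1}^{N}\Wick{Z_i Z_j},
\]
which must vanish in $L^p(\Omega;\bC^{-\kappa})$ as $N\to\infty$; since $(\Psi_j, Z_j)_j$ are iid, this follows from a law of large numbers for iid Besov-valued random variables, combined with Wiener chaos estimates for the stochastic data.

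For the convergence in probability in $C([0,T];\bC^{-1})$, I would apply Gronwall to $\|D_i^N(t)\|_{\bC^{-1}}$, handling the symmetric $\frac{1}{N}\sum_j \|D_j^N\|_{\bC^{-\kappa}}$ term by deriving a companion inequality for the empirical average itself, which closes with the same Gronwall coefficient; the assumption $\frac{1}{N}\sum_j \|\phi_j^N-\psi_j\|_{\bC^{-\kappa}}^p \to^{\mathbf{P}} 0$ then drives the initial data contribution to zero. For the $L^2$ statement, exchangeability forces all $D_j^N$ to share a common law, so the empirical averages collapse onto a single quantity and Gronwall applied directly to $t \mapsto \mathbf{E}\|D_i^N(t)\|_{L^2}^2$ delivers the stated limit. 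The main obstacle I expect is the mean-field residue step at the level of Besov regularity, particularly for the cross-index Wick monomials $\Wick{Z_i Z_j}$ and $\Wick{Z_i Z_j^2}$, where the outer factor $Z_i$ obstructs a direct LLN in the index $j$ and forces a fixed-order Wiener chaos analysis that must be uniform in $N$.
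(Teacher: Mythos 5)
Your high-level strategy — synchronous coupling, Da Prato--Debussche decomposition around the common OU process $Z_i$, and the three-way splitting of the difference equation into a Gronwall coefficient, a piece linear in the differences, and a mean-field residue handled by LLN/Wiener chaos — is the correct skeleton, and the paper's proof of Theorem~\ref{th:conv-v} follows the same architecture. Your identification that the cross-index Wick monomials $\Wick{Z_iZ_j}$, $\Wick{Z_iZ_j^2}$ multiplying the outer $Z_i$ are the delicate part of the residue is also accurate; the paper handles these via $\bar R_N'$ and a diagonal/off-diagonal case analysis that rests on the Hilbert-space orthogonality identity \eqref{eq:Ui}, precisely the ``fixed-order Wiener chaos analysis uniform in $N$'' you anticipate.

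The genuine gap is the topology in which you propose to close the Gronwall estimate. You want to run a mild-form argument for $\|D_i^N(t)\|_{\bC^{-1}}$ (or $\bC^{-\kappa}$), treating quantities like $\frac1N\sum_j Y_j^2$ as ``uniformly bounded stochastic multipliers.'' This cannot work in $d=2$. To multiply a distribution in $\bC^{-1}$ and land back in a space you can iterate, you would need $\frac1N\sum_j Y_j^2$ to lie in $\bC^{\alpha}$ with $\alpha>1$ uniformly in $N$; but the only uniform-in-$N$ bounds on $Y_j$ available (Lemma~\ref{Y:L2}, Corollary~\ref{co:Y}) are averaged $L^2_TH^1$ and $L^2_TL^2$ estimates, and Remark~\ref{re:1} explains explicitly why per-component H\"older bounds on $Y_j$ uniform in $N$ are out of reach, since the Gronwall factor $\exp\{\int(R_N^2+R_N^3)\}$ has no controllable moments. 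Moreover, in a mild formulation the cubic term $-\frac1N\sum_j Y_j^2 v_i$ appears as a bad term to be bounded; in the $L^2$ energy estimate it is a \emph{good} term, since pairing with $v_i$ yields $\frac1N\sum_{i,j}\|Y_j v_i\|_{L^2}^2\ge0$, and one also gains $\sum_i\|\nabla v_i\|_{L^2}^2$ and $\frac1N\|\sum_j X_j v_j\|_{L^2}^2$. These coercive quantities are what absorb the dangerous contributions from $I_2^N$ and the residue $I_3^N$ after Young's inequality, and they have no analogue in a $\bC^{-1}$ mild iteration. The paper therefore proves $\sup_{t\le T}\|v_i^N\|_{L^2}\to0$ in probability by the energy method, and the stated $C([0,T];\bC^{-1})$ convergence is then a corollary of the embedding $L^2\hookrightarrow\bC^{-1}$ together with the convergence of the linear parts $Z_i^N\to Z_i$ supplied by the hypotheses on the initial data.

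A related, smaller issue: you describe controlling $\bigl(\frac1N\sum_j(\Wick{\Phi_j^2}-\Wick{\Psi_j^2})\bigr)\Psi_i$ ``by multilinearity of the Wick cube in the Besov scale.'' After subtracting $Z$-parts, this term is $\frac1N\sum_j\bigl(v_j(Y_j+X_j)+2v_jZ_j\bigr)(X_i+Z_i)$, and the products $v_j Y_j$, $v_j X_j$ involve two function-valued factors at best in $L^2$; they are not amenable to Bony paraproduct estimates at the $\bC^{-\kappa}$ level without positive regularity on both factors. Again this is why the paper estimates these in $L^1$ duality against $\bC^{-s}$ objects (Lemma~\ref{lem:dual+MW}) after pairing against $v_i$, with the $H^1$ interpolation terms absorbed into the dissipation.
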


In Section \ref{sec:dif} we actually prove this convergence result under more general conditions for initial data (see Assumption \ref{a:main}).
Along the way to Theorem \ref{th:1}, we prove new uniform in $N$ bounds through suitable energy estimates on the remainder equation \eqref{e:1:Y}.  We are inspired in part by the approach in \cite{MW17}, but  subtleties arise as we track carefully the dependence of the bounds on $N$.  Indeed, the natural  approach  (e.g.  \cite{MW17} for dynamical $\Phi^4_2$ model) to obtain global in time bounds for fixed $N$ is to exploit the damping effect from $Y_j^2Y_i$.
However, the extra factor $1/N$ before the nonlinear terms makes this effect weaker as $N$ becomes large.  In fact, the moral is that we cannot exploit the strong damping effect at the level of a fixed component $Y_{i}$, rather we're forced to consider aggregate quantities, and ultimately we focus on the empirical average of the $L^2$-norm (squared) instead of the $L^p$-norm, $p>2$, c.f. Lemma \ref{Y:L2} and Remark \ref{re:1}.  This is natural on one hand due to the coupling of the components, but also for the slightly more subtle point that we ought to respect the structure of the mean-field SPDE \eqref{eq:Psi2}, for which the damping effect seems to hold only in the mean square sense, not at the path-by-path level.

In this direction, we now discuss a bit more the solution theory for the mean-field SPDE \eqref{eq:Psi2}.   While the notion of solution we use is again via the Da-Prato/Debussche trick, the well-posedness theory for \eqref{eq:formPsi} requires more care than for $\Phi^4_2$  since we cannot proceed by pathwise arguments alone.  In fact, similar to \eqref{e:1:li}-\eqref{e:1:Y}, we understand \eqref{eq:Psi2} via the decomposition $\Psi_i=Z_i+X_i$ with $X_i$ satisfying
 \begin{align}
\LL X_i& =-(\E[X_j^2]X_i+\E [X_j^2]Z_i+2\E[X_jZ_j]X_i+2\E [X_jZ_j]Z_i
).\label{e:1:X}
\end{align}
Here we actually introduce an independent copy $(X_j,Z_j)$ of $(X_i,Z_i)$, which turns out to be useful for both the local and global well-posedness of \eqref{eq:Psi2}.  Indeed, one point is that the term $\E [X_jZ_j]Z_i$ in \eqref{e:1:X} cannot be understood in a classical sense; however  we can view it as a conditional expectation $\E [X_jZ_jZ_i|Z_i]$ and use properties of the Wick product $Z_{i}Z_{j}$ to give a meaning to this, c.f. Lemma \ref{lem:Z1}.  Furthermore, to obtain global bounds, using this independent copy allows us to approach the a priori estimates for \eqref{e:1:X} much like the uniform in $N$ bounds for \eqref{e:1:Y}.   Indeed, after taking expectation, $\E[X_j^2]X_i$ in \eqref{e:1:X} also plays the role of the damping mechanism, which helps us to obtain uniform bounds on the mean-squared $L^2$-norm of $X_i$ c.f. Lemma \ref{lem:l2}.

%Note that it is a priori not even clear whether \eqref{eq:Psi2} would be a meaningful equation (even it looks linear), since it is not immediately clear that the deterministic $\mu$  is well-defined, and it is expected to be {\it distributional} that is multiplied with the distributional solution $\Psi_i$. The fact that  \eqref{eq:Psi2} does make sense as well as the construction of its long time solution are discussed in Section~\ref{sec:limit}.
%Theorem~\ref{th:1} will then be proved in Section \ref{sec:dif} and it will follow from Theorem~\ref{th:conv-v}.

Theorem~\ref{th:1} can be viewed as a {\it mean field} limit result in the context of singular SPDE systems. Our proof is  indeed inspired by certain mean field limit techniques, and we combine them with a priori estimates that are specific to our model - see the discussion above Theorem~\ref{th:conv-v} % Remark~\ref{rem:mf-phil}
 for a more detailed discussion on this strategy.
We will provide more background discussion on mean field limits below in Section~\ref{sec:mean}.   By a classical coupling argument, this result also yields a propagation of chaos type statement: if the initial condition is asymptotically chaotic (i.e. independent components as $N\to \infty$) then, although the $\Phi$-system is interacting, as  $N\to \infty$ the limiting system becomes decoupled (\cite[Def.3,Def.5]{MR3317577}).

The second part of this paper (Section~\ref{sec:inv}) is concerned with equilibrium theories, namely stationary solutions, invariant measures, and large $N$ convergence.  For $N=1$, the long-time behavior of the solutions was investigated in \cite{RZZ17CMP} and \cite{TW18}.  In the vector valued setting, by lattice approximation (see \cite{HM18a, ZZ18, GH18a}), strong Feller property in \cite{HM18} and irreducibility in \cite{HS19} it can be shown that $\nu^N$ is the unique invariant measure to \eqref{eq:21} and the law of $\Phi_i(t)$ converges to $\nu^N$ as $t\to \infty$.
Our goal then is to study the large $N$ limit of the $O(N)$ linear sigma model $\nu^N$.  Our second main result yields the convergence of the unique invariant measure $\nu^N$ of \eqref{eq:21} to the invariant measure of \eqref{eq:Psi2}, provided the mass is sufficiently large.

To state the result, consider the projection onto the $i^{th}$ component,
\begin{equ}[e:Pr-Pi]
\Pi_i:\mathcal{S}'(\mathbb{T}^d)^N\rightarrow \mathcal{S}'(\mathbb{T}^d),
\qquad \Pi_i(\Phi)\eqdef \Phi_i.
\end{equ}
 Noting that $\nu^N$ is a measure on $\mathcal{S}'(\mathbb{T}^d)^N$, we define the marginal law $\nu^{N,i}\eqdef \nu^N\circ \Pi_i^{-1}$. Furthermore, consider
\begin{equ}[e:Pr-Pik]
 \Pi^{(k)}:\mathcal{S}'(\mathbb{T}^d)^N\rightarrow \mathcal{S}'(\mathbb{T}^d)^k,
 \qquad
 \Pi^{(k)}(\Phi)=(\Phi_i)_{1\leq i\leq k}
 \end{equ}
  and define the marginal law of the first $k$ components by $\nu^{N}_k \eqdef \nu^N\circ (\Pi^{(k)})^{-1}$.

\begin{theorem}[Large N limit of the invariant measures]\label{th:1.2}
There exists $m_0>0$ such that the following results hold:
\begin{itemize}[leftmargin=.2in]
  \item The Gaussian free field $\nu\eqdef \mathcal{N}(0,\frac12(m-\Delta)^{-1})$ is an invariant measure for \eqref{eq:Psi2}.
  \item  The sequence of probability measures $(\nu^{N,i})_{N \geq 1}$ are tight on $\bC^{-\kappa}$ for $\kappa>0$.
  \item For $m\geq m_0$, the Gaussian free field $\mathcal{N}(0,\frac12(m-\Delta)^{-1})$ is the unique invariant measure to equation \eqref{eq:Psi2}.
  \item
For $m\geq m_0$,   $\nu^{N,i}$ converges to $\nu$ and $\nu^N_k$ converges to $\nu\times\cdots\times \nu$, as $N\to \infty$. Furthermore, $\mathbb{W}_2(\nu^{N,i},\nu)\lesssim N^{-\frac{1}{2}}$.
\end{itemize}
\end{theorem}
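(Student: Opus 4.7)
The plan is to treat the four assertions largely separately, leveraging the a priori estimates already developed for $Y_i^N$ and $X_i$ in the earlier sections.

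For the first assertion, I would observe that if $Z_i$ is the stationary solution of \eqref{e:1:li}, its law at each time is $\nu$, and $X_i \equiv 0$ (with an independent copy $X_j \equiv 0$) trivially satisfies the coupled system \eqref{e:1:X}. Hence $\Psi_i = Z_i$ is a stationary solution of \eqref{eq:Psi2} with law $\nu$, proving invariance. For tightness of $(\nu^{N,i})_{N\geq 1}$, I would start a stationary solution $\Phi^N$ under $\nu^N$ and use the uniform-in-$N$ energy estimate on the empirical $L^2$-norm of $Y^N$ (cf.\ Lemma~\ref{Y:L2}), together with the decomposition $\Phi_i^N = Z_i + Y_i^N$ and moment bounds on the stationary $Z_i$ in $\bC^{-\kappa}$. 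By exchangeability of $\nu^N$, the empirical bound translates to an individual bound on $\E_{\nu^{N,i}}\|Y_i^N\|_{L^2}^p$, and compact Besov embedding yields tightness on $\bC^{-\kappa}$.

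For uniqueness of the invariant measure when $m$ is large, I would take two stationary solutions $\Psi^a = Z + X^a$, $a=1,2$, of \eqref{eq:Psi2} driven by the same noise, subtract the equations \eqref{e:1:X} satisfied by $X^a$ (and their independent copies), and test against $D = X^1 - X^2$ in $L^2$. Taking expectations, the mass term contributes $-m\,\E\|D\|_{L^2}^2$ and the damping $\E[(X_j^a)^2]\,\|D\|_{L^2}^2$ has a favorable sign, while the cross terms involving $Z$, $X^a$ and their independent copies are controlled by Cauchy--Schwarz together with the a priori bounds on $\E\|X^a\|_{L^2}^2$ from Lemma~\ref{lem:l2}. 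For $m$ large enough this yields $\partial_t \E\|D(t)\|_{L^2}^2 \leq -c\,\E\|D(t)\|_{L^2}^2$; stationarity forces $D \equiv 0$, and uniqueness follows.

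The convergence $\nu^{N,i} \to \nu$ (and $\nu^N_k \to \nu^{\otimes k}$) then combines the tightness above, the dynamical convergence in Theorem~\ref{th:1}, and the uniqueness just proved: any subsequential weak limit of $\nu^{N,i}$ is invariant under \eqref{eq:Psi2} (by passing to the limit in a stationary solution), hence equals $\nu$. For the quantitative rate, I would use a synchronous coupling: let $\Phi^N$ be stationary under $\nu^N$ and let $\Psi_1,\dots,\Psi_N$ be iid stationary solutions of \eqref{eq:Psi2}, all driven by the same noises $\xi_1,\dots,\xi_N$. Writing $\Phi_i^N = Z_i + Y_i^N$ and $\Psi_i = Z_i + X_i$, the remainder $R_i = Y_i^N - X_i$ satisfies an equation whose drift splits into a non-negative damping term, Lipschitz-type terms in $R$, and a ``chaos error'' of the form $\frac{1}{N}\sum_j \Psi_j^2 - \E[\Psi_i^2]$ plus Wick-renormalized analogues; by the iid structure of $(\Psi_j)$ this chaos error has mean-square norm of order $1/N$. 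An $L^2$-energy estimate on $R_i$, exploiting exchangeability to replace $\E\|R_i\|_{L^2}^2$ by its average over $i$ and invoking large $m$ to close a Gronwall bound uniformly in time, gives $\E\|R_i\|_{L^2}^2 \lesssim 1/N$ at stationarity, and hence $\mathbb{W}_2(\nu^{N,i},\nu) \lesssim N^{-1/2}$. The principal obstacle is this last step: the chaos error must be quantified at the level of the Wick products appearing in both decompositions, and the same large-mass threshold $m_0$ must absorb both the linearized nonlinearity and the dissipation loss coming from coupling with $N-1$ other components.
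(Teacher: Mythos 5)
Your treatment of the first two items matches the paper's. For uniqueness, you take a genuinely different route: a two-sided contraction comparing two stationary solutions, whereas the paper (Lemma~\ref{le:m1}, Theorem~\ref{th:u}) shows that \emph{every} solution with $\bC^{-\kappa}$ data converges exponentially to the explicit reference $Z$ (for which $X\equiv 0$), and then passes to $\mathbb{W}_2'$. Your version is plausible in outline, but two caveats: (i) the sentence ``stationarity forces $D\equiv 0$'' is not quite right — two individually stationary processes coupled through the same noise need not be jointly stationary, so $\E\|D(t)\|_{L^2}^2$ is not automatically constant; one must argue as the paper does, via $\mathbb{W}_2'(\nu,\nu_1)=\mathbb{W}_2'(P_t^*\nu,P_t^*\nu_1)\le (\E\|D(t)\|_{\bC^{-1}}^2)^{1/2}\to 0$; (ii) the difference equation produces terms like $\E[DZ]\,Z$ and $\E[D(X^1+X^2)](X^2+Z)$, the first of which is a product of two genuine distributions and needs the conditional-expectation/Wick trick of Lemma~\ref{lem:Z1}, so the cross terms require more than Cauchy--Schwarz with the $L^2$ a priori bound. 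Comparing against $Z$ avoids much of this bookkeeping.

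The real gap is the soft convergence step ``any subsequential weak limit of $\nu^{N,i}$ is invariant under \eqref{eq:Psi2}, hence equals $\nu$.'' The paper explicitly explains, just before Lemma~\ref{lem:zz1}, why this route fails: Theorem~\ref{th:1} requires the limiting initial data $(\psi_i)_i$ to be iid, but a subsequential weak limit of $\nu^N_k$ is not known to be a product measure, and since the semigroup $P_t^*$ of the McKean--Vlasov SPDE is nonlinear in the initial law, one cannot disintegrate $P_t^*\nu^*=\int P_t^*\delta_\psi\,\nu^*(\dif\psi)$. So this argument does not close. Your quantitative argument, however, is essentially the paper's actual proof (Theorem~\ref{th:m1} together with Lemma~\ref{lem:zz1}): once you observe that the iid stationary solutions $\Psi_j$ of \eqref{eq:Psi2} are simply $Z_j$ (so $X_j\equiv 0$ and your $R_i\equiv Y_i^N$), the task reduces to proving $\E\|Y_i^N\|_{H^1}^2\lesssim N^{-1}$ for the jointly stationary pair of Lemma~\ref{lem:zz1}, using the stationary form of the energy balance \eqref{bs16} together with $O(N^{-1})$ variance estimates on the fluctuating coefficients $D_N,D_N^1$. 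This already gives the $N^{-1/2}$ Wasserstein rate and hence all the convergence claims, so the flawed soft step should be dropped rather than repaired.
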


These statements will follow from Theorem~\ref{th:con}, Theorem~\ref{th:u} and Theorem~\ref{th:m1}.
Here $\mathbb{W}_2(\nu_1,\nu_2)$ is  the $\bC^{-\kappa}$-Wasserstein distance
defined in \eqref{def:wa} before  Theorem~\ref{th:m1}.
%We note that the invariant measure to \eqref{eq:Psi2} might be different by choosing the different renormalization constant (see Remark \ref{rem:1} and Remark \ref{rem:2}).
%\hao{I would suggest maybe delete the last sentence: it's a subtlety and might confuse the reader when reading an introduction. If we keep it here we can say "might have different mass" instead of "might be different"}
The Gaussian free field limit is expected (at a heuristic level) by physicists e.g. \cite{wilson1973quantum} and also in mathematical physics \cite{MR578040}.
\footnote{In \cite{wilson1973quantum} it was written that ``If one now looks at vacuum expectation values of individual $\Phi$ fields, all diagrams vanish like $1/N$ (at least), except for the free-field terms.''
In the Introduction of \cite{MR578040}  it was mentioned that
 ``the $1/N$ expansion predicts that the theory is close to Gaussian as $N$ becomes large enough'',
 but this reference did not intend to prove this statement (see Section~\ref{sec:LargeN} below).
}
Our result Theorem~\ref{th:1.2} provides a precise justification provided $m\ge m_0$,
 with the  convergence rate $N^{-\frac{1}{2}}$ (which is expected to be optimal, see for instance \cite[Remark~4]{MR3858403}) in terms of Wasserstein distance. The large $m$ assumption could also be formulated as a small nonlinearity assumption - see Remark~\ref{rem:lambda}.

%$$\mathbb{W}_2(\nu_1,\nu_2):=\inf_{\pi\in\mathscr{C}(\nu_1,\nu_2)}\left(\int\|\phi-\psi\|_{\bC^{-\kappa}}^2\pi(\dif \phi,\dif \psi)\right)^{1/2},$$
%where $\mathscr{C}(\nu_1,\nu_2)$ denotes all the coupling of $\nu_1, \nu_2$ satisfying $\int\|\phi\|_{\bC^{-\kappa}}^2\nu_i(\dif \phi)<\infty$ for $i=1,2$.

Note that  the study of ergodicity properties of the dynamic  \eqref{eq:Psi2} is nontrivial.
% (see \cite{Fengyu} and the reference therein.\hao{maybe cite this somewhere later. Not very clear why we cite it here}\zhu{you could put it later})
In fact,  the dynamic for $\Psi$ depends on the law of $\Psi$ itself, so the associated semigroup
 is generally nonlinear (see Section~\ref{Uniqueness  of invariant measure}).
  As a result, the general ergodic theory for Markov process (see e.g. \cite{DZ96}, \cite{HMS11}, \cite{HM18}) could not be directly applied here. Instead, we prove the solutions to \eqref{eq:Psi2} converge to the limit  directly as time goes to infinity, which requires $m\geq m_0$. %We also use the so-called coming down from infinity property for \eqref{eq:Psi2} (see \eqref{bd2:uniX}), which has been deduced for the dynamical $\Phi^4_d$ model in \cite{MW18, TW18}.

We now comment on our approach to the fourth part of Theorem \ref{th:1.2}.  It would be natural to try and use Theorem \ref{th:1} together with the tightness result from the second part of Theorem \ref{th:1.2} to derive the convergence of $\nu^{N,i}$ to $\nu$ directly (see e.g. \cite{HM18a}). However, it is not clear to the authors how to implement this strategy in the present setting.  Indeed, to apply Theorem \ref{th:1}, it is important that each component $\psi_i$ of the initial data is independent of each other. However, we are not able to deduce that an arbitrary limit point $\nu^*$ has this property. If we use $P_t^*\nu^*$ to denote the marginal distribution of the solution to \eqref{eq:Psi2} starting from the initial distribution $\nu^*$, we cannot write $P_t^*\nu^*$ as $\int (P_t^*\delta_\psi)\nu^*(\dif \psi)$ due to the lack of linearity, which makes it difficult to overcome the assumption of independence.  Alternatively, we follow the idea in \cite{GH18a} and construct a jointly stationary process $(\Phi, \Psi)$ whose components satisfy \eqref{eq:21} and \eqref{eq:Psi2}, respectively. In this case $\Psi=Z$, since the Gaussian free field gives the unique invariant measure to \eqref{eq:Psi2}. We then establish the convergence of $\nu^{N,i}$ to $\nu$ by deriving suitable uniform estimates on the stationary process.

Our next result is concerned with observables, in the stationary setting.
In QFT models with continuous symmetries, physically interesting quantities involve more than just a component of the field itself but also
quantities composed by the fields which preserve the symmetries, called invariant observables. These acquire the same interest in SPDE (a natural example being the gauge invariant observables, e.g. \cite[Section 2.4]{Shen2018Abelian}). In the present setting of  \eqref{eq:21},
a natural quantity  that is invariant under $O(N)$-rotation is the ``length'' of $\Phi$;
another being the quartic interaction in \eqref{e:Phi_i-measure}.
We thus consider the following two $O(N)$ invariant observables:
for $\Phi\backsimeq\nu^N$
\begin{equ}[e:twoObs]
\frac{1}{\sqrt{N}} \sum_{i=1}^N\Wick{\Phi^2_i},\qquad  \frac{1}{N} \Wick{\Big(\sum_{i=1}^N\Phi_i^2\Big)^2}.
\end{equ}
Here %the Wick product is with respect to the Gaussian free field measure;
the precise definition is given in Section \ref{sec:label}.
One could consider more general renormalized polynomials of $\sum_{i} \Phi_i^2$
but we choose to focus on the above two in this article.
We establish the large $N$ tightness of these observables as random fields in suitable Besov spaces by using  iteration to derive  improved uniform estimates in the stationary case.

Note that the physics literature usually considers integrated quantities, i.e. partition function of correlations of these observables. Our SPDE approach allows us to study these observables as random fields with precise regularity as $N\to \infty$ which is new.  %The above result can also be shown in dimension one, and in fact we can obtain more refined information about the limit in this case.  This is the focus of the final part of our paper.  In $d=1$ the equations are less singular and uniform estimates are simpler;
%we provide in Section~\ref{sec:d1} and Appendix \ref{sec:A.3} the arguments in $d=1$ that
%simplify in comparison to $d=2$ (while skipping details that follow essentially the same way as $d=2$).
%A reader might also use Section~\ref{sec:d1} with the proof in Appendix \ref{sec:A.3} to grasp the main ideas of our analysis before embarking on the more challenging case of $d=2$.

Moreover,  we investigate the {\it nontrivial statistics} of the large $N$ limit of the $O(N)$ invariant  observables. %To this end, we suitably recenter
%the system  \eqref{eq:21}, that is, the  Wick ordered system
%defined by subtracting the finite  variance of the stationary solution $Z_i$ (see \eqref{eq:1m} for the precise definition), and also Wick order the observables (see \eqref{e:-Cw}).
We show that although for large enough $m$
the invariant measure of $\Phi_i$ converges
as $N\to \infty$ to the invariant measure of $Z_i$ i.e. Gaussian free field,
the limits of the observables \eqref{e:twoObs} have different laws than those if $\Phi_i$ in \eqref{e:twoObs}  were replaced by
$Z_i$:
\begin{equ}[e:two-OZ]
	\frac{1}{\sqrt{N}} \sum_{i=1}^N\Wick{Z^2_i},
	\qquad  \frac{1}{N} \Wick{\Big(\sum_{i=1}^NZ_i^2\Big)^2}.
\end{equ}

\begin{theorem}\label{th:1.3}
Suppose that $\Phi\backsimeq\nu^N$.  For $m$ large enough, the following result holds for any $\kappa>0$:
\begin{itemize}[leftmargin=.4in]
  \item  $\frac{1}{\sqrt{N}} \sum_{i=1}^N\Wick{\Phi^2_i}$ is tight in $B^{-2\kappa}_{2,2}$.
  \item  $\frac{1}{N} \Wick{(\sum_{i=1}^N\Phi_i^2)^2}$ is tight in $B^{-3\kappa}_{1,1}$.
  \item  
%  The limiting laws of the observables \eqref{e:twoObs} are different from those of \eqref{e:two-OZ};
%  in fact, we have an explicit formula for 
 The Fourier transform of the two point correlation function of $\frac{1}{\sqrt{N}} \sum_{i=1}^N\Wick{\Phi^2_i}$
  in the limit as $N\to \infty$ is
  given by the explicit formula $2\widehat{C^2} /(1+2\widehat{C^2})$, where $C=\frac12(m-\Delta)^{-1}$ and $\widehat{C}$ is the Fourier transform;
  moreover
  %$ \E \frac{1}{N} \Wick{ (\PPhi^2)^2} (x)$
  $\E \frac{1}{N} \Wick{\big(\sum_{i=1}^N\Phi_i^2\big)^2}$
  converges as $N\to \infty$ to the explicit formula
  given by
  $-4 \sum_{k\in \mathbb Z^2} \widehat{C^2}(k)^2 /(1+2\widehat{C^2}(k))$.
 (In particular the limiting laws of the observables \eqref{e:twoObs} are different from those of \eqref{e:two-OZ}).
%  \item For $d=1$, $$
%\lim_{N\to \infty}
%\frac{1}{\sqrt{N}} \sum_{i=1}^N\Wick{\Phi^2_i} \neq
%\lim_{N\to \infty}
%\frac{1}{\sqrt{N}} \sum_{i=1}^N\Wick{Z_i^2}
%\quad
%\lim_{N\to \infty}
%\frac{1}{N} \Wick{(\sum_{i=1}^N\Phi^2_i)^2} \neq
%\lim_{N\to \infty}
%\frac{1}{N} \Wick{\sum_{i=1}^N{Z}_i^2)^2}
%$$
\end{itemize}
\end{theorem}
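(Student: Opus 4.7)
The plan is to treat the three items in sequence, exploiting throughout the stationary decomposition $\Phi_i=Z_i+Y_i$ from \eqref{e:1:li}--\eqref{e:1:Y}, the $O(N)$-invariance of $\nu^N$, and the uniform-in-$N$ moment bounds on $Y_i$ established in the preceding sections. Expanding the Wick powers gives
\begin{equation*}
\Wick{\Phi_i^2}=\Wick{Z_i^2}+2Z_iY_i+Y_i^2,
\end{equation*}
together with the analogous expansion of $\Wick{\big(\sum_i\Phi_i^2\big)^{\!2}}$ into monomials of total degree four in $Y_\cdot$ and $Z_\cdot$. For the first observable $O_1^N:=N^{-1/2}\sum_i\Wick{\Phi_i^2}$ I bound the second moment in each Littlewood--Paley block. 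The i.i.d.\ piece $N^{-1/2}\sum_i\Wick{Z_i^2}$ contributes a uniform-in-$N$ bound since $\E\|\Delta_q(N^{-1/2}\sum_i\Wick{Z_i^2})\|_{L^2}^{2}=\E\|\Delta_q\Wick{Z_1^2}\|_{L^2}^{2}$. For the mixed term $N^{-1/2}\sum_iY_iZ_i$ and the diagonal term $N^{-1/2}\sum_iY_i^2$ I use $O(N)$-symmetry to reduce the sum over species to one- and two-species contributions: the one-species part is $O(N^{-1})$ multiplied by bounds on individual $\|Y_i\|_{L^p}$ tamed by the uniform stationary estimates (Lemma~\ref{Y:L2} and its higher-moment refinements), while the two-species cross term must be shown to be $O(N^{-1})$ per pair, exploiting the explicit $1/N$ in the nonlinearity of \eqref{e:1:Y} through an iteration: one starts from the known $L^2$-type stationary bounds and feeds them back into \eqref{e:1:Y} to promote them to higher-moment stationary bounds. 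The same template, in the more forgiving target space $B^{-3\kappa}_{1,1}$, yields tightness of $N^{-1}\Wick{(\sum_i\Phi_i^2)^{\!2}}$.

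For the explicit two-point correlation I set $K_N(x,y):=\E[O_1^N(x)O_1^N(y)]$ and derive a Dyson--Schwinger equation via Gaussian integration by parts on $\nu^N$. Since $\partial_{\Phi_i(x)}S=(m-\Delta)\Phi_i(x)+N^{-1}\Wick{(\sum_j\Phi_j^2)\Phi_i}(x)$, applying the IBP identity with test function $f_{i,j}(\Phi):=\Phi_i(x)\Wick{\Phi_j^2}(y)$, summing $i,j$ over all components, and carefully handling the Wick renormalisation produces, after using $O(N)$-symmetry, a relation of the schematic form
\begin{equation*}
K_N(x,y)=2\,G_N(x,y)^{2}-2\int G_N(x,z)^{2}\,K_N(z,y)\,dz+\cE_N(x,y),
\end{equation*}
where $G_N(x,y)$ is the one-species two-point function under $\nu^N$ and $\cE_N$ collects connected contributions suppressed in $N$. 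The tightness of $O_1^N$ from the first step, combined with the uniform bounds on $Y_i$, allows one to pass to the limit $G_N\to C=\tfrac12(m-\Delta)^{-1}$ strongly and $\cE_N\to 0$ in the right topology; Fourier transforming and solving the convolution equation yields $\widehat{K}_\infty(k)(1+2\widehat{C^2}(k))=2\widehat{C^2}(k)$, hence the claimed formula. For the expectation of the quartic observable, a single IBP (or, equivalently, contracting two external legs in the previous identity) produces $\E[N^{-1}\Wick{(\sum_i\Phi_i^2)^{\!2}}]=-4\langle\widehat{C^2},\widehat{K}_\infty\rangle+o(1)$, and substituting $\widehat{K}_\infty$ gives the stated $-4\sum_k\widehat{C^2}(k)^{2}/(1+2\widehat{C^2}(k))$.

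The principal obstacle will be rigorously controlling the remainder $\cE_N$ in the Dyson--Schwinger equation and justifying that the ``bubble chain'' resummation indeed captures the full leading order of $K_N$. This requires showing that all competing connected correlators are $O(N^{-1})$ in the stationary setting, which is not purely algebraic: it rests on iterated stationary bounds on $Y_i$ (in particular on quantities like $N^{-1}\sum_i\E\|Y_i\|_{L^p}^{p}$ for $p>2$ and $N^{-2}\sum_{i,j}\E|\langle Y_i,Y_j\rangle|^{2}$) that must be extracted from \eqref{e:1:Y} by the same $1/\sqrt{N}$-aware energy estimates used in the dynamic part of the paper. The Wick renormalisation of the quartic observable adds further bookkeeping, since the naive decomposition into $Y,Z$-monomials hides cancellations needed both for the $B^{-3\kappa}_{1,1}$ bound and for the identification of the explicit limit.
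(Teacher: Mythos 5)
Your outline follows essentially the same route as the paper: the stationary decomposition $\Phi_i=Y_i+Z_i$ together with an iterated moment bound on $\sum_{i=1}^N\|Y_i\|_{L^2}^2$ (Lemma \ref{th:m2}) yields the two tightness statements, and the correlation formulas come from exact Dyson--Schwinger identities obtained by Gaussian integration by parts, resummed in Fourier space once the connected remainders are shown to vanish. Two points in your sketch need repair. First, your final substitution is off by a factor of two: the identity \eqref{e:IBP1d2} gives $\E\frac1N\Wick{(\PPhi^2)^2}=-2\sum_k\widehat{C^2}(k)\widehat{G_N}(k)+O(R_N)$, so the prefactor multiplying $\sum_k\widehat{C^2}(k)\widehat{K}_\infty(k)$ is $-2$, not $-4$; the stated limit then follows from $\widehat{K}_\infty=2\widehat{C^2}/(1+2\widehat{C^2})$. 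Second, and more substantively, the moment input you propose for controlling the remainder --- a uniform-in-$N$ bound on $N^{-1}\sum_i\E\|Y_i\|_{L^p}^p$ for $p>2$ --- is not available: by exchangeability this quantity equals $\E\|Y_1\|_{L^p}^p$, and the paper's Lemma \ref{th:lp} only establishes $\E\|Y_1\|_{L^p}^p\lesssim N^{p/2}$, a bound that genuinely grows in $N$. The actual argument (Lemmas \ref{lem:zmm}, \ref{le:I} and \ref{lem:q22}) shows that this controlled growth, combined with the uniform bounds of Lemma \ref{th:m2} and the explicit $1/N$ and $1/N^{2}$ prefactors in $Q_N$ and $R_N$, is just barely enough for $\widehat{Q_N}/N$ and $\widehat{R_N}$ to vanish; closing that step is the real technical content of the third bullet and is not supplied by your proposal.
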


These results are proved in Theorem~\ref{th:o1} and Theorem \ref{theo:nontrivial2}.
The last statement on correlation formulas of the observables
are known -- first heuristically by  physicists who expressed these formulas in terms of the
sum of ``bubble'' diagrams, and then derived in \cite[Eq.~(15)]{MR578040} using constructive field theory techniques such as ``chessboard estimates''.
Our new proofs of these correlation formulas
using PDE methods  are quite simple and straightforward once all the a priori estimates are available.
We expect that these methods can be applied to study more $O(N)$ invariant observables
and higher order correlations; we will pursue these in future work.

Let's also mention the three dimensional construction of  local solutions \cite{Hairer14,GIP15,MR3846835}, global solutions \cite{MW18,GH18,AK17,moinat2020space}, as well as a priori bounds in fractional dimension $d<4$ by \cite{CMW19}, though we  focus on $d=2$ in this paper.  It would also be interesting to see if  our methodology could be used to  study limits of other singular SPDE systems as dimensionality of the target space tends to infinity, such as coupled dynamical $\Phi^4_3$, \footnote{In fact, we have obtained some partial results for coupled dynamical $\Phi^4_3$, such as convergence of invariant measures to the Gaussian free field.} coupled KPZ systems \cite{MR3653951}, random loops in $N$ dimensional manifolds \cite{hairer2019geo,hairer2016motion, RWZZ17, CWZZ18} and
Yang-Mills model \cite{CCHS20} with $N$ dimensional Lie groups (or abelian case \cite{Shen2018Abelian} with Higgs field generalized to value in $\mathbb C^N$). These are of course left to further work.

\subsection{Large N problem in QFT: background and motivation}
\label{sec:LargeN}

Large $N$ methods (or ``$1/N$ expansions'') in theoretical physics are ubiquitous and
are generally applied to models where dimensionality of the target space is large.
It was first used in
\cite{stanley1968spherical} for spin models,
and then developed in quantum field theories (QFT) which was
pioneered by \cite{wilson1973quantum} ($\Phi^4$ type and Fermionic models), \cite{gross1974dynamical} (Fermionic models),
\cite{tHooft1974planar} (Yang-Mills model), and the idea was soon popularized and extended to many other systems - see  \cite{LargeN1993} for an edited comprehensive collection of articles on large
$N$ as applied to a wide spectrum of problems in quantum field theory and statistical
mechanics; see also the review articles \cite{witten1980}, \cite[Chapter~8]{coleman1988} and \cite{moshe2003} for summaries of the progress. Loosely speaking, in terms of our model \eqref{e:Phi_i-measure}, the ordinary QFT perturbative calculation of for instance a two-point correlation of $\Phi_i$ is given by sum of Feynman graphs with two external legs and degree-$4$ internal vertices, each vertex carrying two distinct summation variables
and a factor $1/N$ that represents the interaction $\frac1N \sum_{i,j}\Phi_i^2 \Phi_j^2$, such as
(a) (b) below

\begin{center}
$(a)\quad$
\begin{tikzpicture}[scale=1.5,baseline=10]
\draw[thick] (0,0) to  (1,0);
\draw[thick] (0.5,0)  [bend left=40] to (0.2,0.5);
\draw[thick]  (0.2,0.5)  [bend left=80] to (0.8,0.5);
\draw[thick]  (0.8,0.5) [bend left=40] to (0.5,0);
\node at (-0.1,0.1) {$i$}; \node at (1.1,0.1) {$i$};
\node at (0.1,0.3) {$j$}; \node at (0.9,0.3) {$j$};
\end{tikzpicture}
$\qquad (b)\quad$
\begin{tikzpicture}[scale=1.5,baseline=-3]
\draw[thick] (0,0) to  (0.4,0);
\draw[thick] (0.4,0)  [bend left=100] to (1.1,0);
\draw[thick] (0.4,0) to (1.1,0);
\draw[thick] (0.4,0)  [bend right=100] to (1.1,0);
\draw[thick] (1,0) to  (1.5,0);
\node at (0.2,0.1) {$i$}; \node at (1.3,0.1) {$i$};
\node at (0.9,0.1) {$j$}; \node at (0.9,0.35) {$j$}; \node at (1,-0.2) {$i$};
\end{tikzpicture}
$\qquad (c)\quad$
%\begin{tikzpicture}[scale=1.5,baseline=-3]
%\draw[thick] (0.2,0.2) to  (0.4,0);\draw[thick] (0.2,-0.2) to  (0.4,0);
%\draw[thick] (0.4,0)  [bend left=70] to (1.1,0);
%\draw[thick] (0.4,0)  [bend right=70] to (1.1,0);
%\draw[thick] (1.1,0) to  (1.3,0.2);\draw[thick] (1.1,0) to  (1.3,-0.2);
%\node at (0.1,0.1) {$i$}; \node at (0.1,-0.1) {$i$};
%\node at (1.3,0.1) {$j$}; \node at (1.4,-0.1) {$j$};
%\node at (0.9,0.35) {$k$}; \node at (1,-0.2) {$k$};
%\end{tikzpicture}
\begin{tikzpicture}[scale=1.5,baseline=-3]
\draw[thick] (0,0)  [bend left=70] to (0.7,0);
\draw[thick] (0,0)  [bend right=70] to (0.7,0);
\node at (0.35,0.35) {$i$}; \node at (0.35,-0.35) {$i$};
\draw[thick] (0.7,0)  [bend left=70] to (1.4,0);
\draw[thick] (0.7,0)  [bend right=70] to (1.4,0);
\node at (1.05,0.35) {$j$}; \node at (1.05,-0.35) {$j$};
\node at (-0.1,0) {$x$};\node at (1.5,0) {$y$};
\end{tikzpicture}
\end{center}
Heuristically,
 graph (a) is of order $\frac1N \sum_j \approx O(1)$ and
  graph (b) is of order $\frac{1}{N^2} \sum_j \approx O(\frac1N)$.
 The philosophy of  \cite{wilson1973quantum} is that graphs with ``self-loops'' such as (a) get {\it cancelled} by Wick renormalization,
 and all other graphs with internal vertices  including (b) are at least of order $O(1/N)$ and thus vanish,
so the theory would be asymptotically Gaussian free field - which is what we prove in Theorem~\ref{th:1.2}.
On the other hand  for  observables
 such as $ \frac{1}{\sqrt{N}} \sum_{i=1}^N\Wick{\Phi^2_i} $,
 two-point correlation at $x,y$
 may have $O(1) $ contributions as shown in graph (c), \footnote{but there are infinitely many $O(1)$ graphs.} which is the heuristic behind the existence of a nontrivial correlation structure for such  observables as in Theorem~\ref{th:1.3}.
The ``$1/N$ expansion'' is a re-organization of the series in the parameter $1/N$,
 with each term typically being a (formal) sum of infinitely many orders of the ordinary perturbation theory. Besides directly examining  the perturbation theory, alternative (and more systematic) methodologies  of analyzing such expansion were discovered in physics, for instance a method via ``dual'' field  \cite{coleman1974}, \cite[Section~2]{moshe2003}, via Schwinger-Dyson equations \cite{symanzik1977}, or via stochastic quantization (with references below).

Rigorous  study of large $N$ in mathematical physics was initiated  by Kupiainen \cite{MR574175,MR578040,MR582622}.
The literature most related to the present article is \cite{MR578040}, which studied the QFT in continuum in $d=2$ given by \eqref{e:Phi_i-measure}, and proved that the $1/N $ expansion of the  pressure (i.e. vacuum energy, or log of partition per area)  is asymptotic,
 and each order in this expansion
 % $p=\sum_{j=0}^{m-1} p_j N^{-j} + R_m$ where $R_m$ has explicit upper bound of order $N^{-m}$ and $p_j$
  can be described by sums of infinitely many Feynman diagrams of certain types.
 Borel summability of $1/N $ expansion of Schwinger functions for this model was discussed in \cite{MR661137}.

In \cite{MR574175} Kupiainen also proved that  on the lattice with fixed lattice spacing, the large $N$ expansion of correlation functions of the N-component
nonlinear sigma model (which simplifies to ``spherical model'' as $N\to \infty$) is asymptotic above the spherical model criticality; % and mass gap was established for these temperatures and $N$ sufficiently large;
asymptoticity was later extended to Borel summability by \cite{MR678004}.
Large $N$ limit and expansion for Yang-Mills model
has also been rigorously studied:
see \cite{Levy2011} (also \cite{MR2864481}) for convergence of Wilson loop observables to master field in the continuum plane,
and \cite{MR3919447} (resp. \cite{chatterjee2016}) for
computation of correlations of the Wilson loops % in strongly coupled $SO(N)$ lattice Yang-Mills
 in the large $N$ limit (resp. $1/N$ expansion) which relates to  string theory.

 Large $N$ problems in the stochastic quantization formalism
 have also been discussed in the physics literature, for instance  \cite{Alfaro1983,AlfaroSakita},
 \cite[Section~8]{damgaard1987}.
 \cite[Section~5.1]{moshe2003}  is close to our setting; it makes an ``ansatz'' that $\frac{1}{N}\sum_{j=1}^N\Phi_j^2$ in \eqref{eq:21} would self-average in the large $N$ limit to a constant; our present paper justifies this ansatz and in the non-equilibrium setting generalizes it.

In summary, the study of large $N$ problems in QFT is motivated by the following properties (among others).
The first property is simplification or solvability as $N\to\infty$. This is the motivation ever since the earliest literature \cite{stanley1968spherical}  as aforementioned: the model studied therein becomes a simplified, solvable model as $N\to\infty$ known as the Berlin-Kac spherical model. In our setting this simplification or solvability heuristics are reflected by the Gaussian free field asymptotic as well as the rigorous derivation of exact formula (which would not be possible for finite $N$) for certain correlation of observables in Theorem~\ref{th:1.2} and Theorem~\ref{th:1.3}.
Another property is that when $N$ is large, $1/N$ serves as a natural perturbation parameter in QFT models, as already discussed above. Of course this went much farther than just simplifying things later when applied to more sophisticated models like gauge theory, for which $1/N$ expansions led to the discovery of so called gauge-string duality as mentioned above.

\subsection{Mean field limits.}
\label{sec:mean}

As mentioned above the proof of our main theorems borrows some ingredients from
mean field limit theory (MFT).
To the best of our knowledge, the study of mean field problems originated from McKean \cite{MR0233437}.
Typically, a mean field problem is concerned with a system of $N$ particles interacting with each other,
which is often modeled by a system of stochastic {\it ordinary} differential equations, for instance, driven by independent Brownian motions. A prototype of such systems has the form $\dif X_i = \frac{1}{N} \sum_j f(X_i,X_j) \dif t + \dif B_i$, see for instance the classical reference by Sznitman \cite[Sec~I(1)]{MR1108185},
 and in the $N\to \infty$ limit one could obtain decoupled SDEs each interacting with the law of itself:
 $\dif Y_i = \int f(Y_i,y) \mu(\dif y) \dif t + \dif B_i$ where $\mu(\dif y) $ is the law of $Y_i$.
 So just as in QFT the motivation of MFT is also a simplification of an  $N$-body system to  a one-body equation which interacts with itself, i.e. the system is factorized.

In simple situations the interaction $f$ is assumed to be ``nice'', for instance globally Lipschitz (\cite{MR0233437}); much of the literature aims to prove such limits under more general assumptions on the interaction, see  \cite{MR1108185} for a survey.
%one instance  that is to some extent relevant to our problem is \cite{MR2860672}  where the interaction is assumed to be non-Lipschitz.
\footnote{In the context of SDE systems, one also considers the empirical measures of the particle configurations, and aims to show their convergence as  $N\to \infty$ to the McKean-Vlasov PDEs, which are typically deterministic. %(however, in  situations such as the particles are driven by space-time random fields the McKean-Vlasov PDEs could becomes SPDEs, see for instance Kotelenez-Kurtz, Dawson-Vaillancourt\cite{}).
Note that in this paper we do not consider the ``analogue'' of McKean-Vlasov PDE (which would be infinite dimensional) in the context of our model.}
Our Theorem~\ref{th:1} can be viewed as a result of this flavor, in an SPDE setting,
and in fact the starting point of our proof is indeed close in spirit to  \cite[Sec~I(1)]{MR1108185}
where one subtracts $X_i$ from $Y_i$ to cancel the noise and then bound a suitable norm of the difference.

We note that mean field limits are studied under much broader frameworks or scopes of applications,
such as mean field limit in the context of rough paths  (e.g. \cite{MR3299600,BCD18,CDFM18}),
mean field games (e.g. survey \cite{MR2295621}), quantum dynamics (e.g. \cite{MR2680421} and references therein).
We do not intend to have a comprehensive list, but rather refer to survey articles \cite{MR3468297,MR3317577} and the book \cite[Chapter~8]{spohn2012large}
besides \cite{MR1108185}.

The study of mean field limit for SPDE systems also has precursors, see for instance the book \cite[Chapter~9]{MR1465436} or \cite{MR3160067}. However these results make strong assumptions on the interactions of the SPDE systems such as linear growth and globally Lipschitz drift, and certainly do not cover the singular regime where renormalization is required as in our case.

\subsection{Structure of the paper}
This paper is organized as follows. 
Sections \ref{sec:nonlinear}-\ref{sec:dif} are devoted to proof of Theorem \ref{th:1}. First in Section \ref{sec:re} we recall the definition of the renormalization for $Z_i$, which satisfies the linear equation \eqref{e:1:li}. Then a uniform in $N$ estimate for the average of the $L^2$-norm of $Y_i$, the solutions to equation  \eqref{e:1:Y}, is derived in Section \ref{sec:Y}. Local well-posedness to equation \eqref{eq:Psi2} is proved in Section \ref{sec:4.1}. Global well-posedness to equation \eqref{eq:Psi2} is proved in Section \ref{sec:4.2} by combining a uniform $L^p$-estimate with Schauder theory. The difference estimate for $\Phi_i-\Psi_i$ is given in Section \ref{sec:dif}, which gives the  proof of Theorem \ref{th:1}.

Section \ref{sec:inv} is concerned with the proof of Theorem \ref{th:1.2}. % and Theorem \ref{th:1.3}. 
In Section \ref{Uniqueness of invariant measure}, uniqueness of invariant measures to \eqref{eq:Psi2} for large $m$ is proved. The convergence of invariant measures from $\nu^{N,i}$ to the Gaussian free field $\nu$ is shown in Section \ref{sec:con} by comparing the stationary solutions $(\Phi_i, Z_i)$.

 Section \ref{sec:non} mainly concentrates on the observables and  the nontriviality of the statistics of the observables.
 Section \ref{sec:label} is devoted to the study of the observables and the proof of first two part of Theorem \ref{th:1.3}.
 We derive an $L^p$-estimate of $Y_i$ in Section \ref{se:lp} and prove the third part of Theorem \ref{th:1.3} in Section \ref{se:non}. 

 Finally %in Section \ref{sec:d1}  all the results have been stated in the one dimensional case. In Section \ref{sec:7.1} the convergence of the dynamics is stated.  Section \ref{sec:7.2} is the corresponding part of the uniqueness of the invariant measure and the convergence of the invariant measures. Section \ref{sec:7.3} mainly concentrates on the proof of the nontriviality of the statistics of the observables. 
 in Appendix \ref{s:not}, we collect the  notations and useful lemmas used throughout the paper. In Appendix \ref{sec:A1} we give the proof of global well-posedness of equation \eqref{e:1:Y}. %In Appendix \ref{sec:A.3} we give the proof of the result in Sections \ref{sec:7.1}-\ref{sec:7.2}. 
 In Appendix \ref{sec:D2} the application of Dyson-Schwinger equations has been derived, which is useful in studying the limiting law of the observables. In Appendix \ref{sec:E2} we give the proof of  Step \ref{diffEst5} in the proof of Theorem \ref{th:conv-v}.
%\zhu{We have modified this part. Please double check.}\hao{Looks good.}

\subsection*{Acknowledgments}
 We would like to thank Antti Kupiainen for helpful discussion on large $N$ problems and are grateful to Fengyu Wang and Xicheng Zhang for helpful discussion on distributional dependent SDE. H.S.
gratefully acknowledges financial support from NSF grant DMS-1712684 / DMS-1909525 and DMS-1954091. R.Z. and X.Z. are grateful to
the financial supports of the NSFC (No. 11771037,
11922103) and the financial support by the DFG through the CRC 1283 “Taming uncertainty
and profiting from randomness and low regularity in analysis, stochastics and their applications” and the support by key Lab of Random Complex Structures and Data Science, Chinese Academy of Science.

%%%%%%%%%%%%%%%%%%%%%%%%%%%%%%%%%%%%%%%%%%%%%%%%%%%%%%%%%%%%%%%%  Preliminary
%%%%%%%%%%%%%%%%%%%%%%%%%%%%%%%%%%%%%%%%%%%%%%%%%%%%%%%%%%%%%%%%
%%%%%%%%%%%%%%%%%%%%%%%%%%%%%%%%%%%%%%%%%%%%%%%%%%%%%%%%%%%%%%%%
%%%%%%%%%%%%%%%%%%%%%%%%%%%%%%%%%%%%%%%%%%%%%%%%%%%%%%%%%%%%%%%%
%%%%%%%%%%%%%%%%%%%%%%%%%%%%%%%%%%%%%%%%%%%%%%%%%%%%%%%%%%%%%%%%
%%%%%%%%%%%%%%%%%%%%%%%%%%%%%%%%%%%%%%%%%%%%%%%%%%%%%%%%%%%%%%%%
%%%%%%%%%%%%%%%%%%%%%%%%%%%%%%%%%%%%%%%%%%%%%%%%%%%%%%%%%%%%%%%%
%%%%%%%%%%%%%%%%%%%%%%%%%%%%%%%%%%%%%%%%%%%%%%%%%%%%%%%%%%%%%%%%

%\medskip

%%%%%%%%%%%%%%%%%%%%%%%%%%%%%%%%%%%%%%%%%%%%%%%%%%%%%%%%%%%%%%%%  Uniform in N Bounds
%%%%%%%%%%%%%%%%%%%%%%%%%%%%%%%%%%%%%%%%%%%%%%%%%%%%%%%%%%%%%%%%
%%%%%%%%%%%%%%%%%%%%%%%%%%%%%%%%%%%%%%%%%%%%%%%%%%%%%%%%%%%%%%%%
%%%%%%%%%%%%%%%%%%%%%%%%%%%%%%%%%%%%%%%%%%%%%%%%%%%%%%%%%%%%%%%%
%%%%%%%%%%%%%%%%%%%%%%%%%%%%%%%%%%%%%%%%%%%%%%%%%%%%%%%%%%%%%%%%
%%%%%%%%%%%%%%%%%%%%%%%%%%%%%%%%%%%%%%%%%%%%%%%%%%%%%%%%%%%%%%%%
%%%%%%%%%%%%%%%%%%%%%%%%%%%%%%%%%%%%%%%%%%%%%%%%%%%%%%%%%%%%%%%%
%%%%%%%%%%%%%%%%%%%%%%%%%%%%%%%%%%%%%%%%%%%%%%%%%%%%%%%%%%%%%%%%

\section{Uniform in $N$ bounds on the dynamical linear sigma model}\label{sec:nonlinear}

In this section, we obtain new estimates on the Wick renormalized version of \eqref{eq:21}, given by
\begin{equation}\label{eq:Phi2d}
\LL \Phi_i=-\frac{1}{N}\sum_{j=1}^N \Wick{\Phi_j^2\Phi_i}+\xi_i,\quad \Phi_i(0)=\phi_i.
\end{equation}
The notion of solution to \eqref{eq:Phi2d} is the same as in \cite{DD03}  and \cite{MW17}, where the case $N=1$ is treated.  For a fixed $N$, these well-posedness results are easy to generalize to the present setting, so we only give the statement here and refer the reader to Appendix \ref{sec:A1} for the proof.  Our primary goal in this section is rather to obtain bounds which are stable with respect to the number of components $N$, which we will send to infinity in Section \ref{sec:dif}.

\medskip

As is well known, it's natural to consider initial datum to \eqref{eq:Phi2d} belonging to a negative H\"{o}lder space with exponent just below zero.  We will be slightly more general and consider random initial datum of the form $\phi_i=z_i+y_i$ % defined on the same stochastic basis $(\Omega,\mathcal{F},\mathbf{P})$  and
 satisfying  $\mathbf{E}\|z_i\|_{\bC^{-\kappa}}^p\lesssim1$ for $\kappa>0$ small enough and every $p>1$, and $\mathbf{E}\|y_i\|_{L^2}^2\lesssim1$, where the implicit constants are independent of $i, N$.
 %Furthermore, we impose the qualitative hypothesis that $(\phi_{i})_{i}$ is independent of the space-time white noises $(\xi_{i})_{i}$.
 %
  %and that $z_{i}$ and $z_{j}$ are independent for $i \neq j$.
%\scott{I geuss there is not enough time to do this now, but In the version we submit to a journal, I would prefer to remove all assumptions of the form $\mathbf{E}\|z_i\|_{\bC^{-\kappa}}^p\lesssim1$ and replace them with just $z_{i} \in L^{p}(\Omega; \bC^{-\kappa} )$ since my opinion is that the symbol $\lesssim$ should be reserved for a universal constant, not a constant depending on the initial datum.  Also, my impression is that the independence of $z_{i}$ is not used in this section.}

\medskip

The notion of solution to \eqref{eq:Phi2d} is based on the now classical trick of Da-Prato and Debussche, c.f. \cite{DD03}.  Namely, we say that $\Phi_{i}$ is a solution to \eqref{eq:Phi2d} provided the decomposition $\Phi_i = Z_i + Y_i$ holds, where $Z_{i}$ is a solution to the linear SPDE
\begin{equation}\label{eq:li1}
\LL Z_i=\xi_i,\quad Z_i(0)=z_{i},
\end{equation}
and $Y_{i}$ is a weak solution to the remainder equation
\begin{equation}\label{eq:22}\aligned
\LL Y_i& =-\frac{1}{N}\sum_{j=1}^N(Y_j^2Y_i+Y_j^2Z_i+2Y_jY_iZ_j+2Y_j\Wick{Z_iZ_j}
+ \Wick{Z_j^2}Y_i+\Wick{Z_i Z_j^2 })
,\\ Y_i(0)& =y_i.
\endaligned\end{equation}
The notation $\Wick{Z_iZ_j} $, $ \Wick{Z_j^2}$ and $\Wick{Z_i Z_j^2}$ denotes a renormalized product of Wick type which will be defined in Section \ref{sec:re} below.

\subsection{Renormalization}\label{sec:re}

To define the renormalized products appearing in \eqref{eq:22}, it is convenient to make a further splitting of $Z_{i}$ relative to the corresponding stationary solution to \eqref{eq:li1}, which we will denote by $\tilde{Z}_i$.  For $\tilde{Z}_i$, these products have a canonical definition that we now recall.  Namely, let $\xi_{i,\varepsilon}$ be a space-time mollification of $\xi_i$ defined on $\R\times \mathbb{T}^2$
 and let $\tilde{Z}_{i,\varepsilon}$ be the stationary solution to $\LL \tilde{Z}_{i,\varepsilon}=\xi_{i,\varepsilon}$.
For convenience, we assume that all the noises are mollified with a common bump function. In particular, $\tilde Z_{i,\eps}$ are i.i.d. mean zero Gaussian.  For $k\ge 1$ and $i_1,\dots,i_k \in \{1,\cdots,N\}$ we then write
$\Wick{ \tilde Z_{i_1}\cdots \tilde Z_{i_k}} $
as the limit of
$\Wick{ \tilde Z_{i_1,\eps}\cdots \tilde Z_{i_k,\eps}} $
as $\eps \to 0$. Here $\Wick{ \tilde Z_{i_1,\eps}\cdots \tilde Z_{i_k,\eps}} $ is the canonical
Wick product, which in particular is mean zero. % (see for instance \cite[Def.~4.3]{HaiShen}).
More precisely,
\begin{equ}[e:wick-tilde]
\Wick{\tilde{Z}_i\tilde{Z}_j}=
\begin{cases}
\lim\limits_{\varepsilon\to0}(\tilde{Z}_{i,\varepsilon}^2-a_\varepsilon)  &  (i=j)\\
 \lim\limits_{\varepsilon\to0}\tilde{Z}_{i,\varepsilon}\tilde{Z}_{j,\varepsilon} & (i\neq j)
\end{cases}
\quad
\Wick{\tilde{Z}_i\tilde{Z}_j^2} =
\begin{cases}
 \lim\limits_{\varepsilon\to0}(\tilde{Z}_{i,\varepsilon}^3-3a_\varepsilon \tilde{Z}_{i,\varepsilon})   & (i=j)\\
 \lim\limits_{\varepsilon\to0}(\tilde{Z}_{i,\varepsilon}\tilde{Z}_{j,\varepsilon}^2-a_\varepsilon \tilde{Z}_{i,\varepsilon}) & (i\neq j)
\end{cases}
\end{equ}
 where $a_\varepsilon=\mathbf{E}[\tilde Z_{i,\varepsilon}^2(0,0)]$ is a diverging constant independent of $i$ and the limits are understood in $C_T\bC^{-\kappa}$ for $\kappa>0$. (see \cite[Section 5]{MW17} for more details).

%We use $\bar{Z}_i$ to denote the solution to \eqref{eq:li1} with initial data $0$. In [MW17]
 % $:\bar{Z}_i^2:, :\bar{Z}_i^3:$ have been defined by using renormalization  and we use $\bar{Z}_i\bar{Z}_j$ and $:\bar{Z}_j^2:\bar{Z}_j$ to denote $:\bar{Z}_i^2:$ and $:\bar{Z}_j^3:$ for simplicity. The corresponding term could also be defined for $\tilde{Z}_i$ as in [MW17].
We now define the Wick products for $Z_{i}$ by combining the above with the smoothing properties of the heat semi-group $S_t$ associated with $\LL$.
Defining $\tilde{z}_{i} \eqdef z_i-\tilde{Z}_i(0)$, we have the decomposition
$$
Z_i=\tilde{Z}_i+S_t\tilde{z}_{i}.
$$
We then overload notation and define
 the Wick products of $Z_i$
by the binomial formula  \footnote{This definition is in line with \cite[(5.42)]{MW17},
which first considers a linear solution with $0$ initial condition rather than a stationary solution as here.}
 namely
\begin{align*}
\Wick{Z_j^2}
&=\Wick{\tilde{Z}_j^2}
	+2S_t\tilde{z}_{j}\tilde{Z}_j+(S_t\tilde{z}_{j})^2,
\\
\Wick{Z_j^3}
&=\Wick{\tilde{Z}_j^3}
	+3S_t\tilde{z}_{j}\Wick{\tilde{Z}_j^2}
	+3(S_t\tilde{z}_{j})^2\tilde{Z}_j
	+(S_t\tilde{z}_{j})^3,
\end{align*}
and for $i\neq j$
\begin{align*}
\Wick{Z_iZ_j}
&=\Wick{\tilde{Z}_i \tilde{Z}_j}
	+ S_t\tilde{z}_{i} \tilde{Z}_j
	+ S_t\tilde{z}_{j} \tilde{Z}_i
	+ S_t\tilde{z}_{i}S_t\tilde{z}_{j},
%=(\tilde{Z}_i+S_t\tilde{z}_{i})(\tilde{Z}_j+S_t\tilde{z}_{j}),
\\
\Wick{Z_iZ_j^2}
&= \Wick{\tilde{Z}_i\tilde{Z}_j^2}
+ S_t\tilde{z}_{i}  \Wick{\tilde{Z}_j^2 }
+ 2S_t\tilde{z}_{j} \Wick{ \tilde{Z}_i \tilde{Z}_j}
+ 2S_t\tilde{z}_{i} S_t\tilde{z}_{j}  \tilde{Z}_j
+ (S_t\tilde{z}_{j})^2  \tilde{Z}_i
+ S_t\tilde{z}_{i}(S_t\tilde{z}_{j})^2.
\end{align*}
We caution the reader that  this definition is non-canonical, in the sense that these renormalized products are not necessarily mean zero.  By the calculation in \cite[Corollary~3]{MW17} (see also \cite[Lemma 3.5]{RZZ17}) we have the following estimate:
\bl\label{le:ex}
For each $\kappa'>\kappa>0$ and all $p \geq 1$, we have the following bounds
\begin{align}
\mathbf{E}\|\tilde{Z}_i\|_{C_T\bC^{-\kappa}}^p+\mathbf{E}\|Z_i\|_{C_T\bC^{-\kappa}}^p \lesssim 1. \nonumber \\
\mathbf{E}\| \Wick{\tilde{Z}_i\tilde{Z}_j} \|_{C_T\bC^{-\kappa}}^p+\mathbf{E}\| \Wick{\tilde{Z}_i \tilde{Z}_j^2}\|_{C_T\bC^{-\kappa}}^p\lesssim 1. \nonumber \\
\mathbf{E}(\sup_{t\in[0,T]}t^{\kappa'}\|\Wick{Z_iZ_j}\|_{\bC^{-\kappa}})^p+\mathbf{E}(\sup_{t\in[0,T]}t^{2\kappa'}\|\Wick{Z_iZ_j^2}\|_{\bC^{-\kappa}})^p\lesssim 1. \nonumber
\end{align}
Furthermore, the proportional constants in the inequalities are independent of $i, j,N$.
\el

By Lemma \ref{le:ex}, there exists a measurable  $\Omega_0\subset \Omega$ with $\mathbf{P}(\Omega_0)=1$ such that for $\omega\in \Omega_0$ and every $i,j$
$$
\|Z_i\|_{C_T\bC^{-\kappa}}
	+\sup_{t\in[0,T]}t^{\kappa'}\|\Wick{Z_iZ_j}\|_{\bC^{-\kappa}}
	%+\sup_{t\in[0,T]}t^{\kappa'}\|:Z_j^2:\|_{\bC^{-\kappa}}
	+\sup_{t\in[0,T]}t^{2\kappa'}\|\Wick{Z_iZ_j^2}\|_{\bC^{-\kappa}}
<\infty.
$$
%$$\sup_{t\in[0,T]}t^{2\kappa'}\|Z_i:Z_j^2:\|_{\bC^{-\kappa}}
%+\sup_{t\in[0,T]}t^{2\kappa'}\|:Z_j^3:\|_{\bC^{-\kappa}}<\infty.$$
In the following we always consider $\omega\in \Omega_0$.  With the above choice of renormalization, classical arguments from \cite{DD03} can be used to obtain local existence and uniqueness to equation \eqref{eq:22}  by a pathwise fixed point argument.  This solution can also be shown to be global, as a simple consequence of a much stronger result, Lemma \ref{Y:L2}, which will be established in detail below.  Since the well-posedness arguments for solving equation \eqref{eq:22} with a fixed number of components is essentially known, we relegate the proof to Appendix \ref{sec:A1} and only state the result here.

\begin{lemma}\label{lem:Yglobal}
For each $N$, there exist unique global solutions $(Y_i)$ to equation  \eqref{eq:22} such that for $1\leq i\leq N$,  $Y_i\in C_TL^2\cap L^4_TL^4\cap L^2_TH^1$.
\end{lemma}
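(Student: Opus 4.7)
\smallskip

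\textbf{Proof plan for Lemma \ref{lem:Yglobal}.}  The strategy is the now-standard Da Prato--Debussche pathwise argument adapted to the coupled system: local well-posedness by a fixed point in a suitable function space, followed by an $L^2$-based energy estimate that exploits the cubic damping to upgrade to a global solution. Since $N$ is held fixed here, there is no need to track the dependence of constants on $N$ (that careful analysis is deferred to Lemma~\ref{Y:L2}).

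\smallskip

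First, I would set up the mild formulation
\begin{equation*}
Y_i(t) = S_t y_i - \frac{1}{N}\int_0^t S_{t-s}\Big(\sum_{j=1}^N Y_j^2 Y_i + \sum_{j=1}^N Y_j^2 Z_i + 2\sum_{j=1}^N Y_j Y_i Z_j + 2\sum_{j=1}^N Y_j \Wick{Z_iZ_j} + \sum_{j=1}^N \Wick{Z_j^2} Y_i + \sum_{j=1}^N \Wick{Z_iZ_j^2}\Big)\,\dif s,
\end{equation*}
and work pathwise for $\omega \in \Omega_0$, treating the stochastic objects $Z_i$, $\Wick{Z_iZ_j}$, $\Wick{Z_iZ_j^2}$ as deterministic inputs with the regularity supplied by Lemma~\ref{le:ex}. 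A contraction argument on a small time interval in the Banach space
\[
E_T \eqdef \bigoplus_{i=1}^N \big(C_T L^2 \cap L^4_T L^4 \cap L^2_T H^1\big)
\]
will yield local existence and uniqueness. The choice of space is dictated by the nonlinear terms: the cubic term $Y_j^2 Y_i$ is controlled by $L^4_T L^4$, the mixed terms $Y_j^2 Z_i$ and $Y_j Y_i Z_j$ are handled by Besov multiplication (using that $Z_i \in \bC^{-\kappa}$ and $L^4 \subset B^{\kappa}_{4/3,4/3}$ after paraproduct decomposition), and the ill-defined looking $\sum_j \Wick{Z_iZ_j^2}$ (with regularity $\bC^{-\kappa}$) is handled by Schauder estimates alone since no multiplication is required. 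The slightly singular behavior as $t \to 0$ of $\Wick{Z_iZ_j}$ and $\Wick{Z_iZ_j^2}$ from Lemma~\ref{le:ex} is absorbed by taking $T$ small.

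\smallskip

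Second, to pass from local to global, I would derive an \emph{a priori} $L^2$ energy estimate. Testing the equation for $Y_i$ against $Y_i$ and summing over $i$ gives
\begin{equation*}
\frac{1}{2}\frac{\dif}{\dif t}\sum_{i=1}^N \|Y_i\|_{L^2}^2 + \sum_{i=1}^N \|\nabla Y_i\|_{L^2}^2 + m\sum_{i=1}^N \|Y_i\|_{L^2}^2 + \frac{1}{N}\int_{\mathbb{T}^2}\Big(\sum_{i=1}^N Y_i^2\Big)^2\dif x = \text{(remainder)},
\end{equation*}
where the remainder contains terms mixing $Y_i$ with $Z_i$ and the Wick products. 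The crucial sign-definite damping $\frac{1}{N}\int(\sum_i Y_i^2)^2$ dominates (via Young's and Cauchy--Schwarz inequalities, combined with the Besov duality pairing $\langle \Wick{Z_iZ_j^2}, Y_i\rangle$ and interpolation between $L^2$ and $H^1$) all terms on the right-hand side after absorbing a portion of $\|\nabla Y_i\|_{L^2}^2$. This gives global-in-time control of $\sum_i \|Y_i(t)\|_{L^2}^2$, together with the $L^2_T H^1$ and $L^4_T L^4$ norms (the latter from the quartic term in the energy identity).

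\smallskip

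I expect the main technical obstacle to be the pairing $\langle \Wick{Z_iZ_j^2}, Y_i\rangle$ in the energy estimate, since the Wick product has only regularity $\bC^{-\kappa}$ while $Y_i \in L^2$ is not regular enough on its own. The remedy is to use the duality $|\langle f, g\rangle| \lesssim \|f\|_{\bC^{-\kappa}} \|g\|_{B^\kappa_{1,1}}$ together with the embedding $H^{2\kappa}\hookrightarrow B^\kappa_{1,1}$ on the two-dimensional torus, then interpolate $\|Y_i\|_{H^{2\kappa}}^2 \lesssim \|Y_i\|_{L^2}^{2-4\kappa}\|Y_i\|_{H^1}^{4\kappa}$ and absorb a small fraction of $\|\nabla Y_i\|_{L^2}^2$ via Young's inequality. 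Once this is in place, a Grönwall argument on $\sum_i \|Y_i\|_{L^2}^2$ with the time-dependent but finite stochastic data from Lemma~\ref{le:ex} closes the estimate and rules out blow-up, yielding the global solution in $C_T L^2 \cap L^4_T L^4 \cap L^2_T H^1$ as claimed.
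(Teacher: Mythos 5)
Your strategy is genuinely different from the paper's, and the difference is not cosmetic. You propose a direct fixed-point argument in the target space $E_T = C_TL^2\cap L^4_TL^4\cap L^2_TH^1$ for $y_i\in L^2$, followed by an energy estimate for global continuation. The paper instead first solves for mollified data $y_i^\varepsilon\in\bC^\beta$, $\beta\in(1,2)$, where a fixed point in $C_T\bC^\beta$ is straightforward (all products with the $\bC^{-\kappa}$ stochastic inputs are classical and the Schauder time-singularities have plenty of slack); it then derives $\varepsilon$-independent energy bounds (the estimates behind Lemma~\ref{Y:L2} and~\ref{Y:L3}) and passes to the limit by an Aubin--Lions compactness argument; uniqueness is treated separately via a testing argument (the estimates $I_1^N$, $I_2^N$ from Section~\ref{sec:dif}), which needs only $C_TL^2\cap L^2_TH^1\cap L^4_TL^4$ regularity.

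The gap in your proposal is that a contraction in $E_T$ for $L^2$ data does not close in the naive way, because $L^2(\mathbb{T}^2)$ is scaling-critical for the cubic heat nonlinearity in two dimensions. Concretely: the pure cubic term gives $Y_j^2Y_i \in L^{4/3}_TL^{4/3}$, and the Schauder estimate $\|\nabla S_{t-s}f\|_{L^2}\lesssim (t-s)^{-3/4}\|f\|_{L^{4/3}}$ produces the convolution kernel $(t-s)^{-3/4}$. The map $g\mapsto\int_0^t(t-s)^{-3/4}g(s)\,\dif s$ from $L^{4/3}_T$ to $L^2_T$ sits exactly at the Hardy--Littlewood--Sobolev endpoint (Young's inequality fails outright since $\int_0^T s^{-1}\dif s = \infty$), and the scale invariance of HLS means it yields no factor $T^\theta$ with $\theta>0$. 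Hence the usual mechanism for producing a contraction on a short interval — smallness through positive powers of $T$ — is unavailable, so the time of existence cannot be read off from $\|y_i\|_{L^2}$ alone. (This is the same borderline phenomenon one sees for the $L^{q_c}$-critical nonlinear heat equation, cf.\ Brezis--Cazenave.) A careful version of your route is not impossible — e.g.\ time-weighted norms $\sup_t t^\gamma\|Y(t)\|_{\bC^\beta}$ as the paper uses for $X_i$ in Section~\ref{sec:4.1}, or a decomposition of the initial datum into a small $L^2$ part plus a smooth part — but these devices are precisely what your sketch omits, and they amount to an appreciable amount of extra work. The paper's mollify-then-compactness route sidesteps all of this because the fixed point is only ever run in the subcritical setting $\bC^\beta$, $\beta>1$. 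Your energy estimate for the local-to-global step, by contrast, is fine: it is essentially the estimate of Lemma~\ref{Y:L2}, and your treatment of the pairing $\langle\Wick{Z_iZ_j^2},Y_i\rangle$ via Besov duality and interpolation matches the paper's.
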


\subsection{Uniform in $N$ estimate}\label{sec:Y}
%\scott{Maybe we should keep the statement here but move the proof to an appendix: for two reasons, 1) the result is expected and 2) we refer to estimates which appear only later in the article}.
%Since $m\geq0$, we assume $m=0$ in this section for simplicity. \hao{think again if we want to do so - also double check if what follows below are consistently assuming this}\scott{I think we should not set m to be zero here}\hao{I agree; otherwise the paper is not very consistent}
We now turn to our uniform in $N$ bounds on equation \eqref{eq:22} and note that $Y_{i}$ itself depends on $N$, but we omit this throughout.  In the following lemma, we show that the empirical averages of the $L^{2}$ norms of $Y_{i}$ can be controlled pathwise in terms of averages of the $C_{T}\bC^{-\kappa}$ norms of $Z_{i}$, $\Wick{Z_{i}Z_{j}}$ and $\Wick{Z_{i}^{2}Z_{j}}$ discussed in Lemma \ref{le:ex} .
%We will aim to establish  an estimate for \eqref{eq:22} which is {\it uniform in} $N$. To this end, taking $L^2$-inner product with $Y_i$, we have
%\begin{equation}\label{uni21}
%\aligned&\frac{1}{2}\frac{\dif}{\dif t} \|Y_i\|_{L^2}^2+\|\nabla  Y_i\|_{L^2}^2
%	+\int\frac{1}{N}\sum_{j=1}^NY_j^2Y_i^2\dif x+m\|Y_i\|_{L^2}^2
%\\&=-\int\frac{1}{N}\sum_{j=1}^N(Y_iY_j^2Z_i+2Y_jY_i^2Z_j+2Y_jY_i \Wick{Z_iZ_j}+\Wick{Z_j^2}Y_i^2+Y_i \Wick{Z_j^2Z_i})\dif x.
%\endaligned\end{equation}
%Here the integration is over the torus.

\bl\label{Y:L2}
Let $s \in [2\kappa,\frac{1}{4})$.  There exists a universal constant $C$ such that
\begin{align}
\frac{1}{N}\sup_{t\in[0,T]}\sum_{j=1}^{N}\|Y_{j}\|_{L^{2} }^2+\frac{1}{N}\sum_{j=1}^{N}\| \nabla  Y_{j}\|_{L^2_T L^2 }^2+\bigg \| \frac{1}{N}\sum_{i=1}^{N}Y_{i}^{2}\bigg \|_{L^2_T L^2 }^{2}
\leq C\int_{0}^{T} \!\!\! R_{N}\dif t+\frac{1}{N}\sum_{j=1}^{N}\|y_{j}\|_{L^{2} }^2,\label{s15}
\end{align}
where
\begin{equation}\label{eq:RN}
\aligned
R_{N}
&:=1+ \bigg (\frac{1}{N}\sum_{j=1}^{N} \|Z_j\|_{\bC^{-s}}^{2}  \bigg )^{\frac{2}{1-s}}
+\bigg (\frac{1}{N}\sum_{j=1}^{N}\| \Wick{Z_{j}^{2}} \|_{\bC^{-s}} \bigg )^{\frac{4}{2-s}}
\\
&+\bigg (\frac{1}{N^{2}}  \sum_{i,j=1}^{N}\|\Wick{Z_{j}Z_{i}}\|_{\bC^{-s}}^{2} \bigg )^{\frac{2}{2-s}}
+\bigg (\frac{1}{N^{2}}  \sum_{i,j=1}^{N}\| \Wick{Z_{j}^{2}Z_{i}}\|_{\bC^{-s}}^{2} \bigg ).
\endaligned
\end{equation}
\el

\begin{proof}
\newcounter{UnifN} % proofstep = 0
\refstepcounter{UnifN} % increases value by 1
The proof is based on an energy estimate.  In Step \ref{UnifN1} we establish the energy identity \eqref{s10} which identifies the coercive quantities and involves three types of terms on the RHS.  These are labelled $I_{N}^{1}$, $I_{N}^{2}$, and $I_{N}^{3}$, which are respectively linear, quadratic, and cubic in $Y$.  In Steps \ref{UnifN4}-\ref{UnifN3}, we estimate each of these quantities, proceeding in order of difficulty, in terms of the coercive terms and the quantities $R_{N}^{i}$ for $i=1,2,3$ defined below.  The main ingredient is Lemma~\ref{lem:dual+MW}, restated here: for $s \in (0,1)$
\begin{equation}
|\langle g, f \rangle |\lesssim \big (\|\nabla g \|_{L^{1}}^{s}\|g\|_{L^{1}}^{1-s}+ \|g\|_{L^{1}} \big )\|f\|_{\bC^{-s}}  \label{s1}.
\end{equation}
The final output of Steps \ref{UnifN1}-\ref{UnifN3} is that for some universal constant $C$ it holds
\begin{align}
&\frac{1}{N}\sum_{i=1}^{N}\frac{\dif}{\dif t}\|Y_{i}\|_{L^{2}}^{2}+ \frac{1}{N}\sum_{i=1}^{N}\|\nabla Y_{i}\|_{L^{2}}^{2} +\frac{1}{N^{2}}\bigg \|\sum_{i=1}^{N}Y_{i}^{2} \bigg \|_{L^{2}}^{2}+ \frac{m}{N}\sum_{j=1}^{N}\|  Y_{j}\|_{L^2 }^2 \nonumber \\
&\leq C\frac{R_{N}^{1}}{N}+C \big (R_{N}^{2}+R_{N}^{3} \big )\frac{1}{N}\sum_{i=1}^{N} \|Y_{i}\|_{L^{2}}^{2} ,\label{eq:A}
\end{align}
where  $R_{N}^{i}$ for $i=1, 2, 3$ are defined in \eqref{eq:R1}, \eqref{eq:R2}, \eqref{e:def-SZ} below.
Noting that by H\"{o}lder's inequality
\begin{align}
\frac{1}{N}\sum_{i=1}^{N} \|Y_{i}\|_{L^{2}}^{2}=\frac{1}{N}\bigg \| \sum_{i=1}^{N}Y_{i}^{2} \bigg \|_{L^{1}} \leq \frac{1}{N}\bigg \| \sum_{i=1}^{N}Y_{i}^{2} \bigg \|_{L^{2}},  \nonumber
\end{align}
the estimate \eqref{s15} follows from Young's inequality with exponents $(2,2)$ and an integration over $[0,T]$.  The condition $s \in [2\kappa,\frac{1}{4})$ ensures that $R_{N}$ is integrable  near the origin, c.f. Lemma \ref{le:ex}.

%This follows from Lemma~\ref{lem:multi} together with \cite[Proposition~8]{MW17}.

%In Steps \ref{UnifN2} and \ref{UnifN3} we use \eqref{s1} to establish the inequalities \eqref{s6} and \eqref{s2}, which can then be integrated over $[0,T]$ to obtain the bound \eqref{s15}. I didn't account for the lower order term in {\color{blue} blue} yet.

{\sc Step} \arabic{UnifN} \label{UnifN1} \refstepcounter{UnifN} (Energy balance)

In this step, we justify the energy identity
%\footnote{The quantity $\frac{1}{N^2}\|\sum_i Y_{i}^{2}  \|^2_{L^{2}}$ on the LHS could be thought of as playing the same role as the $L^4$-norm in the $L^2$-estimate of $\Phi^4$ model.}
\begin{align}
\frac{1}{2} \sum_{i=1}^{N}\frac{\dif}{\dif t}\|Y_{i}\|_{L^{2}}^{2}+ \sum_{i=1}^{N}\|\nabla Y_{i}\|_{L^{2}}^{2}+m\sum_{i=1}^{N} \|Y_{i}\|_{L^{2}}^{2} +\bigg \| \frac{1}{\sqrt{N}}\sum_{i=1}^{N}Y_{i}^{2} \bigg \|_{L^{2}}^{2}
=I_{N}^{1}+I_{N}^{2}+I_{N}^{3},\label{s10}
\end{align}
where the quantities $I_{N}^{i}$ for $i=1,2,3$ are defined by
\begin{align*}
I_{N}^{1}&\eqdef -\frac{1}{N}\sum_{i,j=1}^{N} \langle Y_{i}, \Wick{Z_{j}^{2}Z_{i}}\rangle\\
I_{N}^{2}&\eqdef -\frac{1}{N}\sum_{i,j=1}^{N} 2 \langle Y_{i}Y_{j},\Wick{Z_{j}Z_{i}} \rangle +\langle Y_{i}^{2} ,\Wick{Z_{j}^{2}} \rangle  \\
I_{N}^{3}&\eqdef-\frac{1}{N}\sum_{i,j=1}^{N} 3 \langle Y_{i}^{2}Y_{j},Z_{j} \rangle
\end{align*}
%the old version
%\begin{align}
%I_{N}& \eqdef -\frac{1}{N}\sum_{i,j=1}^{N} \int 3Y_{i}^{2}Y_{j}Z_{j} \dif x. \nonumber \\
%I_{N}'& \eqdef -\frac{1}{N}\sum_{i,j=1}^{N} \int\big (2Y_{i}Y_{j}\Wick{Z_{j}Z_{i}}+Y_{i}^{2}\Wick{Z_{j}^{2}}+Y_{i} \Wick{Z_{j}^{2}Z_{i}} \big ) \dif x. \nonumber
%\end{align}
Notice that $I_{N}^{1}$, $I_{N}^{2}$, and $I_{N}^{3}$ are linear, quadratic, and cubic in $Y$, respectively.  Formally, the identity \eqref{s10} follows from testing \eqref{eq:22} by $Y_{i}$, integrating by parts, summing over $i=1,\dots,N$, and using symmetry with respect $i$ and $j$.  Since $Y_{i}$ is not sufficiently smooth in the time variable, some care is required to make this fully rigorous, and we direct the reader to  \cite[Proposition 6.8]{MW17} for more details.
%A rigorous justification can be carried out as in \cite{MW17}.
\medskip

{\sc Step} \arabic{UnifN} \label{UnifN4} \refstepcounter{UnifN} (Estimates for $I_{N}^{1}$)

In this step, we show there is a universal constant $C$ such that
\begin{align}
I_{N}^{1}\leq \frac{1}{4}  \sum_{i=1}^{N}\|\nabla Y_{i}\|_{L^{2}}^{2}+\sum_{i=1}^{N}\|Y_{i}\|_{L^{2}}^{2}
+CR_{N}^{1},\label{se1}
\end{align}
where
\begin{equation}
R_{N}^{1}\eqdef \sum_{i=1}^{N}\bigg \|\frac{1}{N}\sum_{j=1}^{N} \Wick{Z_{j}^{2}Z_{i}} \bigg \|_{\bC^{-s}}^{2}.\label{eq:R1}
\end{equation}
To establish \eqref{se1}, we apply \eqref{s1} with $Y_{i}$ playing the role of $g$  and $\frac{1}{N}\sum_{j=1}^{N}\Wick{Z_{j}^{2}Z_{i}}$ playing the role of $f$ to find
\begin{align}
I_{N}^{1}&\lesssim  \sum_{i=1}^{N}\big (  \|Y_{i}\|_{L^{1}}^{1-s}\|\nabla Y_{i}\|_{L^{1}}^{s}+\|Y_{i}\|_{L^{1}} \big )\bigg \|\frac{1}{N}\sum_{j=1}^{N} \Wick{Z_{j}^{2}Z_{i}} \bigg \|_{\bC^{-s}} . %\nonumber  \\
%&\lesssim \bigg ( \sum_{i=1}^{N}\|Y_{i}\|_{L^{1}}^{2}   \bigg )^{\frac{1-s}{2}} \bigg ( \sum_{i=1}^{N}\|\nabla Y_{i}\|_{L^{1}}^{2}   \bigg )^{\frac{s}{2} } \bigg ( \sum_{i=1}^{N}\bigg \|\frac{1}{N}\sum_{j=1}^{N} \Wick{Z_{j}^{2}Z_{i}} \bigg \|_{\bC^{-s}}^{2}      \bigg )^{\frac12} \nonumber\\
%&\qquad \qquad+\bigg ( \sum_{i=1}^{N}\|Y_{i}\|_{L^{1}}^{2}   \bigg )^{\frac{1}{2}}\bigg ( \sum_{i=1}^{N}\bigg \|\frac{1}{N}\sum_{j=1}^{N} \Wick{Z_{j}^{2}Z_{i}} \bigg \|_{\bC^{-s}}^{2}\bigg )^{1/2},
\label{s7}
\end{align}
%where we applied H\"older's inequality for the summation in $i$ using exponents $(\frac{2}{1-s},\frac{2}{s},2)$ for the first term and $(2,2)$ for the second term.
We now use Young's inequality with
exponents $(\frac{2}{1-s},\frac{2}{s},2)$ for the first term and $(2,2)$ for the second term
and the embedding of $L^{2}$ into $L^{1}$ to obtain \eqref{se1}.
%considering separately the cases $m=0$ and $m>0$.

\medskip

{\sc Step} \arabic{UnifN} \label{UnifN2} \refstepcounter{UnifN} (Estimates for $I_{N}^{2}$ )

In this step, we show there is a universal constant C such that
\begin{align}
I_{N}^{2} \leq \frac{1}{4} \sum_{i=1}^{N}\|\nabla Y_{i}\|_{L^{2}}^{2}
+C(1+R_{N}^{2})\bigg (  \sum_{i=1}^{N} \|Y_{i}\|_{L^{2}}^{2}\bigg ),\label{s6}
\end{align}
where
\begin{align}\label{eq:R2}
R_{N}^{2}&\eqdef \bigg (\frac{1}{N^{2}}\sum_{i,j=1}^{N}\|\Wick{Z_{j}Z_{i}}\|_{\bC^{-s}}^{2}  \bigg )^{\frac{1}{2-s}}+\bigg \|\frac{1}{N}\sum_{j=1}^{N} \Wick{Z_{j}^{2}} \bigg \|_{\bC^{-s}}^{\frac{2}{2-s} }
\quad \eqdef \Big(\widetilde{R_{N}^{2}}\Big)^{\frac{1}{2-s}}+ \Big(\overline{R_{N}^{2}}\Big)^{\frac{2}{2-s}}.
\end{align}
Applying \eqref{s1} with $Y_{i}Y_{j}$ playing the role of $g$   and $\Wick{Z_{j}Z_{i}}$ playing the role of $f$ followed by H\"older's inequality in $L^{2}$, the product rule and symmetry with respect to $i,j$ we find
\begin{align}
\frac{1}{N }\sum_{i,j=1}^{N} \langle Y_{i}Y_{j},& \Wick{Z_{j}Z_{i}} \rangle
\lesssim
\frac{1}{N }\sum_{i,j=1}^{N}\big (  \|Y_{i}Y_{j}\|_{L^{1}}^{1-s}\|\nabla(Y_{i}Y_{j})\|_{L^{1}}^{s}+\|Y_{i}Y_{j}\|_{L^{1}} \big )\|\Wick{Z_{j}Z_{i}}\|_{\bC^{-s}}\nonumber
\\
&\lesssim
\frac{1}{N }\sum_{i,j=1}^{N}\big ( \|Y_{j}\|_{L^{2}}\|Y_{i}\|_{L^{2}}^{1-s}\|\nabla Y_{i}\|_{L^{2}}^{s}+\|Y_{i}\|_{L^{2}}\|Y_{j}\|_{L^{2}} \big )\|\Wick{Z_{j}Z_{i}}\|_{\bC^{-s}} \nonumber
 \\
&\lesssim
 \bigg ( \sum_{j=1}^{N}\|Y_{j}\|_{L^{2}}^{2}   \bigg )^{\frac12} \bigg ( \sum_{i=1}^{N}\|Y_{i}\|_{L^{2}}^{2(1-s)}\|\nabla Y_{i}\|_{L^{2}}^{2s}   \bigg )^{\frac12}
  \Big(\widetilde{R_{N}^{2}}\Big)^{\frac12}
 %\bigg (\frac{1}{N^{2}}\sum_{i,j=1}^{N}\|\Wick{Z_{j}Z_{i}}\|_{\bC^{-s}}^{2}  \bigg )^{\frac12}
+\bigg (\sum_{i=1}^{N} \|Y_{i}\|_{L^{2}}^{2}\bigg )
   \Big(\widetilde{R_{N}^{2}}\Big)^{\frac12}
 %\bigg ( \frac{1}{N^{2}}\sum_{i,j=1}^N \|\Wick{Z_{j}Z_{i}}\|_{\bC^{-s}}^2 \bigg )^{1/2}
 \nonumber \\
 &\lesssim \bigg ( \sum_{j=1}^{N}\|Y_{j}\|_{L^{2}}^{2} \bigg )^{1-\frac{s}{2}}\bigg ( \sum_{i=1}^{N}\|\nabla Y_{i}\|_{L^{2}}^{2}   \bigg )^{\frac{s}{2}}
   \Big(\widetilde{R_{N}^{2}}\Big)^{\frac12}
%\bigg (\frac{1}{N^{2}}\sum_{i,j=1}^{N}\|\Wick{Z_{j}Z_{i}}\|_{\bC^{-s}}^{2}  \bigg )^{\frac12} \nonumber \\
+\bigg ( \sum_{i=1}^{N} \|Y_{i}\|_{L^{2}}^{2}\bigg )
  \Big(\widetilde{R_{N}^{2}}\Big)^{\frac12}
% \bigg ( \frac{1}{N^{2}}\sum_{i,j=1}^N \|\Wick{Z_{j}Z_{i}}\|_{\bC^{-s}}^2 \bigg )^{1/2}
\label{s8},
\end{align}
where we used H\"older's inequality for the summation in $i,j$ with exponents $(2,2)$ followed by H\"older's inequality for the summation in $i$ with exponents $(\frac{1}{1-s},\frac{1}{s})$.  Finally, applying \eqref{s1} with $Y_{i}^{2}$ playing the role of $g$ and $\frac{1}{N}\sum_{j=1}^{N} \Wick{Z_{j}^{2}}$ playing the role of $f$ we find
\begin{align}
\frac{1}{N} \sum_{i,j=1}^{N} \langle Y_{i}^{2}, & \Wick{Z_{j}^{2}} \rangle
\lesssim
\sum_{i=1}^{N}\big ( \|Y_{i}^{2}\|_{L^{1}}^{1-s}\|Y_{i}\nabla Y_{i}\|_{L^{1}}^{s}+\|Y_{i}^{2}\|_{L^{1}} \big )
\overline{R_{N}^{2}}
% \bigg \|\frac{1}{N}\sum_{j=1}^{N} \Wick{Z_{j}^{2}} \bigg \|_{\bC^{-s}}
\lesssim
\sum_{i=1}^{N}\big (  \|Y_{i} \|_{L^{2}}^{2-s}\|\nabla Y_{i}\|_{L^{2}}^{s}+\|Y_{i} \|_{L^{2}}^{2} \big )
\overline{R_{N}^{2}}
%\bigg \|\frac{1}{N}\sum_{j=1}^{N} \Wick{Z_{j}^{2}} \bigg \|_{\bC^{-s}}
\nonumber \\
%&\lesssim \bigg (\frac{1}{N}\sum_{i=1}^{N}\|Y_{i}\|_{L^{2}}^{2-s}\|\nabla Y_{i}\|_{L^{2}}^{s}  \bigg )\bigg (\frac{1}{N} \sum_{j=1}^{N}\| \Wick{Z_{j}^{2}} \|_{B_{\infty,\infty}^{-s}}   \bigg ) \nonumber \\
&\lesssim
 \bigg [ \bigg (\sum_{i=1}^{N} \|Y_{i}\|_{L^{2}}^{2} \bigg )^{1-\frac{s}{2}} \bigg ( \sum_{i=1}^{N}\|\nabla Y_{i}\|_{L^{2}}^{2}  \bigg )^{\frac{s}{2}}+\sum_{i=1}^{N}\|Y_{i}\|_{L^{2}}^{2} \bigg ]
 \overline{R_{N}^{2}}
 % \bigg \|\frac{1}{N}\sum_{j=1}^{N} \Wick{Z_{j}^{2}} \bigg \|_{\bC^{-s}}
  \label{s9},
\end{align}
where we used H\"older's inequality for the summation in $i$ with exponents $(\frac{2}{2-s},\frac{2}{s})$.  The inequality \eqref{s6} now follows from \eqref{s8}-\eqref{s9} by Young's inequality with exponents $(\frac{2}{2-s},\frac{2}{s})$.
%\scott{Maybe add remark somewhere about why we don't use the nonlinear dissipation at this stage; related to the factors of $N$.}\zhu{we add a remark later}

\medskip

{\sc Step} \arabic{UnifN} \label{UnifN3} \refstepcounter{UnifN} (Estimates for $I_{N}^3$: cubic terms in $Y$)

In this step, we show there exists a universal constant $C$ such that
\begin{equation}
I_{N}^{3} \leq \frac{1}{4} \bigg (\sum_{i=1}^{N}\|\nabla Y_{i}\|_{L^{2}}^{2}+\bigg \| \frac{1}{\sqrt{N}}\sum_{i=1}^{N}Y_{i}^{2}\bigg \|_{L^{2}}^{2}  \bigg )+C(1+R_{N}^{3})\bigg (  \sum_{i=1}^{N} \|Y_{i}\|_{L^{2}}^{2}\bigg ),\label{s2}
\end{equation}
where
\begin{equ}[e:def-SZ]
R_{N}^{3}\eqdef \bigg ( \frac{1}{N} \sum_{j=1}^{N}\|Z_{j}\|_{\bC^{-s}}^{2}  \bigg )^{\frac1{1-s}}
=(\frac1N \SZ)^{\frac1{1-s}}
\qquad
\mbox{with}
\quad
\SZ \eqdef \sum_{j=1}^{N}\|Z_{j}\|_{\bC^{-s}}^{2}.
\end{equ}
Appealing again to \eqref{s1}, we find
\begin{align} \label{q1}
I_{N}^{3}
&\lesssim \frac{1}{N}\sum_{j=1}^{N} \Big ( \Big \|  \sum_{i=1}^{N}Y_{i}^{2}Y_{j} \Big \|_{L^{1}}^{1-s} \Big \|\nabla \Big ( \sum_{i=1}^{N}Y_{i}^{2}Y_{j} \Big ) \Big \|_{L^{1}}^{s}+\Big \|  \sum_{i=1}^{N}Y_{i}^{2}Y_{j} \Big \|_{L^{1}} \Big ) \|Z_{j}\|_{\bC^{-s}} \\
& \lesssim\frac{1}{N} \Big (\sum_{j=1}^{N}\Big \|  \sum_{i=1}^{N}Y_{i}^{2}Y_{j} \Big \|_{L^{1}}^{2(1-s) }\Big \|\nabla \Big ( \sum_{i=1}^{N}Y_{i}^{2}Y_{j} \Big ) \Big \|_{L^{1}}^{2s}  \Big )^{\frac12}
\SZ^{\frac12}
%\Big (  \sum_{j=1}^{N}\|Z_{j}\|_{\bC^{-s}}^{2}   \Big )^{1/2}
+\frac{1}{N}\Big ( \sum_{j=1}^{N} \Big \|  \sum_{i=1}^{N}Y_{i}^{2}Y_{j} \Big \|_{L^{1}}^{2} \Big )^{\frac12}
\SZ^{\frac12}
%\Big (  \sum_{j=1}^{N}\|Z_{j}\|_{\bC^{-s}}^{2}   \Big )^{1/2}
\nonumber.
\end{align}
By H\"older's inequality, it holds that
\begin{equation}
\Big \|  \sum_{i=1}^{N}Y_{i}^{2}Y_{j} \Big \|_{L^{1}} \lesssim \Big \|\sum_{i=1}^{N}Y_{i}^{2} \Big \|_{L^{2}} \|Y_{j}\|_{L^{2}}. \label{q2}
\end{equation}
Furthermore, we find that
\begin{align}
&\Big \| \nabla \Big ( \sum_{i=1}^{N}Y_{i}^{2}Y_{j} \Big )  \Big \|_{L^{1}}
\lesssim \Big \| \sum_{i=1}^{N}Y_{i}^{2}\nabla Y_{j}  \Big \|_{L^{1}}+\Big \| \sum_{i=1}^{N} \nabla Y_{i}Y_{i}Y_{j}  \Big \|_{L^{1}} \nonumber \\
&\lesssim \Big \| \sum_{i=1}^{N}Y_{i}^{2} \Big \|_{L^{2}} \|\nabla Y_{j}\|_{L^{2}}+\Big ( \sum_{i=1}^{N}\|\nabla Y_{i}\|_{L^{2}}^{2} \Big )^{1/2} \Big \| \sum_{i=1}^{N}Y_{i}^{2}Y_{j}^{2} \Big \|_{L^{1}}^{1/2}.
\end{align}
Hence, we find that
\begin{align}
&\sum_{j=1}^{N}\Big \|  \sum_{i=1}^{N}Y_{i}^{2}Y_{j} \Big \|_{L^{1}}^{2(1-s) }\Big \|\nabla \Big (\sum_{i=1}^{N}Y_{i}^{2}Y_{j} \Big ) \Big \|_{L^{1}}^{2s}
\;\; \lesssim \;\; \Big \| \sum_{i=1}^{N}Y_{i}^{2} \Big \|_{L^{2}}^{2} \Big ( \sum_{j=1}^{N}\|Y_{j}\|^{2(1-s) }_{L^{2}}\|\nabla Y_{j}\|_{L^{2}}^{2s}   \Big )  \nonumber\\
&\qquad\qquad\qquad\qquad+\Big \| \sum_{i=1}^{N}Y_{i}^{2} \Big \|_{L^{2}}^{2(1-s)}\Big ( \sum_{i=1}^{N}\|\nabla Y_{i}\|_{L^{2}}^{2} \Big )^{s}
 \Big ( \sum_{j=1}^{N}\Big \| \sum_{i=1}^{N}Y_{i}^{2}Y_{j}^{2} \Big \|_{L^{1}}^{s}\|Y_{j}\|_{L^{2}}^{2(1-s)}  \Big )  \nonumber\\
& \lesssim \Big \| \sum_{i=1}^{N}Y_{i}^{2} \Big \|_{L^{2}}^{2} \Big ( \sum_{j=1}^{N}\|Y_{j}\|^{2 }_{L^{2}}   \Big )^{1-s}\Big ( \sum_{j=1}^{N} \|\nabla Y_{j}\|_{L^{2}}^{2} \Big )^{s}  \nonumber \\
&\qquad\qquad\qquad\qquad+\Big \| \sum_{i=1}^{N}Y_{i}^{2} \Big \|_{L^{2}}^{2(1-s)}\Big ( \sum_{i=1}^{N}\|\nabla Y_{i}\|_{L^{2}}^{2} \Big )^{s}
\Big ( \sum_{j=1}^{N}\Big \| \sum_{i=1}^{N}Y_{i}^{2}Y_{j}^{2} \Big \|_{L^{1}}  \Big )^{s}\Big ( \sum_{j=1}^{N} \|Y_{j}\|_{L^{2}}^{2}  \Big )^{1-s} \nonumber \\
& \lesssim \Big \| \sum_{i=1}^{N}Y_{i}^{2} \Big \|_{L^{2}}^{2} \Big ( \sum_{j=1}^{N}\|Y_{j}\|^{2 }_{L^{2}}   \Big )^{1-s}\Big ( \sum_{j=1}^{N} \|\nabla Y_{j}\|_{L^{2}}^{2} \Big )^{s} .  \nonumber
 \end{align}
 Inserting this into \eqref{q1}, taking the square root, and using \eqref{q2} we find
 \begin{align}
 I^3_{N} &\lesssim \frac{1}{N}\Big \| \sum_{i=1}^{N}Y_{i}^{2} \Big \|_{L^{2}} \!\!
 \Big ( \sum_{j=1}^{N}\|Y_{j}\|^{2 }_{L^{2}}   \Big )^{\frac{1-s}{2} }\!\!
 \Big ( \sum_{j=1}^{N} \|\nabla Y_{j}\|_{L^{2}}^{2} \Big )^{\frac{s}{2}}
  \SZ^{\frac12}
  %\Big (  \sum_{j=1}^{N}\|Z_{j}\|_{\bC^{-s}}^{2}   \Big )^{1/2}
 + \frac{1}{N}\Big \| \sum_{i=1}^{N}Y_{i}^{2} \Big \|_{L^{2}}\!\!
 \Big ( \sum_{j=1}^{N}\|Y_{j}\|^{2 }_{L^{2}}   \Big )^{\frac{1}{2} }
 \SZ^{\frac12}
  %\Big (  \sum_{j=1}^{N}\|Z_{j}\|_{\bC^{-s}}^{2}   \Big )^{1/2}
 \label{ssz1}.
 \end{align}
 Applying Young's inequality with exponent $(2,\frac{2}{1-s},\frac{2}{s})$ we arrive at \eqref{s2}.
\end{proof}

\bc\label{co:Y} Let  $q\geq1$, $s \in [2\kappa,\frac{2}{q+1})$.  There exists a universal constant $C$ such that
\begin{align*}
&\sup_{t\in[0,T]}\bigg(\frac{1}{N}\sum_{j=1}^{N}\|Y_{j}\|_{L^{2} }^2\bigg)^q+\int_0^T \bigg(\frac{1}{N}\sum_{j=1}^{N}\|Y_{j}\|_{L^{2} }^2\bigg)^{q-1}\bigg[\frac{1}{N}\sum_{j=1}^{N}\| \nabla  Y_{j}\|_{L^2}^2+\bigg \| \frac{1}{N}\sum_{i=1}^{N}Y_{i}^{2}\bigg \|_{L^2}^{2}\bigg]\dif t  \nonumber\\
&\leq C\int_{0}^{T}R_{N}^{\frac{q+1}{2}}\dif t+\bigg(\frac{1}{N}\sum_{j=1}^{N}\|y_{j}\|_{L^{2} }^2\bigg)^q,
\end{align*}
with $R_N$ introduced in Lemma \ref{Y:L2}.
\ec

\begin{proof}
Set $V=\frac{1}{N}\sum_{i=1}^N\|Y_i\|_{L^{2}}^{2}$
and $G=\frac{1}{N}\sum_{j=1}^{N}\| \nabla  Y_{j}\|_{L^2 }^2
+\big \| \frac{1}{N}\sum_{i=1}^{N}Y_{i}^{2}\big \|_{L^2 }^{2}$. By \eqref{eq:A} in the proof of Lemma \ref{Y:L2} we deduce for $q\geq1$
\begin{equ}
\frac{\dif}{\dif t}V^q+GV^{q-1}\leq CR_{N}V^{q-1}\leq CR_N^{\frac{q+1}{2}}+\frac{1}{2}V^{q+1} .
\end{equ}
Note that  $G \geq V^{2}$ since
$\|\sum_{i=1}^{N}Y_{i}^{2}\|_{L^{1}}=\sum_{i=1}^{N}\|Y_{i}\|_{L^{2}}^{2}$,
which implies the result.
\end{proof}

%\scott{On a first reading the final estimate looks bad for large times, but as we know, this is because there are different options on how to conclude after doing Steps 1-4.  This should be emphasized somewhere.  We could also use the m when we do Gronwall.  On a related note, I would like to know the following: for a non-negative solution to the ODE $\dot{E}+\lambda E^{2}=gE$ where $\lambda \in \R$ and $g \in L^{1}_{t}$, what is the optimal bound on E? Roughly speaking, at times where $\lambda E >g$, we should absorb to the left.  How to see that in an estimate? }\zhu{Please take a look at \cite[Lemma 3.8]{TW18}.} \scott{I was asking about $g \in L^{1}_{T}$ not $g \in L^{\infty}_{T}$, but I geuss this is not urgent.}

\bl\label{Y:L3} Let $s \in [2\kappa,1/4)$.  There exists a universal constant $C$ such that
\begin{align}
&\sup_{t\in[0,T]}\sum_{j=1}^{N}\|Y_{j}\|_{L^{2} }^2+\sum_{j=1}^{N}\| \nabla  Y_{j}\|_{L^2_T L^2 }^2+\frac{1}{N}\bigg \|\sum_{i=1}^{N}Y_{i}^{2}\bigg \|_{L^2_T L^2 }^{2}  \nonumber\\
&\leq C(\|R_N^0\|_{L^1_T}+\sum_{j=1}^{N}\|y_{j}\|_{L^{2} }^2)\exp\Big\{\int_{0}^{T}(1+R_N^2+R_{N}^3)\dif t\Big\},\label{bs15}
\end{align}
where $R_N^2$, $R_N^3$ given in the proof of Lemma \ref{Y:L2} and
\begin{equation}\label{eq:R0}R_N^0=\frac{1}{N^2} \sum_{i=1}^{N}\Big\|\sum_{j=1}^{N}\Lambda^{-s}(\Wick{Z_{j}^{2}Z_{i}})\Big\|^2_{L^2}. \end{equation}
%Furthermore, if $\E\Wick{Z_j^2}=0$ for  every $j$, $\E R_N^0\lesssim1$.
\el
\begin{proof}
The proof is almost the same as Lemma~\ref{Y:L2}.
We appeal to Steps 1,3, and 4 of Lemma \ref{Y:L2} and only modify Step 2.  To estimate $I_{N}^{1}$ we write
\begin{align}
I_{N}^{1}
&\leq \frac{1}{N} \sum_{i=1}^{N}\|\Lambda^sY_{i}\|_{L^{2}}
	  \Big\|\sum_{j=1}^{N}\Lambda^{-s}(\Wick{Z_{j}^{2}Z_{i}})\Big\|_{L^2} \nonumber\\
%&\lesssim
%\varepsilon\sum_{i=1}^{N}\|\Lambda^sY_{i}\|^2_{L^{2}}
%	+  \frac{1}{N^2} \sum_{i=1}^{N}\Big\|\sum_{j=1}^{N}\Lambda^{-s}(\Wick{Z_{j}^{2}Z_{i}})\Big\|^2_{L^2} \nonumber\\
&\leq   \frac{1}{8}\sum_{i=1}^{N}\|Y_{i}\|^2_{L^{2}}
+ \frac{1}{8}\sum_{i=1}^{N}\|\nabla Y_{i}\|^2_{L^{2}}
+ \frac{C}{N^2} \sum_{i=1}^{N}\Big\|\sum_{j=1}^{N}\Lambda^{-s}(\Wick{Z_{j}^{2}Z_{i}})\Big\|^2_{L^2},\label{ns7}
\end{align}
where, in the last step, we applied Young's inequality for products, and then interpolation.  Combining \eqref{s6}, and \eqref{s2} \eqref{s9} with \eqref{ns7} and inserting these inequalities into the energy identity \eqref{s10}, we obtain
\begin{align}
&\sum_{j=1}^{N}\frac{\dif }{\dif t}\|Y_{j}\|_{L^{2}}^2+\sum_{j=1}^{N}\| \nabla  Y_{j}\|_{L^2 }^2+\frac{1}{N}\bigg \|\sum_{i=1}^{N}Y_{i}^{2}\bigg \|_{L^2}^{2}
+m\sum_{j=1}^{N}\|  Y_{j}\|_{L^2 }^2 \nonumber\\
&\leq CR_N^0+\sum_{j=1}^{N}\|Y_{j}\|_{L^{2}}^2C(1+R_N^2+R_{N}^3).\label{bs16}
\end{align}
The estimate \eqref{bs15} now follows from Gronwall's inequality. %\hao{$\|Y_{j}\|_{L^{2}}$ here looks strange, should it be $\|Y_{j}\|_{L^{2}}^2$?}
\end{proof}

\begin{remark}\label{re:1} For the estimate of $\frac{1}{N}\sum_{i=1}^N\|Y_i\|_{L^2}^2$ in Lemma \ref{Y:L2}, the dissipation term $\big\|\frac1N\sum_{i=1}^NY_i^2\big\|_{L^2}^2$ could be used to avoid Gronwall's Lemma. However, for  $\sum_{i=1}^N\|Y_i\|_{L^2}^2$ or $\frac{1}{N}\sum_{i=1}^N\|Y_i\|_{L^p}^p$ for $p>2$
it is  less clear how to exploit
 the corresponding dissipation term and we need to use Gronwall's inequality to derive a uniform estimate. Since the $R_N^2, R_N^3$ appear in the exponential, this makes it unclear how to obtain moment estimates directly.
\end{remark}

%%%%%%%%%%%%%%%%%%%%%%%%%%%%%%%%%%%%%%%%%%%%%%%%%%%%%%%%%%%%%%%%  Limiiting equation
%%%%%%%%%%%%%%%%%%%%%%%%%%%%%%%%%%%%%%%%%%%%%%%%%%%%%%%%%%%%%%%%
%%%%%%%%%%%%%%%%%%%%%%%%%%%%%%%%%%%%%%%%%%%%%%%%%%%%%%%%%%%%%%%%
%%%%%%%%%%%%%%%%%%%%%%%%%%%%%%%%%%%%%%%%%%%%%%%%%%%%%%%%%%%%%%%%
%%%%%%%%%%%%%%%%%%%%%%%%%%%%%%%%%%%%%%%%%%%%%%%%%%%%%%%%%%%%%%%%
%%%%%%%%%%%%%%%%%%%%%%%%%%%%%%%%%%%%%%%%%%%%%%%%%%%%%%%%%%%%%%%%
%%%%%%%%%%%%%%%%%%%%%%%%%%%%%%%%%%%%%%%%%%%%%%%%%%%%%%%%%%%%%%%%
%%%%%%%%%%%%%%%%%%%%%%%%%%%%%%%%%%%%%%%%%%%%%%%%%%%%%%%%%%%%%%%%

\section{Global solvability of the mean-field SPDE}\label{sec:limit}
%\begin{equation}\label{eq:Psi2}
%\LL\Psi_i=-\mu\Psi_i+\xi_i,\quad \Psi_i(0)=\psi_i
%\end{equation}
%where (formally at this stage) $\mu$ is given by
%$$
%\mu\eqdef \mathbf{E}[\Psi_i^2-\tilde{Z}_i^2].
%$$
% Here $(\xi_i)_{i\in \N}$ are independent space-time white noises as in  \eqref{eq:21} and $\tilde{Z}_i$ is the stationary solution to \eqref{eq:li1}.
In this section, we develop a solution theory for the mean-field SPDE \eqref{eq:Psi2}, the renormalized version of the formal equation \eqref{eq:formPsi}.
%\begin{equation}
%\LL \Psi_{i}=-\E[\Psi_{i}^{2}]\Psi_{i}+\xi_{i}.\nonumber
%\end{equation}
In two dimensions this is a singular SPDE where the ill-defined non-linearity depends on the law of the solution.  As a result, we cannot proceed via path-wise arguments alone as in \cite{DD03}  and \cite{MW17} and we need to develop a few new tricks for both the local and global well-posedness.

We begin by explaining our assumptions on the initial data and our notion of solution to \eqref{eq:Psi2}.  The initial datum $\psi_{i}$ decompose as $\psi_i=z_i+\eta_i$, where $\mathbf{E}\|z_i\|_{\bC^{-\kappa}}^p\lesssim 1$ for $\kappa>0$ and every $p>1$, and $\mathbf{E}\|\eta_i\|_{L^4}^4<\infty$ (except for Lemma~\ref{lem:Lp} which is an $L^p$ estimate).
%Both parts of the initial condition are defined on the same stochastic basis $(\Omega,\mathcal{F},\mathbf{P})$ and independent of the space-time white noises $(\xi_i)_{i\in \N}$.
We define $\Psi_{i}$ to be a solution to the renormalized, mean-field SPDE \eqref{eq:Psi2} starting from $\psi_{i}$ provided that $\Psi_i=Z_i+X_i$ holds, where $Z_i$ is the solution to \eqref{eq:li1} with $Z_i(0)=z_{i}$ as in Section \ref{sec:nonlinear} and $X_{i}$ is a random process satisfying
\begin{equation}\label{eq:X2}
\LL X_i= -\mu_{X_i}(X_i+Z_i),\quad X_i(0)=\eta_i.
\end{equation}
Here, $\mu_{X_i}$ depends on the law of $X_{i}$ and is defined as
$$\mu_{X_i} \eqdef \mathbf{E}[X_i^2]+2\mathbf{E}[X_iZ_i]+\mathbf{E}[\Wick{Z_i^2}].$$
In the following we write $\mu$ for $\mu_{X_i}$ for simplicity.

We now comment on the meaning of the non-linearity in equation \eqref{eq:X2}.
Recall from Section \ref{sec:re} that
 $Z_i\in \bC^{-\kappa}$ (Lemma \ref{le:ex}), while $\mathbf{E}[\Wick{Z_i^2}]=\E[(S_t\tilde{z}_{i})^2]$ with $\tilde{z}_{i}=z_i-\tilde{Z}_i(0)$, so by Schauder theory we expect that $X_{i}$ is H\"{o}lder continuous.  Hence, we anticipate that $\mathbf{E}[X_i^2]$ is a well-defined function, while $\mathbf{E}[X_iZ_i]$ is a distribution satisfying for $t>0$ and $\beta>\kappa$
$$
\|\mathbf{E}[X_iZ_i](t)\|_{\bC^{-\kappa}}\lesssim \mathbf{E}[\|X_i(t)\|_{\bC^{\beta}}\|Z_i(t)\|_{\bC^{-\kappa}}].
$$
We immediately find that all terms in $\mu(X_i+Z_i)$ are classically defined in the sense of distributions except for $\mathbf{E}[X_iZ_i]Z_i$, which requires more care and a suitable probabilistic argument.  The idea used to overcome this difficulty, which is repeated in different ways throughout the section, is to view the expectation $\mu$ as coming from a suitable independent copy of $(X_{i}, Z_{i})$.
To avoid notational confusion, we now comment further on our convention throughout this section. We consider equation \eqref{eq:Psi2} for a {\it fixed} $i$ and when we write  $(\eta_j,z_j) $ for  $j\neq i$ we  mean an independent copy of $(\eta_i,z_i)$, % and when we write $Z_j$ (resp. $X_j$) it means an  independent copy of $Z_i$ (resp. $X_i$).
and we then write $(Z_j,X_j)$ for the solution driven by  white noise $\xi_j$, which is independent of $\xi_i$, from initial data $(z_j,\eta_j)$.

%Here, $\mu_{1,2,3}$ depend on $i$ (since initial conditions depend on $i$), but we omit this dependence in the notation for simplicity.

%\scott{I would prefer not to introduce $\mu_{1},\mu_{2},\mu_{3}$ since it does not really shorten many expressions and it is harder for me to remember which is which.}

\subsection{Local well-posedness}\label{sec:4.1}

\bl\label{lem:Z1} For $p\in [1,\infty]$ and $0<\kappa<s$ it holds
\begin{equation}
\|Z_{i}\E[Z_{i}X_{i}] \|_{B^{-\kappa}_{p,\infty} }  \lesssim \big ( \mathbf{E}  \|X_{i}\|_{B^{s}_{p,\infty}}^{2} \big )^{\frac{1}{2} } \big ( \E[\|\Wick{Z_{i}Z_{j}}\|_{\bC^{-\kappa} }^{2}  \mid Z_{i} ] \big )^{\frac{1}{2}} .
\end{equation}
Here the conditional expectation is on the $\sigma$-algebra generated by the stochastic process $Z_i$.
\el
\begin{proof}
	Letting $(X_{j},Z_{j})$ be an independent copy of $(X_{i},Z_{i})$ we have
	$$Z_{i}\E[Z_{i}X_{i}] =Z_{i}\E[Z_{j}X_{j}] =\E[ \Wick{Z_{i}Z_{j}}X_{j} \mid Z_{i}].$$
	We then use  Jensen's inequality to find
	\begin{align}
	\|Z_{i}\E[Z_{j}X_{j}] \|_{B^{-\kappa}_{p,\infty} } &\leq \E \big [ \|\Wick{Z_{i}Z_{j}}X_{j} \|_{B^{-\kappa}_{p,\infty}}  \mid Z_{i}\big ]\leq \E \big [ \|\Wick{Z_{i}Z_{j}}\|_{\bC^{-\kappa} } \| X_{j} \|_{B^{s}_{p,\infty}}  \mid Z_{i}\big ], \nonumber
	\end{align}
	where we used Lemma \ref{lem:multi} in the last line.  The claim now follows from conditional H\"{o}lder's inequality and the independence of $X_{j}$ from $Z_{i}$.
\end{proof}
\iffalse
\bl\label{lem:ZL2} It holds that
$$\|\Lambda^{-2\kappa}(\mu_2 Z_i)\|_{L^2}\lesssim\mathbf{E}[\|\Lambda^{2\kappa}\bar{X}_i\|_{L^2}\|Z_i(\omega)\circ \bar{Z}_i\|_{\bC^{-\kappa}}]
+\mathbf{E}[\|\Lambda^{3\kappa}\bar{X}_i\|_{L^2}\|\bar{Z}_i\|_{\bC^{-\kappa}}]\|Z_i\|_{\bC^{-\kappa}}.$$
\el
\begin{proof}
	We use  the same decomposition as Lemma \ref{lem:Z1} and obtain
	$$\|Z_i\circ\mathbf{E}[\bar{X}_i\prec Z_j]\|_{B^{-\kappa}_{2,2}}\lesssim \mathbf{E}[\|\bar{X}_i\|_{B^{2\kappa}_{2,2}}\|Z_i(\omega)\circ Z_j\|_{\bC^{-\kappa}}]+\mathbf{E}[\|\bar{X}_i\|_{B^{3\kappa}_{2,2}}\|Z_j\|_{\bC^{-\kappa}}]\|Z_i\|_{\bC^{-\kappa}}.$$
	For the other term we have
	$$\|Z_i\prec\mathbf{E}[\bar{X}_i\prec Z_j]\|_{B^{-2\kappa}_{2,2}}+\|Z_i\succ\mathbf{E}[\bar{X}_i\prec Z_j]\|_{B^{-2\kappa}_{2,2}}\lesssim \|Z_i\|_{\bC^{-\kappa}}\mathbf{E}[\|\bar{X}_i\|_{B^{\kappa}_{2,2}}\|Z_j\|_{\bC^{-\kappa}}],$$
	$$\|\mathbf{E}[\bar{X}_i\succcurlyeq Z_j] Z_i\|_{B^{-\kappa}_{2,2}}\lesssim \|Z_i\|_{\bC^{-\kappa}}\mathbf{E}[\|\bar{X}_i\|_{B^{3\kappa}_{2,2}}\|Z_j\|_{\bC^{-\kappa}}].$$

\end{proof}\fi

We now apply the above result to obtain a local well-posedness result for \eqref{eq:X2}, which yields in turn a local well-posedness result for \eqref{eq:Psi2}.

\bl
There exists $T^*>0$ small enough such that \eqref{eq:X2} has a unique mild solution $X_i\in L^2(\Omega;C_{T^*}L^4\cap C((0,T^*];\bC^\beta))$  and for $\beta>3\kappa$ small enough, $\gamma=\beta+\frac{1}{2}$, one has
$$\mathbf{E}[\sup_{t\in[0,T^*]} t^\gamma\|X_i\|_{\bC^\beta}^2]\leq1.$$
\el

\begin{proof}
For $T>0$ define the ball
$$
\mathcal{B}_{T} \eqdef \big\{X_{i}\in L^2(\Omega,C((0,T];\bC^{\beta})) \mid  \mathbf{E}[\sup_{t\in[0,T]} t^\gamma\|X_i\|_{\bC^\beta}^2]\leq 1, \quad X_i(0)=\eta_i \big \}.
$$
Here we endow the space $C((0,T];\bC^{\beta})$
with  norm  $(\sup_{t\in[0,T]} t^\gamma\|f(t)\|_{\bC^\beta}^2)^{1/2}$.

	For $X_{i}\in \mathcal{B}_{T}$, define $\mathcal{M}_{T}X_{i}: (0,T] \mapsto \bC^{\beta}$ via
	$$\mathcal{M}_{T}X_{i}(t):=\int_0^tS_{t-s}\E[X_{i}^{2}+2X_{i}Z_{i}+\Wick{Z_{i}^{2}} ](X_{i}+Z_{i} )\dif s+S_t\eta_i.$$
	Using Lemma \ref{lem:heat} and Lemma \ref{lem:multi} noting $\beta>\kappa$, we find that
	\begin{align}
	&\|\mathcal{M}_{T}X_{i}(t)-S_{t}\eta_i\|_{\bC^\beta}\lesssim \int_0^t(t-s)^{-\frac{\beta+\kappa}{2}} \|\E [X_{i}Z_{i}] Z_{i}\|_{\bC^{-\kappa}}\dif s \nonumber\\
	&+\int_0^t \big (\|\E X_{i}^{2} \|_{\bC^\beta}+\|\E\Wick{Z_{i}^{2}}  \|_{\bC^\beta}+(t-s)^{-\frac{\beta+\kappa}{2}}\|\E [X_{i}Z_{i}] \|_{\bC^{-\kappa}}  \big ) \|X_{i}\|_{\bC^\beta}\dif s \nonumber\\
	&+\int_0^t(t-s)^{-\frac{\beta+\kappa}{2}}  \big(\|\E X_{i}^{2}\|_{\bC^\beta}+\|\E\Wick{Z_{i}^{2}} \|_{\bC^\beta} \big) \|Z_{i}\|_{\bC^{-\kappa}} \dif s
	\quad \eqdef \quad \sum_{i=1}^3J_i(t).\nonumber
	\end{align}
	We start by applying Lemma \ref{lem:Z1} to obtain the pathwise bound
	\begin{align}
	J_{1}(t)& \lesssim \int_{0}^{t}(t-s)^{-\frac{\beta+\kappa}{2}} \big (\E\|X_{i}\|_{\bC^{\beta}}^{2} \big )^{\frac{1}{2}}\big (\E[\|\Wick{Z_{i}Z_{j}}\|_{\bC^{-\kappa}}^{2} \mid Z_{i}  ] \big )^{\frac{1}{2} } \dif s \nonumber \\
	& \lesssim \big (\E[\sup_{r\in[0,t]}(r^{\kappa'}\|\Wick{Z_{i}Z_{j}}(r)\|_{\bC^{-\kappa}})^{2} \mid Z_{i}  ] \big )^{\frac{1}{2} }  \int_{0}^{t}(t-s)^{-\frac{\beta+\kappa}{2}}s^{-\frac{\gamma+2\kappa'}{2}}\dif s \nonumber,
	\end{align}
	for $\kappa'>\kappa>0$,
which is integrable provided $\beta<2-\kappa$ and $\gamma<2-2\kappa'$.  We may now apply Lemma \ref{le:ex} to find that
	\begin{equation}
	\E|J_{1}(t)|^{2} \lesssim \E[\sup_{r\in[0,t]}(r^{\kappa'}\|\Wick{Z_{i}Z_{j}}(r)\|_{\bC^{-\kappa} })^{2}]t^{2-(\beta+\kappa+2\kappa'+\gamma )}\lesssim t^{2-(\beta+\kappa+2\kappa'+\gamma) } \nonumber.
	\end{equation}
	Before estimating $J_{2}(t)$ and $J_{3}(t)$, we make three observations.  First note that by Jensen's inequality and Lemma \ref{lem:multi} it holds
	\begin{equation}
	\|\E X_{i}^{2} \|_{\bC^\beta} \lesssim \mathbf{E}[\|X_i\|_{\bC^\beta}^2] \lesssim s^{-\gamma}\nonumber.
	\end{equation}
	Furthermore, using again Lemma \ref{lem:heat} we find
	\begin{equation}
	\|\E\Wick{Z_{i}^{2}}  \|_{\bC^\beta}
	\leq\E\big \| (S_{t}\tilde{z}_{i} )^{2} \big \|_{\bC^{\beta}} \leq \E\big \| S_{t}\tilde{z}_{i} \big \|_{\bC^{\beta}}^{2}\lesssim s^{-(\beta+\kappa) }.\nonumber
	\end{equation}
Finally, note that by Lemma \ref{lem:multi}
\begin{equation}
	\|\E [X_{i}Z_{i}] \|_{\bC^{-\kappa}}
\lesssim \mathbf{E}[\|X_i\|_{\bC^{\beta}}\|Z_i\|_{\bC^{-\kappa}}]
\lesssim \big ( \mathbf{E}[\|X_i\|_{\bC^{\beta}}^{2} ] \big )^{\frac{1}{2}}\big ( \mathbf{E}[ \|Z_i\|_{\bC^{-\kappa}}^{2} ] \big )^{\frac{1}{2}} \lesssim s^{-\frac{\gamma}{2} }.\nonumber
\end{equation}
	Inserting these three bounds, we find the inequalities
	\begin{align}
	J_2(t) &\lesssim \int_0^t \big (s^{-\gamma}+s^{-(\beta+\kappa)} +(t-s)^{-\frac{\beta+\kappa}{2}}s^{-\frac{\gamma}{2}} \big ) \|X_{i}\|_{\bC^\beta}\dif s \nonumber. \\
	J_{3}(t) &\lesssim \int_0^t(t-s)^{-\frac{\beta+\kappa}{2}}  \big(s^{-\gamma}  +s^{-(\beta+\kappa)} \big) \|Z_{i}\|_{\bC^{-\kappa}} \dif s \nonumber.
	\end{align}
	Squaring and taking expectation, we find
	\begin{align}
	\E|J_{2}(t)|^{2}& \lesssim t^{-\gamma}\big ( t^{2-2\gamma}+t^{2-2(\beta+\kappa )}+t^{2-(\beta+\kappa+\gamma)}  \big ) \nonumber.\\
	\E|J_{3}(t)|^{2}& \lesssim \big ( t^{2-(\beta+\kappa+2\gamma )}+t^{2-3(\beta+\kappa)} \big ) \nonumber.
	\end{align}
Under our assumption on $\beta$ these
are all bounded by $t^{-\gamma}$.
	Finally, note that by Lemma \ref{lem:heat} and the embedding $L^4\subset \bC^{-\frac{1}{2}}$, we obtain
	\begin{align}\label{eq:S}\|S_t\eta\|_{\bC^{\beta}}\lesssim t^{-\frac{1+2\beta}{4}}\|\eta\|_{L^4}.\end{align}
	
	Combining the above estimates we can find $T^*$ small enough to have
	\begin{align*}
	\mathbf{E}[\sup_{t\in[0,T^*]}t^\gamma\|\mathcal{M}_{T^*}X(t)\|_{\bC^\beta}^2]\leq 1,
	\end{align*}
	which implies that for $T^*$ small enough $\mathcal{M}_{T^*}$  maps $\mathcal{B}_{T^*}$ into itself. The contraction property follows similarly.
	Now the local existence and uniqueness in $L^2(\Omega;C((0,T],\bC^\beta))$  follows. Furthermore, we know $\int_0^tS_{t-s}\mu(X(s)+Z(s))\dif s$ is continuous in $\bC^\beta$ and $S_t\eta_i\in C_TL^4$. The result follows.
\end{proof}

\subsection{Global well-posedness}\label{sec:4.2}

We now extend our local solution to a global solution through a series of a priori bounds, starting with a uniform in time on the $L^{2}(\Omega; L^{2})$ norm of $X_{i}$ together with an $L^{2}(\Omega ; L^{2}_{T}H^{1})$ bound.

\bl\label{lem:l2} There exists a universal constant $C$ such that

\begin{align}
\sup_{t\in[0,T]}\E \|X_{i}\|_{L^{2}}^{2}+\E \|\nabla X_{i}\|_{L^{2}_TL^{2} }^{2}+\| \E X_{i}^{2} \|_{L^{2}_T L^{2} }^{2}+m\E \| X_{i}\|_{L^{2}_T L^{2}   }^{2}
 \leq C \int_{0}^{T}R \dif t+\E\|\eta_i\|_{L^2}^2, \label{s30}
\end{align}
where, for $i \neq j$ we define
\begin{align}
R\eqdef 1 +\big ( \E\|Z_i\|_{\bC^{-s}}^{2}  \big )^{\frac{2}{1-s}} +\E\|\Wick{Z_{j}^{2}Z_{i}}\|_{\bC^{-s}}^{2}
 +C\big ( \E \|\Wick{Z_{j}Z_{i}}\|_{\bC^{-s}}^{2} \big )^{2} +C\big (\E \|\Wick{Z_{i}^{2}} \|_{\bC^{-s}} \big )^{4}.\nonumber
\end{align}
\el

\begin{proof}
	\newcounter{GlobPsi} % proofstep = 0
	\refstepcounter{GlobPsi} % increases value by 1
	The proof is similar in spirit to the proof of Lemma \ref{Y:L2}, proceeding by energy estimates.
	%It turns out to be convenient to introduce an independent copy of $(X_{i},Z_{i},\Wick{Z_{i}^{2}})$ which we denote by  $(X_{j},Z_{j},\Wick{Z_{j}^{2}})$.
	
	{\sc Step} \arabic{GlobPsi} \label{GlobPse1} \refstepcounter{GlobPsi} (Expected energy balance)
	
	In this step, we establish the following identity
	\begin{align}
	\frac{1}{2}\frac{\dif}{\dif t}\E \|X_{i}\|_{L^{2}}^{2}+\E \|\nabla X_{i}\|_{L^{2}}^{2}+ \|\E X_{i}^{2}\|_{L^{2}}^{2}+m\E\|X_i\|_{L^2}^2 =I^{1}+I^{2}+I^{3}, \label{s20}
	\end{align}
where
	\begin{align}\label{eq:I}
	I^{1}\eqdef \E\langle X_{i},\Wick{ Z_{i}Z_{j}^{2}} \rangle, \qquad
	I^{2}\eqdef \E\langle X_{i}^{2},  \Wick{Z_{j}^{2}}\rangle+2\E\langle X_{i}X_{j}, Z_{i}Z_{j}\rangle, \qquad
	I^{3}\eqdef 3\E\langle X_{i}^{2}X_{j},Z_{j} \rangle .
	\end{align}
	Testing \eqref{eq:X2} with $X_{i}$, integrating by parts and using that $X_{i}$, $X_{i}Z_{i}$, and $\Wick{Z_{i}^{2}}$ are respectively equal in law to $X_{j}$, $X_{j}Z_{j}$, and $\Wick{Z_{j}^{2}}$ we find
	\begin{equs}
		\frac{1}{2}\frac{\dif}{\dif t} &\|X_{i}\|_{L^{2}}^{2} +\|\nabla X_{i}\|_{L^{2}}^{2}+m\|X_i\|_{L^2}^2+\|X_{i}^{2}\E X_{i}^{2}\|_{L^{1}}
		\\
		&=-\langle X_{i},Z_{i}\E(\Wick{Z_{j}^{2}} ) \rangle
		-\langle X_{i}^{2},\E(\Wick{Z_{j}^{2}} )\rangle-2 \langle X_{i},Z_{i}\E(X_{j}Z_{j}) \rangle
		 -2 \langle X_{i}^{2},\E(X_{j}Z_{j}) \rangle-\langle X_{i}\E(X_{j}^{2}),Z_{i} \rangle.
	\end{equs}
	Taking expectation on both sides, using independence, and the fact that $X_{i}^{2}X_{j}Z_{j}$ has the same law as $X_{j}^{2}X_{i}Z_{i}$ we obtain \eqref{s20}. %This argument can be made rigorous with an approximation argument.

	\medskip
	
	{\sc Step} \arabic{GlobPsi} \label{GlobPsi4} \refstepcounter{GlobPsi} (Estimates for $I^{1}$)
	
	In this step, we show there is a universal constant $C$ such that
	\begin{equation}
	I^{1} \leq \frac{1}{4} \big ( \|\E X_{i}^{2}\|_{L^{2}}^{2} +\E\|\nabla X_{i}\|_{L^{2}}^{2} \big )+C\big ( 1+ \E\|\Wick{ Z_{i}Z_{j}^{2}}\|_{\bC^{-s}}^{2} \big ).\label{se2}
	\end{equation}
	To prove the claim, we apply \eqref{s1} to have % followed by two applications of H\"{o}lder's inequality to obtain
	\begin{align}
	I^{1}&\lesssim  \E\big [ \big ( \|X_{i}\|_{L^{1}}^{1-s}\|\nabla X_{i}\|_{L^{1}}^{s}+\|X_{i}\|_{L^{1}}  \big )\|\Wick{Z_{j}^{2}Z_{i}}\|_{\bC^{-s}}\big ]. \nonumber
%	&\lesssim \big (\E\|X_{i}\|_{L^{1}}^{2} \big )^{\frac{1-s}{2} }\big (\E\|\nabla X_{i}\|_{L^{2}}^{2}  \big )^{\frac{s}{2}}  \big ( \E\|\Wick{Z_{j}^{2}Z_{i}}\|_{\bC^{-s}}^{2} \big )^{1/2}
%	 +\big (\E\|X_{i}\|_{L^{1}}^{2}  \big )^{1/2} \big ( \E\|\Wick{Z_{j}^{2}Z_{i}}\|_{\bC^{-s}}^{2} \big )^{1/2} \label{s27}.
	\end{align}
	Hence, \eqref{se2} follows from the inequality $\E\|X_{i}\|_{L^{1}}^{2} \leq \|\E X_{i}^{2}\|_{L^{2}}$ and Young's inequality with exponents $(\frac{2}{1-s},\frac{2}{s},2)$ and $(2,2)$.
	
	\medskip
	
	{\sc Step} \arabic{GlobPsi} \label{GlobPsi2} \refstepcounter{GlobPsi} (Estimates for $I^{2}$ )
	
	In this step, we show that  there is a universal constant $C$ such that
	\begin{align}
	I^{2} \leq \frac{1}{4} \big (\E \|\nabla X_{i}\|_{L^{2}}^{2}+ \|\E X_{i}^{2}\|_{L^{2}}^{2} \big )
	 +C+C\big (\E \|\Wick{Z_{j}Z_{i}}\|_{\bC^{-s}}^{2} \big )^{2}
	+C\big (\E \|\Wick{Z_{j}^{2}} \|_{\bC^{-s}} \big )^{4}.  \label{s26}
	\end{align}
	Using again \eqref{s1}, Young's inequality, H\"{o}lder's inequality and the independence of $X_{i}$ and $X_{j}$ we obtain
	\begin{align}
	\E \langle X_{i}X_{j}, \Wick{Z_{j}Z_{i}} \rangle
	&\lesssim  \E\big (\|X_{i}X_{j}\|_{L^{1}}+\|\nabla X_{i}X_{j} \|_{L^{1}}+\|X_{i}\nabla X_{j} \|_{L^{1}} \big ) \|\Wick{Z_{j}Z_{i}}\|_{\bC^{-s}} \nonumber \\
	&\lesssim  \E \big (\|X_{i}\|_{L^{2}} \|X_{j}\|_{L^{2}}+\|\nabla X_{i}\|_{L^{2}} \| X_{j} \|_{L^{2}} \big ) \|\Wick{Z_{j}Z_{i}}\|_{\bC^{-s}} \nonumber \\
	&\lesssim  \big (\E\|X_{i}\|_{L^{2}}^{2} \E\|X_{j}\|_{L^{2}}^{2}+\E\|\nabla X_{i}\|_{L^{2}}^{2} \E\| X_{j} \|_{L^{2}}^{2} \big )^{1/2} \big ( \E \|\Wick{Z_{j}Z_{i}}\|_{\bC^{-s}}^{2}\big )^{1/2}\nonumber \\
	&\lesssim \big (\|\E X_{i}^{2} \|_{L^{1}}^{2}+\E\|\nabla X_{i}\|_{L^{2}}^{2} \| \E X_{j}^{2}  \|_{L^{1}} \big )^{1/2} \big ( \E \|\Wick{Z_{j}Z_{i}}\|_{\bC^{-s}}^{2}\big )^{1/2}
	\label{s28}.
	\end{align}
Similarly, using this time independence of $X_{i}^{2}$ and $\Wick{Z_{j}^{2}} $ we obtain
\begin{align}
	\E \langle  X_{i}^{2}, \Wick{Z_{j}^{2}} \rangle
	&\lesssim  \E\big (\|X_{i}^{2}\|_{L^{1}}+\|\nabla X_{i}\|_{L^{2}} \|X_{i} \|_{L^{2}}\big ) \|\Wick{Z_{j}^{2}} \|_{\bC^{-s}} \nonumber \\
	&= \big (\|\E X_{i}^{2}\|_{L^{1}}+\E\|\nabla X_{i}\|_{L^{2}} \|X_{i} \|_{L^{2}}\big ) \,\E\|\Wick{Z_{j}^{2}} \|_{\bC^{-s}}\nonumber \\
	&\lesssim \big (\|\E X_{i}^{2}\|_{L^{1}}+\big (\E\|\nabla X_{i}\|_{L^{2}}^{2} \big )^{1/2} \|\E X_{i}^{2}\|_{L^{1}}^{1/2}  \big )\, \E\|\Wick{Z_{j}^{2}} \|_{\bC^{-s}}. \label{s29}
	\end{align}
	To obtain \eqref{s26} we use Young's inequality with exponents  $(2,2)$ and $(2,4,4)$ for both \eqref{s28} \eqref{s29}.
	
	\medskip
	
	{\sc Step} \arabic{GlobPsi} \label{GlobPsi3} \refstepcounter{GlobPsi} (Estimates for $I^{3}$)
	
	In this step, we show there is a universal constant $C$ such that
	\begin{align}
	I^{3} \leq \frac{1}{4} \big (\E \|\nabla X_{i}\|_{L^{2}}^{2}+ \|\E X_{i}^{2}\|_{L^{2}}^{2} \big )
	+C\big(\big ( \E\|Z_j\|_{\bC^{-s}}^{2}  \big )^{\frac{2}{1-s}}+1\big). \label{s21}
	\end{align}
To this end, we write
	\begin{align}
	I^{3}
	&\lesssim  \E\big (  \|  X_{i}^{2}X_{j}  \|_{L^{1}}^{1-s}  \|\nabla ( X_{i}^{2}X_{j})\|_{L^{1}}^{s}+\|  X_{i}^{2}X_{j}  \|_{L^{1}} \big) \|Z_{j}\|_{\bC^{-s}}\nonumber  \\
	& \lesssim (\E \|  X_{i}^{2}X_{j}  \|_{L^{1}}\|Z_{j}\|_{\bC^{-s}})^{1-s}  (\E\|\nabla ( X_{i}^{2}X_{j})\|_{L^{1}}\|Z_{j}\|_{\bC^{-s}})^s +\E\|  X_{i}^{2}X_{j}  \|_{L^{1}}\|Z_{j}\|_{\bC^{-s}}\nonumber .
	\end{align}
	By independence and H\"older's inequality, it holds that
	\begin{align}
	\E \|X_{i}^{2}X_{j}\|_{L^{1}}\|Z_j \|_{\bC^{-s}}
	&\lesssim \big \| \E X_{i}^{2} \E [|X_{j}| \|Z_j\|_{\bC^{-s}}] \big\|_{L^{1}}
	\lesssim \|\E X_{i}^{2}\|_{L^{2}}\big \| (\E X_{j}^{2})^{1/2}  (\E\|Z_j\|_{\bC^{-s}}^{2})^{1/2} \big \|_{L^{2}} \nonumber \\
	&\lesssim \|\E X_{i}^{2}\|_{L^{2}} \big ( \E\|X_{j}\|_{L^{2}}^{2} \big )^{\frac{1}{2}} \big(\E\|Z_j\|_{\bC^{-s}}^{2} \big)^{1/2} \label{s23},
	\end{align}
	where we used that $\|(\E X_{j}^{2})^{1/2}\|_{L^{2}}=\|\E X_{j}^{2}\|_{L^{1}}^{\frac{1}{2}}=\big ( \E\|X_{j}\|_{L^{2}}^{2} \big )^{\frac{1}{2}}$.  Furthermore,
\begin{align}
	{}&\E  \|X_{i}^{2}\nabla X_{j}\|_{L^{1}}    \|Z_j  \|_{\bC^{-s}}
	=\big \| \E X_{i}^{2}\E\big (|\nabla X_{j}|\| Z_j  \|_{\bC^{-s}} \big )  \big \|_{L^{1}} \nonumber \\
	&\leq \| \E X_{i}^{2}\|_{L^{2}} \big \|\E\big (|\nabla X_{j}|\| Z_j  \|_{\bC^{-s}} \big )  \big \|_{L^{2}}
	\leq  \| \E X_{i}^{2}\|_{L^{2}} \big \| (\E |\nabla X_{j}|^{2} )^{\frac{1}{2}} (\E \| Z_j  \|_{\bC^{-s}}^{2} )^{\frac{1}{2}}  \big \|_{L^{2}} \nonumber \\
	&\lesssim\| \E X_{i}^{2}\|_{L^{2}} \big ( \E \|\nabla X_{j}\|_{L^{2}}^{2} \big )^{1/2} \big ( \E \|Z_j  \|_{\bC^{-s}}^{2} \big )^{\frac{1}{2}}\label{s24} .
	\end{align}
	Similarly, note that
\begin{align}
	 \E  \|X_{i}X_{j}\nabla X_{i}\|_{L^{1}} & \|Z_j\|_{\bC^{-s}}
	\lesssim \E  \|X_{i}X_{j}\|_{L^{2}}  \big ( \|\nabla X_{i}\|_{L^{2}}  \|Z_j\|_{\bC^{-s}} \big )\nonumber  \\
	&\lesssim \big (\E  \|X_{i}X_{j}\|_{L^{2}}^{2}  \big )^{1/2}\big ( \E\|\nabla X_{i}\|_{L^{2}}^{2}\E \|Z_j\|_{\bC^{-s}}^{2}  \big )^{1/2} \nonumber \\
	&\lesssim  \|\E X_{i}^{2}\|_{L^{2} }   \big ( \E\|\nabla X_{i}\|_{L^{2}}^{2} \big )^{1/2} \big ( \E \|Z_j  \|_{\bC^{-s}}^{2} \big )^{1/2}\label{s25} .
	\end{align}
	Combining the above estimate we arrive at
	\begin{align}
	I^{3}
	&\lesssim \|\E X_{i}^{2}\|_{L^{2} }\big ( \E\|\nabla X_{i}\|_{L^{2}}^{2} \big )^{\frac{s}{2}}\big (\E\|X_{i}\|_{L^{2}}^{2} \big )^{\frac{1-s}{2}}\big ( \E \|Z_j  \|_{\bC^{-s}}^{2} \big )^{\frac12}
	+\|\E X_{i}^{2}\|_{L^{2}} \big ( \E\|X_{j}\|_{L^{2}}^{2} \big )^{\frac{1}{2}} \big(\E\|Z_j\|_{\bC^{-s}}^{2} \big)^{\frac12} \nonumber
	 \\
	&\lesssim  \|\E X_{i}^{2}\|_{L^{2} }^{\frac{3-s}{2}}   \big ( \E\|\nabla X_{i}\|_{L^{2}}^{2} \big )^{\frac{s}{2}} \big ( \E \|Z_j  \|_{\bC^{-s}}^{2} \big )^{\frac12}+\|\E X_{i}^{2}\|_{L^{2} }^{\frac{3}{2}}    \big ( \E \|Z_j  \|_{\bC^{-s}}^{2} \big )^{\frac12}.
	\label{se55}
	\end{align}
	Applying Young's inequality with exponents $(\frac{4}{3-s},\frac{2}{s},\frac{4}{1-s})$ we arrive at \eqref{s21}.
\end{proof}

In Section ~\ref{sec:dif} we will study the large N limit of  \eqref{eq:Phi2d} by comparing the dynamics of each component to the corresponding mean-field evolution.  To control the equation for the difference, we will need a stronger control on $X_{i}$ than the $L^2$ type bound obtained above.  In the following lemma, we show that $L^p$ bounds can be propagated in time, which will turn out to be a necessary ingredient in Section~\ref{sec:dif}.

\bl\label{lem:Lp}
Let $p> 2$ and assume that $\E\|\eta_i\|_{L^p}^p\lesssim1$.
%It holds that for every $p>1$ satisfying $sp<1$ and $\frac{2}{p}+s<1$
Then, we have
$$\sup_{t\in[0,T]}\E \|X_{i}\|_{L^{p}}^{p}+\E\| |X_{i}|^{\frac{p-2}{2}}\nabla X_{i}\|_{L^{2}_{T}L^{2}}^{2} +\|\E |X_{i}|^{p}\E X_{i}^{2} \|_{L^{1}_{T}L^{1}}  \lesssim1,$$
where the implicit constant is independent of $i$.
\el
\begin{proof}
	
	\newcounter{GlobPsi1} % proofstep = 0
	\refstepcounter{GlobPsi1} % increases value by 1
	Given $p> 2$, we fix  $s>0$ sufficiently small such that
	$sp<\frac{1}{2}$ and $\frac{2}{p}+s<1$.
	%We omit the subscripts $i$ for simplicity of notation,
	%namely we write $X$ for $X_i$, $Z$ for $Z_i$, and $\bX$, $\bZ$ for their independent copies.
	We will perform an $L^p$ estimate: integrating \eqref{eq:X2} against $|X_i|^{p-2}X_i$ we get
	\begin{align}\label{eq:Lp}
	&\frac{1}{p}\frac{\dif}{\dif t}\E \|X_i\|_{L^{p}}^{p}
	+(p-1)\E \||X_i|^{p-2}|\nabla X_i|^2\|_{L^1}+\E\||X_i|^p X_j^2\|_{L^1}+m\E\|X_i\|_{L^p}^p\nonumber
	\\=&-2\E \Big\< \E [X_jZ_j],|X_i|^p\Big\>
	-\E\Big\<\E[X_j^2]|X_i|^{p-2}X_i,Z_i\Big\>
	-2\E \Big\< \E[X_jZ_j]Z_i,|X_i|^{p-2}X_i \Big\> \nonumber
	\\
	&\qquad +\E\Big\<\E[\Wick{Z_j^2}],|X_i|^p \Big\>+\E\Big\< \E[\Wick{Z_j^2}]X_i|X_i|^{p-2},Z_i \Big\>
	=: \sum_{k=1}^5I_k.
	\end{align}
Set $D\eqdef \|X_i^{p-2}|\nabla X_i|^2\|_{L^1}$ and $A\eqdef \|X_i^pX_j^2\|_{L^1}$.
We claim that there is some $R$ so that % \hao{I added $+C$}
	\begin{align}\label{e:EAED-R}
	I_k\le \frac1{10} \E A+ \frac1{10} \E D+ (\E[\|X_i^{p}\|_{L^1}]+C) R,
	\qquad
	\mbox{with }\int_0^TR \lesssim1.
	\end{align}

	{\sc Step} \arabic{GlobPsi1} \label{GlobPse1z} \refstepcounter{GlobPsi1} (Estimate of $I_1$)
	
Using Lemma~\ref{lem:dual+MW}, we have
	\begin{align*}
	I_1  %=-2\E\int X_jZ_jX_i^p\, \dif x
	% \lesssim \E \Big[ \|X_jX_i^p\|_{B^{s}_{1,1}}\|Z_j\|_{\bC^{-s}} \Big] \notag
	\lesssim \E \Big[ \|X_j X_i^p\|_{L^1}^{1-s}\|\nabla (X_j |X_i|^p)\|_{L^1}^{s}\|Z_j\|_{\bC^{-s}}\Big]
	+\E\Big[\|X_j X_i^p\|_{L^1}\|Z_j\|_{\bC^{-s}}\Big]
	=: I_1^{(1)}+I_1^{(2)}.% \label{e:5.4I1}
	\end{align*}
Using
\begin{equ}[e:A12Xp12]
\|X_j X_i^p\|_{L^1}\lesssim A^{1/2}\|X_i^p\|_{L^1}^{1/2}
\end{equ}
 and independence, one has %H\"older and Young's inequality,
	\begin{align*}
	I_1^{(2)} %\le \E \Big[\|\bX X^p\|_{L^1} & \|\bZ\|_{\bC^{-s}}\Big]
	\leq\E\left[ A^{1/2} \|X_i^{p}\|_{L^1}^{1/2}\|Z_j\|_{\bC^{-s}}\right]
	\le \frac{1}{10}\E A+ C \E\|X_i^{p}\|_{L^1}\E\|Z_j\|_{\bC^{-s}}^2.
	\end{align*}

Regarding $I_1^{(1)}$,  using  H\"older inequality
and then Gagliardo-Nirenberg with $(s,q,r,\alpha)=(0,4,2,\frac12)$,
\begin{align*}
	\|\nabla (X_j |X_i|^p)\|_{L^1}
	& \le
	 \|\nabla  X_j\|_{L^{2}}\||X_i|^{\frac{p}{2}}\|_{L^4}^2
	+2\||X_i|^{\frac{p}{2}}X_j\|_{L^{2}}\|\nabla |X_i|^{\frac{p}{2}}\|_{L^{2}} \\
	& \lesssim
	\||X_i|^{\frac{p}{2}}\|_{H^1} \||X_i|^{\frac{p}{2}}\|_{L^2} \|X_j\|_{H^1}
	+ \sqrt{AD}
\end{align*}
Since $\||X_i|^{\frac{p}{2}}\|_{H^1} \lesssim D^{\frac12}+ \||X_i|^{\frac{p}{2}}\|_{L^2}$,	
together with \eqref{e:A12Xp12}
one has
\begin{equs}
I_1^{(1)}
&	\lesssim
\E \Big[A^{\frac{1-s}{2}}\|X_i^p\|_{L^1}^{\frac{1-s}{2}}
\Big(
D^{\frac{s}{2}} \|X_i^p\|_{L^1}^{\frac{s}{2}} \|X_j\|_{H^1}^s
+ \|X_i^p\|_{L^1}^{s}  \|X_j\|_{H^1}^s
 +A^{\frac{s}{2}}D^{\frac{s}{2}}\Big)
 \|Z_j\|_{\bC^{-s}}\Big]
\\
 	&\le
	\frac{1}{10}\E A+ \frac{1}{10}\E D
	+C\E\|X_i^p\|_{L^1}
	 \Big(
	 \E\|X_j\|_{H^1}^{2s} \|Z_j\|_{\bC^{-s}}^{2}
	+\E\|X_j\|_{H^1}^{\frac{2s}{1+s}}\|Z_j\|_{\bC^{-s}}^{\frac{2}{1+s}}
	+\E\|Z_j\|_{\bC^{-s}}^{2/(1-s)}\Big),
 \end{equs}
 where  in the last inequality we used independence and
	Young's inequality for products with exponents $(\frac{2}{1-s},2,\frac{2}{s})$ for the first and third term,
and exponents $(\frac{2}{1-s},\frac{2}{1+s})$ for the second term. 	Therefore
invoking Lemma~\ref{lem:l2} to deduce
$$\int_0^T \Big(
\E\|X_j\|_{H^1}^{2s} \|Z_j\|_{\bC^{-s}}^{2}
+\E\|X_j\|_{H^1}^{\frac{2s}{1+s}}\|Z_j\|_{\bC^{-s}}^{\frac{2}{1+s}}\Big)\dif s\lesssim 1,$$
which implies a bound of the form \eqref{e:EAED-R}.

	\medskip
	
	{\sc Step} \arabic{GlobPsi1} \label{GlobPsi4z} \refstepcounter{GlobPsi1} (Estimates for $I_{2}$)

	For the second term on the right hand side of \eqref{eq:Lp} we use Lemma \ref{lem:interpolation} to have
	\begin{align*}
	&I_2
	=\E\< \Lambda^s(X_j^2X_i|X_i|^{p-2}),\Lambda^{-s}Z_i \>
	\\
	&\lesssim\E\Big[\|\Lambda^s(X_j^2)\|_{L^p}\|X_i^{p-1}\|_{L^{\frac{p}{p-1}}}\|\Lambda^{-s}Z_i\|_{L^\infty}\Big]
	+\E\Big[\|X_j^2\|_{L^{p}}\|\Lambda^s(X_i|X_i|^{p-2})\|_{L^{\frac{p}{p-1}}}\|\Lambda^{-s}Z_i\|_{L^\infty}\Big]
	\\
	&=:I_2^{(1)}+I_2^{(2)}.
	\end{align*}
	%where $\frac{1}{p_1}+\frac{1}{p_2}=1$.
	%
	%For $I_2$ we use \cite[Proposition 3.25]{MW17} to have
	%\begin{align*}&I_2=\E\int[X_j^2X_i^{p-1}Z_i]\\
	%\lesssim&\E\|X_j^2X_i^{p-1}\|_{L^1}^{1-s}\|\nabla (X_j^2X_i^{p-1})\|_{L^1}^s\|Z_i\|_{\bC^{-s}}+\E\|X_j^2X_i^{p-1}\|_{L^1}\|Z_i\|_{\bC^{-s}}
	%\end{align*}
	%For the second term we have
	%\begin{align*}
	%&\E\|X_j^2X_i^{p-1}\|_{L^1}\|Z_i\|_{\bC^{-s}}\lesssim \E\|X_j^2\|_{L^p}\E\|X_i^{p-1}\|_{L^{\frac{p}{p-1}}}\|Z_i\|_{\bC^{-s}}
	%\\\lesssim &\E\|X_j\|_{H^1}^2\E\|X_i^p\|_{L^{1}}^{\frac{p-1}{p}}\|Z_i\|_{\bC^{-s}}\lesssim \E\|X_j\|_{H^1}^2\left(\E\|X_i^p\|_{L^{1}}+\E\|Z_i\|_{\bC^{-s}}^p\right).
	%\end{align*}
	%
	%\rmk{If we use \cite[Proposition 3.25]{MW17} we  meet the following term
	%$\|\nabla (X_j^2X_i^{p-1})\|_{L^1}$, which seems not easy to handle. }
	%
	%
	%
Using  independence, $\|\Lambda^s(X_j^2)\|_{L^p}
	\lesssim \|\Lambda^s X_j\|_{L^{2p}} \| X_j\|_{L^{2p}}$ by \eqref{e:Lambda-prod},
and Lemma~\ref{lem:emb}(iii) with $sp<1$,
$$
I_2^{(1)}\lesssim  \E\Big[\|X_i\|_{L^p}^{p-1}\|\Lambda^{-s}Z_i\|_{L^\infty}\Big]\E\Big[\|\Lambda^s(X_j^2)\|_{L^p}\Big]
\lesssim (\E[\|X_i\|_{L^p}^p]+1)\E[\| X_j\|_{H^1}^2].
$$

	%
	%For the second term we use independence  to have
	%\begin{align*}
	%I_2^{(2)}=\E[\|X_j^2\|_{L^{p}}]\E[\|\Lambda^s(X_i^{p-1})\|_{L^{\frac{p}{p-1}}}\|\Lambda^{-s}Z_i\|_{L^\infty}].
	%\end{align*}

	Regarding $I_2^{(2)}$, by the interpolation Lemma \ref{lem:interpolation} followed by H\"older's inequality,
	\begin{align}\label{z1}
	\|\Lambda^s(X_i|X_i|^{p-2})\|_{L^{\frac{p}{p-1}}} & \lesssim\|\nabla(X_i|X_i|^{p-2})\|_{L^{\frac{p}{p-1}}}^s\|X_i^{p-1}\|_{L^{\frac{p}{p-1}}}^{1-s}
	+\|X_i^{p-1}\|_{L^{\frac{p}{p-1}}}\nonumber
	\\ & \lesssim D^{s/2}\|X_i^p\|_{L^{1}}^{(1-s)\frac{p-1}{p}+\frac{s(p-2)}{2p}}+\|X_i^p\|_{L^{1}}^{\frac{p-1}{p}}.
	\end{align}
	%Also, by Sobolev embedding $H^{\beta}\subset L^{2p}$, $\beta=1-\frac{1}{p}<1$ and interpolation Lemma \ref{lem:interpolation}, followed by H\"older inequality with exponents $(\frac{1}{\beta},\frac{1}{1-\beta})$, we have
By \eqref{e:Gagliardo} with $(q,s,\alpha,r)=(2p,0,\beta,2) $ with $\beta\eqdef 1-\frac{1}{p}$, and then H\"older inequality
	\begin{align}\label{z2}
	\E \|X_j^2\|_{L^{p}}
	\lesssim
	\E \Big[ \|X_j\|_{H^1}^{2\beta}\| X_j\|_{L^{2}}^{2(1-\beta)} \Big]
	\lesssim
	%  \|\bX\|_{H^1}^{2} + \| \bX\|_{L^{2}}^{2} .
	(\E\|X_j\|_{H^1}^{2} )^\beta (\E\| X_j\|_{L^{2}}^{2} )^{1-\beta}.
	\end{align}
Recall from Lemma \ref{lem:l2} that $\E[\| X_j\|_{L^{2}}^{2}]\lesssim1$.
With  \eqref{z1}-\eqref{z2}, using again independence,
	and H\"{o}lder's inequality with exponents $(\frac{2}{s},\frac{2}{2-s})$,
	%Young's inequality for products,
	together with Lemma \ref{le:ex}, we obtain
\begin{align*}
I_2^{(2)}
\lesssim
	\big(\E\|X_j\|_{H^1}^{2} \big)^\beta
	\big(\E D\big)^{\frac{s}{2}}
	\E\Big[\|X_i^p\|_{L^{1}}^{\eta}\|\Lambda^{-s}Z_i\|_{L^\infty}^{\frac{2}{2-s}}\Big]^{1-\frac{s}{2}}
	+\E[\|X_j\|_{H^1}^{2}]^\beta (\E\|X_i^p\|_{L^{1}}\E\| X_j\|_{L^{2}}^{2} +1)
\end{align*}
	where $\eta\eqdef (1-\frac{1}{p}-\frac{s}{2})\frac{2}{2-s}$ and clearly $\eta<1$.
	The first term on the RHS can be bounded by, using Young's inequality with exponents $(\frac{2}{s},\frac{2}{2-s})$ and then with exponents $(\frac{1}{\eta},\frac{1}{1-\eta})$,
	\begin{align*}
	\frac{1}{10}\E[D] & + C \E \Big[\|X_i^p\|_{L^{1}}^{\eta}\|\Lambda^{-s}Z_i\|_{L^\infty}^{\frac{2}{2-s}}\Big]\,
	\E\Big[\| X_j\|_{H^1}^{2}\Big]^{\frac{2\beta}{2-s}} %+(\E[\|X_i^p\|_{L^{1}}]+1)\E[\|X_j\|_{H^1}^{2}]^\beta
	\\ & \le \frac{1}{10}\E[D]+C \Big(\E \|X_i^p\|_{L^{1}}+\E \|\Lambda^{-s}Z_i\|_{L^\infty}^{\frac{2}{2-s}\frac{1}{1-\eta}}\Big)\E\Big[\| X_j\|_{H^1}^{2}\Big]^{\frac{2\beta}{2-s}} %+(\E[\|X_i^p\|_{L^{1}}]+1)\E[\|X_j\|_{H^1}^{2}]^\beta
%	\\ & \le \varepsilon\E[D]+ C (\E \|X_i^p\|_{L^{1}}+1)\E[\| X_j\|_{H^1}^{2}]^{\frac{2\beta}{2-s}} %+(\E[\|X_i^p\|_{L^{1}}]+1)\E[\|X_j\|_{H^1}^{2}]^\beta
	\end{align*}
	By Lemma \ref{lem:emb} and Lemma \ref{le:ex} we easily find $\E \|\Lambda^{-s}Z_i\|_{L^\infty}^{q}\lesssim1$ for every $q\geq1$.
Using Lemma \ref{le:ex}, and Lemma~\ref{lem:l2} noting that
 $2\beta/(2-s)<1$ by our smallness assumption on $s>0$,
we obtain a bound of the form \eqref{e:EAED-R} for $I_2$.

	\medskip
	
	{\sc Step} \arabic{GlobPsi1} \label{GlobPsi5z} \refstepcounter{GlobPsi1} (Estimate of $I_3$-$I_5$)

	Using Lemma \ref{lem:interpolation} we obtain
	\begin{align*}
	&I_3=\E \< \Lambda^s(X_j X_i|X_i|^{p-2}),\Lambda^{-s}(\Wick{Z_i Z_j})\>
	\\
	& \lesssim\E\Big[\|\Lambda^{-s}(\Wick{Z_i Z_j})\|_{L^\infty}\|\Lambda^s X_j\|_{L^p}\|X_i^p\|_{L^1}^{\frac{p-1}{p}}\Big]
	\\&\qquad+\E\Big[\|\Lambda^{-s}(\Wick{Z_i Z_j})\|_{L^\infty}\| X_j\|_{L^{p}}\|\Lambda^s(X_i|X_i|^{p-2})\|_{L^{\frac{p}{p-1}}}\Big]
	\quad =:I_3^{(1)}+I_3^{(2)}.
	\end{align*}
	For $I_3^{(1)}$ we use Sobolev embedding $H^1\subset H^s_p$ and Young's inequality and independence to have
	\begin{align*}
	I_3^{(1)}\lesssim \E[\|\Lambda^{-s}(\Wick{Z_i Z_j})\|_{L^\infty}^p]+\E[\| X_j\|_{H^1}^{\frac{p}{p-1}}]\,\E[\|X_i^p\|_{L^1}].
	\end{align*}
For $I_3^{(2)}$ we plug in \eqref{z1}: %for $p_1<\frac{p}{p-1}$, $p_2>p$
\begin{align*}
I_3^{(2)} &\lesssim
	\E\Big[\Big(D^{s/2}\|X_i^p\|_{L^{1}}^{(2p-2-sp)/(2p)}      %^{(1-s)\frac{p-1}{p}+\frac{s(p-2)}{2p}}
	+\|X_i^p\|_{L^{1}}^{\frac{p-1}{p}}\Big)\|X_j\|_{L^{p}}\|\Lambda^{-s}(\Wick{Z_i Z_j})\|_{L^\infty}\Big]
\end{align*}
Using Young's inequality with $(\frac{2}{s},\frac{2p}{2p-2-sp},p)$ and $(\frac{p}{p-1},p)$,
and Sobolev embedding,
	\begin{align*}
I_3^{(2)} \le	\frac{1}{10}\E D+ C \E\|X_i^p\|_{L^{1}}\E\| X_j\|_{H^1}^{\frac{2p}{2p-2-sp}}
+\E \| X_j\|_{H^1}^{\frac{p}{p-1}} \E \|X_i^p\|_{L^1}
+ C \E\|\Lambda^{-s}(\Wick{Z_i Z_j})\|_{L^\infty}^{p}.
	\end{align*}
For $s>0$ small enough $\frac{2}{p}+s<1$ so that $\frac{2p}{2p-2-sp}<2$,
so  Lemma~\ref{lem:l2}  applies.

	By \eqref{z1} we have for $\epsilon>0$ small enough
	\begin{align*}
	&I_4+I_5
	\\
	& \lesssim \|\E[\Wick{Z_j^2}]\|_{L^\infty}\E[\|X_i\|_{L^p}^p]+\|\E[\Wick{Z_j^2}]\|_{\bC^{s+\epsilon}}\E[\|\Lambda^s (X_i|X_i|^{p-2})\|_{L^1}\|\Lambda^{-s}Z_i\|_{L^\infty}]
	\\ & \lesssim \|\E[\Wick{Z_j^2}]\|_{L^\infty}\E[\|X_i\|_{L^p}^p]+\|\E[\Wick{Z_j^2}]\|_{\bC^{s+\epsilon}}
	\E[D^{s/2}\|X_i^p\|_{L^{1}}^{(1-s)\frac{p-1}{p}+\frac{s(p-2)}{2p}}\|\Lambda^{-s}Z_i\|_{L^\infty}]\\
	&\qquad\qquad+\|\E[\Wick{Z_j^2}]\|_{\bC^{s+\epsilon}}
	\E[\|X_i^p\|_{L^{1}}^{\frac{p-1}{p}}\|\Lambda^{-s}Z_i\|_{L^\infty}]
	\\&\lesssim\E[D]^{s/2}\E[\|X_i^p\|_{L^{1}}^{\eta}\|\Lambda^{-s}Z_i\|_{L^\infty}^{\frac{2}{2-s}}]^{1-s/2}\|\E[\Wick{Z_j^2}]\|_{\bC^{s+\epsilon}}
	+\|\E[\Wick{Z_j^2}]\|_{\bC^{s+\epsilon}}(\E[\|X_i\|_{L^p}^p]+1)
	\\&\le \frac{1}{10}\E[D]+ C \E[\|X_i^p\|_{L^{1}}]+ C \|\E[\Wick{Z_j^2}]\|_{\bC^{s+\epsilon}}^{\frac{2q}{2-s}}\E[\|\Lambda^{-s}Z_i\|_{L^\infty}^{\frac{2q}{2-s}}]
	+ C \|\E[\Wick{Z_j^2}]\|_{\bC^{s+\epsilon}}(\E[\|X_i\|_{L^p}^p]+1),
	\end{align*}
	for $q=1/(1-\eta)$.
Combining all the above estimates and using Gronwall's inequality, we obtain the claimed bound.	
\end{proof}

We now conclude this section by combining our energy estimates with Schauder theory to obtain a global H\"{o}lder bound on $X_{i}$.

\bl\label{lem:priori} Assume that $\E\|\eta_i\|_{L^4}^4\lesssim1$. For $\beta>\kappa$ sufficiently small,
$\gamma=\beta+\frac12$, we have
$$\mathbf{E}[\sup_{t\in[0,T]}t^\gamma\|X_i\|_{\bC^\beta}^2]\lesssim1.$$
\el
\begin{proof}
	Recall that $X_{i}$ satisfies the mild formulation of \eqref{eq:X2}, which we write using our independent copy $(X_{j},Z_{j})$ as
	$$X_{i}(t)=S_t\eta_i+\int_0^tS_{t-s}\E[X_{j}^{2}+2X_{j}Z_{j}+\Wick{Z_{j}^{2}} ](X_{i}+Z_{i} )\dif s.$$
	We start by applying the Schauder estimate, Lemma \ref{lem:heat}, with $\delta$ playing the role $\beta+1$, $\beta+\kappa$, and $\beta+\kappa+\frac{2}{3}$ respectively to bound
\begin{align*}
\|& X_i(t)-S_{t}\eta_{i} \|_{\bC^\beta}
\lesssim
\int_0^t(t-s)^{-\frac{\beta+1}{2}}\|\E[X_j^2]X_i\|_{\bC^{-1}}\dif s \\
&+\int_0^t(t-s)^{-\frac{\beta+\kappa}{2}}\|\E[\Wick{Z_j^2}](X_i+Z_i)\|_{\bC^{-\kappa}}\dif s
	+\int_0^t(t-s)^{-\frac{\beta+2/3+\kappa}{2}}\|\E[X_jZ_j] Z_i\|_{\bC^{-\kappa-\frac{2}{3}}}\dif s
	\\&+\int_0^t(t-s)^{-\frac{\beta+2/3+\kappa}{2}}\Big(\|\E[X_jZ_j]X_i\|_{\bC^{-\kappa-\frac{2}{3}} }+\|\E[X_j^2]Z_i\|_{\bC^{-\kappa-\frac{2}{3}} } \Big)\dif s
	\quad \eqdef \sum_{i=1}^4J_i.
\end{align*}
	To estimate $J_{1}$, first recall Lemma \ref{lem:Lp} implies that
	\begin{equation}\label{bd:mu1}
	\sup_{t\in[0,T]}\|\E X_j^2\|_{L^2}^2=\sup_{t\in[0,T]}\|\E X_i^2\|_{L^2}^2 \lesssim \sup_{t\in[0,T]}\E \|X_i\|_{L^4}^4\lesssim1,
	\end{equation}
	which can be combined with the Sobolev embedding $L^{2} \hookrightarrow \bC^{-1}$ in $d=2$ corresponding to Lemma \ref{lem:emb} with $\alpha=0$, $p_{1}=q_{1}=2$ and $p_{2}=q_{2}=\infty$ to find
	\begin{align*}
	J_1\lesssim \int_0^t(t-s)^{-\frac{\beta+1}{2}}\|\E[X_j^2]\|_{L^2}\|X_i\|_{\bC^\beta}\dif s \lesssim \int_0^t(t-s)^{-\frac{\beta+1}{2}}\|X_i\|_{\bC^\beta}\dif s.
	\end{align*}
	We now turn to $J_{2}$ and apply Lemma \ref{lem:multi} to find for $\beta>\kappa$ and $\kappa'>\kappa$
	\begin{align*}
	J_2\lesssim&\int_0^t(t-s)^{-\frac{\beta+\kappa}{2}}[\|\E[\Wick{Z_j^2}]\|_{\bC^{-\kappa} }\|X_i\|_{\bC^\beta}+\|\E[\Wick{Z_j^2}]\|_{\bC^{2\kappa}}\|Z_i\|_{\bC^{-\kappa}}\dif s\\
	\lesssim& \int_0^t(t-s)^{-\frac{\beta+\kappa}{2}}s^{-\frac{\kappa'}{2}}\|X_i\|_{\bC^\beta}\dif s+\|Z_i\|_{C_T\bC^{-\kappa}}.
	\end{align*}
	We now turn to $J_{3}$ and $J_{4}$ and  use the Besov embedding $B^{-\kappa}_{3,\infty} \hookrightarrow \bC^{-\kappa-\frac{2}{3}}$ in $d=2$ in  Lemma \ref{lem:emb}. %with $\alpha=-\kappa$, $p_{1}=3$ and $q_{1}=p_{2}=q_{2}=\infty$.
	Let's begin with $J_{3}$ which is simpler. Using  Lemma \ref{lem:Lp} and Lemma \ref{lem:emb} and Lemma \ref{lem:interpolation} we have
	\begin{align} &\int_0^T\|\E[X_i^2]\|_{B_{3,\infty}^{2\kappa}}^2\dif s\lesssim\int_0^T[\mathbf{E}\|X_i^2\|_{B_{3,\infty}^{2\kappa}}]^2\dif s\lesssim\int_0^T[\mathbf{E}\|\Lambda^{2\kappa}(X_i^2)\|_{L^3}]^2\dif s\lesssim\int_0^T[\mathbf{E}\|\Lambda(X_i^2)\|_{L^{4/3}}]^2\dif s\no
	\\\lesssim&\int_0^T[\mathbf{E}\|X_i\|_{H^1}\|X_i\|_{L^4}]^2\dif s
	\lesssim  \int_0^T\mathbf{E}[\| X_i\|_{H^1}^2]\mathbf{E}[\| X_i\|_{L^4}^2]\dif s\lesssim1,\label{sz1} \end{align}
	where we used \eqref{e:Lambda-prod} in the fourth inequality and H\"{o}lder inequality in the fifth inequality.
	Note that by H\"{o}lder's inequality in time which exponents $(\frac{3}{2},3)$ and taking into account that $\frac{3}{4}(\beta+\frac{2}{3}+\kappa)<1$ for $\beta$ small enough and using Lemma \ref{lem:multi} we find
	\begin{align}\label{bd:mu2Z}
	J_{3} &\lesssim \int_0^T\|\E[X_jZ_j] Z_i\|_{B^{-\kappa}_{3,\infty}}^3\dif s \lesssim\int_0^T(\mathbf{E}[\|X_j\|_{B^{2\kappa}_{3,\infty}}^2])^{\frac32}(\E[\|\Wick{Z_i Z_j}\|_{\bC^{-\kappa}}^2\mid Z_{i}])^{\frac32}\dif s\nonumber\\
	&\lesssim 1+\int_0^T(\E[\|\Wick{Z_i Z_j}\|_{\bC^{-\kappa}}^2\mid Z_{i}])^{6}\dif s.
	\end{align}
	Here we used Lemma \ref{lem:Z1} and \eqref{sz1}.
	Finally, we turn to $J_{4}$.  By Lemma \ref{lem:multi} and Lemma \ref{lem:interpolation} we deduce
	\begin{align*}
	\|X_i\|_{B^{2\kappa}_{3,\infty}}\lesssim \|X_i\|_{B^{2\kappa}_{\frac{4}{1+2\kappa},\infty}}\lesssim \|X_i\|_{B^{1}_{2,\infty}}^{2\kappa}\|X_i\|_{B^{0}_{4,\infty}}^{1-2\kappa}\lesssim \|X_i\|_{H^1}^{2\kappa}\|X_i\|_{L^4}^{1-2\kappa},
	\end{align*}
	which implies that
	\begin{align}\label{bd:mu2}&\int_0^T\|\E(X_iZ_i)\|_{B^{-\kappa}_{3,\infty}}^3\dif s\lesssim \int_0^T\mathbf{E}[\|X_i\|_{B^{2\kappa}_{3,\infty}}^3\|Z_i\|_{\bC^{-\kappa}}^3]
	\lesssim \int_0^T\mathbf{E}[\| X_i\|_{H^1}^{6\kappa }\|X_i\|_{L^4}^{3(1-2\kappa)}\|Z_i\|_{\bC^{-\kappa}}^3]\nonumber
	\\\lesssim&\int_0^T\mathbf{E}[\| X_i\|_{H^1}^{2}]+\int_0^T\mathbf{E}[\|X_i\|_{L^4}^{4}]+\int_0^T\mathbf{E}[\|Z_i\|_{\bC^{-\kappa}}^{l}]\lesssim1,
	\end{align}
	for some $l>1$.  This combined with \eqref{sz1} and Lemma \ref{lem:multi} implies that
	\begin{align*}
J_4\lesssim&\int_0^t(t-s)^{-\frac{\beta+2/3+\kappa}{2}} \Big(\|\E[X_jZ_j]\|_{B^{-\kappa}_{3,\infty}}\|X_i\|_{\bC^\beta}
	+\|\E[X_j^2]\|_{B_{3,\infty}^{2\kappa}}\|Z_i\|_{\bC^{-\kappa}}\Big)\dif s
	\\\lesssim&\int_0^t(t-s)^{-\frac{\beta+2/3+\kappa}{2}}\|\E[X_jZ_j]\|_{B^{-\kappa}_{3,\infty}}\|X_i\|_{\bC^\beta}\dif s
	+\|Z_i\|_{C_T\bC^{-\kappa}}\end{align*}
	%Using \eqref{bd:mu2} we have for some $\eta>1$
	%$$\int_0^t(t-s)^{-\frac{\eta(\beta+1/2+\kappa)}{2}}\|\mu_2\|_{B^{-\kappa}_{4,\infty}}^\eta\dif s\lesssim1.$$
	For $S_t\eta_i$ we  use \eqref{eq:S} to have the desired bound.
	Combining the above estimates, using \eqref{bd:mu2},  H\"{o}lder's inequality and Gronwall's inequality, 	 the result follows.
\end{proof}

Combining the local well-posedness result and the uniform estimate Lemma \ref{lem:priori} we conclude the following result:

\bt\label{Th:global}
For given $Z_i$ as the solution to \eqref{eq:li1} and $\E\|\eta_i\|_{L^4}^4\lesssim1$, there exists a unique solution $X_i\in L^2(\Omega;C((0,T];\bC^\beta)\cap C_TL^4)$ to \eqref{eq:X2} such that
$$\mathbf{E}[\sup_{t\in[0,T]}t^\gamma\|X_i\|_{\bC^\beta}^2]+\sup_{t\in[0,T]}\E\|X_i\|_{L^4}^4+\E\|X_i\|_{L_T^2H^1}^2\lesssim1.$$
In particular, for every $\psi_i\in \bC^{-\kappa}$ with $\E\|\psi_i\|_{\bC^{-\kappa}}^p\lesssim1$, $p>1$,   there exists a unique solution $\Psi_i\in L^2(\Omega;C_T\bC^{-\kappa})$ to \eqref{eq:Psi2} such that
$$\mathbf{E}[\sup_{t\in[0,T]}t^\gamma\|\Psi_i-Z_i\|_{\bC^\beta}^2]\lesssim1,$$
for $\beta>3\kappa>0$ small enough and  $\gamma=\frac{1}{2}+\beta$.
\et

\section{Large N limit of the dynamics}\label{sec:dif}
In this section we study the large N behavior of a fixed component $\Phi_{i}^{N}$ satisfying \eqref{eq:Phi2d} with initial condition $\phi_i^N=y_i^N+z_i^N$.  Namely, under suitable assumptions on the initial conditions, we show that as $N \to \infty$, the component converges to the corresponding solution $\Psi_i$ to \eqref{eq:Psi2} with initial condition $\psi_i=\eta_i+z_i$.  Recall that by definition, $\Phi_{i}^{N}=Y_{i}^{N}+Z_{i}^{N}$, where $Y_{i}^{N}$ satisfies \eqref{eq:22} and $Z_{i}^{N}$ satisfies \eqref{eq:li1} with initial conditions $y_{i}^{N}$ and $z_{i}^{N}$ respectively.  Similarly,  $\Psi_{i}=X_{i}+Z_{i}$, where $X_{i}$ satisfies \eqref{eq:X2} and $Z_{i}$ satisfies \eqref{eq:li1} with initial conditions $y_{i}$ and $\eta_{i}$ respectively.  We now define
\begin{equation}
v_{i}^{N}\eqdef Y_{i}^{N}-X_{i} \nonumber.
\end{equation}
%which satisfies
%\begin{align*}
%\LL v_i^{N} & =-\frac{1}{N}\sum_{j=1}^N \Wick{\Phi_j^2\Phi_i}+\mu \Psi_i, \quad v_{i}^{N} (0)=y_{i}^{N}-\eta_{i}.
%\\
%& =-\frac{1}{N}\sum_{j=1}^N\Phi_j^2(\Phi_i-\Psi_i)
%	- \frac{1}{N}\sum_{j=1}^N(\Phi_j^2-\Psi_j^2)\Psi_i
%	-(\frac{1}{N}\sum_{j=1}^N\Psi_j^2-\mu)\Psi_i
%\\
%&=-\frac{1}{N}\sum_{j=1}^N(Y_j^2+2Y_jZ_j^N+\Wick{ (Z_j^{N})^2})(v_i+u_i)
%\\
%&\qquad-\frac{1}{N}\sum_{j=1}^N(v_j+u_j)(\Phi_j+\Psi_j)\Psi_i
%-(\frac{1}{N}\sum_{j=1}^N\Psi_j^2-\mu)\Psi_i \;.
%\end{align*}
For future reference, we note that in light of the decomposition, c.f. Section \ref{sec:re} for the definition of $\tilde{Z}_{i}$,
$$Z_i^N=\tilde{Z}_i+S_t(z_i^N-\tilde{Z}_i(0)),\quad Z_i=\tilde{Z}_i+S_t(z_i-\tilde{Z}_i(0)),$$
it follows that
\begin{equation}
\Phi_i^N-\Psi_i=Y_i^N-X_i+Z_i^N-Z_i=v_i^{N}+S_t(z_i^N-z_i). \nonumber
\end{equation}
Hence, our main task is to study $v_{i}^{N}$ and this will occupy the bulk of the proof.  We now give our assumptions on the initial conditions.
\begin{assumption}
	\label{a:main}Suppose the following assumptions:
	\begin{itemize}
		\item The random variables $\{( z_{i},\eta_{i}) \}_{i=1}^{N}$ are iid.%exchangeable and
		%$\{(z_i, \eta_i) \}_{i=1}^{N}$ are independent.
		%\item $z_i^N=z_i$ for $i=1,\dots , N$. 
		%\zhu{We add this condition. }\hao{OK.}
		%\zhu{Here we add the same distribution condition to use $v_i=^dv_j$ and $\E\|v_i\|_{L^2}^2=\frac1N\sum_{j=1}^N\E\|v_j\|_{L^2}^2$. but it's not necessary.}\hao{OK. We will need to decide whether we should change $X_j$ to $\bar X_i$ like what we did in the previous section.}\zhu{in this section we keep the notation as $X_i, X_j$}
		\item  For every $p>1$, and 
		every $i$,
		$$\mathbf{E}[\|z_i^N-z_i\|_{\bC^{-\kappa}}^p]\rightarrow0, \quad
		\mathbf{E}[\|y_i^N-\eta_i\|_{L^2}^2]\to 0,%\qquad\frac1N\sum_{i=1}^N\|y_i^N-\eta_i\|_{L^2}^2\to^{\mathbf{P}} 0 
		\qquad \mbox{as } N\to \infty ,$$
		$$\frac1N\sum_{i=1}^N\|z_i^N-z_i\|_{\bC^{-\kappa}}^p\rightarrow^{\mathbf{P}}0, \quad
		\frac1N\sum_{i=1}^N\|y_i^N-\eta_i\|_{L^2}^2\to^{\mathbf{P}} 0, \qquad \mbox{as } N\to \infty ,$$
		where $\to^{\mathbf{P}}$ means the convergence in probability.
		\item For some $q>1$, $p_0>4/(1-4\kappa)$, and every $p>1$
		$$
		\mathbf{E}[\|z_i^N\|_{\bC^{-\kappa}}^p+
		\|z_i\|_{\bC^{-\kappa}}^p]\lesssim1,
		\quad\mathbf{E}\|\eta_i\|_{L^{p_0}}^{p_0}\lesssim1,\quad \mathbf{E}[\frac1N\sum_{i=1}^N\|y_i^N\|_{L^2}^2]^q\lesssim1,$$
		where the implicit constant is independent of $i, N$.
		
	\end{itemize}
\end{assumption}

%\begin{remark}\label{rem:mf-phil}
The following theorem is our main convergence result - which in particular implies Theorem~\ref{th:1}. The proof is inspired by mean field theory for {\it SDE} systems such as Sznitman's article \cite{MR1108185}, which as the general philosophy starts by directly subtracting the two dynamics and thereby cancelling the white noises, and then controls the difference. To this end we establish energy estimates for the difference $v_i^{N}$ below, c.f. \eqref{diff11} from Step \ref{diffEst1}. The key to the proof is that for the terms collected in $I_1^N$, $I_2^N$ below we interpolate with $\bC^{-s}$ and $B^s_{1,1}$ spaces and leverage various a-priori estimates obtained in the previous sections; but for terms collected in $I_3^N$ which are suitably {\it centered}, we interpolate with {\it Hilbert spaces} and invoke the following  fact  \eqref{eq:Ui}, which in certain sense gives us a crucial ``factor of $1/N$'':
%\end{remark}

Recall that %the following fact will be important for us:
for mean-zero independent random variables $U_{1},\dots,U_{N}$ taking values in a Hilbert space $H$, we have
\begin{equation}
\E \Big \| \sum_{i=1}^{N}U_{i} \Big \|_{H}^{2} =\E \sum_{i=1}^{N}\|U_{i}\|_{H}^{2}.\label{eq:Ui}
\end{equation}
This simple fact is important for us since the square of the sum on
the l.h.s. of \eqref{eq:Ui} appears to have ``$N^2$ terms'' but under expectation it's only a sum of $N$ terms, in certain sense giving us a ``factor of $1/N$''.
%\hao{I moved these here, because it's not about Besov space and shouldn't be in that appendix.}

\bt\label{th:conv-v}
If the initial datum $(z_i^N,y_i^N, z_i, \eta_i)_{i}$ satisfy Assumption \ref{a:main},
then for every $i$ and every $T>0$, $\|v_i^N\|_{C_TL^2}$ converges to zero in probability, as $N\to \infty$.
Moreover, under the additional hypothesis that $(z_i^N,y_i^N, z_{i},\eta_{i})_{i=1}^{N}$ are exchangeable,  for all $t>0$ it holds
\begin{equation}\label{e:diff-t-0}
\lim_{N \to \infty} \E\|\Phi_{i}^{N}(t)-\Psi_{i}(t) \|_{L^{2}}^{2}=0.
\end{equation}
\et

\begin{proof}
	\newcounter{diffEst} % proofstep = 0
	\refstepcounter{diffEst} % increases value by 1
	The proof has a similar flavor to the Lemma \ref{Y:L2}, and in fact we will continue to use the notation $R_{N}^{i}$ for $i=1,2,3$ for the same quantities.  One additional ingredient required is the following instance of the Gagliardo-Nirenberg inequality (a special case of Lemma~\ref{lem:interpolation}),
	\begin{equation}
	\|g \|_{L^{4}} \leq C \|g \|_{H^{1}}^{1/2}\|g \|_{L^{2}}^{1/2} \label {diff2}.
	\end{equation}
	In the proof we omit the superscript $N$ and simply write $v_{i}$ for $v_{i}^{N}$ throughout. %By assumption we know $z_{i}^{N}=z_{i}$, so that also $Z_{i}^{N}=Z_{i}$.
%	\zhu{We modify some words here.} \hao{OK}
	Furthermore, in Steps \ref{diffEst1}-\ref{diffEst4} we work under the simplifying assumption that $z_{i}^{N}=z_{i}$, so that also $Z_{i}^{N}=Z_{i}$.  In Step \ref{diffEst5}, we sketch the argument in the more general case.

	{\sc Step} \arabic{diffEst} \label{diffEst1} \refstepcounter{diffEst} (Energy balance)
	
	In this step, we justify the following energy identity
	%\begin{align}
	%\frac{d}{dt} \|v \|_{L^{2}_{N}}^{2}+  \|\nabla v \|_{L^{2}_{N} }^{2}+\frac{1}{N}\|Y\otimes v \|_{L^{2}_{N \times N}}^{2}+\frac{1}{N}\ \|\langle X,v \rangle_{N} \|_{L^{2}}
	%\end{align}
	\begin{align}
	& \frac12\frac{\dif}{\dif t}\sum_{i=1}^{N} \|v_{i}\|_{L^{2}}^{2}+ \sum_{i=1}^{N}\|\nabla v_{i}\|_{L^{2}}^{2}+m\sum_{i=1}^{N} \|v_{i}\|_{L^{2}}^{2}+\frac{1}{N}\sum_{i,j=1}^{N} \|Y_{j}v_{i}\|_{L^{2}}^{2}+\frac{1}{N}\bigg \|\sum_{j=1}^{N}X_{j}v_{j}  \bigg \|_{L^{2}}^{2} = \sum_{k=1}^3 I_k^N \label{diff11}
	\end{align}
	where
	\begin{align}
	I_{1}^{N} & \eqdef
	-\frac{1}{N}\sum_{i,j=1}^{N}  \big ( 2\<v_{i}v_{j},\Wick{ Z_{j}Z_{i}}\>+\<v_{i}^{2},\Wick{Z_{j}^{2}}\>+2\<v_{i}^{2}Y_{j},Z_{j}\> \big )\;,
	\nonumber \\
	I_{2}^{N}& \eqdef
	-\frac{1}{N}\sum_{i,j=1}^{N} \< v_{i}v_{j}, \big (X_{i}Y_{j}+(3X_{j}+Y_{j} )Z_{i}  \big )\>\;,
	\nonumber \\
	I_{3}^{N}&\eqdef
	-\frac{1}{N}\sum_{i,j=1}^N \Big\< \big [\Wick{Z_{j}^{2} }-\E\Wick{Z_{j}^{2} }+ X_{j}(X_{j}+2Z_{j} )-\E  X_{j}(X_{j}+2Z_{j})  \big ] (X_{i}+Z_{i})\;,\;v_{i} \Big\>\;,
	%I_{4}^{N}&\eqdef
	%-\frac{1}{N}\sum_{i=1}^{N} \< \big [ (X_{i}^{2}-\E(X_{i}^{2}) )(X_{i}+Z_{i})+2\big (X_{i}Z_{i}-\E(X_{i}Z_{i})  \big )X_{i}  \big ], v_{i}\>   \nonumber \\
	%&\quad-\frac{1}{N}\sum_{i=1}^{N} \< \big [ 2(X_{i}\Wick{Z_{i}^{2}}-\E(X_{i}Z_{i})Z_{i}) +X_{i}(\Wick{Z_{i}^{2}}-\E\Wick{Z_{i}^{2}}  ) \big ], v_{i} \> \nonumber
	%\\&\quad-\frac{1}{N}\sum_{i=1}^{N} \< \big [ \Wick{Z_{i}^{3}}-\E(\Wick{Z_{i}^2})Z_{i}], v_i\>\;.
	\end{align}
	%\begin{align}
	%I_{1,1}^{N}&:=-\frac{1}{N}\sum_{i,j=1}^{N}\int u_{i}^{2}\Wick{Z_{j}^{2}}. \nonumber \\
	%I_{1,2}^{N}&:=-\frac{2}{N}\sum_{i,j=1}^{N}\int u_{i}^{2}Y_{j}Z_{j}. \nonumber \\
	%I_{2,1}^{N}&:=-\frac{2}{N}\sum_{i,j=1}^{N}\int u_{i}u_{j}\Wick{ Z_{j}Z_{i}}. \nonumber \\
	%I_{2,2}^{N}&:=-\frac{1}{N}\sum_{i,j=1}^{N}\int u_{i}u_{j}X_{i}Y_{j}. \nonumber \\
	%I_{2,3}^{N}&:=-\frac{3}{N}\sum_{i,j=1}^{N}\int u_{i}u_{j}X_{j}Z_{i}. \nonumber \\
	%I_{2,4}^{N}&:=-\frac{1}{N}\sum_{i,j=1}^{N}\int u_{i}u_{j}Y_{j}Z_{i}. \nonumber
	%\end{align}
	In the definition of $I_{3}^{N}$, to have a compact formula, we slightly abuse notation for the contribution of the diagonal part $i=j$, where we understand $Z_{i}Z_{j}$ to be $\Wick{Z_{i}^{2}}$ and $\Wick{Z_{j}^{2}}Z_{i}$ to be $\Wick{Z_{i}^{3}}$.
	
	We now turn the justification of this identity, and for the convenience of the reader, we write the equations for $Y_{i}$ and $X_{i}$ side by side as
	\begin{equs}
		\LL Y_{i}
		&=-\frac{1}{N}\sum_{j=1 }^{N}\Big ( Y_{j}^{2}Y_{i}+Y_{j}^{2}Z_{i}+2Y_{j}Z_{j}Y_{i}+2Y_{j}\Wick{Z_{i}Z_{j}}+Y_{i}\Wick{Z_{j}^{2}}+\Wick{Z_{i}Z_{j}^{2} } \Big ), \label{diff8}
		\\
		\LL X_{i}
		&=-\frac{1}{N}\sum_{j=1}^{N} \Big( \E(X_{j}^{2})X_{i}+\E(X_{j}^{2})Z_{i}+2\E(X_{j}Z_{j})X_{i}+2\E(X_{j}Z_{j})Z_{i}   +X_i\E\Wick{Z_{j}^{2}}\\&\qquad\qquad\qquad+Z_i\E\Wick{Z_{j}^{2}}\Big), \label{diff9}
	\end{equs}
	where we used that $X_{j}$ and $X_{i}$ are equal in law.  We now compare each of the first 4 terms in \eqref{diff8} to the corresponding terms in \eqref{diff9}.  Note first that
	\begin{align}
	Y_{j}^{2}Y_{i}-\E(X_{j}^{2})X_{i}&=Y_{j}^{2}Y_{i}-X_{j}^{2}X_{i}+\big (X_{j}^{2}-\E(X_{j}^{2}) \big )X_{i} \nonumber \\
	&=Y_{j}^{2}v_{i}+v_{j}(Y_{j}+X_{j})X_{i}+\big (X_{j}^{2}-\E(X_{j}^{2}) \big )X_{i} \nonumber.
	\end{align}
	Similarly, we find
	\begin{align}
	\big (Y_{j}^{2}-\E(X_{j}^{2} ) \big )Z_{i}&=v_{j}(Y_{j}+X_{j})Z_{i}+\big (X_{j}^{2}-\E(X_{j}^{2})  \big )Z_{i} \nonumber. \\
	2Y_{j}Z_{j}Y_{i}-2\E(X_{j}Z_{j})X_{i}&=2\big (v_{i}Y_{j}+v_{j}X_{i} \big )Z_{j}+2\big (X_{j}Z_{j}-\E(X_{j}Z_{j}) \big )X_{i} \nonumber.\\
	2Y_{j}\Wick{Z_{i}Z_{j}}-2\E(X_{j}Z_{j})Z_{i}&=2v_{j}\Wick{Z_{i}Z_{j}}+2\big (  X_{j}\Wick{Z_{i}Z_{j}}-\E(X_{j}Z_{j})Z_{i} \big ) \nonumber. \\
	Y_{i}\Wick{Z_{j}^{2}}&=v_{i}\Wick{Z_{j}^{2}}+X_{i}\Wick{Z_{j}^{2}}\nonumber.
	\end{align}
	Taking the difference of \eqref{diff8} and \eqref{diff9}, using the identities above, multiplying by $v_{i}$, integrating by parts, and summing over $i$ leads to \eqref{diff11}.  Indeed, notice that each equality gives a sum of two pieces, one with a factor of $v$ and one without any factor of $v$, but with a re-centering.  The terms which have a factor of $v$ lead to $I_{1}^{N}$ and $I_{2}^{N}$, except for $Y_{j}^{2}v_{i}$ and $v_{j}X_{j}X_{i}$, which lead to the two coercive quantities on the LHS of \eqref{diff11}.  The terms which have been re-centered lead to $I_{3}^{N}$. % and $I_{4}^{N}$. With independence in mind, we have divided into $i \neq j$ which leads to $I_{3}^{N}$ and $i=j$ with gives $I_{4}^{N}$.  Here, we used that for $i \neq j$, it holds $\Wick{Z_{i}Z_{j}}=Z_{i}Z_{j}$ and $\Wick{Z_{i}Z_{j}^{2}}=Z_{i}\Wick{Z_{j}^{2}}$.
	
	%\begin{align}
	%I_{1,1}^{N}&:=-\frac{1}{N}\sum_{i,j=1}^{N}\int u_{i}^{2}\Wick{Z_{j}^{2}}. \nonumber \\
	%I_{1,2}^{N}&:=-\frac{2}{N}\sum_{i,j=1}^{N}\int u_{i}^{2}Y_{j}Z_{j}. \nonumber \\
	%I_{2,1}^{N}&:=-\frac{2}{N}\sum_{i,j=1}^{N}\int u_{i}u_{j}\Wick{ Z_{j}Z_{i}}. \nonumber \\
	%I_{2,2}^{N}&:=-\frac{1}{N}\sum_{i,j=1}^{N}\int u_{i}u_{j}X_{i}Y_{j}. \nonumber \\
	%I_{2,3}^{N}&:=-\frac{3}{N}\sum_{i,j=1}^{N}\int u_{i}u_{j}X_{j}Z_{i}. \nonumber \\
	%I_{2,4}^{N}&:=-\frac{1}{N}\sum_{i,j=1}^{N}\int u_{i}u_{j}Y_{j}Z_{i}. \nonumber
	%\end{align}
	%The choice to divide terms in the manner above is not entirely obvious, so we will comment on this.  The integrands in $I^{N}_{1}$ and $I^{2}_{N}$ are each quadratic in $v$ and can be estimated pathwise by a small peice of the coercive quantities on the LHS of \eqref{dif11} together with a term which will require a Gronwall.  To estimate $I^{1,i}_{N}$ we can refer back to identical arguments in Lemma ? with $v$ playing the role of $Y$, but for $I^{2,i}_{N}$ we need to modify the arguments slightly.  The terms $I^{3,i}_{N}$ are linear in $v$ and have all been re-centered in order to obtain convergence in probability to zero.  These terms will need to be estimated first at a pathwise level and then in expectation in order to generate cancellations.  Here we use arguments in the flavor of the law of large numbers in a Hilbert space.  For future reference, we also note that since $Y_{i}=X_{i}+v_{i}$, it holds
	%\begin{align}
	%\|Y_{i}v_{j}\|_{L^{2}}^{2} \geq \frac{1}{2}\|X_{i}v_{j}\|_{L^{2}}^{2}+\frac{1}{2}\|v_{i}v_{j}\|_{L^{2}}^{2}\nonumber.
	%\end{align}

	{\sc Step} \arabic{diffEst} \label{diffEst7} \refstepcounter{diffEst} (Estimates for $I_{1}^{N}$ )
	
	In this step, we show there is a universal constant $C$ such that
	\begin{align}
	I_{1}^{N} \leq \frac{1}{8} \bigg ( \sum_{i=1}^{N}\|\nabla v_{i}\|_{L^{2}}^{2}+\frac{1}{N}\sum_{i,j=1}^{N} \|Y_{j}v_{i}\|_{L^{2}}^{2} \bigg )
	 +C(1+R_{N}^{2}+R_{N}^{3}+R_N^5 )\sum_{i=1}^{N} \|v_{i}\|_{L^{2}}^{2} \label{diff1},
	\end{align}
	where $R_{N}^{2}$ and $R_{N}^{3}$ are defined in terms of $Z$ in the same way as in \eqref{eq:R2} and \eqref{e:def-SZ} and
	\begin{align}
	R_{N}^{5}&\eqdef \bigg (1+\frac{1}{N} \sum_{j=1}^{N} \|\nabla Y_{j} \|_{L^{2}}^{2}   \bigg )^{s}\bigg (  \frac{1}{N}\sum_{i=1}^{N}    \|Z_{i}\|_{\bC^{-s}}^{2}\bigg )  \nonumber,
	\end{align}
	with $1>s\geq 2\kappa$ and $s$ small enough.
	Indeed, \eqref{diff1} follows from arguments identical to the ones leading to \eqref{s6} and \eqref{s2} in Lemma \ref{Y:L2}, but with a different labelling of the integrands which we now explain.  There are three contributions to $I_{1}^{N}$ and each can be treated separately.  For the contribution of $v_{i}v_{j}\Wick{ Z_{j}Z_{i}}$ we argue exactly as for \eqref{s8} but with $v_{i}v_{j}$ in place of $Y_{i}Y_{j}$.   For the contribution of $v_{i}^{2}\Wick{Z_{j}^{2}}$, we argue exactly as in \eqref{s9}, but with $v_{i}^{2}$ in place of $Y_{i}^{2}$.  This leads to the inequality
	\begin{align}
	\frac{1}{N}\sum_{i,j=1}^{N} \big ( 2\<v_{i}v_{j}, \Wick{Z_{i}Z_{j}}\>+\<v_{i}^{2}, \Wick{Z_{j}^{2} }\> \big )
	&\leq \frac{1}{16} \bigg ( \sum_{i=1}^{N}\|\nabla v_{i}\|_{L^{2}}^{2} \bigg )
	+C(1+R_{N}^{2})\sum_{i=1}^{N} \|v_{i}\|_{L^{2}}^{2} \label{diff12}.
	\end{align}
	Finally, for the contribution of  $v_{i}^{2}Y_{j}Z_{j}$ the argument is similar as for \eqref{s2}.  This leads to the estimate
	\begin{align}
	-\frac{1}{N} \sum_{j=1}^{N} & \Big\< \sum_{i=1}^{N} v_{i}^{2}Y_{j}, Z_{j}\Big\>
	\lesssim
	\frac{1}{N}\sum_{j=1}^{N} \Big ( \Big \|  \sum_{i=1}^{N}v_{i}^{2}Y_{j} \Big \|_{L^{1}}^{1-s} \Big \|\nabla \Big( \sum_{i=1}^{N}v_{i}^{2}Y_{j} \Big) \Big \|_{L^{1}}^{s}+\Big \|  \sum_{i=1}^{N}v_{i}^{2}Y_{j} \Big \|_{L^{1}} \Big ) \|Z_{j}\|_{\bC^{-s}}  \label{q1d}  \\
	& \lesssim\frac{1}{N} \Big (\sum_{j=1}^{N}\Big \|  \sum_{i=1}^{N}v_{i}^{2}Y_{j} \Big \|_{L^{1}}^{2(1-s) }\Big \|\nabla \Big ( \sum_{i=1}^{N}v_{i}^{2}Y_{j} \Big ) \Big \|_{L^{1}}^{2s}  \Big )^{1/2}
\SZ^{\frac12}
%\Big (  \sum_{j=1}^{N}\|Z_{j}\|_{\bC^{-s}}^{2}   \Big )^{1/2}
 +\frac{1}{N}\Big ( \sum_{j=1}^{N} \Big \|  \sum_{i=1}^{N}v_{i}^{2}Y_{j} \Big \|_{L^{1}}^{2} \Big )^{1/2}
\SZ^{\frac12},
%\Big (  \sum_{j=1}^{N}\|Z_{j}\|_{\bC^{-s}}^{2}   \Big )^{1/2}
	\nonumber
	\end{align}
where $\SZ \eqdef \sum_{j=1}^{N}\|Z_{j}\|_{\bC^{-s}}^{2} $ as in \eqref{e:def-SZ}.
	By H\"older's inequality, it holds that
	\begin{equation}
	\Big \|  \sum_{i=1}^{N}v_{i}^{2}Y_{j} \Big \|_{L^{1}} \lesssim \Big(\sum_{i=1}^{N}\|v_iY_j\|_{L^{2}}^2\Big)^{1/2}\Big(\sum_{i=1}^N\|v_{i}\|_{L^{2}}^2\Big)^{1/2}. \label{qd2}
	\end{equation}
	Furthermore, we find that
	\begin{align}
	&\Big \| \nabla \Big ( \sum_{i=1}^{N}v_{i}^{2}Y_{j} \Big )  \Big \|_{L^{1}}
	\lesssim \Big \| \sum_{i=1}^{N}v_{i}^{2}\nabla Y_{j}  \Big \|_{L^{1}}+\Big \| \sum_{i=1}^{N} \nabla v_{i}v_{i}Y_{j}  \Big \|_{L^{1}} \nonumber \\
	&\lesssim \sum_{i=1}^{N}\|v_{i} \|_{L^{4}}^2 \|\nabla Y_{j}\|_{L^{2}}+\Big ( \sum_{i=1}^{N}\|\nabla v_{i}\|_{L^{2}}^{2} \Big )^{1/2} \Big(\sum_{i=1}^{N}\| v_{i}Y_{j}  \|_{L^{2}}^2\Big)^{1/2}\nonumber\\
	&\lesssim \Big(\sum_{i=1}^{N}\|v_{i} \|_{H^1}^2\Big)^{1/2}\Big( \sum_{i=1}^{N}\|v_{i} \|_{L^2}^2\Big)^{1/2} \|\nabla Y_{j}\|_{L^{2}}+\Big ( \sum_{i=1}^{N}\|\nabla v_{i}\|_{L^{2}}^{2} \Big )^{1/2} \Big(\sum_{i=1}^{N}\| v_{i}Y_{j}  \|_{L^{2}}^2\Big)^{1/2},\nonumber
	\end{align}
	where we used \eqref{diff2} in the last step.
	Hence, we find that
	\begin{align}
	&\sum_{j=1}^{N}\Big \|  \sum_{i=1}^{N}v_{i}^{2}Y_{j} \Big \|_{L^{1}}^{2(1-s) }\Big \|\nabla \Big (\sum_{i=1}^{N}v_{i}^{2}Y_{j} \Big ) \Big \|_{L^{1}}^{2s} \nonumber \\
	& \lesssim \Big(\sum_{i,j=1}^{N}\|v_iY_j\|_{L^{2}}^2\Big)^{{1-s}}\Big(\sum_{i=1}^{N}\|v_{i} \|_{H^1}^2\Big)^{s}\Big( \sum_{i=1}^{N}\|v_{i} \|_{L^2}^2\Big) \Big(\sum_{j=1}^N\|\nabla Y_{j}\|_{L^{2}}^2\Big)^{s}\nonumber
	\\&+\Big ( \sum_{i=1}^{N}\|\nabla v_{i}\|_{L^{2}}^{2} \Big )^{s}\Big ( \sum_{i=1}^{N}\| v_{i}\|_{L^{2}}^{2} \Big )^{1-s} \Big(\sum_{i,j=1}^{N}\| v_{i}Y_{j}  \|_{L^{2}}^2\Big).  \nonumber
	\end{align}
	Inserting this into \eqref{q1d}, taking the square root, and using \eqref{qd2} and
	Young's inequality with exponent $(\frac{2}{1-s},\frac{2}{s},2)$ we arrive at

	\begin{align}
	-\frac{1}{N}\sum_{i,j=1}^{N} \< 2 v_{i}^{2}Y_{j}, Z_{j} \>& \leq \frac{1}{16} \bigg ( \sum_{i=1}^{N}\|\nabla v_{i}\|_{L^{2}}^{2}+\frac{1}{N}\sum_{i,j=1}^{N} \|Y_{j}v_{i}\|_{L^{2}}^{2} \bigg )
	+C(1+R_{N}^{3}+R_N^5)\sum_{i=1}^{N} \|v_{i}\|_{L^{2}}^{2} \label{diff13} .
	\end{align}
	Combining \eqref{diff12} and \eqref{diff13} and recalling the definition of $I_{1}^{N}$ we obtain \eqref{diff1}.

	{\sc Step} \arabic{diffEst} \label{diffEst2} \refstepcounter{diffEst} (Estimates for $I_{2}^{N}$ )
	
	In this step, we show there is a universal constant $C$ such that
	\begin{align}
	&I_{2}^{N} \leq \frac{1}{4} \Big (\sum_{i=1}^{N}\|\nabla v_{i}\|_{L^{2}}^{2} +\frac{1}{N}\sum_{i,j=1}^{N}\|v_{i}Y_{j}\|_{L^{2}}^{2}+\Big \|\frac{1}{\sqrt{N}} \sum_{j=1}^{N}v_{j}X_{j} \Big \|_{L^{2}}^2  \Big ) \nonumber \\
	&+C\Big (1+R_{N}^{3}+R_{N}^{4}+R_{N}^{5}+R_N^6+\Big(\frac{1}{N}\sum_{i=1}^{N}\|X_{i}\|_{L^{4}}^{2}\Big)^2\Big ) \Big ( \sum_{i=1}^{N}\|v_{i} \|_{L^{2}}^{2}  \Big ),\label{est55}
	\end{align}
	where $R_{N}^{3}$ is defined as in \eqref{e:def-SZ} and $R_{N}^{4}$ and $R_N^6$ are defined by
	\begin{align}
	R_{N}^{4}&\eqdef \Big (1+\frac{1}{N} \sum_{j=1}^{N} \|X_{j}\|_{L^{4}}^{2}   \Big )^{\frac{2s}{2-s}}\Big (  \frac{1}{N}\sum_{i=1}^{N}    \|Z_{i}\|_{\bC^{-s}}^{2}\Big )^{\frac{2}{2-s}}
	+ \Big (\frac{1}{N}\sum_{j=1}^{N}\|\nabla X_{j}\|_{L^{2}}^{2}    \Big )^{s }\Big (  \frac{1}{N}\sum_{i=1}^{N}    \|Z_{i}\|_{\bC^{-s}}^{2}\Big ).  \nonumber \\
	R_N^6&\eqdef \Big (\frac{1}{N}\sum_{j=1}^{N}\|Y_{j} \|_{L^{4}}^{2}    \Big )^{\frac{s}{1-s} }R_{N}^{3}.\nonumber
	\end{align}
	
	We break $I_{2}^{N}$ into the separate contributions where $v_{i}v_{j}$ multiplies $X_{i}Y_{j}$, $X_{j}Z_{i}$, and $Y_{j}Z_{i}$ respectively.  For the first contribution,  Cauchy-Schwarz's inequality yields
	\begin{align}
	&\frac{1}{N}\sum_{i,j=1}^{N} \int v_{i}v_{j} X_{i}Y_{j} \dif x \leq \frac{1}{8} \frac{1}{N }\sum_{i,j=1}^{N} \int v_{i}^{2}Y_{j}^{2}\dif x+C \frac{1}{N}\sum_{i,j=1}^{N} \int v_{j}^{2}X_{i}^{2}\dif x \nonumber \\
	&\leq \frac{1}{8} \frac{1}{N }\sum_{i,j=1}^{N} \int v_{i}^{2}Y_{j}^{2}\dif x+C \Big ( \sum_{i=1}^{N}\|v_{i} \|_{L^{4}}^{2}  \Big ) \Big (\frac{1}{N}\sum_{i=1}^{N}\|X_{i}\|_{L^{4}}^{2}  \Big ) \nonumber
	\\
	&\leq \frac{1}{8} \frac{1}{N}\sum_{i,j=1}^{N} \|v_{i}Y_{j}\|_{L^{2}}^{2}+C \Big ( \sum_{i=1}^{N}\|v_{i} \|_{H^{1}}^{2}  \Big )^{1/2}\Big ( \sum_{i=1}^{N}\|v_{i} \|_{L^{2}}^{2}  \Big )^{1/2} \Big (\frac{1}{N}\sum_{i=1}^{N}\|X_{i}\|_{L^{4}}^{2}  \Big ), \nonumber
	\end{align}
	where we used \eqref{diff2}.  Using Young's inequality with exponents $(2,2)$ leads to the last contribution to \eqref{est55}.
	The remaining contributions to $I_{2}^{N}$ are more involved to estimate.  Our next claim is that
	\begin{align}
	\frac{3}{N} \sum_{i=1}^{N} \Big\< v_{i}  \sum_{j=1}^{N}v_{j}X_{j}   , Z_{i}\Big\> &\leq \frac{1}{8} \Big ( \sum_{i=1}^{N}\|\nabla v_{i}\|_{L^{2}}^{2}+\Big \|\frac{1}{\sqrt{N}} \sum_{j=1}^{N}v_{j}X_{j} \Big \|_{L^{2}}^{2} \Big )+C(1+R_{N}^{3}+R_{N}^{4} )\sum_{i=1}^{N} \|v_{i}\|_{L^{2}}^{2}. \label{diff3}
	\end{align}
	
The basic setup is the same as the bound leading to \eqref{q1} via the inequality \eqref{s1} with $v_{i}\sum_{j=1}^{N}v_{j}X_{j}$ playing the role of $g$ and $Z_{i}$ playing the role of $f$, followed by an application of the Cauchy-Schwarz inequality for the summation in $i$. The LHS of \eqref{diff3} is then bounded by
	\begin{align}
%	&\frac{1}{N} \sum_{i=1}^{N} \Big\<v_{i}  \sum_{j=1}^{N}v_{j}X_{j}   , Z_{i}\Big\>  \nonumber  \\
	\sum_{s'\in\{s,0\}}
	\frac{1}{\sqrt{N} } \Big ( \sum_{i=1}^{N} \Big \|v_{i}  \sum_{j=1}^{N}v_{j}X_{j}  \Big \|_{L^{1}}^{2(1-s') } \Big \|\nabla \Big ( v_{i}  \sum_{j=1}^{N}v_{j}X_{j} \Big )  \Big \|_{L^{1}}^{2s' } \Big )^{\frac12}
	\Big (\frac{1}{N}\SZ  \Big )^{\frac12} .
	%+\frac{1}{\sqrt{N} } \Big ( \sum_{i=1}^{N}\Big \|v_{i}  \sum_{j=1}^{N}v_{j}X_{j}  \Big \|_{L^{1}}^{2} \Big )^{\frac12} \Big (\frac{1}{N}\SZ \Big )^{\frac12}
	\label{est11}
	\end{align}
	Using H\"older's inequality in the form $\big \| v_{i}\sum_{j=1}^{N}v_{j}X_{j} \big \|_{L^{1}} \leq \|v_{i}\|_{L^{2}}\big \|\sum_{j=1}^{N}v_{j}X_{j}\big \|_{L^{2}}$ together with
\begin{align}
	\Big \|\nabla \Big (  v_{i}\sum_{j=1}^{N}v_{j}X_{j}\Big ) \Big \|_{L^{1}}
	&\leq \|\nabla v_{i}\|_{L^{2}} \Big \|\sum_{j=1}^{N}v_{j}X_{j}\Big \|_{L^{2}}
	+\|v_{i}\|_{L^{2}}^{1/2}\|v_i\|_{H^1}^{1/2}\Big (\sum_{j=1}^{N}\|\nabla v_{j}\|_{L^{2}}^{2}  \Big )^{\frac12} \Big (\sum_{j=1}^{N} \|X_{j}\|_{L^{4}}^{2}   \Big )^{\frac12}\nonumber \\
	&+\|v_{i}\|_{L^{2}}^{1/2}\|v_i\|_{H^1}^{1/2}\Big (\sum_{j=1}^{N}\|v_{j}\|_{L^{2}}^{2}  \Big )^{1/4} \Big (\sum_{j=1}^{N}\|v_{j}\|_{H^1}^{2}  \Big )^{1/4}\Big (\sum_{j=1}^{N}\|\nabla X_{j}\|_{L^{2}}^{2}   \Big )^{1/2},\nonumber
\end{align}
	where we used \eqref{diff2},
	and inserting this into \eqref{est11} and applying H\"older's inequality for the summation in $i$ together with
	\begin{align}
	&\bigg ( \sum_{i=1}^{N}\|v_{i}\|_{L^{2}}^{2(1-s)}\|v_{i}\|_{L^{2}}^{s}\|v_{i}\|_{H^{1}}^{s} \bigg )^{\frac{1}{2}}  \leq \bigg ( \sum_{i=1}^{N}\|v_{i}\|_{L^{2}}^{2}   \bigg )^{\frac{2-s}{4} }\bigg ( \sum_{i=1}^{N}\|v_{i}\|_{H^{1}}^{2} \bigg )^{\frac{s}{4}},    \nonumber
	\end{align}
	we obtain a majorization by
	\begin{align}
	&\Big \|\frac{1}{\sqrt{N}} \sum_{j=1}^{N}v_{j}X_{j} \Big \|_{L^{2}} \Big (\sum_{i=1}^{N}\|v_{i}\|_{L^{2}}^{2}  \Big )^{\frac{1-s}{2}}\Big (\sum_{i=1}^{N} \|v_{i}\|_{H^{1} }^{2}  \Big )^{\frac{s}{2}}  \Big (  \frac{1}{N}\SZ\Big )^{1/2} \nonumber \\
	&+\Big \|\frac{1}{\sqrt{N}} \sum_{j=1}^{N}v_{j}X_{j} \Big \|_{L^{2}}^{1-s} \Big (\sum_{i=1}^{N}\|v_{i}\|_{L^{2}}^{2}  \Big )^{\frac{1}{2}-\frac{s}{4}}\Big (\sum_{i=1}^{N} \|  v_{i}\|_{H^1}^{2}  \Big )^{\frac{3s}{4}}
	 \Big (\frac{1}{N} \sum_{j=1}^{N} \|X_{j}\|_{L^{4}}^{2}   \Big )^{\frac{s}{2}} \Big (  \frac{1}{N}\SZ\Big )^{1/2} \nonumber \\
	&+\Big \|\frac{1}{\sqrt{N}} \sum_{j=1}^{N}v_{j}X_{j} \Big \|_{L^{2}}^{1-s} \Big (\sum_{i=1}^{N}\|v_{i}\|_{L^{2}}^{2}  \Big )^{\frac{1}{2}} \Big (\sum_{i=1}^{N}\|v_{i}\|_{H^1}^{2}  \Big )^{\frac{s}{2}}  \Big (\frac{1}{N}\sum_{j=1}^{N} \|\nabla X_{j}\|_{L^{2}}^{2}   \Big )^{\frac{s}{2}} \Big (  \frac{1}{N}\SZ\Big )^{1/2} \nonumber.
	\end{align}
	Finally, we apply Young's inequality with exponents $(2,\frac{2}{1-s},\frac{2}{s})$ for the first term, $(\frac{2}{1-s},\frac{4}{2-s},\frac{4}{3s})$ for the second term, and $(\frac{2}{1-s},2,\frac{2}{s})$ for the third term which leads to \eqref{diff3}.

	Similar to \eqref{diff3}, we now claim that
	\begin{align}
	\frac{1}{N} \sum_{i=1}^{N} \Big\<  \sum_{j=1}^{N}v_{i}v_{j}Y_{j}   , Z_{i}\Big\> &\leq \frac{1}{8} \Big ( \sum_{i=1}^{N}\|\nabla v_{i}\|_{L^{2}}^{2}+\frac{1}{N}\sum_{i,j=1}^{N}\|v_{i}Y_{j}\|_{L^{2}}^{2}   \Big )
	+C\Big(1+ \!\!\!\!\!\! \sum_{k\in\{3,5,6\}}\!\!\!\! \!\!R_{N}^{k} \Big)\sum_{i=1}^{N} \|v_{i}\|_{L^{2}}^{2}.\label{diff4}
	\end{align}
	
	The basic setup is again similar to the bound leading to \eqref{q1} via the inequality \eqref{s1} with $\sum_{j=1}^{N}v_{i}v_{j}Y_{j}$ playing the role of $g$ and $Z_{i}$ playing the role of $f$, followed by an application of the Cauchy-Schwarz inequality for the summation in $i$. The l.h.s. of \eqref{diff4} is then bounded by
	\begin{align}
	%&\frac{1}{N} \sum_{i=1}^{N} \Big\<  \sum_{j=1}^{N}v_{i}v_{j}Y_{j}   , Z_{i}\Big\>  \nonumber  \\
	\sum_{s'\in\{s,0\}}
	 \frac{1}{\sqrt{N} } \Big ( \sum_{i=1}^{N} \Big \|  \sum_{j=1}^{N}v_{i}v_{j}Y_{j}  \Big \|_{L^{1}}^{2(1-s') } \Big \| \sum_{j=1}^{N}\nabla (v_{i}v_{j}Y_{j})  \Big \|_{L^{1}}^{2s' } \Big )^{\frac12}
	\Big (\frac{1}{N}\SZ \Big )^{\frac12}.
	 %+\frac{1}{\sqrt{N} } \Big ( \sum_{i=1}^{N}\Big \| \sum_{j=1}^{N}v_{i}v_{j}Y_{j}  \Big \|_{L^{1}}^{2} \Big )^{\frac12} \Big (\frac{1}{N}\SZ \Big )^{\frac12}.
	 \label{diff5}
	\end{align}
	By H\"older's inequality and the Cauchy-Schwarz inequality we find
	\begin{equation}
	\Big \| \sum_{j=1}^{N}v_{i}v_{j}Y_{j} \Big \|_{L^{1}} \leq \Big ( \sum_{j=1}^{N}\|v_{i}Y_{j}\|_{L^{2}}^{2}\Big )^{1/2}\Big (\sum_{j=1}^{N}\|v_{j}\|_{L^{2}}^{2} \Big )^{1/2}, \nonumber
	\end{equation}
	together with \eqref{diff2} to have
\begin{align}
	\Big \| \sum_{j=1}^{N} & \nabla (v_{i}v_{j}Y_{j})  \Big \|_{L^{1}} \leq \Big (\sum_{j=1}^{N}\|v_{i}Y_{j}\|_{L^{2}}^{2}   \Big )^{1/2} \Big (\sum_{j=1}^{N}\|\nabla v_{j}\|_{L^{2}}^{2}  \Big )^{1/2} \nonumber \\
	&+\|\nabla v_{i}\|_{L^{2}} \Big (\sum_{j=1}^{N}\| v_{j}\|_{L^{4}}^{2}  \Big )^{1/2} \Big (\sum_{j=1}^{N} \| Y_{j}\|_{L^{4}}^{2}   \Big )^{1/2}
	+ \|v_{i}\|_{L^{4}} \Big (\sum_{j=1}^{N}\| v_{j}\|_{L^{4}}^{2}  \Big )^{1/2} \Big (\sum_{j=1}^{N} \|\nabla Y_{j}\|_{L^{2}}^{2}   \Big )^{1/2}. \nonumber
\end{align}
	Inserting this into \eqref{diff5} and applying H\"older's inequality for the summation in $i$ with exponents $(\frac{1}{1-s},\frac{1}{s})$ leads to a majorization by
	\begin{align}
	&\Big (\frac{1}{N}\sum_{i,j=1}^{N}\|v_{i}Y_{j}\|_{L^{2}}^{2}\Big )^{\frac{1}{2}} \Big (\sum_{i=1}^{N}\|v_{i}\|_{L^{2}}^{2}  \Big )^{\frac{1-s}{2}}\Big (\sum_{i=1}^{N} \|\nabla v_{i}\|_{L^{2} }^{2}  \Big )^{\frac{s}{2}}
	 \Big (  \frac{1}{N}\SZ\Big )^{1/2} \nonumber \\
	&+\Big (\frac{1}{N}\sum_{i,j=1}^{N}\|v_{i}Y_{j}\|_{L^{2}}^{2}\Big )^{\frac{1-s}{2}} \Big (\sum_{j=1}^{N}\|v_{j}\|_{L^{2}}^{2}  \Big )^{\frac{1-s}{2}}\Big (\sum_{j=1}^{N} \|v_{j}\|_{L^{4}}^{2}  \Big )^{\frac{s}{2}}
	  \Big (\sum_{i=1}^{N} \|\nabla v_{i}\|_{L^{2}}^{2}  \Big )^{\frac{s}{2}}  \Big (\frac{1}{N}\sum_{j=1}^{N} \|Y_{j}\|_{L^{4}}^{2}   \Big )^{\frac{s}{2}} \Big (  \frac{1}{N}\SZ\Big )^{\frac12} \nonumber\\
	& +\Big (\frac{1}{N}\sum_{i,j=1}^{N}\|v_{i}Y_{j}\|_{L^{2}}^{2}\Big )^{\frac{1-s}{2}} \Big (\sum_{i=1}^{N}\|v_{i}\|_{H^{1}}^{2}  \Big )^{\frac{s}{2}}\Big (\sum_{j=1}^{N} \|v_{j}\|_{L^{2}}^{2}  \Big )^{\frac{1}{2}}
	  \Big (\frac{1}{N}\sum_{j=1}^{N} \|\nabla Y_{j}\|_{L^{2}}^{2}   \Big )^{\frac{s}{2}} \Big (  \frac{1}{N}\SZ\Big )^{1/2}\nonumber .
	\end{align}
	Note that for the third term, we also took advantage of \eqref{diff2}.  We now apply Young's inequality with exponents $(2,\frac{2}{1-s},\frac{2}{s})$ for the first term, $(\frac{2}{1-s},\frac{2}{1-s},\frac{2}{s},\frac{2}{s} )$ for the second term, and $(\frac{2}{1-s},\frac{2}{s},2)$ for the third term, which leads to \eqref{diff4}.  Finally, combining \eqref{diff3} and \eqref{diff4} we obtain \eqref{est55}.

	{\sc Step} \arabic{diffEst} \label{diffEst3} \refstepcounter{diffEst} (Law of large numbers type bounds: estimates for $I_{3}^{N}$)
	
	For $I_{3}^{N}$ we obtain a bound in expectation in the spirit of the law of large numbers in a Hilbert space to generate cancellations.
	To this end, we define
	\begin{equation}
	G_{j}\eqdef  (X_{j}^2-\E X_j^2)+2(X_jZ_{j}-\E X_{j}Z_{j})+(\Wick{Z_{j}^{2} }-\E\Wick{Z_{j}^{2} })\eqdef G_j^{(1)}+G_j^{(2)}+G_j^{(3)}.\nonumber
	\end{equation}
	We show there is a universal constant $C$ such that
	\begin{align}\label{eq:zz}
	I_3^N & \leq C(\bar{R}_N+\bar{R}_N^\prime)+\frac18\frac1N \Big\|\sum_{i=1}^NX_iv_i\Big\|_{L^2}^2+\frac{1}{4}\sum_{i=1}^N\| v_i\|_{H^1}^2
	\\&+C\Big(\sum_{i=1}^N\|v_i\|_{L^2}^2\Big)
	\Big[R_N^7+1
	+\frac{1}{N}\sum_{i=1}^N\Big(\|X_i\|_{L^4}^{4/(1-2s)}+\|\Lambda^sX_i\|_{L^4}^4\Big)\Big],\nonumber
	\end{align}
	with
	\begin{align*}
	\bar{R}_N & \eqdef \frac{1}{N}\Big\|\sum_{j=1}^NG_j^{(1)}\Big\|_{H^{s}}^2
	+\!\!\! \sum_{k\in\{2,3\}}\frac{1}{N}\Big\|\sum_{j=1}^N G_j^{(k)}\Big\|_{H^{-s}}^2 ,
	%+\frac{1}{N}\Big\|\sum_{j=1}^NG_j^3\Big\|_{H^{-s}}^2,
	\qquad
	\bar{R}_N^\prime  \eqdef \sum_{k\in\{2,3\}} \frac{1}{N^2}\sum_{i=1}^N \| \sum_{j=1}^N G_j^{(k)} Z_i\|_{H^{-s}}^2,
	%+\frac{1}{N^2}\sum_{i=1}^N \| \sum_{j=1}^NG_j^3Z_i\|_{H^{-s}}^2,
	\\
	R_N^7 & \eqdef \Big(\frac{1}{N}\sum_{i=1}^N\|\Lambda^{-s}Z_i\|_{L^\infty}^2\Big)^{\frac{1}{1-s}}.
	\end{align*}
	
	We write $I_3^N=\sum_{k=1}^3(I_{3,k}^N+J_{3,k}^N)$ with
	$$I_{3,k}^N\eqdef\frac{1}{N}\sum_{i=1}^N\Big\<\sum_{j=1}^N G_j^{(k)} X_i,v_i \Big\>,\qquad J_{3,k}^N\eqdef\frac{1}{N}\sum_{i=1}^N\Big\<\sum_{j=1}^N G_j^{(k)} Z_i,v_i \Big\>.$$
	We consider each term separately: For $I_{3,1}^N$ we have the following
	\begin{align*}I_{3,1}^N
	\leq \frac{1}{N}\Big\|\sum_{j=1}^N G_j^{(1)}\Big\|_{L^2} \Big\|\sum_{i=1}^NX_iv_i\Big\|_{L^2}\leq
	C\frac{1}{N}\Big\|\sum_{j=1}^N G_j^{(1)}\Big\|_{L^2}^2+\frac18\frac1N \Big\|\sum_{i=1}^NX_iv_i\Big\|_{L^2}^2
	.
	\end{align*}
	For $J_{3,1}^N$ we use  \eqref{e:Lambda-prod}, the interpolation Lemma \ref{lem:interpolation} and Young's inequality to obtain
\begin{align*}
	J_{3,1}^N&=\frac{1}{N}\sum_{i=1}^N \Big\langle \Lambda^s \Big(\sum_{j=1}^NG_j^{(1)}v_i \Big),\Lambda^{-s}Z_i\Big\rangle
	\\&\le \frac{1}{N}\sum_{i=1}^N \Big[\Big\|\Lambda^{s} \sum_{j=1}^NG_j^{(1)}\Big\|_{L^2}\|v_i\|_{L^2}+\Big\| \sum_{j=1}^NG_j^{(1)}\Big\|_{L^2}\|\Lambda^{s}v_i\|_{L^2}\Big]\|\Lambda^{-s}Z_i\|_{L^\infty}
	\\&
	\le \frac{C}{N}\Big\|\sum_{j=1}^NG_j^{(1)}\Big\|_{H^{s}}^2
	+\frac{1}{20}\sum_{i=1}^N\| v_i\|_{H^1}^2
	+C \!\!\!  \sum_{s'\in\{0,s\}} \!\!\!\! \Big(\sum_{i=1}^N\|v_i\|_{L^2}^2\Big)
		\Big(\frac{1}{N}\sum_{i=1}^N\|\Lambda^{-s}Z_i\|_{L^\infty}^2\Big)^{\frac{1}{1-s'}}.
%	+\Big(\sum_{i=1}^N\|v_i\|_{L^2}^2\Big)
%	\Big(\frac{1}{N}\sum_{i=1}^N\|\Lambda^{-s}Z_i\|_{L^\infty}^2\Big)^{\frac{1}{1-s}} .
\end{align*}
	For $I_{3,2}^N$ we have
	\begin{align*}
	I_{3,2}^N
	&=\frac{1}{N}\sum_{i=1}^N \Big\langle\Lambda^{-s} \sum_{j=1}^NG_j^{(2)},\Lambda^s(X_iv_i)\Big\rangle
	 \le  \frac{C}{N}\Big\|\sum_{j=1}^NG_j^{(2)}\Big\|_{H^{-s}}^2+\frac{C}{N}\Big(\sum_{i=1}^N\|\Lambda^s(X_iv_i)\|_{L^2}\Big)^2
	\\& \le  \frac{C}{N}\Big\|\sum_{j=1}^NG_j^{(2)}\Big\|_{H^{-s}}^2
	+\frac{1}{20}\sum_{i=1}^N\| v_i\|_{H^1}^2+
	C\Big(\sum_{i=1}^N\|v_i\|_{L^2}^2\Big)\Big(\frac{1}{N}\sum_{i=1}^N(\|X_i\|_{L^4}^{4/(1-2s)}+\|\Lambda^sX_i\|_{L^4}^4)\Big),
	\end{align*}
where we used \eqref{e:Lambda-prod}, \eqref{diff2}  to have
	\begin{equation}\label{bd:sob}\aligned
	&\frac{1}{N}\Big(\sum_{i=1}^N\|\Lambda^s(X_iv_i)\|_{L^2}\Big)^2\lesssim \frac{1}{N}\Big(\sum_{i=1}^N\|\Lambda^sX_i\|_{L^4}\|v_i\|_{L^4}+\|\Lambda^sv_i\|_{L^4}\|X_i\|_{L^4}\Big)^2
	\\ & \lesssim \frac{1}{N}\Big(\sum_{i=1}^N\|v_i\|_{H^1}^{1/2}\|v_i\|_{L^2}^{1/2}\|\Lambda^sX_i\|_{L^4}+\sum_{i=1}^N\| v_i\|_{H^1}^{\frac{1}{2}+s}\|v_i\|_{L^2}^{\frac12-s}\|X_i\|_{L^4}\Big)^2
	%\\ & \lesssim\frac{1}{20}\sum_{i=1}^N\| v_i\|_{H^1}^2+\Big(\sum_{i=1}^N\|v_i\|_{L^2}^2\Big)\Big(\frac{1}{N}\sum_{i=1}^N(\|X_i\|_{L^4}^{4/(1-2s)}+\|\Lambda^sX_i\|_{L^4}^4)\Big)
	,\endaligned\end{equation}
followed by H\"older inequality with exponents $(4,4,2)$, $(\frac4{1+2s},\frac4{1-2s},2)$, Young's inequality  and finally  Jensen's inequalities for the terms with $ X_i$
	in the last inequality.
For $J_{3,2}^N$ we have
	\begin{align*}
	J_{3,2}^N\lesssim\frac{1}{N^2}\sum_{i=1}^N \| \sum_{j=1}^NG_j^{(2)}Z_i\|_{H^{-1}}^2+\frac{1}{20}\sum_{i=1}^N\| v_i\|_{H^1}^2.
\end{align*}
For $I_{3,3}^N$ we have
\begin{align*}
	I_{3,3}^N
	&\lesssim \frac{1}{N}\Big\|\sum_{j=1}^NG_j^{(3)}\Big\|_{H^{-s}}^2+\frac{1}{N}\Big(\sum_{i=1}^N\|\Lambda^s (X_i v_i)\|_{L^2}\Big)^2
	\\ & \lesssim  \frac{1}{N}\Big\|\sum_{j=1}^NG_j^{(3)}\Big\|_{H^{-s}}^2+\frac{1}{20}\sum_{i=1}^N\| v_i\|_{H^1}^2 +
	\Big(\sum_{i=1}^N\|v_i\|_{L^2}^2\Big)\Big(\frac{1}{N}\sum_{i=1}^N(\|X_i\|_{L^4}^{4/(1-2s)}+\|\Lambda^sX_i\|_{L^4}^4)\Big),
	\end{align*}
	where we used \eqref{bd:sob} in the last inequality.
	For the last term we have
	\begin{align*}J_{3,3}^N
	\lesssim& \frac{1}{N^2}\sum_{i=1}^N\|\sum_{j=1}^NG_j^{(3)}Z_i\|_{H^{-1}}^2+\frac{1}{20}\sum_{i=1}^N\|v_i\|_{H^1}^2
	.\end{align*}
	Combining all the estimates for $I_{3,k}^N$ and $J_{3,k}^N$ we arrive at \eqref{eq:zz}. In the following we calculate
	$\E\|\bar{R}_N\|_{L^1_T}+\E\|\bar{R}_N^\prime\|_{L^1_T}$.
	To this end, we recall the  general fact \eqref{eq:Ui} for centered  independent Hilbert space-valued random variables.
%	 for mean-zero independent random variables $U_{1},\dots,U_{N}$ taking values in a Hilbert space $H$, it holds that
%	\begin{equation}
%	\E \Big \| \sum_{i=1}^{N}U_{i} \Big \|_{H}^{2} =\E \sum_{i=1}^{N}\|U_{i}\|_{H}^{2}.\label{eq:Ui}
%	\end{equation}
	Applying \eqref{eq:Ui} we obtain
	\begin{align*}\E\|\bar{R}_N\|_{L^1_T}\lesssim \E\|G_1^{(1)}\|_{L_T^2H^s}^2+\E\|G_1^{(2)}\|_{L_T^2H^{-s}}^2+\E\|G_1^{(3)}\|_{L_T^2H^{-s}}^2.\end{align*}
	It is obvious that $\E\|G_1^{(3)}\|_{L_T^2H^{-s}}^2\lesssim1$.
	By Lemma \ref{lem:Lp} we know
	\begin{align}
	\E\|G_1^{(1)}\|_{L_T^2H^s}^2 & \lesssim \int_0^T\E(\|X_1\nabla X_1\|_{L^2}^2+\|X_1\|_{L^4}^4)\dif t\lesssim1,	\nonumber
\\
	\E\|G_1^{(2)}\|_{L_T^2H^{-s}}^2 &\lesssim\int_0^T \E[\|Z_1\|_{\bC^{-s/2}}^2\|X_1\|_{H^s}^2]\dif t
	\lesssim\int_0^T (\E[\|X_1\|_{H^1}^2+\|X_1\|_{L^4}^4]+1)\dif t\lesssim1,	\label{eq:zz2}
	\end{align}
	where we used Lemma \ref{lem:interpolation} and  Lemma \ref{lem:multi}. Therefore $\E\|\bar{R}_N\|_{L^1_T}\lesssim1$.
	For $\bar{R}_N^\prime$ we have
	\begin{align*}
	&\E\frac{1}{N^2}\sum_{i=1}^N \| \sum_{j=1}^NG_j^{(2)}Z_i\|_{H^{-s}}^2=\frac{1}{N^2}\sum_{i,j,\ell=1}^N\E \<G_j^{(2)}Z_i,G_\ell^{(2)} Z_i\>_{H^{-s}}
	\\
	&=\frac{1}{N^2}\Big[\sum_{i=j=\ell}+2\sum_{i=j\neq \ell}+\sum_{\ell=j\neq i}\Big]\lesssim\E\|X_1\Wick{Z_1^2}\|_{H^{-s}}^2+\E\|X_1\Wick{Z_1Z_2}\|_{H^{-s}}^2,
	\end{align*}
	where we used independence to have $\sum_{i\neq j\neq \ell}=0$.
	Similarly, we have
	\begin{align*}
	&\E\frac{1}{N^2}\sum_{i=1}^N \| \sum_{j=1}^NG_j^{(3)}Z_i\|_{H^{-s}}^2\lesssim\E\|\Wick{Z_1^3}\|_{H^{-s}}^2+\E\|\Wick{Z_1^2Z_2}\|_{H^{-s}}^2
	.\end{align*}
	Combining the above two estimates and using Lemma \ref{le:ex} and the same argument as in \eqref{eq:zz2} with $Z_1$ replaced by $\Wick{Z_1Z_2}$ and $\Wick{Z_1^2}$, we obtain  $\E\|\bar{R}_N^\prime\|_{L^1_T}\lesssim1$.

	{\sc Step} \arabic{diffEst} \label{diffEst4} \refstepcounter{diffEst} (Convergence of $v_{i}$ to zero in $L^{2}(\Omega)$)
	
	We now combine our estimates and conclude the proof of \eqref{e:diff-t-0}.  Namely, we insert the estimates \eqref{diff1} and \eqref{est55} into \eqref{diff11} and also appeal to our bounds from Step \ref{diffEst3} to obtain
	\begin{align}
	\frac{\dif}{\dif t}\sum_{i=1}^{N } & \|v_{i}\|_{L^{2}}^{2} \leq C(\bar{R}_{N}+\bar{R}_N^\prime)
	+C\Big(1+\sum_{i=2}^{7}R_{N}^{i}+  \frac{1}{N}\sum_{i=1}^{N}\big(\|X_{i}\|_{L^{4}}^{4/(1-2s)}+\|\Lambda^sX_i\|_{L^4}^4\big)  \Big)\sum_{i=1}^{N}\|v_{i}\|_{L^{2}}^{2}, \label{finEs1}
	\end{align}
	where $\bar{R}_{N}+\bar{R}_{N}^\prime$ is uniformly bounded in $L^{1}(\Omega \times [0,T])$.  Furthermore, by  Lemma \ref{le:ex},  Lemma \ref{Y:L2}, Lemma \ref{lem:Lp} and \eqref{diff2} we deduce also that $R_{N}^{i}$ is uniformly bounded in $L^{1}(\Omega \times [0,T])$ for each $i=2,\dots,7$.
	%\begin{align}
	%\sum_{i=2}^{7}\E \int_{0}^{T}R_{N}^{i} \dif t  <\infty \nonumber.
	%\end{align}
	By the Gagliardo-Nirenberg inequality in Lemma \ref{lem:interpolation} we have for $s\geq 2\kappa$,  $r>4$, $\frac{1}{4}=\frac{s}{2}+\frac{1-s}{r}$, $\|\Lambda^sX_i\|_{L^4}^4\lesssim\|X_i\|_{H^1}^{4s}\|X_i\|_{L^r}^{4(1-s)}$,  which combined with  Lemma \ref{lem:Lp}  implies that
	\begin{align*}
	\E \int_{0}^{T}  \frac{1}{N}\sum_{i=1}^{N} \big(\|X_{i}\|_{L^{4}}^{\frac4{1-2s}}+\|\Lambda^sX_i\|_{L^4}^4  \big )\dif t
	 \lesssim
	 \E \int_{0}^{T} \frac{1}{N}\sum_{i=1}^{N} \big(\|X_i\|_{H^1}^2+\|X_i\|_{L^{\frac4{1-2s}}}^{\frac4{1-2s}}+1 \big)  \dif t<\infty .
	\end{align*}
	We now divide \eqref{finEs1} by $N$ and use the above observations together with Gronwall's inequality.  Note that in light of Assumption \ref{a:main}, it holds that
	\begin{equation}
	\frac{1}{N}\sum_{i=1}^{N}\|v_{i}(0)\|_{L^{2}}^{2}\to^{\mathbf{P}} 0.
	\end{equation}
	%so in particular, $\frac{1}{N} \sum_{i=1}^{N}\|v_{i}(0)\|_{L^{2}}^{2} \to 0$ in probability.
	It now follows that
	\begin{align}\label{eq:zz3}
	\sup_{t\in[0,T]}\frac{1}{N}\sum_{i=1}^{N}\|v_{i}\|_{L^{2}}^{2}+\frac{1}{N}\sum_{i=1}^{N}\|v_{i}\|_{L_T^2H^1}^{2}+\frac{1}{N^2}\sum_{i,j=1}^N\|Y_jv_i\|_{L^2_TL^2}^2+\frac{1}{N^2}\|\sum_j X_jv_j\|_{L_T^2L^2}^2
	\end{align}
	 converges to zero in probability by Lemma \ref{lem:pro} below.  We now upgrade this from convergence in probability to convergence in $L^{1}(\Omega)$ by bounding higher moments and applying Vitali's      convergence theorem. Only in this part we use the condition that the initial conditions $(z_i^N,y_i^N,z_i,\eta_i)_{i=1}^N$ are exchangeable, which implies that the law of $v_i(t)$ and $v_j(t)$, $i\neq j$ are the same.

	  Indeed, first note that $\sup_{t\in[0,T]}\frac{1}{N}\sum_{i=1}^{N} \|Y_{i}\|_{L^{2}  }^{2}$ is uniformly bounded in $L^{q}(\Omega)$ for $q$ in Assumption \ref{a:main} by Lemma \ref{Y:L2}.  Additionally, by Lemma \ref{lem:Lp} , Jensen's inequality, and the fact that $X_{i}$ and $X_{j}$ are identically distributed (which follows from the i.i.d. hypothesis in Assumption \ref{a:main} ) it holds
	\begin{align}
	\sup_{N \geq 1} \sup_{t \in [0,T]} \E \bigg (\frac{1}{N}\sum_{i=1}^{N}\|X_{i}(t) \|_{L^{2} }^{2}  \bigg )^{2} \leq \sup_{t \in [0,T]}\E\|X_{1}(t)\|_{L^{2}}^{4}<\infty. \nonumber
	\end{align}
	Notice that at this stage we are appealing to the assumption $\E \|\eta_{i}\|_{L^{p_{0}}}^{p_{0}} \lesssim 1$ in order to meet the hypotheses of Lemma \ref{lem:Lp} and deduce the final step above.  Hence, by the triangle inequality we find that
	\begin{align}
	\sup_{N \geq 1} \sup_{t \in [0,T]}\E \bigg (\frac{1}{N}\sum_{i=1}^{N}  \|v_{i}(t) \|_{L^{2} }^{2}  \bigg )^{q}<\infty,\nonumber
	\end{align}
	which implies the following convergence upgrade: $\frac{1}{N}\sum_{i=1}^{N}\|v_{i}(t)\|_{L^{2}}^{2}$ converges to zero in $L^{1}(\Omega)$ for each $t \in [0,T]$.  Finally, we appeal once more to the first bullet point in Assumption \ref{a:main} which is designed to ensure that $v_{i}$ and $v_{j}$ have the same law.  As a consequence we can now pass from empirical averages to components in light of
	\begin{equation}
	\E \|v_{i}(t)\|_{L^{2}}^{2}=\frac{1}{N}\sum_{i=1}^{N}\E \|v_{i}(t)\|_{L^{2}}^{2} \to 0.\label{e:Evi-sum-Evi}
	\end{equation}
	%completing the proof of the first result.
	
	{\sc Step} \arabic{diffEst} \label{diffEst6} \refstepcounter{diffEst} (Convergence as a stochastic process) %\zhu{how to modify this step?}

	The proof is largely the same as above, except that we do not estimate $v_i$ by an average over $i $ as in \eqref{e:Evi-sum-Evi}, since a supremum over time would not commute with a sum over $i$. Instead we deduce the following  bound
\begin{align}
	&\frac{\dif}{\dif t}\|v_{i}\|_{L^{2}}^{2}  +\frac12 \|v_{i}\|_{H^{1}}^{2}
	\leq
	C\Big(\frac{\bar{R}_{N}}{N}+\tilde{\bar{R}}_N^\prime\Big)
	+\frac{1}{N}\sum_{i=1}^N\|v_{i}\|_{H^1}^{2}
	+\frac{1}{N^2}\Big\|\sum_j X_jv_j\Big\|_{L^2}^2\label{e:nest}
\\&
	+C\Big(1+\tilde{R}_N^{21}%+\tilde{R}_N
	+\!\!\! \sum_{k\in\{3,5\}}\!\!\! R_{N}^{k}+ \!\!\!\! \sum_{k\in\{3,4,7\}}\!\!\!\! \tilde{R}_{N}^{k}
	+\frac{1}{N}\sum_{j=1}^{N}\|X_{j}\|_{L^{4}}^{4}
	+ \Big(\|X_{i}\|_{L^{4}}^{4/(1-2s)}+\|\Lambda^sX_i\|_{L^4}^4 \Big)   \Big)\|v_{i}\|_{L^{2}}^{2}\nonumber\\&
	+C\Big(1+\tilde{R}_N^{22}+\tilde{R}_{N}^{3}+\tilde{R}_{N}^{5}+\tilde{R}_{N}^{6}+\|X_i\|_{L^4}^4 \Big)\frac{1}{N}\sum_{j=1}^N\|v_{j}\|_{L^{2}}^{2},\label{nest2}
\end{align}
	where all the ``tilde $R$-terms'' are defined analogously to their ``un-tilde'' counterparts with slight tweaks:
\begin{equs}
\tilde{\bar{R}}_N^\prime &\eqdef\frac{1}{N^2} \Big\| \sum_{j=1}^NG_j^2Z_i\Big\|_{H^{-s}}^2
	+\frac{1}{N^2} \Big\| \sum_{j=1}^NG_j^3Z_i\Big\|_{H^{-s}}^2,
\\
	%$$\tilde{R}_N\eqdef R_{N}^{3}+R_{N}^5+\sum_{i=3}^{4}\tilde{R}_{N}^{i}+\tilde{R}_N^7.$$
\tilde{R}_N^{21}&\eqdef\frac{1}{N}\sum_{j=1}^N\|\Wick{Z_j^2}\|^{2/(2-s)}_{\bC^{-s}},
\qquad\tilde{R}_N^{22}\eqdef\frac{1}{N}\sum_{j=1}^N\|\Wick{Z_iZ_j}\|^{2}_{\bC^{-s}},
\\
\tilde{R}_N^{3}&\eqdef\|Z_i\|^{2/(1-s)}_{\bC^{-s}},\qquad
\tilde{R}_{N}^{5}\eqdef \Big (1+\frac{1}{N} \sum_{j=1}^{N} \|\nabla Y_{j} \|_{L^{2}}^{2}   \Big )^{\frac{s}{1-s}}   \|Z_{i}\|_{\bC^{-s}}^{\frac2{1-s}},
\\
	\tilde{R}_{N}^{4}&\eqdef \Big (1+\frac{1}{N} \sum_{j=1}^{N} \|X_{j}\|_{H^1}^{2}   \Big )^{\frac{2s}{2-s}}  (\|Z_{i}\|_{\bC^{-s}}^{\frac{4}{2-s}}+1)
	 +\Big(\frac{1}{N} \SZ\Big)^{\frac{2}{2-s}}\|X_{i}\|_{H^1}^{\frac{4s}{2-s}} +\Big(\frac{1}{N} \SZ\Big)^{2}\|X_{i}\|_{L^4}^{2},
\\
\tilde{R}_N^6&\eqdef \Big (\frac{1}{N}\sum_{j=1}^{N}\|Y_{j} \|_{L^{4}}^{2}    \Big )^{\frac{s}{1-s} }\|Z_{i}\|_{\bC^{-s}}^{\frac{2}{1-s}},\qquad
	\tilde{R}_N^{7}\eqdef\|\Lambda^{-s}Z_i\|^{2/(1-s)}_{L^\infty},
\end{equs}
	%+ \Big (\frac{1}{N}\sum_{j=1}^{N}\|\nabla X_{j}\|_{L^{2}}^{2}    \Big )^{\frac{2s}{2-s} } \|Z_{i}\|_{B^{-s}_{\infty,\infty}}^{\frac{4}{2-s}}.  \nonumber \\
where $\SZ$ is as in \eqref{e:def-SZ}.
In fact all the terms are similar as above except the following two terms:
	$$-\frac{1}{N}\sum_{j=1}^N\int X_iX_jv_jv_i \dif x-\frac{2}{N}\sum_{j=1}^N\< X_iv_jv_i, Z_j\>:=J_1+J_2.$$
The term $J_1$ is treated differently than above, since without the sum over $i$ we could not move it to the l.h.s. as a coercive quantity. 	We have
	\begin{align*}
	J_1 & \leq \frac{1}{N}\sum_{j=1}^N\int v_j^2X_i^2 \dif x+\frac{1}{N}\sum_{j=1}^N\int v_i^2X_j^2 \dif x
	\\& \leq C \Big (\frac{1}{N} \sum_{j=1}^{N}\|v_{j} \|_{L^{4}}^{2}  \Big ) \|X_{i}\|_{L^{4}}^{2} +C\|v_i\|_{L^4}^2\frac{1}{N}\sum_{j=1}^N\|X_j\|_{L^4}^2
	\\& \leq C \Big (\frac{1}{N}  \sum_{j=1}^{N}\|v_{j} \|_{H^{1}}^{2}  \Big )^{1/2}\Big ( \frac{1}{N} \sum_{j=1}^{N}\|v_{j} \|_{L^{2}}^{2}  \Big )^{1/2}\|X_{i}\|_{L^{4}}^{2}+\frac{1}{8}\|v_i\|_{H^1}^2+C\|v_i\|_{L^2}^2\Big(\frac{1}{N}\sum_{j=1}^N\|X_j\|_{L^4}^2\Big)^2,
	\end{align*}
	which by  Young's inequality deduce one contribution to \eqref{nest2}.
	For the second term we have
	\begin{align}
	J_2
	&\lesssim
	\sum_{s'\in \{0,s\}}
	\Big (\frac{1}{N } \sum_{j=1}^{N} \|v_{i}v_{j}X_{i} \|_{L^{1}}^{2(1-s') } \|\nabla ( v_{i} v_{j}X_{i}  )  \|_{L^{1}}^{2s' } \Big )^{\frac12} \Big (\frac{1}{N} \SZ \Big )^{\frac12} .
	%+\Big (\frac{1}{N } \sum_{j=1}^{N} \|v_{i}  v_{j}X_{i}  \|_{L^{1}}^{2} \Big )^{\frac12} \Big (\frac{1}{N}\SZ \Big )^{\frac12}. 	
	\label{nest11}
	\end{align}
	Using H\"older's inequality in the form $ \| v_{i}v_{j}X_{i} \|_{L^{1}} \leq \|v_{i}\|_{L^{2}}\|v_{j}X_{i}\|_{L^{2}}$ together with the bound for the first term in $J_1$ we obtain the estimate for $s'=0$ in $J_2$, which corresponds to the last term in $\tilde{R}_N^4$. Moreover, we have
	\begin{align}
	\|\nabla (v_{i}v_{j}X_{i} )  \|_{L^{1}} &\leq \|\nabla v_{i}\|_{L^{2}} \|v_{j}X_{i} \|_{L^{2}} +\|v_{i}\|_{L^{2}}^{1/2}\|v_i\|_{H^1}^{1/2}\|\nabla v_{j}\|_{L^{2}} \|X_{i}\|_{L^{4}}\nonumber \\
	&\qquad +\|v_{i}\|_{L^{2}}^{1/2}\|v_i\|_{H^1}^{1/2}\|v_{j}\|_{L^{2}}^{1/2} \|v_{j}\|_{H^1}^{1/2}\|\nabla X_{i}\|_{L^{2}},\nonumber
	\end{align}
	and inserting this into the term $s'=s$ in \eqref{nest11} and applying H\"older's inequality for the summation in $j$ leads to the following
	\begin{align}
	&\Big(\frac{1}{N} \sum_{j=1}^{N}\|v_{j}X_{i}  \|^2_{L^{2}} \Big )^{1/2}\|v_{i}\|_{L^{2}}^{1-s}\|v_{i}\|_{H^{1} }^{s} \Big (  \frac{1}{N}  \SZ \Big )^{1/2} \nonumber \\
	&+\Big(\frac{1}{N} \sum_{j=1}^{N}\|v_{j}X_{i}  \|^{2(1-s)}_{L^{2}}\|v_{j}\|_{H^{1} }^{2s} \Big )^{1/2} \|v_{i}\|_{L^{2}}^{1-s/2}\|v_{i}\|_{H^{1} }^{s/2} \|X_{i}\|_{L^{4}}^s \Big (  \frac{1}{N} \SZ\Big )^{1/2} \nonumber \\
	&+\Big(\frac{1}{N}\sum_{j=1}^{N} \| v_{j}X_{i} \|_{L^{2}}^{2(1-s)}\|v_j\|_{H^1}^s\|v_j\|_{L^2}^s\Big)^{1/2}\|\nabla X_i\|_{L^2}^s\|v_{i}\|_{L^{2}}^{1-\frac{s}2} \|v_{i}\|_{H^1}^{\frac{s}2}
	\Big (  \frac{1}{N} \SZ \Big )^{1/2} \nonumber.
	\end{align}
	Finally, we apply Young's inequality and obtain the contribution of $\tilde{R}_N^4$ in the estimate \eqref{nest2}.
	
	Using the fact that \eqref{eq:zz3} converges to zero in probability, we deduce the $L^1(0,T)$ norm of \eqref{nest2} and the right hand side of \eqref{e:nest} converges to zero in probability.
	Then by Gronwall's inequality and Lemma
	\ref{lem:pro} imply $\sup_{t\in[0,T]}\|v_i(t)\|_{L^2}^2\to0$ in probability, as $N\to \infty$.
In this step we see that we don't use the condition that the initial conditions $(z_i^N,y_i^N,z_i,\eta_i)_{i=1}^N$ are exchangeable.

	{\sc Step} \arabic{diffEst} \label{diffEst5} \refstepcounter{diffEst} (General initial datum)
	% \zhu{how to modify this step?}
	To this end, define $u_{i}\eqdef S_t(z_i^N-z_i)$ and note that we have the following extra terms:
	\begin{align*}
	\bar{I}^N&:=-\frac{1}{N}\sum_{i,j=1}^N \bigg[\<Y_j^2,v_iu_i\>+2\<Y_jY_iu_j,v_i \>+2\<Y_jv_i,\Wick{Z_i^NZ_j^N}-\Wick{Z_iZ_j}\>\nonumber\\
	&\qquad\qquad\qquad+\<Y_iv_i,\Wick{Z_j^{N,2}}-\Wick{Z_j^2}\>+\<v_i,\Wick{Z_i^NZ_j^{N,2}}-\Wick{Z_iZ_j^2}\>\bigg]
	\end{align*}
	These  terms could also be estimated similarly as that for $I_1^N$ and $I_2^N$ by using $\|u_i\|_{\bC^s}\lesssim t^{-(s+\kappa)/2}\|z_i^N-z_i\|_{\bC^{-\kappa}}$.
Since the proof follows a similar line of argument as in Steps \ref{diffEst7}--\ref{diffEst2}, we place the details in appendix.
\end{proof}

%\br
%From the proof in Step 5 and Step 6, the first assumption in Assumption \ref{a:main} could be weakened to
%the condition that the random variables
%$\{(z_i, \eta_i) \}_{i=1}^{N}$ are i.i.d.. In this case,
%for every $i$ and every $T>0$, $\|v_i^N\|_{C_TL^2}$ converges to zero in probability, as $N\to \infty$.
%\er

%\zhu{add remark here}\hao{Now the supplementary file is titled as ``Estimates in Step 7 of Theorem 5.1 in main file.'' It should be ``Detailed estimates for Remark 4.2 in main file.''} \zhu{I made changes.}\hao{Is the supplementary file in our shared folder ``main tex''? I can't see it.}

%\br
%Theorem \ref{th:conv-v} also holds under the more general assumptions  that $z_{i}^{N} \neq z_{i} $ and for every $p\geq1$ $$\E[\|z_i^N-z_i\|_{\bC^{-\kappa}}^p]\to0,\quad \frac1N\sum_{i=1}^N\|z_i^N-z_i\|_{\bC^{-\kappa}}^p\to^{\mathbf{P}}0\quad \mbox{ as }N\to\infty,$$
%	$$\E[\|z_i^N\|_{\bC^{-\kappa}}^p+\|z_i\|_{\bC^{-\kappa}}^p]\lesssim1.$$
	   %To this end, define $u_{i}\eqdef S_t(z_i^N-z_i)$ and note that we have the following extra terms:
%\begin{align*}
%\bar{I}^N&:=-\frac{1}{N^2}\sum_{i,j=1}^N \bigg[\<Y_j^2,v_iu_i\>+2\<Y_jY_iu_j,v_i \>+2\<Y_jv_i,\Wick{Z_i^NZ_j^N}-\Wick{Z_iZ_j}\>\nonumber\\
%&\qquad\qquad\qquad+\<Y_iv_i,\Wick{Z_j^{N,2}}-\Wick{Z_j^2}\>+\<v_i,\Wick{Z_i^NZ_j^{N,2}}-\Wick{Z_iZ_j^2}\>\bigg]
%\end{align*}
%\begin{align*}
%\bar{I}_{1}^N&:=-\frac{1}{N^2}\sum_{i,j=1}^N [\<Y_j^2,v_iu_i\>+2\<Y_jv_iu_i, Z_j^N\>+\<\Wick{Z_j^{N,2}},v_iu_i\>]\nonumber\\
%\bar{I}_{2}^N&:=-\frac{1}{N^2}\sum_{i,j=1}^N\<v_iu_j, (\Phi_j+\Psi_j)\Psi_i\>.\nonumber
%\end{align*}

%

We recall the following result from probability theory, used in Steps \ref{diffEst3} and \ref{diffEst6} of the above lemma, %which % from classical probability theory
which can be deduced with elementary arguments.
\begin{lemma}\label{lem:pro}
	Let $\{U_{N}\}_{N=1}^{\infty}$ be a non-negative sequence of 1d random variables converging to zero in probability.  Let $\{V_{N}\}_{N=1}^{\infty}$ be a non-negative sequence of random variables with tight laws.  Then the sequence $\{U_{N}V_{N}\}_{N=1}^{\infty}$ converges to zero in probability.
\end{lemma}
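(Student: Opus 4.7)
The plan is to use the standard $\epsilon$-$\delta$ characterization of convergence in probability together with tightness to decompose the event $\{U_N V_N > \epsilon\}$ into two pieces: one where $V_N$ is controlled uniformly in $N$ by tightness, and another where $V_N$ is large but has small probability.

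More concretely, fix arbitrary $\epsilon, \delta > 0$. By tightness of the laws of $\{V_N\}_{N \geq 1}$, I can choose a threshold $M = M(\delta) > 0$ such that $\mathbf{P}(V_N > M) < \delta/2$ uniformly in $N$. On the complementary event $\{V_N \leq M\}$, one has the pathwise bound $U_N V_N \leq M \cdot U_N$, so
\[
\{U_N V_N > \epsilon\} \subseteq \{V_N > M\} \cup \{U_N > \epsilon/M\}.
\]
Applying a union bound gives $\mathbf{P}(U_N V_N > \epsilon) \leq \delta/2 + \mathbf{P}(U_N > \epsilon/M)$. Since $U_N \to 0$ in probability and $\epsilon/M$ is a fixed positive number, the second term is smaller than $\delta/2$ for all $N$ sufficiently large. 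Combining yields $\mathbf{P}(U_N V_N > \epsilon) < \delta$ for all $N$ large, which is precisely convergence in probability.

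There is no serious obstacle here; this is a standard probability fact whose main content is the elementary observation that tightness provides a uniform-in-$N$ bound on $V_N$ on a set of large probability, which is exactly what is needed to ``transfer'' the convergence in probability from $U_N$ to the product $U_N V_N$. Non-negativity is used only to ensure that the pathwise bound $U_N V_N \leq M \cdot U_N$ on $\{V_N \leq M\}$ holds without absolute values; the result would extend to signed random variables by taking $|V_N|$ provided one assumes tightness of $\{|V_N|\}$.
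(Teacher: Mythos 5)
Your proof is correct; the paper itself omits the argument, remarking only that the lemma ``can be deduced with elementary arguments,'' and your tightness-threshold decomposition $\{U_N V_N>\epsilon\}\subseteq\{V_N>M\}\cup\{U_N>\epsilon/M\}$ is precisely the standard elementary argument the authors have in mind.
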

%\begin{proof}
%	For each $M,\overline{M}>0$  it holds
%	\begin{equation}
%	P(U_{N}V_{N}>M) \leq P\big(U_{N}>\frac{M}{\overline{M}} \big)+P(V_{N}>\overline{M} ) \nonumber.
%	\end{equation}
%	Given $\epsilon>0$, we may first choose $\overline{M}$ to make the second term small uniformly in $N$ and then send $N \to \infty$ to make the first term small.
%\end{proof}

%%%%%%%%%%%%%%%%%%%%%%%%%%%%%%%%%%%%%%%%%%%%%%%%%%%%%%%%%%%%%%%%  Invariant measure section
%%%%%%%%%%%%%%%%%%%%%%%%%%%%%%%%%%%%%%%%%%%%%%%%%%%%%%%%%%%%%%%%
%%%%%%%%%%%%%%%%%%%%%%%%%%%%%%%%%%%%%%%%%%%%%%%%%%%%%%%%%%%%%%%%
%%%%%%%%%%%%%%%%%%%%%%%%%%%%%%%%%%%%%%%%%%%%%%%%%%%%%%%%%%%%%%%%
%%%%%%%%%%%%%%%%%%%%%%%%%%%%%%%%%%%%%%%%%%%%%%%%%%%%%%%%%%%%%%%%
%%%%%%%%%%%%%%%%%%%%%%%%%%%%%%%%%%%%%%%%%%%%%%%%%%%%%%%%%%%%%%%%
%%%%%%%%%%%%%%%%%%%%%%%%%%%%%%%%%%%%%%%%%%%%%%%%%%%%%%%%%%%%%%%%
%%%%%%%%%%%%%%%%%%%%%%%%%%%%%%%%%%%%%%%%%%%%%%%%%%%%%%%%%%%%%%%%

\section{Invariant measure and observables}\label{sec:inv}

We now study the invariant measure for the equation
\begin{equation}\label{eq2:Psi}
\LL\Psi=-\mathbf{E}[\Psi^2- Z^2]\Psi+\xi,  %\quad \Psi(0)=\psi.
\end{equation}
with $\mathbf{E}[\Psi^2-Z^2]=\mathbf{E}[X^2]+2\mathbf{E}[XZ]$ for $X=\Psi-Z$ and $\xi$ space-time white noise.
Here, since we are only interested in the stationary setting in this section,
we overload the notation in the previous sections and simply write
$Z$ for the  stationary solution to the  linear equation
\begin{equation}\label{eq:Zm}
\LL Z=\xi\;,
\end{equation}
and we consider the decomposition (slightly different from Section \ref{sec:limit})
$X\eqdef \Psi-Z$, so that
\begin{equation}\label{eq2:X}
\LL X=-\mathbf{E}[X^2+2XZ](X+Z),\quad X(0)=\Psi(0)-Z(0).
\end{equation}
For the case that $m=0$ we restrict the solutions $\Psi$ and $Z$ satisfying $\<\Psi,1\>=\<Z,1\>=0$.

%Here \eqref{eq2:Psi} is the same as \eqref{eq:Psi2}.
By Theorem \ref{Th:global} for every initial data
$$\Psi(0)=\psi\in \bC^{-\kappa}$$
with $\E\|\psi\|_{\bC^{-\kappa}}^p\lesssim1$ there exists a unique global solution $\Psi$ to \eqref{eq2:Psi}.
We immediately find that $Z$ is a stationary solution to  \eqref{eq2:Psi}.  This follows since the unique solution to \eqref{eq2:X} starting from zero is identically zero.  Furthermore, we define a semigroup $P_t^*\nu$ to denote the law of $\Psi(t)$ with the initial condition distributed according to a measure $\nu$. By uniqueness of the solutions to \eqref{eq2:Psi}, we have
$P_t^*=P_{t-s}^*P_s^* $ for $ t\geq s\geq0$.
By direct probabilistic calculation we  easily obtain the following result, which implies that  the implicit constant in Lemma \ref{le:ex} is independent of $m$.

\bl\label{le:ex1}
For $\kappa'>\kappa>0$ and $p \geq1$, it holds that
$$
\sup_{m\geq0}\mathbf{E}[\|Z_i\|_{C_T\bC^{-\kappa}}^p]
+\sup_{m\geq0}\mathbf{E}[\| \Wick{Z_iZ_j} \|_{C_T\bC^{-\kappa}}^p]
+\sup_{m\geq0}\mathbf{E}[\| \Wick{Z_i Z_j^2}\|_{C_T\bC^{-\kappa}}^p]
\lesssim 1,
$$
where the proportional constants are independent of $i,j,N$.
\el

\begin{proof} By a standard technique (c.f. \cite{GP17}), it is sufficient to calculate
	\begin{align*}
	\E|\Delta_q Z_i(t)|^2\lesssim \sum_{k\in\mathbb{Z}^2}\int_{-\infty}^t\theta(2^{-q}k)^2 |e^{-2(t-s)(|k|^2+m)}|\dif s \lesssim \sum_{k\in \mathbb{Z}^2} 2^{q \kappa}\frac{1}{|k|^{\kappa}(|k|^2+m)},
	\end{align*}
	where $\Delta_q$ is a Littlewood-Paley block and $\theta$ is the Fourier multiplier associated with $\Delta_q$.
	From here we see the bound is independent of $m$. Other terms can be bounded in a similar way.
\end{proof}

 For $R_N^0$ defined in \eqref{eq:R0} with $Z_i$ stationary, we have the following result.

\bl 
For every $q\geq 1$ it holds that
\begin{align}\label{mom}
\E[(R_N^0)^q]\lesssim1.
\end{align}
\el
\begin{proof}
	Since we will have several similar calculations in the sequel, we first demonstrate
	such calculation in the case $q=1$. We have
	\begin{align*}
	\E\frac{1}{N^2} \sum_{i=1}^{N}  \Big\|\sum_{j=1}^{N}\Lambda^{-s}(\Wick{Z_{j}^{2}Z_{i}})\Big\|^2_{L^2}
	=
	\frac{1}{N^2}\sum_{i,j_1,j_2=1}^N\E \Big\<\Lambda^{-s} \Wick{Z_{j_1}^2 Z_{i}},\Lambda^{-s} \Wick{Z_{j_2}^2 Z_{i}}\Big\>.
	%\\\lesssim&\frac{1}{N^2} \sum_{i=1}^{N}\E
	%	\|\sum_{j\neq i}\Lambda^{-s}(\Wick{Z_{j}^{2}Z_{i}})\|^2_{L^2}+\frac{1}{N^2} \sum_{i=1}^{N}\E
	%	\|\Lambda^{-s}(\Wick{Z_{i}^{3}})\|^2_{L^2}
	%\\=&\frac{1}{N^2} \sum_{i=1}^{N}\sum_{j\neq i}\E\|\Lambda^{-s}(\Wick{Z_{j}^{2}Z_{i}})\|_{L^2}^2+\frac{1}{N^2} \sum_{i=1}^{N}\E
	%	\|\Lambda^{-s}(\Wick{Z_{i}^{3}})\|^2_{L^2}\lesssim 1.
	\end{align*}
	We have 3 summation indices and a factor $1/N^2$.
	The contribution to the sum from the cases  $j_1=i$ or $j_2=i$ or  $j_1=j_2$
	is bounded by a constant in light of Lemma \ref{le:ex1}. If $i,j_1,j_2$ are all different, by independence
	and the fact that Wick products are mean zero, the terms are zero.
	%\hao{I rewrote the $q=1$ case to look more parallel with  $q=2$ below}
	
	For general $q\ge 1$,
	by Gaussian hypercontractivity and the fact that $R_N^0$ is a random variable with finite Wiener chaos decomposition, we have
	\begin{align*}\E[(R_N^0)^q]\lesssim\E[(R_N^0)^2]^{q/2}.\end{align*}
	For the case that $q=2$ we  write it as
	\begin{align*}
	\frac{1}{N^4}\sum_{\substack{i_1,i_2,j_k=1\\ k=1\dots 4}}^N\E \Big\<\Lambda^{-s} \Wick{Z_{j_1}^2 Z_{i_1}},\Lambda^{-s} \Wick{Z_{j_2}^2 Z_{i_1}}\Big\>
	\Big\< \Lambda^{-s} \Wick{Z_{j_3}^2 Z_{i_2}},\Lambda^{-s} \Wick{Z_{j_4}^2 Z_{i_2}}\Big\>.
	\end{align*}
	We have $6$ indices $i_1, i_2, j_k, k=1,...,4$ summing from $1$ to $N$ and an overall factor $1/{N^4}$. Using again Lemma \ref{le:ex1}, we reduce the problem to the cases where five or six of the indices are different.  However, in these two cases, by independence the expectation is zero, so \eqref{mom} follows.
\end{proof}

\subsection{Uniqueness  of the invariant measures}\label{Uniqueness  of invariant measure}
We now turn to the question of uniqueness for the invariant measure of \eqref{eq2:Psi}.
Since the non-linearity in the SPDE \eqref{eq2:Psi} involves the law of the solution, the associated semigroup $P_t^*$ is generally nonlinear
i.e.
$$P_t^*\nu\neq \int (P_t^*\delta_\psi)\nu(\dif \psi),$$
for a non-trivial distribution $\nu$ (see e.g. \cite{W18}).  As a result, its unclear if the general ergodic theory for Markov processes (see e.g. \cite{DZ96}, \cite{HMS11}) can be applied directly in our setting.  Fortunately, \eqref{eq2:X} has a strong damping property in the mean-square sense, which comes to our rescue and allows us to proceed directly by a priori estimates.

\bl\label{le:m1}
There exists $C_0>0$ such that for all
$$m\geq 2C_0(\E \|\Wick{Z_{2}Z_{1}}\|_{\bC^{-s}}^{2}+(\E\|Z_1\|_{\bC^{-s}}^2)^{\frac{1}{1-s}}+1):=m_0,$$
there exists a universal $C$ with the following property: for every solution $\Psi$ to \eqref{eq2:Psi} with $\Psi(0) \in \bC^{-\kappa}$,
\begin{equation}
\sup_{t \geq 1}e^{\frac{mt}{2} }\mathbf{E}\|\Psi(t)-Z(t) \|_{L^2}^2 \leq C.\label{se51}
\end{equation}
\el

\begin{proof}
	The proof relies heavily on several computations performed in Lemma \ref{lem:l2} where we used slightly different notation, so we will write $X_{i}$ instead of $X$ and $Z_{i}$ instead of $Z$ for the remainder of this proof. Revisiting the first step of Lemma \ref{lem:l2} where we established \eqref{s20}, we find that $I^{1}$ defined in \eqref{eq:I} and the first contribution to $I^{2}$ defined in \eqref{eq:I} vanishes in light of $\E(\Wick{Z_{j}^{2}})=0$.  It follows that
	\begin{align}
	&\frac{1}{2}\frac{\dif}{\dif t} \E\|X_{i}\|_{L^{2}}^{2}+\E\|\nabla X_{i}\|_{L^{2}}^{2}+m\E \|X_i\|_{L^2}^2+\|\E X_{i}^{2}\|_{L^{2}}^{2} \nonumber \\
	&=-2 \E \langle X_{i}X_{j},Z_{i}Z_{j} \rangle-3\E\langle  X_{i}X_{j}^{2},Z_{i} \rangle. \nonumber
	\end{align}
	Furthermore, in light of \eqref{s28} and \eqref{se55}, we obtain
	\begin{align}
	\E \langle X_{i}X_{j},Z_{i}Z_{j} \rangle &\lesssim \big (\|\E X_{i}^{2} \|_{L^{1}}^{2}+\E\|\nabla X_{i}\|_{L^{2}}^{2} \| \E X_{j}^{2}  \|_{L^{1}} \big )^{1/2}\big (  \E \|\Wick{Z_{j}Z_{i}}\|_{\bC^{-s}}^{2} \big )^{\frac{1}{2}}\nonumber   \\
	\E \langle X_{i}X_{j}^{2},Z_{i} \rangle &\lesssim \|\E X_{i}^{2}\|_{L^{2} }\big ( \E\|\nabla X_{i}\|_{L^{2}}^{2} \big )^{\frac{s}{2} }\big (\E\|X_{i}\|_{L^{2}}^{2} \big )^{\frac{1-s}{2}}\big (  \E \|Z_{i}\|_{\bC^{-s}}^{2} \big )^{\frac{1}{2}} \nonumber\\
	&+\|\E X_{i}^{2}\|_{L^{2}} \big ( \E\|X_{j}\|_{L^{2}}^{2} \big )^{\frac{1}{2}}\big (  \E \|Z_{i}\|_{\bC^{-s}}^{2} \big )^{\frac{1}{2}} \nonumber.
	\end{align}
	We will use these estimates in two different ways.  On one hand, using Young's inequality with respective exponents $(2,2)$ and $(2,\frac{2}{s},\frac{2}{1-s} )$ followed by  Lemma \ref{le:ex1}, we find that
	\begin{equation}
	\frac{1}{2}\frac{\dif}{\dif t} \E \| X_{i} \|_{L^2}^2+\frac{1}{2}\E\|\nabla X_i\|_{L^2}^2 +m\E \|X_{i}\|_{L^2}^2+ \|\E X_{i}^2\|_{L^2}^2 \lesssim 1.\label{se52}
	\end{equation}
	As a consequence, noting that $ \|\E X_{i}^2\|_{L^2}^2 \geq \big ( \E \| X_{i} \|_{L^2}^2 \big )^{2}$, applying Lemma \ref{lem:co} it holds
	\begin{align}
	\sup_{t>0}(t\wedge1)\mathbf{E}[\|X_{i}(t)\|_{L^2}^2] \lesssim1,\label{bd2:uniX}
	\end{align}
	where the implicit constant is independent of the initial data.  %Furthermore, integrating \eqref{se52} we obtain for $t\geq1$
	%\begin{align}
	%\int_1^t\mathbf{E}\| X_{i}\|_{H^1}^2\dif s \lesssim t \label{bd2:uniX1}.
	%\end{align}
	On the other hand, Young's inequality also yields
	\begin{equation}
	\frac{\dif}{\dif t}\E\|X_{i}\|_{L^{2}}^{2}+m\E\|X_{i}\|_{L^{2}}^{2} \leq C_0(\E \|\Wick{Z_{2}Z_{1}}\|_{\bC^{-s}}^{2}+(\E\|Z_1\|_{\bC^{-s}}^2)^{\frac{1}{1-s}}+1)\mathbf{E}\|X_{i}\|_{L^2}^2. \label{se50}
	\end{equation}
	%Further, by \eqref{bd2:uniX} and \eqref{bd2:uniX1}, there exists a universal $M>0$ such that
	%\begin{equation}
	%\int_{1}^{t}\big(1+ \mathbf{E}\|X_{i}\|_{H^1}^2+\|\mathbf{E}X_{i}^2\|_{L^2}^2   \big)\dif s \leq Mt.\nonumber
	%\end{equation}
	Applying Gronwall's inequality over $[1,t]$ leads to
	\begin{equation}
	e^{(m-\frac{m_0}{2})t}\E \|X_{i}(t)\|_{L^{2}}^{2} \lesssim \E \|X_{i}(1)\|_{L^{2}}^{2} \nonumber,
	\end{equation}
	so choosing $m \geq m_0$, using \eqref{bd2:uniX}, and taking the supremum over $t \geq 1$, we arrive at \eqref{se51}.
\end{proof}
We now apply the above result to show that for sufficiently large mass, the unique invariant measure to \eqref{eq2:Psi} is Gaussian.  To this end, define the $\bC^{-1}$-Wasserstein distance
$$\mathbb{W}_p'(\nu_1,\nu_2):=\inf_{\pi\in\mathscr{C}(\nu_1,\nu_2)}\left(\int\|\phi-\psi\|_{\bC^{-1}}^p\pi(\dif \phi,\dif \psi)\right)^{1/p},$$
where $\mathscr{C}(\nu_1,\nu_2)$ denotes the collection of all couplings of $\nu_1, \nu_2$ satisfying $\int\|\phi\|_{\bC^{-1}}^p\nu_i(\dif \phi)<\infty$ for $i=1,2$.

\bt\label{th:u} For $m_0$ as in Lemma \ref{le:m1} and $m\geq m_0$  the unique invariant measure to \eqref{eq2:Psi} supported on $\bC^{-\kappa}$ is $\mathcal N(0,\frac12(-\Delta+m)^{-1})$, the law of the Gaussian free field.
\et

\begin{proof}
	Recall that $Z$ is a stationary solution to \eqref{eq2:Psi}.  Indeed, by definition, $\Psi=X+Z$, where $X$ solves \eqref{eq2:X}.  However, since $X(0)=0$, the identically zero process is the unique solution to \eqref{eq2:X}.  Hence, the law of $Z$, which we now denote by $\nu$, is invariant under $P_t^*$.  We now claim that for $m \geq m_{0}$, this is the only invariant measure supported on $\bC^{-\kappa}$.  Indeed, let $\nu_1$ be another such measure, then modifying the stochastic basis if needed, we may assume  there exists $\psi \in \bC^{-\kappa}$ on it such that $\psi \thicksim \nu_1$. By similar arguments as in Theorem \ref{Th:global} we may construct a solution $\Psi$ to \eqref{eq2:Psi} with $\Psi(0)=\psi$.  By invariance of $\nu_1$ and $\nu$ and the embedding $L^{2} \hookrightarrow \bC^{-1}$, c.f. Lemma \ref{lem:emb}, it follows that
	\begin{align*}
	&\mathbb{W}_2'(\nu,\nu_1)^2=\mathbb{W}_2'(P_t^*\nu,P_t^*\nu_1)^2\leq\mathbf{E}\|\Psi(t)-Z(t)\|_{\bC^{-1}}^2\lesssim e^{-\frac{mt}{2}},
	\end{align*}
	for $t \geq 1$ by Lemma \ref{le:m1}.  Letting $t\to\infty$ we obtain $\nu=\nu_1$.
\end{proof}

%\hao{I replaced the theorem with this paragraph and remark. We could also move this to the beginning of this section to let the reader know that the (an) invariant measure is just such a simple Gaussian measure.}
%For arbitrary $m\ge 0$, the Gaussian field with covariance $(-\Delta+m)^{-1}$ is an invariant measure for $\Psi$. Indeed, this  Gaussian field is the  invariant measure for $Z$, and if $Z(0)$ is sampled from this measure and $X(0)=0$, then $\mu|_{t=0}= 0$ and thus $\LL X=0$ so $X(t)\equiv 0$ for all $t$.
\begin{remark}
	Note that for the limiting equation $\LL\Psi=-\mu\Psi+\xi$,  if we assume that $\mu$ is simply a constant, it has a Gaussian invariant measure
	$\mathcal N(0,\frac12(-\Delta+m+\mu)^{-1})$.
	Assuming $\Psi \sim \mathcal N(0,\frac12(-\Delta+m+\mu)^{-1})$ and $Z  \sim \mathcal N(0,\frac12(-\Delta+m)^{-1})$,
	the self-consistent condition $\mathbf{E}[\Psi^2- Z^2] = \mu$ then yields
	\begin{equation*}%\label{e:2d-mu-equation}
	\frac12\sum_{k\in \mathbb Z^2} \Big(\frac{1}{|k|^2+m+\mu}-\frac{1}{|k|^2+m}\Big) = \mu
	\end{equation*}
	and for $\mu+m\ge 0$ we only have one solution $\mu=0$, since the LHS is monotonically decreasing in $\mu$.
\end{remark}

\br\label{rem:1} Changing  the renormalization constant in \eqref{eq2:Psi} will alter the mass of the Gaussian invariant measure. For instance, if we %replace $Z$ in \eqref{eq2:Psi}
change the renormalization constant in \eqref{eq2:Psi} to $\E Z_{i,\eps}^2(0,0)$ with  $Z_i$
by  the stationary solution to
$(\p_t-(\Delta-a))Z_i=\xi_i$ with $a>0$, one invariant measure  is  Gaussian $\bar\nu\eqdef \mathcal{N}(0,\frac12(-\Delta+m+\mu_0)^{-1})$ with $\mu_0$ satisfying
\begin{equation*}%\label{e:2d-mu-equation1}
\frac12\sum_{k\in \mathbb Z^2} \Big(\frac{1}{|k|^2+m+\mu_0}-\frac{1}{|k|^2+a}\Big) = \mu_0.
\end{equation*}
Moreover by the same proof of Lemma \ref{le:m1} and Theorem \ref{th:u}, for $m+\mu_0$ large enough, $\bar\nu$ is  the unique invariant measure. Indeed, let $\Psi=\bar{X}+\bar{Z}$ with $\bar{Z}$  the stationary solution to $\LL\bar{Z}=-\mu_0 \bar{Z}+\xi$, then $\bar X$ satisfies the following equation:
\begin{align*}
\LL \bar{X}=-\mu_0\bar{X}-\E[\bar{X}^2+2\bar{X}\bar{Z}](\bar{X}+\bar{Z}),
\end{align*}
which is the same case as \eqref{eq:X2} with $m$ replaced by $m+\mu_0$.
\er

\subsection{Convergence of the invariant measures}\label{sec:con}
As a consequence of Lemma \ref{lem:Yglobal}, the solutions $(\Phi_i)_{1\leq i\leq N}$ to \eqref{eq:Phi2d} form a Markov process on $(\bC^{-\kappa})^{N}$ which, by strong Feller property in \cite{HM18} and irreducibility in \cite{HS19}, will turn out to admit a unique invariant measure, henceforth denoted by $\nu^{N}$.  Our goal in this section is to study the large $N$ behavior of $\nu^{N}$ and show that for sufficiently large mass, as $N \to \infty$, it's marginals are simply products of the Gaussian invariant measure for $\Psi$ identified in Theorem \ref{th:u}.  For this we rely heavily on the computations from Section \ref{sec:Y} for the remainder $Y$, but we leverage these estimates with consequences of stationarity.  To this end, it will be convenient to have a stationary coupling of the linear and non-linear dynamics \eqref{eq:li1} and \eqref{eq:Phi2d} respectively, which is the focus of the following lemma.

\bl\label{lem:zz1} There exists a unique invariant measure $\nu^N$ on $(\bC^{-\kappa})^N$ to \eqref{eq:Phi2d}. Furthermore, there exists a  stationary process $(\Phi_i^N, Z_i)_{1\leq i\leq N}$ such that the components $\Phi_i^N, Z_i$ are stationary  solutions to \eqref{eq:Phi2d} and \eqref{eq:li1}, respectively. Moreover, $\E\|\Phi_i^N(0)-Z_i(0)\|_{H^1}^2\lesssim1$ and for every $q>1$
\begin{align}\label{sszz4}
\E\bigg(\frac{1}{N}\sum_{i=1}^N\|\Phi_i^N(0)-Z_i(0)\|_{L^2}^2\bigg)^q\lesssim1
\end{align}
\el

\begin{proof} In the proof we fix $N$. Let $\Phi_i$ and $Z_i$ be solutions to \eqref{eq:Phi2d} and \eqref{eq:li1}, respectively.  By the general results of \cite[Section 2]{HM18}, it follows that
	$(\Phi_i, Z_i)_{1\leq i\leq N}$ is a Markov process on $(\bC^{-\kappa})^{2N}$, and we denote by $(P_t^N)_{t\geq0}$ the associated Markov semigroup.  To derive the desired structural properties about the limiting measure, we will follow the Krylov-Bogoliubov construction with a specific choice of initial condition that allows to exploit Lemma \ref{Y:L2}.  Namely, we denote by $\Phi_{i}$ the solution to \eqref{eq:Phi2d} starting from the stationary solution $\tilde{Z}_{i}(0)$, so that the process $Y_i=\Phi_i-Z_i$ starts from the origin.  Using Lemma \ref{Y:L2} and Corollary \ref{co:Y} with $y_{j}=0$ together with Lemma \ref{le:ex1} to obtain a uniform bound on $\E R_{N}$ with $R_N$ defined in \eqref{eq:RN} we find %and \cite[Lemma 3.8]{TW18}
	%we have
	%\begin{align*}\sup_{t>0}(t\wedge1)\mathbf{E}[\frac{1}{N}\sum_{i=1}^N\|Y_i(t)\|_{L^2}^2]\lesssim1,\end{align*}
	%where the implicit constant is independent of the initial data and $N$. This implies that
	for every $T\geq 1$
	\begin{align}
	\int_0^T\mathbf{E}\Big(\frac{1}{N}\sum_{i=1}^N\| Y_i(t)\|_{H^1}^2\Big)\dif t & \lesssim T,
	\label{sszz2}
	\\
	\E\int_0^T\bigg(\frac{1}{N}\sum_{i=1}^N\|Y_i(t)\|_{L^2}^2\bigg)^q\dif t & \lesssim T,
	\label{sszz3}
	\end{align}
	where the implicit constant is independent of $T$ and for $m=0$ we used the Poincar\'{e} inequality.
	By \eqref{sszz2} we obtain
	\begin{align*}\int_0^T\mathbf{E}\Big(\frac{1}{N}\sum_{i=1}^N\|\Phi_i(t)\|_{\bC^{-\kappa/2}}^2\Big)
	+\int_0^T\mathbf{E}\Big(\frac{1}{N}\sum_{i=1}^N\|Z_i(t)\|_{\bC^{-\kappa/2}}^2\Big)\lesssim T.
	\end{align*}
	% \scott{The second bound would be $T^{q}$ based on a direct application of the Lemma, this would follow if we used the m when doing Gronwall. }\zhu{we add a corollary to prove it. Please see Corollary \ref{co:Y}.}
	Defining $R_t^N:=\frac{1}{t}\int_0^tP_r^N\dif r$, the above estimates and the compactness of the embedding $\bC^{-\kappa/2} \hookrightarrow \bC^{-\kappa}$ imply the induced laws of $\{R_t^N \}_{t\geq0}$ started from $(\tilde{Z}(0),\tilde{Z}(0))$ are tight on $(\bC^{-\kappa})^{2N}$. Furthermore, by the continuity with respect to initial data, it is easy to check that $(P_t^N)_{t\geq0}$ is  Feller on $(\bC^{-\kappa})^{2N}$.
	By the Krylov-Bogoliubov existence theorem (see \cite[Corollary 3.1.2]{DZ96}) , these laws converge weakly in $(\bC^{-\kappa})^{2N}$ along a subsequence $t_k\to\infty$ to an invariant measure $\pi_N$ for $(P_t^N)_{t\geq0}$. The desired stationary process $(\Phi_i^N, Z_i)_{1\leq i\leq N}$ is defined to be the unique solution to \eqref{eq:Phi2d} and \eqref{eq:li1} obtained by sampling the initial datum $(\phi_{i},z_{i})_{i}$ from $\pi_{N}$. By \eqref{sszz2} we deduce
	\begin{align*}
	\E^{\pi_N}&\|\Phi_i(0)-Z_i(0)\|_{H^1}^2=\E^{\pi_N}\sup_{ \varphi }\<\Phi_i(0)-Z_i(0),\varphi\>^2
	\\
	&=\E\sup_{\varphi}\lim_{T\to\infty}\bigg[\frac1T\int_0^T\<Y_i(t),\varphi\>\dif t\bigg]^2\leq\lim_{T\to\infty}\frac1T\int_0^T\E\|Y_i(t)\|_{H^1}^2\dif t\lesssim1,
	\end{align*}
	where $\sup_{\varphi}$ is over smooth functions $\varphi$ with $\|\varphi\|_{H^{-1}}\leq 1$.
	Similarly using \eqref{sszz3}, \eqref{sszz4} follows.  Finally, we project onto the first component and consider the map $\bar{\Pi}_1:\mathcal{S}'(\mathbb{T}^2)^{2N}\rightarrow \mathcal{S}'(\mathbb{T}^2)^N$ defined through $\bar{\Pi}_1(\Phi,Z)=\Phi$.  Letting $\nu^N=\pi_N\circ \bar{\Pi}_1^{-1}$ yields an invariant measure to \eqref{eq:Phi2d}, and uniqueness follows from the general results of strong Feller property in  \cite[Theorem 3.2]{HM18} and irreducibility in \cite[Theorem 1.4]{HS19}.
\end{proof}

\begin{remark}
	Using a lattice approximation (see e.g. \cite{GH18a}, \cite{HM18a, ZZ18})
	one can show that the measure $\nu^N(\dif\Phi)$ indeed has the form \eqref{e:Phi_i-measure} (with Wick renormalization).
	% \hao{I deleted the measure which was written here and just refer to \eqref{e:Phi_i-measure}}
	%we could formally write$$\nu^N(\dif\Phi)=\frac{1}{C_N}\exp\left(-\int \sum_{j=1}^N|\nabla \Phi_j|^2+m|\Phi_j|^2+\frac{1}{N}(\sum_{j=1}^N\Phi_j^2)^2\dif x\right)\dif \Phi.$$
	%$\Phi=(\Phi_1,\Phi_2,...,\Phi_N)$.
\end{remark}

The next step is to study tightness of the marginal laws of $\nu^{N}$ over $\mathcal{S}'(\mathbb{T}^2)^N$.  To this end, consider the projection $\Pi_i:\mathcal{S}'(\mathbb{T}^2)^N\rightarrow \mathcal{S}'(\mathbb{T}^2)$ defined by $\Pi_i(\Phi)=\Phi_i$ and let $\nu^{N,i}\eqdef \nu^N\circ \Pi_i^{-1}$ be the marginal law of the $i^{th}$ component.
%$\nu^{N,i}$ should converge to the invariant measure of the linear equation.
Furthermore, for $k \leq N$, define the map $\Pi^{(k)}:\mathcal{S}'(\mathbb{T}^2)^N\rightarrow \mathcal{S}'(\mathbb{T}^2)^k$ via $\Pi^{(k)}(\Phi)=(\Phi_i)_{1\leq i\leq k}$ and the let $\nu^{N}_k\eqdef \nu^N\circ (\Pi^{(k)})^{-1}$  be the marginal law of the first $k$ components.
We have the following result: %Now we want to obtain the tightness of $\nu^{N,i}$, which is the invariant measure of $\Phi_i$.

\bt\label{th:con}
$\{\nu^{N,i}\}_{N\geq1}$ form a tight set of probability measures on $\bC^{-\kappa}$ for $\kappa>0$. %Any limit of the subsequence of $\{\nu^{N,i}\}$ is a stationary measure to \eqref{eq:Psi2}.
\et
\begin{proof}
	Let  $(\Phi^N_i, Z_i)_{1 \leq i \leq N}$ be the jointly stationary solutions to \eqref{eq:Phi2d} and \eqref{eq:li1} constructed in Lemma \ref{lem:zz1}.  To prove the result, in light of the compact embedding of $\bC^{-\kappa/2} \hookrightarrow \bC^{-\kappa}$ and the stationarity of $\Phi^{N}_{i}$, it suffices to show that the second moment of $\|\Phi^N_i(0)\|_{\bC^{-\kappa/2}}$ is bounded uniformly in $N$.  To this end,
	let $Y_i^N=\Phi^N_i-Z_i$, which is also stationary and note that
	\begin{align*}
	&\mathbf{E}\|\Phi^N_i(0)\|_{\bC^{-\kappa/2}}^2=\frac{2}{T}\int_{T/2}^T \mathbf{E}\|\Phi^N_i(s)\|_{\bC^{-\kappa/2}}^2\dif s
	\\\leq&\frac{4}{T}\int_{T/2}^T \mathbf{E}\|Z_i(s)\|_{\bC^{-\kappa/2}}^2\dif s+\frac{4}{T}\int_{T/2}^T \mathbf{E}\|Y^N_i(s)\|_{H^1}^2\dif s.
	\end{align*}
	The first term is controlled by Lemma \ref{le:ex1}.  For the second term, symmetry yields $Y^N_{i}$ and $Y^N_j$ are identical in law, which combined with Lemma \ref{Y:L2} implies that
	\begin{align*}
	\frac{2}{T}\int_{T/2}^T \mathbf{E}\|Y^N_i(s)\|_{H^1}^2\dif s
	=\frac{2}{T}\int_{T/2}^T \frac{1}{N}\sum_{i=1}^N\mathbf{E}\|Y^N_i(s)\|_{H^1}^2\dif s
	\leq \frac{C}{T}\mathbf{E}[\int_0^T R_N\dif t]\leq C,\end{align*}
	where we used that by stationarity $\sum_{i=1}^N\mathbf{E}\|Y^N_i(T)\|_{L^2}^2=\sum_{i=1}^N\mathbf{E}\|Y^N_i(T/2)\|_{L^2}^2$, with both being finite in view of Lemma \ref{lem:zz1}.  For $m=0$ we also used the Poincar\'{e} inequality.
	%We could obtain the tightness of stationary solutions $(\Phi_i^N, Z_i, Y_i^N)_i$ on $((\bC^{-\kappa})^\mathbb{N},(\bC^{-\kappa})^\mathbb{N},( H^{1-\kappa})^\mathbb{N})$. Here we impose the product topology on $\bC^{-\kappa})^\mathbb{N}$.
	%We obtain  that there exists a subsequence, which is still denoted by $N$, and  probability measures $(\nu^*,\nu^Z,\nu^X)$ on $((\bC^{-\kappa})^{\mathbb{N}},(\bC^{-\kappa})^\mathbb{N},(H^1)^\mathbb{N})$ such that
	%the laws of $(\Phi_i^N, Z_i, Y_i^N)_i$ converge to $(\nu^*,\nu^Z,\nu^X)$ weakly. By Skorohod representation Theorem we deduce that there exists a probability space $(\tilde{\Omega}, \tilde{\mathcal{F}}, \mathbb{P}) $ and  random variables $(\Phi^N_{i,0}(\tilde{\omega}), Z^N_{i,0}(\tilde{\omega}), Y_{i,0}^N(\tilde{\omega}))$ and  $(\Psi_{i,0}(\tilde{\omega}),Z_{i,0}(\tilde{\omega}),X_{i,0}(\tilde{\omega}))$
	%such that
	%\begin{itemize}
	% \item for every $i$,  $\Phi^N_{i,0}(\tilde{\omega})\rightarrow \Psi_{i,0}(\tilde{\omega})$ in $\bC^{-\kappa}$,
	%$Z^N_{i,0}(\tilde{\omega})\rightarrow Z_{i,0}(\tilde{\omega})$ in $\bC^{-\kappa}$ and
	%$Y^N_{i,0}(\tilde{\omega})\rightarrow X_{i,0}(\tilde{\omega})$ in $H^{1-\kappa}$.
	% \item $(\Phi^N_{i,0}(\tilde{\omega}), Z^N_{i,0}(\tilde{\omega}), Y_{i,0}^N(\tilde{\omega}))_i=^d(\Phi_i^N, Z_i, Y_i^N)_i$.
	%\item  $(\Psi_{i,0}(\tilde{\omega}),Z_{i,0}(\tilde{\omega}),X_{i,0}(\tilde{\omega}))=^d(\nu^*_i,\nu^Z_i,\nu^X_i)$.
	%\end{itemize}
\end{proof}

\br
%It is believed that any limiting measure obtained in Theorem \ref{th:con} is an invariant measure to \eqref{eq:21}. We try to use the convergence of the dynamics i.e. the main result in Section \ref{sec:dif}, to deduce it. However, it is required that
%the initial conditions for each component of  $\Psi_i$ are independent  in Section \ref{sec:dif}, which seems not easy to deduce directly. %of  $X^i$ satisfying  $\E\|\eta_i\|_{L^{p_0}}^{p_0}\lesssim1$ for $p_0>4$,
%%which seems not easy to deduce from the uniform estimate of $Y_i$.
It is reasonable to expect that any limiting measure obtained in Theorem \ref{th:con} is an invariant measure for \eqref{eq:Psi2} assuming only $m\ge 0$. However, this cannot be directly deduced from our main result in Section \ref{sec:dif} because we do not know a-priori that any limiting measure of $\nu^{N}$ is a product measure.
This is problematic because the initial conditions for each component of  $\Psi_i$ are assumed to be independent  in Section \ref{sec:dif}.
Nevertheless, we can prove below that this is indeed true if $m$ is large.
\er %\zhu{we change remark back.}\hao{I slightly reworded this remark. Please have a look.}

In the following we prove the convergence of the measure to the unique invariant measure by using the estimate in Lemma \ref{Y:L3}, which requires $m$ large enough.

Define the $\bC^{-\kappa}$-Wasserstein distance
\begin{align}\label{def:wa}\mathbb{W}_2(\nu_1,\nu_2):=\inf_{\pi\in\mathscr{C}(\nu_1,\nu_2)}\left(\int\|\phi-\psi\|_{\bC^{-\kappa}}^2\pi(\dif \phi,\dif \psi)\right)^{1/2},\end{align}
where $\mathscr{C}(\nu_1,\nu_2)$ denotes the set of all couplings of $\nu_1, \nu_2$ satisfying $\int\|\phi\|_{\bC^{-\kappa}}^2\nu_i(\dif \phi)<\infty$ for $i=1,2$.

\bt\label{th:m1} Let $\nu= \mathcal{N}(0,\frac12(m-\Delta)^{-1})$.
There exist $C_0>0$ such that for all $m\geq m_1$ where
$$m_1\eqdef C_0(\E\|Z_1\|_{\bC^{-s}}^{\frac{2}{2-s} }+\E \|\Wick{Z_{1}^{2}}\|_{\bC^{-s}}^{\frac{2}{2-s} }+E\|\Wick{Z_{2}Z_{1}}\|_{\bC^{-s}}^{2}+1)$$
one has
\begin{equation}
\mathbb{W}_2(\nu^{N,i},\nu) \leq CN^{-\frac{1}{2}}. \label{West}
\end{equation}
Furthermore, $\nu^N_k$ converges to $\nu\times...\times \nu$, as $N\to \infty$.
\et
\begin{proof}
	By Lemma \ref{lem:zz1} we may construct a stationary coupling $(\Phi^N_i, Z_i)$ of $\nu_N$ and $\nu$ whose components satisfy \eqref{eq:21} and \eqref{eq:li1}, respectively.  The stationarity of the joint law of $(\Phi^N_i, Z_i)$ implies that also $Y_i^N=\Phi^N_i-Z_i$ is stationary.  In the following we freely omit the time argument of expectations of stationary quantities.  We now claim that
	\begin{equation}
	\E \|Y_{i}^{N}\|_{H^{1}}^{2} \leq CN^{-1} \label{se8},
	\end{equation}
	which implies \eqref{West} by definition of the Wasserstein metric and the embedding $H^{1} \hookrightarrow \bC^{-\kappa}$ in $d=2$, c.f. Lemma \ref{lem:emb}.  To ease notation, we write $Y_i=Y_i^N$ in the following.   By \eqref{bs16} combined with the stationarity of $(Y_{j})_{j}$ and $(Z_{j})_{j}$, we find
	\begin{align*}
	&\sum_{j=1}^N\E\|\nabla Y_j\|_{L^2}^2+m\sum_{j=1}^N\E\|Y_j\|_{L^2}^2+\frac{1}{N}\E\bigg\|\sum_{i=1}^NY_i^2\bigg\|_{L^2}^2\\
	& \leq C\E R_N^0+\E\bigg(\sum_{j=1}^N\|Y_j \|_{L^2}^2(D_N+D_N^1 )\bigg),
	\end{align*}
	where $R_N^0$ is defined in \eqref{eq:R0} and
	\begin{align*}
	D_N&= C\bigg(\frac{1}{N}\sum_{j=1}^{N} \|Z_j\|_{\bC^{-s}}^{\frac{2}{2-s} }+\frac{1}{N}\sum_{j=1}^{N} \|\Wick{Z_{j}^{2}}\|_{\bC^{-s}}^{\frac{2}{2-s} }+1\bigg).\nonumber\\
	D_{N}^{1}&= C\bigg (\frac{1}{N^{2}}\sum_{i,j=1}^{N}\|\Wick{Z_{j}Z_{i}}\|_{\bC^{-s}}^{2}  \bigg ).\nonumber
	\end{align*}
	%\scott{For $D_{N}^{1}$, this comes from $R_{N}^{2}$ and the power there is $\frac{1}{2-s}$.  But the function $z \mapsto z^{\frac{1}{2-s}}$ is concave for $s \in (0,1)$.  Perhaps you mean to first do Young's and use the $+1$,  but then we should get for example $\|\Wick{Z_{j}Z_{i}}\|_{B_{\infty,\infty}^{-s}}^{ \frac{4}{2-s} }$ } \zhu{we change it as above.}
	Setting $A\eqdef\E D_N$ and $A_1\eqdef\E\|\Wick{Z_{2}Z_{1}}\|_{\bC^{-s}}^{2}$ we may re-center $D_{N}$ and $D_{N}^{1}$ above and divide by $N$ to obtain
	\begin{align*}
	&\frac1N\sum_{j=1}^N\E\|\nabla Y_j(t)\|_{L^2}^2+(m-A-A_1)\frac1N\sum_{j=1}^N\E\|Y_j(t)\|_{L^2}^2+\frac{1}{N^2}\E\bigg\|\sum_{i=1}^NY_i^2(t)\bigg\|_{L^2}^2
	\\&\leq C\frac1N\E R_N^0+\frac1N\E\bigg(\sum_{j=1}^N\|Y_j(t)\|_{L^2}^2(|D_N-A|+|D_N^1-A_1|)\bigg)
	\\&\leq C\frac1N\E R_N^0+\frac{1}{2}\frac1{N^2}\E\bigg(\sum_{j=1}^N\|Y_j \|_{L^2}^2\bigg)^2+\E|D_N-A|^2+\E|D_N^{1}-A_1|^2.
	\end{align*}
	For $m\geq A+A_1+1$, using that $Y_{i}$ and $Y_{j}$ are equal in law, we obtain
	\begin{align}
	\E\|Y_i\|_{H^{1}}^2&\leq \frac{1}{N}\sum_{j=1}^N\E\|\nabla Y_j(t)\|_{L^2}^2+ (m-A-A_{1})\frac1N\sum_{j=1}^N\E\|Y_j\|_{L^2}^2\nonumber \\
	&\leq C\frac1N\E R_N^0+\E|D_N-A|^2+\E|D_N^1-A_1|^2.\label{sszz1}
	\end{align}
	Using independence, we find
	\begin{equation}
	\E|D_N(t)-A|^2 \leq \frac{1}{N}\text{Var} \bigg (\|Z_1\|_{\bC^{-s}}^{\frac{2}{2-s} }+\|\Wick{Z_{1}^{2}}\|_{\bC^{-s}}^{\frac{2}{2-s} }   \bigg ) \leq \frac{C}{N}. \label{se6}
	\end{equation}
	To estimate $D_{N}^{1}$, we write  $M_{i,j}=\|\Wick{Z_{j}Z_{i}}\|_{\bC^{-s}}^{2}-A_1$ for $i\neq j$ and $M_{i,i}=\|\Wick{Z_{i}^2}\|_{\bC^{-s}}^{2}-A_1$ and have
	\begin{align}
	\E \bigg ( \frac{1}{N^{2}}& \sum_{i,j=1}^{N}M_{i,j}  \bigg )^{2}  \leq \E\bigg ( \frac{1}{N^{2}}\sum_{i=1}^N\sum_{j\neq i}M_{i,j}+\frac{1}{N^{2}}\sum_{i=1}^{N}M_{i,i}  \bigg )^{2} \nonumber \\
	& \leq \frac{2}{N^4}\sum_{i_1\neq j_1,i\neq j}\E  ( M_{i,j}M_{i_1,j_1} ) +\frac{2}{N^{2}}\E(M_{1,1}^{2})
	\lesssim \frac{1}{N}+\frac{2}{N^{2}}\E(M_{1,1}^{2} )\lesssim\frac{1}{N},\nonumber
	\end{align}
	where we used that for the case that $(i,j,i_1,j_1)$ are different,  $\E  ( M_{i,j}M_{i_1,j_1} ) =\E M_{i,j}\E M_{i_1,j_1}=0$.

	\iffalse of  random variables with entries in $L^{2}(\Omega)$ such that the off-diagonal entries are mean zero and identically distributed and for each $i\neq j\neq i_1\neq j_1$, $M_{i,j}$ is independent of $M_{i_1,j_1}$, \zhu{we make change here} it holds
	\begin{equation}
	\E \bigg ( \frac{1}{N}\sum_{i,j=1}^{N}M_{i,j}  \bigg )^{2} \leq \frac{N-1}{N^{2}}\E(M_{1,2}^{2} ) +\frac{1}{N^{3}}\E(M_{1,1}^{2} ). \label{se5}
	\end{equation}
	Indeed, it suffices to divide the $RHS$ into on-diagonal and off-diagonal entries, use Jensen's inequality, and then independence to obtain
	\begin{align}
	\E \bigg ( \frac{1}{N^{2}}\sum_{i,j=1}^{N}M_{i,j}  \bigg )^{2} &\leq \E\bigg ( \frac{1}{N^{2}}\sum_{i=1}^N\sum_{j\neq i}M_{i,j}+\frac{1}{N^{2}}\sum_{i=1}^{N}M_{i,i}  \bigg )^{2} \nonumber \\
	& \leq \frac{2}{N}\sum_{i=1}^{N}\E \bigg ( \frac{1}{N}\sum_{j \neq i} M_{i,j} \bigg )^{2} +\frac{2}{N^{3}}\E(M_{1,1}^{2}) \nonumber \\
	&\leq \frac{2(N-1)}{N^{2}}\E(M_{1,2}^{2} )+  \frac{2(N-1)}{N^{2}}\E(M_{1,2}M_{1,3} )+\frac{2}{N^{3}}\E(M_{1,1}^{2} ).\nonumber\\
	&\leq \frac{C(N-1)}{N^{2}}\E(M_{1,2}^{2} )+\frac{2}{N^{3}}\E(M_{1,1}^{2} ).\nonumber
	\end{align}
	Applying \eqref{se5} we obtain\fi Then we have
	\begin{align}
	\E \big | D_{N}^{1}-A_{1} \big |^{2} \lesssim \frac1N\label{se7}.
	\end{align}
	Inserting the estimates \eqref{se6}, and \eqref{se7} into \eqref{sszz1} and using \eqref{mom}, we obtain \eqref{se8}, completing the proof.
	\end{proof}
	
\br\label{rem:lambda}
Instead of assuming $m$ large, one could  alternatively consider arbitrary $m>0 $ and assume small nonlinearity. Namely, we could consider a nonlinearity    $-\frac{\lambda}{N}\sum_{j=1}^N \Wick{\Phi_j^2\Phi_i}$
instead of that of \eqref{eq:Phi2d},
and $-\lambda\mathbf{E}[\Psi^2- Z^2]\Psi$ instead of that of \eqref{eq2:Psi},
for $\lambda>0$. By tracing the proofs of Lemma~\ref{le:m1} and Theorem~\ref{th:m1},
we can easily see that given any $m>0$, there exists a constant $\lambda_0>0$, so that the statements of Lemma~\ref{le:m1} and Theorem~\ref{th:m1} hold
for any $\lambda \in (0,\lambda_0)$.
\er

\br\label{rem:2} Following Remark~\ref{rem:1},
with a change of renormalization constant therein,
%For $m+\mu_0$ large enough, $\bar\nu\eqdef\mathcal{N}(0,(-\Delta+m+\mu_0)^{-1})$ is  the unique invariant measure to \eqref{eq2:Psi}. In this case,
we can write $\Phi_i=\bar{Y}_i+\bar{Z}_i$ with $\bar{Z}_i$  the stationary solution to $\LL\bar{Z}_i=-\mu_0 \bar{Z}_i+\xi_i$. Then $\bar Y_i$ satisfies
\begin{align*}
\LL \bar{Y}_i=-\mu_0\bar{Y}_i-\frac{1}{N}\sum_{j=1}^N(\bar Y_j^2\bar Y_i+\bar{Y}_j^2\bar{Z}_i+2\bar{Y_j}\bar{Y}_i\bar{Z}_j
+2\bar{Y}_j\Wick{\bar{Z_i}\bar{Z}_j}+\Wick{\bar{Z}_j^2}\bar{Y}_i+\Wick{\bar{Z}_i\bar{Z}_j^2})
-\frac{2\mu_0}{N}(\bar{Y}_i+\bar{Z}_i),
\end{align*}
which is the same case as \eqref{eq:22} with $m$ replaced by $m+\mu_0$ and an extra term $\frac{2\mu_0}{N}(\bar{Y}_i+\bar{Z}_i)$. Here the Wick product of  $\bar{Z}_j$ is defined similarly as in section \ref{sec:re}. By the same proof of  Theorem \ref{th:m1} we know for $m+\mu_0$ large enough, $\nu^{N,i}$ (renormalized as in Remark \ref{rem:1}) converges to $\bar\nu$ and the other results in Theorem \ref{th:m1} also hold in this case.
\er

\section{Observables and their nontriviality}
\label{sec:non}

%We consider the stationary setting, namely,
%suppose $\Phi=(\Phi_i)_{1\leq i\leq N}\thicksim \nu^N$
%(where $\nu^N$ is as in \eqref{e:Phi_i-measure}. %\eqref{e:nuN1d}
%but with $m$ therein replaced by $m-C_{\wk}$).
%As in Section \ref{sec:label} 

\subsection{Observables}\label{sec:label}

In quantum field theories with symmetries, quantities that are
invariant under action of the symmetry group are of particular interest;
examples of such quantities in the SPDE setting include gauge invariant observables
e.g. \cite[Section~2.4]{Shen2018Abelian}. The model we study here exhibits $O(N)$ rotation symmetry
and formally, functions of the squared ``norm'' $\sum_i \Phi_i^2$ are
quantities that are $O(N)$ invariant.  Of course, such observables need to be suitably renormalized
to be well-defined and suitably scaled by factors of $N$ to have nontrivial limit as $N\to \infty$.

In this section we study the following two observables:
\begin{equ}[obob]
	\frac{1}{N^{1/2}}\sum_{i=1}^N \Wick{\Phi_i^2}\;,
	\qquad
	\frac{1}{N} \Wick{\Big( \sum_{i=1}^N \Phi_i^2 \Big)^2} \;,
\end{equ}
with $\Phi=(\Phi_i)_{1\leq i\leq N}\thicksim \nu^N$ for the invariant measure $\nu^N$ to \eqref{eq:Phi2d} given in Lemma \ref{lem:zz1}. In this section we omit the superscript  $N$ for simplicity.
These are defined as follows.
By Lemma \ref{lem:zz1}  we  decompose $\Phi_i=Y_i+Z_i$ with $(Y_i, Z_i)$ stationary.
With this we define
\begin{align}\label{ob1}
\frac{1}{\sqrt{N}}\sum_{i=1}^N \Wick{\Phi_i^2}
& \eqdef \frac{1}{\sqrt{N}}\sum_{i=1}^N(Y_i^2+2Y_iZ_i+\Wick{Z_i^2}),
\\
\frac{1}{N} \Wick{\Big( \sum_{i=1}^N \Phi_i^2 \Big)^2}
& \eqdef
\frac{1}{N}\sum_{i,j=1}^N \Big(Y_i^2Y_j^2+4Y_i^2Y_jZ_j+2Y_i^2\Wick{Z_j^2}\label{*}\\&
\qquad+\Wick{Z_i^2Z_j^2}+4Y_i\Wick{Z_iZ_j^2}+4Y_iY_j\Wick{Z_iZ_j}\Big).\label{ob2}
\end{align}
Here the Wick products are canonically defined as in \eqref{e:wick-tilde} with $a_\varepsilon=\E[Z_{i,\varepsilon}^2(0,0)]$, in particular
\begin{equ}[e:Zi2Zj2]
	\Wick{Z_i^2Z_j^2} =
	\begin{cases}
		\lim\limits_{\varepsilon\to0}(Z_{i,\varepsilon}^4-6a_\varepsilon Z_{i,\varepsilon}^2+3a_\varepsilon^2)   & (i=j)\\
		\lim\limits_{\varepsilon\to0}(Z_{i,\varepsilon}^2-a_\varepsilon)(Z_{j,\varepsilon}^2-a_\varepsilon) & (i\neq j).
	\end{cases}
\end{equ}

\br\label{rem:ob}
One could also define \eqref{obob} in $L^p(\nu^N)$ directly without using the decomposition $\Phi_i=Y_i+Z_i$.
In fact,
by similar argument as in  \cite[Section 8.6]{MR887102} or \cite{MR0489552}, one can show that $\nu^N$ is absolutely continuous with respect to the  corresponding Gaussian free field $\tilde\nu$  with a density in $L^p(\tilde\nu)$ for $p\in (1,\infty)$.
Since  \eqref{obob} with each $\Phi_i$ replaced by $Z_i$ can be defined via $L^p(\tilde{\nu})$ limit of mollification,
using %\cite[Corollary 3.4]{DPD03}
argument along the line of \cite[Lemma 3.6]{RZZ17} we know that
\eqref{obob} can be also defined as $L^p(\nu^N)$ limit of mollification (essentially H\"older inequality), and they
have the same law as the right hand side of \eqref{ob1} and \eqref{*}\eqref{ob2}.
\er
%\zhu{we add definition and remark for observable.}\hao{I slightly reworded the remark}

In this section we also consider $Y_i, Z_i$ as stationary process with $Z_i$ as the stationary solution of \eqref{eq:Zm} and $Y_i$ as the  solution of \eqref{eq:22}.

\bl\label{th:m2} There exists an $m_{0}$ such that for $m \geq m_{0}$ and $q\geq 1$
\begin{align}
&\E \bigg [ \bigg (\sum_{i=1}^N\|Y_i\|_{L^2}^2 \bigg)^q \bigg ]+\E \bigg[\bigg(\sum_{i=1}^N\|Y_i\|_{L^2}^2+1\bigg)^{q}\bigg(\sum_{i=1}^N\|\nabla Y_i\|_{L^2}^2\bigg) \bigg] \lesssim 1,\label{se19} \\
&\E \bigg[\bigg(\sum_{i=1}^N\|Y_i\|_{L^2}^2+1\bigg)^{q}\bigg\|\sum_{i=1}^NY_i^{2} \bigg\|_{L^2}^2\bigg]\lesssim 1,\label{se16}
\end{align}
where the implicit constant is independent of $N$.

\el
\begin{proof}
	First we observe that \eqref{se16} may be quickly deduced from \eqref{se19} with the help of the inequality
	\begin{equation}
	\bigg\|\sum_{i=1}^NY_i^2\bigg\|_{L^2}^2 \lesssim\bigg(\sum_{i=1}^N\|Y_i\|_{H^{1}}^{2}\bigg)\bigg(\sum_{i=1}^N\|Y_i\|_{L^2}^2\bigg). \label{se15}
	\end{equation}
	To obtain \eqref{se15}, note first that
	\begin{equation}
	\bigg\|\sum_{i=1}^NY_i^2\bigg\|_{L^2}^2=\sum_{i,j=1}^{N}\|Y_{i}Y_{j}\|_{L^{2}}^{2}\nonumber.
	\end{equation}
	Furthermore, by H\"{o}lder's inequality, \eqref{diff2}, and Young's inequality
	\begin{align}
	\|Y_{i}Y_{j}\|_{L^{2}}^{2} \leq \|Y_{i}\|_{L^{4}}^{2} \|Y_{j}\|_{L^{4}}^{2}&\lesssim \|Y_{i}\|_{H^{1}}\|Y_{j}\|_{L^{2}}\|Y_{j}\|_{H^{1}}\|Y_{i}\|_{L^{2}}\nonumber \\
	&\lesssim \|Y_{i}\|_{H^{1}}^{2}\|Y_{j}\|_{L^{2}}^{2}+\|Y_{i}\|_{H^{1}}^{2}\|Y_{j}\|_{L^{2}}^{2} \nonumber.
	\end{align}
	Summing both sides over $i,j$ and using symmetry with respect to the roles of $i$ and $j$, we obtain \eqref{se15}.  The remainder of the proof is devoted to \eqref{se19}.

	To shorten the expressions that follow, we introduce the quantities $F \eqdef \sum_{i=1}^N\|\nabla Y_i\|_{L^2}^2+\frac{1}{N}\|\sum_{i=1}^N Y_i^2\|_{L^2}^2$ and $U \eqdef \sum_{i=1}^N\|Y_i\|_{L^2}^2$.  Note that  $F$ and $U$ are stationary, so we will freely omit the time argument below.  Our starting point is the key inequality \eqref{bs16}, which may be recast in terms of $U$ and $F$ as
	\begin{equation}
	\frac{\dif}{\dif t}U+F+mU \leq CR_{N}^{0}+C\big (D_{N}+D_{N}^{1} \big )U \nonumber.
	\end{equation}
	Muliplying the above by $U^{q-1}$ we find that for $q\geq1$ it holds
	\begin{equation}
	\frac{1}{q}\frac{\dif}{\dif t}U^{q}+U^{q-1}F+mU^{q} \leq CR_{N}^{0}U^{q-1}+C\big (D_{N}+D_{N}^{1} \big )U^{q} \nonumber.
	\end{equation}
	As in the proof of Theorem \ref{th:m1}, we now define $A \eqdef \E(D_{N} )$ and $A_{1}\eqdef\E\|\Wick{Z_1Z_2}\|_{\bC^{-s}}^2$.  Subtract the mean from $D_{N}+D_{N}^{1}$ and take expectation on both sides to find
	\begin{align}
	&\E \big [U^{q-1}F \big ]+(m-A-A_{1})\E \big [U^{q} \big ] \nonumber \\
	&\leq C\E \big [R_{N}^{0}U^{q-1} \big ]+C \E \big [\big (D_{N}+D_{N}^{1}-A-A_{1} \big )U^{q} \big ].  \nonumber \\
	& \leq C\|R_{N}^{0}\|_{L^{q}(\Omega)} \big ( \E U^{q} \big )^{\frac{q-1}{q} } +C\| D_{N}-A+D_{N}^{1}-A_{1}\|_{L^{q+1}(\Omega)} \big ( \E U^{q+1} \big )^{\frac{q}{q+1} } \nonumber \\
	& \leq C\big ( \E U^{q} \big )^{\frac{q-1}{q} }+CN^{-\frac{1}{2}}\big ( \E U^{q+1} \big )^{\frac{q}{q+1} } \nonumber,
	\end{align}
	where we used $\E U^q(t)=\E U^q(0)$ in the first inequality and we used a Gaussian hypercontractivity upgrade of \eqref{se6} and \eqref{se7} in the last line.  Using Young's inequality, we may absorb the first term to the left and obtain
	\begin{equation}
	\E \big [U^{q-1}F \big ]+(m-A-A_{1}-1)\E \big [U^{q} \big ]  \leq C+CN^{-\frac{1}{2}}\big ( \E U^{q+1} \big )^{\frac{q}{q+1} }\label{se20}.
	\end{equation}
	The strategy now is to first use the dissipative quantity on the LHS of \eqref{se20} to obtain $\E(U^{q}) \leq CN^{\frac{q-1}{2}}$, and then use the massive term on the LHS of \eqref{se20} to iteratively decrease the power of $N$ and eventually arrive at $\E(U^{q}) \leq C$.  Once this is established, plugging the bound back into \eqref{se20} completes the proof.\\
	
	Indeed, first observe that $F \geq N^{-1}U^{2}$ so that $\E(U^{q-1}F) \geq N^{-1}\E(U^{q+1})$.  Hence, Young's inequality with exponents $(q+1,\frac{q+1}{q})$ leads to  $\E(U^{q}) \leq CN^{\frac{q-1}{2}}$.  Defining $A_q \eqdef\E  U^q$ and discarding the dissipative term, \eqref{se20} implies
	\begin{align}\label{szz4}
	A_q\lesssim A_{q+1}^{\frac{q}{q+1}}N^{-1/2}+1.
	\end{align}
	We have $A_q\lesssim N^{\frac{q-1}{2}}$, which gives
	\begin{align*}
	A_q\lesssim N^{\frac{q^2}{2(q+1)}-\frac{1}{2}}+1.
	\end{align*}
	%\scott{This remainder of the proof I only checked on an example so far, but it seems reasonable. Very nice.} \zhu{you could check this by iteration. i.e.
	%$$A_q\lesssim (N^{a_{m,q+1}})^{\frac{q}{q+1}}N^{-\frac12}+1\lesssim N^{a_{m+1,q}}+1$$. }
	Substituting into \eqref{szz4} and use induction  we have for $n\geq 1$
	\begin{align}\label{sz2}
	A_q\lesssim N^{a_{n,q}}+1,
	\end{align}
	with $a_{n,q}=\frac{q(q+n-1)}{2(q+n)}-\frac{q}{2}(\sum_{k=1}^{n-1}\frac{1}{q+k})-\frac12$. Here $\sum_{k=1}^0=0$. In fact, we could check \eqref{sz2}
	by $$A_q\lesssim (N^{a_{n,q+1}})^{\frac{q}{q+1}}N^{-\frac12}+1\lesssim N^{a_{n+1,q}}+1.$$
	For fixed $q\geq 1$ we could always find $n$ large enough such that $a_{n,q}<0$, which implies that
	$A_q\lesssim1$ and the result follows.
\end{proof}

\begin{theorem}\label{th:o1}
	Let $\Phi=(\Phi_i)_{1\leq i\leq N}\thicksim \nu^N$ and $m$ be given as in Lemma \ref{th:m2}, then the laws of $\frac{1}{\sqrt{N}} \sum_{i=1}^{N}\Wick{\Phi_{i}^{2}}$ are tight on $B^{-2\kappa}_{2,2}$, and the laws of $\frac{1}{N} \Wick{\Big( \sum_{i=1}^N \Phi_i^2 \Big)^2}$
	are tight on $B^{-3\kappa}_{1,1}$.
	%	
	%	
	%	the following hold:
	%	\begin{enumerate}
	%		\item The laws of $N^{-\frac{1}{2}} \sum_{i=1}^{N}\Wick{\Phi_{i}^{2}}$ are tight on $B^{-2\kappa}_{2,2}$ and the subseqential limits can be identified with those of
	%		\begin{equation}
	%		N^{-\frac{1}{2}}\sum_{i=1}^{N} \big ( \Wick{Z_{i}^{2}}+2Y_{i}Z_{i} \big ). \label{se35}
	%		\end{equation}
	%		
	%		\item The laws of $\frac{1}{N} \Wick{\Big( \sum_{i=1}^N \Phi_i^2 \Big)^2}$
	%		%$N^{-1} \sum_{i,j=1}^{N}\Wick{\Phi_{i}^{2}\Phi_{j}^{2} }$
	%		are tight on $B^{-3\kappa}_{1,1}$ and the subseqential limits can be identified with those of
	%		\begin{equation}
	%		N^{-1}\sum_{i,j=1}^{N} \big ( \Wick{Z_{i}^{2}Z_{j}^{2} }+Y_{i}\Wick{Z_{i}Z_{j}^{2}}+Y_{i}Y_{j}\Wick{Z_{i}Z_{j}} \big ). \label{se36}
	%		\end{equation}
	%		
	%	\end{enumerate}	
\end{theorem}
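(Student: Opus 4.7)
The strategy is to establish moment bounds in a Besov space with strictly better regularity than $B^{-2\kappa}_{2,2}$ (resp.\ $B^{-3\kappa}_{1,1}$), uniformly in $N$, and conclude tightness from the compact embedding into the target space. Since $\Phi_i$ is stationary, the bounds will follow from the a priori estimate \eqref{se19}--\eqref{se16} of Lemma~\ref{th:m2}, the Wick-product bounds of Lemma~\ref{le:ex1}, and product estimates from Lemma~\ref{lem:multi}. Throughout, I exploit that the $(Z_i)_i$ are i.i.d.\ mean-zero Gaussian, and that the Wick products $\Wick{Z_{i_1}\cdots Z_{i_k}}$ for distinct indices are mean-zero and mutually uncorrelated in a suitable Hilbertian sense.

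For the first observable $\tfrac{1}{\sqrt N}\sum_i \Wick{\Phi_i^2}$, I use \eqref{ob1} to split into three pieces. The $Y_i^2$ piece is handled by \eqref{se16}, which gives $\E\|\tfrac{1}{\sqrt N}\sum_i Y_i^2\|_{L^2}^2\lesssim N^{-1}$, hence control in $L^2\hookrightarrow B^0_{2,2}$. For the cross term $\tfrac{1}{\sqrt N}\sum_i Y_iZ_i$, Lemma~\ref{lem:multi} gives $\|Y_iZ_i\|_{B^{-\kappa}_{2,2}}\lesssim \|Y_i\|_{H^1}\|Z_i\|_{\bC^{-\kappa}}$; summing, squaring, and using Cauchy--Schwarz yields
\[
\E\Bigl\|\frac{1}{\sqrt N}\sum_i Y_iZ_i\Bigr\|_{B^{-\kappa}_{2,2}}^2
\lesssim \frac{1}{N}\Bigl(\E\bigl(\textstyle\sum_i \|Y_i\|_{H^1}^2\bigr)^2\Bigr)^{1/2}\Bigl(\E\bigl(\sum_j\|Z_j\|_{\bC^{-\kappa}}^2\bigr)^2\Bigr)^{1/2}\lesssim 1,
\]
using \eqref{se19} and the fact that the second factor is $O(N)$ by i.i.d.\ summation. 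For the purely noisy term $\tfrac{1}{\sqrt N}\sum_i\Wick{Z_i^2}$, Parseval together with the i.i.d.\ mean-zero property gives
\[
\E\Bigl\|\frac{1}{\sqrt N}\sum_i \Wick{Z_i^2}\Bigr\|_{H^{-\kappa}}^2=\E\|\Wick{Z_1^2}\|_{H^{-\kappa}}^2\lesssim 1.
\]
Since $B^{-\kappa}_{2,2}$ embeds compactly into $B^{-2\kappa}_{2,2}$, tightness follows.

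For the second observable, I expand \eqref{*}--\eqref{ob2} into six pieces and control each in $B^{-2\kappa}_{1,1}$. The leading term $\tfrac{1}{N}\sum_{i,j}Y_i^2Y_j^2=\tfrac{1}{N}\bigl(\sum_iY_i^2\bigr)^2$ has $L^1$ norm $\tfrac{1}{N}\|\sum_i Y_i^2\|_{L^2}^2$, directly controlled by \eqref{se16}. The Wick piece $\tfrac{1}{N}\sum_{i,j}\Wick{Z_i^2Z_j^2}$ splits at $i=j$ (giving $\tfrac{1}{N}\sum_i\Wick{Z_i^4}$, trivially bounded in $H^{-\kappa}$) and $i\neq j$; for the off-diagonal, Parseval and independence across disjoint index pairs reduce the second moment in $H^{-\kappa}$ to pairs with $\{i,j\}=\{i',j'\}$, producing $O(N^{-2})\cdot N^2=O(1)$. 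The remaining mixed terms $\sum_{i,j}Y_i^2\Wick{Z_j^2}$, $\sum_{i,j}Y_i^2Y_jZ_j$, $\sum_{i,j}Y_iY_j\Wick{Z_iZ_j}$, and $\sum_{i,j}Y_i\Wick{Z_iZ_j^2}$ are factorized as products of sums whenever possible, e.g.\ $\sum_{i,j}Y_i^2\Wick{Z_j^2}=(\sum_iY_i^2)(\sum_j\Wick{Z_j^2})$, and then estimated by Lemma~\ref{lem:multi} in paired Besov spaces (one factor in a Sobolev space of positive regularity controlled by \eqref{se19}--\eqref{se16}, the other in $\bC^{-\kappa}$); the terms that do not factorize (those coupling $i$ and $j$ via $\Wick{Z_iZ_j}$ and $\Wick{Z_iZ_j^2}$) are treated by Cauchy--Schwarz combined with the orthogonality identity \eqref{eq:Ui} applied to the independent centered noise fields $\Wick{Z_iZ_j}$, $i\neq j$, exactly as in the estimate of $\bar R_N$ in Step~\ref{diffEst3} of Theorem~\ref{th:conv-v}.

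The main obstacle is keeping the $N$-dependence right in the mixed terms of the quartic observable: the naive estimate $\|fg\|_{B^{-\kappa}_{1,1}}\lesssim \|f\|_{B^{\kappa'}_{1,1}}\|g\|_{\bC^{-\kappa}}$ combined with Cauchy--Schwarz would produce powers of $N$ that the $1/N$ prefactor cannot absorb unless one carefully exploits the independence of the $Z_i$'s (or $\Wick{Z_iZ_j}$'s) to generate the correct cancellation factor, as \eqref{eq:Ui} does. Once each piece is bounded uniformly in $N$ in $B^{-2\kappa}_{1,1}$, the compact embedding $B^{-2\kappa}_{1,1}\hookrightarrow B^{-3\kappa}_{1,1}$ yields tightness and completes the proof.
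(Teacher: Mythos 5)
Your overall strategy — decompose via \eqref{ob1} and \eqref{*}--\eqref{ob2}, bound each piece uniformly in $N$ in a slightly better Besov space using Lemma~\ref{th:m2}, the i.i.d./mean-zero structure of the Wick products of the $Z_i$'s (the analogue of \eqref{eq:Ui}), and conclude by compact embedding — is exactly the route the paper takes, and your treatment of the pure-$Y$ piece, the pure-$Z$ pieces, and the quartic observable is in line with the paper's proof (the paper in fact shows the pieces \eqref{*} vanish in the limit, so the subsequential limits are identified with the last three terms of \eqref{ob2}, but that is a refinement, not a different method).

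There is, however, one concrete gap: your displayed estimate for the cross term
\[
\E\Bigl\|\tfrac{1}{\sqrt N}\textstyle\sum_i Y_iZ_i\Bigr\|_{B^{-\kappa}_{2,2}}^2
\lesssim \tfrac{1}{N}\Bigl(\E\bigl(\textstyle\sum_i \|Y_i\|_{H^1}^2\bigr)^2\Bigr)^{1/2}\Bigl(\E\bigl(\textstyle\sum_j\|Z_j\|_{\bC^{-\kappa}}^2\bigr)^2\Bigr)^{1/2}
\]
requires $\E\bigl(\sum_i\|Y_i\|_{H^1}^2\bigr)^2\lesssim 1$, and this does \emph{not} follow from \eqref{se19}. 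Lemma~\ref{th:m2} controls $\E\bigl[(\sum_i\|Y_i\|_{L^2}^2+1)^q\,\sum_i\|\nabla Y_i\|_{L^2}^2\bigr]$, i.e.\ arbitrary powers of the $L^2$-norms but only the \emph{first} power of the dissipation $\sum_i\|\nabla Y_i\|_{L^2}^2$; a bound on $\E\bigl(\sum_i\|\nabla Y_i\|_{L^2}^2\bigr)^2$ is precisely the kind of higher-moment estimate the paper points out is unavailable (cf.\ Remark~\ref{re:1}). The paper sidesteps this by interpolating $\|Y_i\|_{B^{s}_{2,2}}\lesssim\|Y_i\|_{H^1}^{s}\|Y_i\|_{L^2}^{1-s}$ and applying Young's inequality pathwise, so that only $\E\bigl[\sum_i\|Y_i\|_{H^1}^2\bigr]\lesssim 1$ is needed; the resulting bound is in $L^1(\Omega;B^{-s}_{2,2})$ rather than $L^2(\Omega)$, which is all that tightness requires. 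Your argument is repaired the same way (aim for a first-moment, or boundedness-in-probability, estimate rather than a second-moment one), and the same caution applies to the mixed terms of the quartic observable, where the exponents of the gradient sums must be kept at or below one after Hölder.
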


\begin{proof}
	%	Recall the definition:
	%	\begin{align}\frac{1}{\sqrt{N}}\sum_{i=1}^N:\Phi_i^2:
	%	=\frac{1}{\sqrt{N}}\sum_{i=1}^NY_i^2+\frac{2}{\sqrt{N}}\sum_{i=1}^NY_iZ_i+\frac{1}{\sqrt{N}}\sum_{i=1}^N \Wick{Z_i^2}.\label{se18}
	%	\end{align}
	Note that the first term $\frac{1}{\sqrt{N}}\sum_{i=1}^N Y_i^2$ on RHS of  \eqref{ob1} converges to zero in $L^{2}(\Omega;L^{2})$ as an immediate consequence of \eqref{se16};
	so we can actually prove a  stronger result than stated, namely the subseqential limits can be identified with those of the last two terms in \eqref{ob1}.
	We will now show that
	the other two quantities induce tight laws on  $B^{-3\kappa}_{2,2}$, which implies the first part of the theorem. The second sum in \eqref{ob1} can be estimated using Lemma \ref{lem:multi} and Lemma \ref{lem:emb} to find for $s \in (\kappa,2\kappa)$
	\begin{align*}
	&\bigg \|\frac{1}{\sqrt{N}}\sum_{i=1}^NY_iZ_i \bigg \|_{B^{-s}_{2,2}}\lesssim \frac{1}{\sqrt{N}}\sum_{i=1}^N\|Y_i\|_{B^{s}_{2,2}}\|Z_i\|_{\bC^{-\kappa}}\\
	\lesssim& \sum_{i=1}^N\|Y_i\|_{B^{s}_{2,2}}^2+ \frac{1}{N}\sum_{i=1}^N\|Z_i\|_{\bC^{-\kappa}}^2\lesssim \sum_{i=1}^N\|Y_i\|_{H^1}^{2s}\|Y_i\|_{L^2}^{2(1-s)}+ \frac{1}{N}\sum_{i=1}^N\|Z_i\|_{\bC^{-\kappa}}^2\\
	\lesssim&\sum_{i=1}^N\|Y_i\|_{H^1}^{2}+\sum_{i=1}^N\|Y_i\|_{L^2}^{2}+ \frac{1}{N}\sum_{i=1}^N\|Z_i\|_{\bC^{-\kappa}}^2,
	\end{align*}
	which is bounded in expectation by a constant using Lemma \ref{th:m2} for $q=1$ and Lemma \ref{le:ex1}.
	%	\begin{align*}
	%	\E   \bigg \|\frac{1}{\sqrt{N}}\sum_{i=1}^NY_iZ_i  \bigg \|_{B^{-\kappa}_{2,2}} \lesssim1.
	%	\end{align*}
	For the third sum in \eqref{ob1} we use independence to find for $s\in (\kappa,2\kappa)$
	\begin{align*}
	\E\   \bigg \|\frac{1}{\sqrt{N}}\sum_{i=1}^N \Wick{Z_i^2}  \bigg \|_{B^{-s}_{2,2}}^2
	=\E \frac{1}{N} \bigg \langle \Lambda^{-s}\sum_{i=1}^N \Wick{Z_i^2},\Lambda^{-s}\sum_{i=1}^N \Wick{Z_i^2} \bigg \rangle
	=\E \| \Wick{Z_i^2}\|_{B^{-s}_{2,2}}^2\lesssim1.
	\end{align*}
	By the triangle inequality and the embedding $L^{2}\hookrightarrow B^{-\kappa}_{2,2}$
	we find that $\frac{1}{\sqrt N} \sum_{i=1}^N \Wick{\Phi_i^2}$ is bounded in $L^{1}(\Omega; B^{-s}_{2,2})$.  In light of the
	compact embedding $B^{-s}_{2,2}\subset B^{-2\kappa}_{2,2}$, the tightness claim follows.
	%	Furthermore, since the first contribution to \eqref{se18} converges strongly to zero, only contributions from \eqref{se35} can contribute to the limiting law.
	
	For the second observable, we will also show a stronger result: the subseqential limits can be identified with those of the last three terms in \eqref{ob2}.
	%	\begin{align}
	%	\frac{1}{N} \sum_{i,j=1}^{N} \Wick{\Phi_{i}^{2}\Phi_{j}^{2} }&=\frac{1}{N}\sum_{i,j=1}^{N} \big ( Y_{i}^{2}Y_{j}^{2}+4Y_i^2Y_jZ_j+2Y_i^2  \Wick{Z_j^2} \big )\label{se31}\\
	%	&+\frac{1}{N}\sum_{i,j=1}^{N} \big ( \Wick{Z_{i}^{2}Z_{j}^{2} }+4Y_{i}\Wick{Z_{i}Z_{j}^{2}}+4Y_{i}Y_{j}\Wick{Z_{i}Z_{j}} \big ) \label{se30}.
	%	\end{align}
	We start with the first 3 terms in \eqref{*}, which will be shown to converge to zero in $L^{1}(\Omega; B^{-2s}_{1,1})$ for $s>\kappa$.   For the first term of \eqref{*} we use Lemma \ref{th:m2} to obtain
	\begin{align*}
	\E \bigg \|\frac{1}{N}\sum_{i,j=1}^NY_i^2Y_j^2 \bigg \|_{L^1} =\frac{1}{N}\E \bigg \| \sum_{i=1}^NY_i^2 \bigg \|_{L^{2}}^{2} \lesssim \frac{1}{N},
	\end{align*}
	so this term converges to zero in $L^{1}(\Omega;L^{1})$.  For the second term of \eqref{*}, using \eqref{ssz1} of Lemma \ref{Y:L2} with $\varphi Z_{j}$ in place of $Z_{j}$ we obtain
	\begin{align*}
	&\sup_{\|\varphi\|_{\bC^{2s}} \leq 1} \bigg |\frac{1}{N} \sum_{i,j=1}^N \langle Y_i^2Y_jZ_j,\varphi \rangle \bigg |\ \\
	&\lesssim \frac{1}{N}\bigg \| \sum_{i=1}^{N}Y_{i}^{2} \bigg \|_{L^{2}} \bigg ( \sum_{j=1}^{N}\|Y_{j}\|^{2 }_{L^{2}}   \bigg )^{\frac{1-s}{2} }\bigg ( \sum_{j=1}^{N} \|\nabla Y_{j}\|_{L^{2}}^{2} \bigg )^{\frac{s}{2}}   \bigg (  \sum_{j=1}^{N}\|Z_{j}\|_{\bC^{-s}}^{2}   \bigg )^{1/2}\nonumber \\
	&+ \frac{1}{N}\bigg \| \sum_{i=1}^{N}Y_{i}^{2} \bigg \|_{L^{2}}\bigg ( \sum_{j=1}^{N}\|Y_{j}\|^{2 }_{L^{2}}   \bigg )^{\frac{1}{2} } \bigg (  \sum_{j=1}^{N}\|Z_{j}\|_{\bC^{-s}}^{2}   \bigg )^{1/2} \nonumber.
	\end{align*}
	Hence, using Young's inequality we find for $\delta>0$ small enough
	\begin{align}
	\bigg \|\frac{1}{N} \sum_{i,j=1}^N  Y_i^2Y_jZ_j  \bigg \|_{B^{-2s}_{1,1}} &\leq N^{-\delta}\bigg \| \sum_{i=1}^{N}Y_{i}^{2} \bigg \|_{L^{2}}^{2}+N^{-\delta}\bigg (\sum_{j=1}^{N}\|\nabla Y_{j}\|_{L^{2}}^{2} \bigg )\nonumber  \\
	&+N^{-\delta}\bigg (\sum_{j=1}^{N}\|Y_{j}\|_{L^{2}}^{2}\bigg ) \bigg ( \frac{1}{N}\sum_{j=1}^{N}\|Z_{j}\|_{\bC^{-s}}^{2}  +1 \bigg )^{\frac{1}{1-s}}  \nonumber.
	\end{align}
	Both terms above converge to zero in $L^{1}(\Omega)$ as a consequence of \eqref{se19}, \eqref{se16} and Lemma \ref{le:ex1}.  For the third term in \eqref{*}, a calculation similar to \eqref{s9} using     		Lemma  \ref{lem:multi} with $Y_{i}$ replaced by $\varphi Y_{i}$ with $\varphi \in \bC^{2s}$  yields
	\begin{align}
	\sup_{\|\varphi\|_{\bC^{2s}} \leq 1} & \bigg |\frac{1}{N}\sum_{i,j=1}^{N} \langle Y_{i}^{2}\Wick{Z_{j}^{2}},\varphi \rangle \bigg |
	\lesssim \frac{1}{N}\sum_{i=1}^{N} \|\Lambda^s(Y_{i}^2)\|_{L^{2}} \bigg\|\sum_{j=1}^N\Lambda^{-s}(\Wick{Z_{j}^{2}}) \bigg \|_{L^2}\nonumber \\
	&\lesssim \bigg (\sum_{i=1}^{N}\|\nabla Y_{i}\|^{1+s}_{L^{2}} \|Y_{i}\|^{1-s}_{L^{2}}+\|Y_{i}^{2}\|_{L^{2}}  \bigg )\bigg \|\frac{1}{N}\sum_{j=1}^N\Lambda^{-s}(\Wick{Z_{j}^{2}}) \bigg \|_{L^2}\nonumber \\
	&\lesssim \bigg (\sum_{i=1}^{N}\| Y_{i}\|^{2}_{H^1}\bigg)^{\frac{1+s}{2}}\Big(\sum_{i=1}^N\|Y_{i}\|_{L^{2}}^{2}  \Big )^{\frac{1-s}{2}}\bigg \|\frac{1}{N}\sum_{j=1}^N\Lambda^{-s}(\Wick{Z_{j}^{2}}) \bigg \|_{L^2},  \label{zz4}
	\end{align}
	where we used \eqref{e:Lambda-prod} and Lemma \ref{lem:interpolation} to have
	$$\|\Lambda^s(Y_{i}^2)\|_{L^{2}}\lesssim \|\Lambda^s Y_i\|_{L^4}\|Y_i\|_{L^4}\lesssim\|\nabla Y_{i}\|^{1+s}_{L^{2}} \|Y_{i}\|^{1-s}_{L^{2}}+\|Y_{i}\|_{L^{4}}^2 .$$
	The first part of the product in \eqref{zz4} is bounded in $L^{1}(\Omega)$ by \eqref{se19}.  For the second part of the product, we use independence to obtain
	\begin{align*}\E \bigg\|\frac{1}{N} \sum_{j=1}^N\Lambda^{-s}(\Wick{Z_{j}^{2}}) \bigg\|_{L^2}^2
	\lesssim \frac{1}{N^2}\sum_{j=1}^N\E \|\Lambda^{-s}(\Wick{Z_{j}^{2}})\|_{L^2}^2\lesssim \frac{1}{N},\end{align*}
	so together  we find
	$\E\| \frac{1}{N}\sum_{i,j}  Y_{i}^{2} \Wick{Z_{j}^{2}}  \|_{B^{-2s}_{1,1}} $ converges to $0$.

	We now turn to terms in  \eqref{ob2} and derive suitable moment bounds.  For the first of these terms we have
	\begin{align*}
	\E \bigg\|\frac{1}{N}\sum_{i,j=1}^N \Wick{Z_i^2Z_j^2} \bigg \|_{B^{-\kappa}_{2,2}}^2
	& =\E \frac{1}{N^2} \bigg\langle \Lambda^{-\kappa}\sum_{i,j=1}^N \Wick{Z_i^2Z_j^2},\Lambda^{-\kappa}\sum_{i,j=1}^N \Wick{Z_i^2Z_j^2} \bigg \rangle
	\\ & \lesssim\E \| \Wick{ Z_1^2Z_2^2}\|_{B^{-\kappa}_{2,2}}^2+\frac{1}{N}\E \|\Wick{Z_1^4}\|_{B^{-\kappa}_{2,2}}^2\lesssim1.
	\end{align*}
	For the next term, using \eqref{ns7} with $\varphi Y_{i}$ in place of $Y_{i}$ we find
	\begin{align*}
	&\sup_{\|\varphi\|_{\bC^{2s}} \leq 1}\bigg | \frac{1}{N}\sum_{i,j=1}^N \langle Y_i \Wick{Z_iZ_j^2},\varphi \rangle \bigg | \nonumber \\
	&\lesssim \bigg(\sum_{i=1}^{N}\|\Lambda^sY_{i}\|^2_{L^{2}}\bigg)^{1/2}\bigg(\frac{1}{N^2} \sum_{i=1}^{N} \bigg\| \sum_{j=1}^{N}\Lambda^{-s}(\Wick{Z_{j}^{2}Z_{i}}) \bigg \|^2_{L^2}\bigg)^{1/2} \nonumber\\
	&\lesssim  \bigg(\sum_{i=1}^{N}\|Y_{i}\|^2_{H^{1}}\bigg)^{s/2}
	\bigg(\sum_{i=1}^{N}\|Y_{i}\|^2_{L^{2}}\bigg)^{(1-s)/2}\bigg(\frac{1}{N^2} \sum_{i=1}^{N}\bigg \|\sum_{j=1}^{N}\Lambda^{-s}(\Wick{Z_{j}^{2}Z_{i}})\bigg \|^2_{L^2}\bigg)^{1/2}.
	\end{align*}
	Using \eqref{mom} and Lemma \ref{th:m2} we deduce for some $p$ satisfying $sp<2$
	\begin{align*}
	\E \bigg\|\frac{1}{N}\sum_{i,j=1}^NY_i,\Wick{Z_iZ_j^2}\bigg\|_{B^{-2s}_{1,1}}^p\lesssim1.
	\end{align*}
	For the last term, we argue similarly to \eqref{s6} but $\Wick{Z_iZ_j}$ replace by $\varphi \Wick{Z_iZ_j}$ for $\varphi \in \bC^{s}$ to deduce boundedness in $L^{1}(\Omega;B^{-2s}_{1,1})$.
	
	Combining the above observations with the triangle inequality, we find that the second observable %$\frac{1}{N} \sum_{i,j}\Wick{\Phi_{i}^{2}\Phi_{j}^{2} }$
	is uniformly bounded in probability as a $B^{-2s}_{1,1}$ valued random variable.  By compactness   	of the embedding of $B^{-2s}_{1,1}$ into $B^{-{2s}-\delta}_{1,1}$ for $\delta>0$, we obtain the result.
	%Since the contributions from \eqref{se31} converge strongly to zero, only the contributions of \eqref{se36} contribute in the limit, completing the proof.
\end{proof}

\subsection{$L^p$-estimate}\label{se:lp}
In light of Lemma \ref{th:m2} and the Sobolev embedding theorem, it follows that for each component $i$, $\E \|Y_{i}\|_{L^{p}}^{2} \lesssim 1$, $p>1$.  Our goal now in this subsection is to upgrade from the second moment to higher moments of the $L^{p}$ norm.  We do so by revisiting the energy estimates for the PDE \eqref{eq:22}.  Since we work with a fixed component rather than an aggregate quantity, these bounds come with a price: the estimate is no longer uniform in $N$.  Nonetheless, the power of $N$ that appears is ultimately small enough for a successful application of the estimate in Lemma \ref{lem:zmm}, en route to Theorem \ref{theo:nontrivial2}.  

\bl\label{th:lp} Let $m$ as in Lemma \ref{th:m2} and $p>2$.  For each component $i$ it holds 
\begin{align*}
\E \|Y_i\|_{L^p}^p+\E\|Y_i^{p-2}|\nabla Y_i|^2\|_{L^1}+\frac1N\sum_{j=1}^N\E\|Y_i^pY_j^2\|_{L^1}\lesssim N^{p/2},
\end{align*}
where the implicit constant is independent of $N$ and $i$.   
\el
\begin{proof}
	\newcounter{GlobLp} % proofstep = 0
	\refstepcounter{GlobLp} % increases value by 1
	Fix a component $i$.  Given $p> 2$, let $s>0$ be a small number to be selected (depending on $p$ ) in the final step of the proof. % such that
	%$sp<\frac{1}{2}$ and $\frac{2}{p}+s<1$.
	%We omit the subscripts $i$ for simplicity of notation,
	%namely we write $X$ for $X_i$, $Z$ for $Z_i$, and $\bX$, $\bZ$ for their independent copies.
	We will perform an $L^p$ estimate: integrating \eqref{eq:22} against $|Y_i|^{p-2}Y_i$ we obtain
	\begin{align}\label{eq:LpPhi}
	&\frac{1}{p}\frac{\dif}{\dif t} \|Y_i\|_{L^{p}}^{p}
	+(p-1) \||Y_i|^{p-2}|\nabla Y_i|^2\|_{L^1}
	+\frac1N\sum_{j=1}^N\||Y_i|^p Y_j^2\|_{L^1}+m\|Y_i\|_{L^p}^p\nonumber
	\\=&-2 \Big\< \frac1N\sum_{j=1}^N Y_jZ_j,|Y_i|^p\Big\>
	-\Big\<\frac1N\sum_{j=1}^NY_j^2|Y_i|^{p-2}Y_i,Z_i\Big\>
	-2 \Big\< \frac1N\sum_{j=1}^NY_j\Wick{Z_jZ_i},|Y_i|^{p-2}Y_i \Big\> \nonumber
	\\
	&\qquad +\Big\<\frac1N\sum_{j=1}^N\Wick{Z_j^2},|Y_i|^p \Big\>+\Big\< \frac1N\sum_{j=1}^N\Wick{Z_iZ_j^2},Y_i|Y_i|^{p-2}\Big\>
	=: \sum_{k=1}^5I_k.
	\end{align}
	Define $D \eqdef \|Y_i^{p-2}|\nabla Y_i|^2\|_{L^1}$ and for each $j$, let $A_j \eqdef \|Y_i^pY_j^2\|_{L^1}$.

	{\sc Step} \arabic{GlobLp} \label{GlobLp1} \refstepcounter{GlobLp} (Estimate of $I_1$)
	In this step we show that
	\begin{align}\label{est:I1}
	I_1
	&	\leq
	\frac{1}{10}\frac1N\sum_{j=1}^NA_j+ \frac{1}{10} D+C\|Y_i\|_{L^p}^{p} F,
	\end{align}
	where
	$$F\eqdef
	\frac1N\sum_{j=1}^N\|Y_j\|_{H^1}^{2s} \|Z_j\|_{\bC^{-s}}^{2}
	%+\frac1N\sum_{j=1}^N\|Y_j\|_{H^1}^{\frac{2s}{1+s}}\|Z_j\|_{\bC^{-s}}^{\frac{2}{1+s}}
	+\frac1N\sum_{j=1}^N\|Z_j\|_{\bC^{-s}}^{2/(1-s)}+1.
	$$

	In the following we prove \eqref{est:I1}. By Lemma~\ref{lem:dual+MW} we have
	\begin{align*}
	I_1
	\lesssim 
	\frac1N\sum_{j=1}^N\Big[\|Y_j |Y_i|^p\|_{L^1}\|Z_j\|_{\bC^{-s}}\Big]+\frac1N\sum_{j=1}^N\Big[ \|Y_j |Y_i|^p\|_{L^1}^{1-s}\|\nabla (Y_j |Y_i|^p)\|_{L^1}^{s}\|Z_j\|_{\bC^{-s}}\Big]
	=: I_1^{(1)}+I_1^{(2)}.% \label{e:5.4I1}
	\end{align*}
	To estimate $I_1^{(1)}$, notice that the Cauchy-Schwarz inequality yields
	\begin{equation}\label{e:Y_jY_i^p}
	\|Y_j |Y_i|^p\|_{L^1} \leq \|Y_j |Y_i|^\frac{p}{2}\|_{L^2} \||Y_{i}|^{\frac p2}\|_{L^2}=A_j^{1/2}\|Y_i\|_{L^p}^{\frac p 2 }.
	\end{equation}
	Combining this with Young's inequality, we obtain
	\begin{align*}
	I_1^{(1)} %\le \E \Big[\|\bX X^p\|_{L^1} & \|\bZ\|_{\bC^{-s}}\Big]
	\leq\frac1N\sum_{j=1}^N\left[ A_j^{1/2} \|Y_i\|_{L^p}^{\frac p 2 }\|Z_j\|_{\bC^{-s}}\right]
	\le \frac{1}{10}\frac1N\sum_{j=1}^N A_j+ C\|Y_i\|_{L^p}^{p} \frac1N\sum_{j=1}^N\|Z_j\|_{\bC^{-s}}^2.
	\end{align*}

	To estimate $I_1^{(2)}$, note first that $\nabla(Y_{j}|Y_{i}|^{p})=\nabla Y_{j} \big (|Y_{i}|^{\frac p 2} \big )^{2}+2Y_{j}|Y_{i}|^{\frac{p}{2}}\nabla |Y_{i}|^{\frac{p}{2}}$.  Hence, using H\"older's inequality
	followed by Gagliardo-Nirenberg \eqref{e:Gagliardo} with $(s,q,r,\alpha)=(0,4,2,\frac12)$,
	\begin{align*}
	\|\nabla (Y_j |Y_i|^p)\|_{L^1}
	& \le
	\|\nabla  Y_j\|_{L^{2}}\||Y_i|^{\frac{p}{2}}\|_{L^4}^2
	+2\|Y_{j}|Y_i|^{\frac{p}{2}} \|_{L^{2}}\|\nabla |Y_i|^{\frac{p}{2}}\|_{L^{2}} \\
	& \lesssim
	\||Y_i|^{\frac{p}{2}}\|_{H^1} \||Y_i|^{\frac{p}{2}}\|_{L^2} \|Y_j\|_{H^1}
	+ \sqrt{A_jD}.
	\end{align*}
	Since $\||Y_i|^{\frac{p}{2}}\|_{H^1} \lesssim D^{\frac12}+ \||Y_i|^{\frac{p}{2}}\|_{L^2}$,	using \eqref{e:Y_jY_i^p} again and $\||Y_i|^{\frac{p}{2}}\|_{L^2}=\||Y_{i}|^{p}\|_{L^1}^{1/2}$ 
	yields
	\begin{align*}
	I_1^{(2)}
	&	\lesssim
	\frac1N\sum_{j=1}^N\Big[A_j^{\frac{1-s}{2}}\||Y_i|^p\|_{L^1}^{\frac{1-s}{2}}
	\Big(
	D^{\frac{s}{2}} \||Y_i|^p\|_{L^1}^{\frac{s}{2}} \|Y_j\|_{H^1}^s
	+ \||Y_i|^p\|_{L^1}^{s}  \|Y_j\|_{H^1}^s
	+A_j^{\frac{s}{2}}D^{\frac{s}{2}}\Big)
	\|Z_j\|_{\bC^{-s}}\Big]
	\\
	&\le
	\frac{1}{10}\frac1N\sum_{j=1}^NA_j+ \frac{1}{10} D
	\\&+C\|Y_i\|_{L^p}^{p}
	\Big(
	\frac1N\sum_{j=1}^N\|Y_j\|_{H^1}^{2s} \|Z_j\|_{\bC^{-s}}^{2}
	+\frac1N\sum_{j=1}^N\|Y_j\|_{H^1}^{\frac{2s}{1+s}}\|Z_j\|_{\bC^{-s}}^{\frac{2}{1+s}}
	+\frac1N\sum_{j=1}^N\|Z_j\|_{\bC^{-s}}^{2/(1-s)}\Big)
	\\&\leq\frac{1}{10}\frac1N\sum_{j=1}^NA_j+ \frac{1}{10} D+C\|Y_i\|_{L^p}^{p}F,
	\end{align*}
	where the second inequality follows from three applications of Young's inequality.  We view the summand in first term as $A_{j}^{\frac{1-s}{2}}D^{\frac s 2}(\|Y_{i} \|_{L^{p}}^{\frac p 2} \|Y_{j}\|_{H^{1}}^{s}\|Z_{j}\|_{\bC^{-s}} )$ and use exponents $(\frac{2}{1-s},\frac{2}{s},2)$.  We view the second summand as $A_{j}^{\frac{1-s}{2}}(\|Y_{i} \|_{L^{p}}^{\frac{p(1+s)}{2}}\|Y_{j}\|_{H^{1}}^{s}\|Z_{j}\|_{\bC^{-s}})$ and we use exponents $(\frac{2}{1-s},\frac{2}{1+s})$.  Finally, the third summand is viewed as $A_{j}^{\frac{1}{2}}D^{\frac s 2}(\|Y_{i} \|_{L^{p}}^{\frac{p(1-s) }{2}}\|Z_{j}\|_{\bC^{-s}})$ and we use exponents $(2,\frac{2}{s},\frac{2}{1-s} )$.
	
	%\begin{align*}
	%F_1\lesssim& \frac1N\Big(\sum_{j=1}^N\|Y_j\|_{H^1}^{2}\Big)^s\Big(\sum_{j=1}^N \|Z_j\|_{\bC^{-s}}^{\frac2{1-s}}\Big)^{1-s}.
	%\\\lesssim&\frac1N\Big(\sum_{j=1}^N\|Y_j\|_{H^1}^{2}\Big)+\frac1N\Big(\sum_{j=1}^N \|Z_j\|_{\bC^{-s}}^{\frac2{1-s}}\Big)^{1-s}
	%\end{align*}

	\medskip
	
	{\sc Step} \arabic{GlobLp} \label{GlobLp} \refstepcounter{GlobLp} (Estimates for $I_{2}$)
	In this step, we show that
	\begin{align}\label{est:I2}
	I_2
	\le \frac1{10} \bigg\|\frac1N\sum_{j=1}^NY_j^2Y_i^{p} \bigg\|_{L^1}+\frac1{10} D+C\|Y_{i}\|_{L^{p}}^{p}+C F_1,
	\end{align}
	where
	$$F_1\eqdef\Big\|\frac1N\sum_{j=1}^NY_j^2\Big\|_{L^1}\|Z_i\|_{\bC^{-s}}^p+\Big\|\frac1N\sum_{j=1}^NY_j^2\Big\|_{L^1}\Big(\frac1N\sum_{j=1}^N\|Y_j\|_{H^1}^2\Big)^{\frac{sp}{1-s}}
	\|Z_i\|_{\bC^{-s}}^{\frac{p}{1-s}}+1.$$
	%	For the second term on the right hand side of \eqref{eq:LpPhi} we  have
	To prove \eqref{est:I2}, by Lemma~\ref{lem:dual+MW}  one has
	\begin{align*}
	I_2
	&=\frac1N\sum_{j=1}^N\< (Y_j^2Y_i|Y_i|^{p-2} ),Z_i \>
	\\
	&\lesssim
	\Big(\Big\|\frac1N\sum_{j=1}^N Y_j^2Y_i|Y_i|^{p-2}\Big\|_{L^1}
	+\Big\|\frac1N\sum_{j=1}^N\nabla(Y_j^2Y_i|Y_i|^{p-2} ) \Big\|_{L^1}^s
	\Big\|\frac1N\sum_{j=1}^N Y_j^2Y_i|Y_i|^{p-2} \Big\|_{L^1}^{1-s}\Big)\|Z_i\|_{\bC^{-s}}.
	\end{align*}
	%\hao{Writing $Y_j^2Y_i^{p-1}=Y_j^{\frac{2}{p}} (Y_j^{\frac{2p-2}{p}} Y_i^{p-1})$ and by H\"older (for sum) and Jensen inequality we have} 
	%\zhu{We just use H\"older's inequality for intergral, right?}\hao{I still think you're using H\"older for sum (otherwise why sum is inside the $L^1$ norm on RHS?) In any case I agree with this bound; we could  just leave $Y_j^2Y_i^{p-1}=Y_j^{\frac{2}{p}} (Y_j^{\frac{2p-2}{p}} Y_i^{p-1})$} 
	Using the triangle inequality, writing $Y_j^2 |Y_i|^{p-1}=(Y_{j}^{2} |Y_{i}|^{p} )^{\frac{p-1}{p}}(Y_{j}^{2})^{\frac{1}{p}}$ and using Holder's inequality, 
	\begin{align*}
	\Big\|\frac1N\sum_{j=1}^N  Y_j^2Y_i|Y_i|^{p-2}\Big\|_{L^1}\lesssim\Big\|\frac1N\sum_{j=1}^NY_j^2 |Y_i|^{p}\Big\|_{L^1}^{\frac{p-1}p}
	\Big\|\frac1N\sum_{j=1}^NY_j^2\Big\|_{L^1}^{\frac{1}p}.
	\end{align*}
	Another application of Holder's inequality gives
	\begin{align}\label{eq:a}
	\|\nabla(Y_i|Y_i|^{p-2}) \|_{L^{\frac{p}{p-1}}} \lesssim \||\nabla Y_{i}| Y_i^{\frac {p-2} 2 } \|_{L^{2}} \||Y_{i} |^{\frac{p-2}{2} } \|_{L^{\frac{2p}{p-2} } }
	=D^{1/2}\||Y_i|^p\|_{L^{1}}^{\frac{p-2}{2p}}.
	\end{align}
	For $r>2$, using Holder's inequality and the Sobolev embedding $H^1\subset L^{\frac{2r}{r-2}}$   we have 
	%\hao{I don't quite understand how you got $\|Y_j\|_{H^1}^2\|Y_i^{p-1}\|_{L^r}$,  what is $r$, and then how did you bound $\|Y_i^{p-1}\|_{L^r}$ by the $\|Y_i^{p/2}\|_{H^1}^{2(1-\frac{1}{p})}$ in the 2nd line?} \zhu{Here we use \eqref{e:Lambda-prod} and for the second term we first have $\frac1N\sum_{j=1}^N \|\nabla Y_j\|_{L^{2}}\|Y_j\|_{L^{r'}}\|Y_i^{p-1}\|_{L^r}$ for $r>2$ and $\frac1{r'}+\frac1r=\frac12$. So we can choose $r$ for every $r>2$ and use Sobolev embedding $H^1\subset L^{r'}$ to have the following last term in the first line. We also use Sobolev embedding to get from the first line to the second line. 
	%}\hao{From 1st to 2nd line, Sobolev embedding only gives $\|Y_i^{p-1}\|_{H^1}$, and then what did you do?} \zhu{We write $\|Y_i^{p-1}\|_{L^r}$ as $\|Y_i^{p/2}\|_{L^q}^{2(1-\frac1p)}$, which can be bounded by $\|Y_i^{p/2}\|_{H^1}^{2(1-\frac1p)}$ for $q=\frac{2r(p-1)}{p}$.}
	\begin{align*}
	\Big\| \frac1N\sum_{j=1}^N   \nabla(Y_j^2Y_i|Y_i|^{p-2} ) \Big\|_{L^1}
	&\lesssim
	\frac1N\sum_{j=1}^N  \|\nabla(Y_i|Y_i|^{p-2}) \|_{L^{\frac{p}{p-1}}} \|Y_j^2\|_{L^p}
	+ \|\nabla Y_j\|_{L^2} \|Y_{j}\|_{L^{\frac{2r}{r-2}}} \|Y_{i}|Y_{i}|^{p-2} \|_{L^r}
	%\\	&\lesssim
	%\frac1N\sum_{j=1}^N  \|\nabla(Y_i|Y_i|^{p-2}) \|_{L^{\frac{p}{p-1}}} \|Y_j^2\|_{L^p}
	%+\frac1N\sum_{j=1}^N \|Y_j\|_{H^1}^2 \|Y_i^{p-1}\|_{L^r}
	\\
	&\lesssim
	\frac1N\sum_{j=1}^N 
	D^{1/2}\||Y_i|^p\|_{L^{1}}^{\frac{p-2}{2p}}\|Y_j^2\|_{L^p}
	+\||Y_i|^{\frac p 2}\|_{H^1}^{2(1-\frac{1}{p})}\frac1N\sum_{j=1}^N\|Y_j\|_{H^1}^2
	\\
	&\lesssim \Big(\frac1N\sum_{j=1}^N\|Y_j\|_{H^1}^2\Big)
	\Big(\||Y_i|^{p/2}\|_{H^1}^{2(1-\frac{1}{p})}+D^{1/2}\||Y_i|^p\|_{L^{1}}^{\frac{p-2}{2p}}\Big),
	\end{align*}
	%where in the first inequality we used \eqref{e:Lambda-prod}
	%which gives as the second term $ \|\nabla Y_j\|_{L^{2}}\|Y_j\|_{L^{r'}}\|Y_i^{p-1}\|_{L^r}$ with $\frac1{r'}+\frac1r=\frac12$, followed by Sobolev embedding $H^1\subset L^{r'}$ 
	where in the second inequality we used that $\|Y_{i}|Y_{i}|^{p-2} \|_{L^r}=\||Y_i|^{\frac p 2}\|_{L^q}^{2(1-\frac1p)}$ for $q=\frac{2r(p-1)}{p}$ and again the Sobolev embedding theorem.
	%	for $r>2$. For the first term we have
	%	\begin{align}\label{eq:a}
	%	\|\nabla(Y_i|Y_i|^{p-2})\|_{L^{\frac{p}{p-1}}}
	%	\lesssim D^{1/2}\|Y_i^p\|_{L^{1}}^{\frac{p-2}{2p}}.
	%	\end{align}
	%	For the second term we have
	%	\begin{align*}
	%	\frac1N\sum_{j=1}^N\|Y_j\|_{H^1}^2\|Y_i^{p-1}\|_{L^r}\lesssim \|Y_i^{p/2}\|_{H^1}^{2(1-\frac{1}{p})}\frac1N\sum_{j=1}^N\|Y_j\|_{H^1}^2.
	%	\end{align*}
	%	Then we deduce
	%	\begin{align*}
	%	\Big\|\frac1N\sum_{j=1}^N\nabla[(Y_j^2)Y_i^{p-1}]\Big\|_{L^1}\lesssim\frac1N\sum_{j=1}^N\|Y_j\|_{H^1}^2
	%	(\|Y_i^{p/2}\|_{H^1}^{2(1-\frac{1}{p})}+D^{1/2}\|Y_i^p\|_{L^{1}}^{\frac{p-2}{2p}}),
	%	\end{align*}
	Combining the above estimates we deduce
	\begin{align*}
	I_2
	&\lesssim 
	\Big\|\frac1N\sum_{j=1}^NY_j^2|Y_i|^{p} \Big\|_{L^1}^{\frac{p-1}p}
	\Big\|\frac1N\sum_{j=1}^NY_j^2 \Big\|_{L^1}^{\frac{1}p} 
	\|Z_i\|_{\bC^{-s}}
	\\
	& +\Big\|\frac1N\sum_{j=1}^NY_j^2|Y_i|^{p}\Big\|_{L^1}^{\frac{(p-1)(1-s)}p}
	\Big\|\frac1N\sum_{j=1}^NY_j^2\Big\|_{L^1}^{\frac{1-s}p}
	\Big(\frac1N\sum_{j=1}^N\|Y_j\|_{H^1}^2\Big)^s
	\Big(\||Y_i|^{ \frac p 2}\|_{H^1}^{2(1-\frac{1}{p})}+D^{1/2}\||Y_i|^p\|_{L^{1}}^{\frac{p-2}{2p}}\Big)^s   \|Z_i\|_{\bC^{-s}}
	\\
	&\lesssim
	\frac1{10} \Big\|\frac1N\sum_{j=1}^NY_j^2|Y_i|^{p} \Big\|_{L^1}
	+\frac1{10} D+\Big\|\frac1N\sum_{j=1}^NY_j^2\Big\|_{L^1}\|Z_i\|_{\bC^{-s}}^{ p }+\Big\|\frac1N\sum_{j=1}^NY_j^2\Big\|_{L^1}\Big(\frac1N\sum_{j=1}^N\|Y_j\|_{H^1}^2\Big)^{\frac{sp}{1-s}}
	\|Z_i\|_{\bC^{-s}}^{\frac{p}{1-s}}.\\
	&+C\|Y_{i}\|_{L^{p}}^{p}+1, 
	%\\:=&\frac1{10} \|\frac1N\sum_{j=1}^NY_j^2Y_i^{p}\|_{L^1}+\frac1{10} D+F_1.
	\end{align*}
	where we applied Cauchy's inequality with exponents 
	$(\frac{p}{p-1},p)$ for the first term and $(\frac{p}{(p-1)(1-s)},\frac{p}{1-s},\frac{1}{s})$ for the second term.  
	
	\medskip
	
	{\sc Step} \arabic{GlobLp} \label{GlobLp2} \refstepcounter{GlobLp} (Estimate of $I_3$-$I_5$)
	
	In this step, we show that
	\begin{align}\label{est:I3}
	\sum_{k=3}^5I_k\leq\frac1{10} D%+\frac1{10}\||Y_i|^{\frac p2}\|_{H^1}^2
	+\frac1{10}\frac1N\sum_{j=1}^N\|Y_j^2|Y_i|^p\|_{L^1}+C\|Y_i\|_{L^p}^pF_2+CF_3,
	\end{align}
	where
	\begin{align*}
	F_2 & \eqdef
	\Big\|\frac1N\sum_{j=1}^N\Wick{Z_j^2}\Big\|_{\bC^{-s}}^{\frac1{1-s/2}}+1, \\
	F_3 & \eqdef 
	\Big(\frac1N\sum_{j=1}^N\|\Wick{Z_i Z_j}\|_{\bC^{-s}}^2\Big)^{\frac{p}2}+\Big(\frac1N\sum_{j=1}^N\|\Wick{Z_i Z_j}\|_{\bC^{-s}}^2\Big)^{\frac{p}{2(1-s)}}\Big(\frac1N\sum_{j=1}^N\| Y_j\|_{H^1}^2\Big)^{\frac{ps}{2(1-s)}}
	\\&\qquad+\Big\|\frac1N\sum_{j=1}^N\Wick{Z_iZ_j^2}\Big\|_{\bC^{-s}}^p+1.
	%\\&+\Big(\Big\|\frac1N\sum_{j=1}^N\Wick{Z_j^2}\Big\|_{\bC^{-s}}^{\frac1{1-s/2}}+1\Big),
	\end{align*}
	
	To prove \eqref{est:I3}, we apply Lemma \ref{lem:dual+MW} to find
	\begin{align*}
	I_3&= \frac1N\sum_{j=1}^N\< Y_j Y_i|Y_i|^{p-2},\Wick{Z_i Z_j}\>
	\\
	& \lesssim
	\frac1N\sum_{j=1}^N \Big(\|Y_j Y_i|Y_i|^{p-2}\|_{L^1}+\|Y_j Y_i|Y_i|^{p-2}\|_{L^1}^{1-s}\|\nabla(Y_j Y_i|Y_i |^{p-2})\|_{L^1}^s\Big)\|\Wick{Z_i Z_j}\|_{\bC^{-s}}
	.\end{align*}
	By the Cauchy-Schwartz inequality, 
	\begin{align*}
	\|Y_j Y_i|Y_i|^{p-2}\|_{L^1}=\|(Y_{j}|Y_{i}|^{\frac p 2}) |Y_{i}|^{\frac p2-1} \|_{L^{1}}\lesssim \|Y_{j}^{2}|Y_{i}|^{p}\|_{L^{1}}^{\frac 12 } \||Y_{i}|^{p-2} \|_{L^{1}}^{\frac12}  ,
	\end{align*}
	and by Holder's inequality and the Sobolev embedding theorem
	\begin{align*}
	\|\nabla(Y_j Y_i|Y_i |^{p-2})\|_{L^1}
	&\lesssim 
	\|\nabla Y_j\|_{L^2}\||Y_i|^{p-1}\|_{L^2}
	+\|\nabla Y_i|Y_i|^{p-2}\|_{L^{\frac{p}{p-1}}}\|Y_j\|_{L^p}
	\\
	&\lesssim
	\|\nabla Y_j\|_{L^2}\|| Y_i |^{\frac p2}\|_{L^{\frac{4(p-1)}{p } }}^{2(1-\frac{1}{p})}
	+D^{1/2}\||Y_i|^p\|_{L^{1}}^{\frac{p-2}{2p}}\|Y_j\|_{L^p},
	\\
	&\lesssim
	\|\nabla Y_j\|_{L^2}\||Y_i|^{\frac p2}\|_{H^1}^{2(1-\frac{1}{p})}
	+D^{1/2}\||Y_i|^p\|_{L^{1}}^{\frac{p-2}{2p}}\|Y_j\|_{L^p},
	\end{align*}
	where we used \eqref{eq:a}.
	Combining these observations and applying the Cauchy-Schwartz inequality for the summation we find
	\begin{align*}
	I_3
	&\lesssim
	\Big(\frac1N\sum_{j=1}^N\|Y_j^2|Y_i|^p\|_{L^1}\Big)^{1/2}\| |Y_{i}|^{p-2} \|_{L^{1}}^{\frac12}\Big(\frac1N\sum_{j=1}^N\|\Wick{Z_i Z_j}\|_{\bC^{-s}}^2\Big)^{1/2}  \\&
	+\frac1N\sum_{j=1}^N \|\nabla Y_j\|_{L^2}^s\||Y_i|^{\frac p 2}\|_{H^1}^{2s(1-\frac{1}{p})}\|Y_j^2|Y_i|^p\|_{L^1}^{\frac{1-s}2}\||Y_i|^{p-2}\|_{L^1}^\frac{1-s}{2}\|\Wick{Z_i Z_j}\|_{\bC^{-s}}
	\\&
	+\frac1N\sum_{j=1}^N D^{\frac{s}2}\||Y_i|^{p}\|_{L^1}^{\frac{s (p-2)}{2p}}\|Y_j\|_{L^p}^s\|Y_j^2|Y_i|^p\|_{L^1}^{\frac{1-s}2}\||Y_i|^{p-2}\|_{L^1}^\frac{1-s}{2}\|\Wick{Z_i Z_j}\|_{\bC^{-s}}
	:=\sum_{k=1}^3I_{3k}.
	\end{align*}
	To estimate $I_{31}$, we use Young's inequality with exponents $(p,2,\frac{2p}{p-2})$ together with the embedding $L^{p} \hookrightarrow L^{p-2}$, and this leads to the $1$ in $F_{2}$ and first term in $F_{3}$.  To estimate $I_{32}$,  we use H\"older's inequality for the summation with exponents $(2,\frac{2}{s},\frac{2}{1-s})$ to find
	\begin{align*}
	I_{32}& \lesssim
	\||Y_i|^{\frac p 2}\|_{H^1}^{2s(1-\frac{1}{p})}\||Y_i|^{p-2}\|_{L^1}^\frac{1-s}{2}
	\\
	&\qquad\times \Big(\frac1N\sum_{j=1}^N\|\nabla Y_j\|_{L^2}^2\Big)^{s/2} \Big(\frac1N\sum_{j=1}^N\|Y_j^2|Y_i|^p\|_{L^1}\Big)^{\frac{1-s}2}\Big(\frac1N\sum_{j=1}^N\|\Wick{Z_i Z_j}\|_{\bC^{-s}}^2\Big)^{1/2}
	\\
	&\le
	\frac1{40}\||Y_i|^{\frac p 2 }\|_{H^1}^2+\frac1{30}\frac1N\sum_{j=1}^N\|Y_j^2Y_i^p\|_{L^1}+1
	\\&\qquad +\|Y_i \|_{L^p}^{p-2}\Big(\frac1N\sum_{j=1}^N\|\nabla Y_j\|_{L^2}^2\Big)^{s/(1-s)} \Big(\frac1N\sum_{j=1}^N\|\Wick{Z_i Z_j}\|_{\bC^{-s}}^2\Big)^{1/(1-s)},
	\end{align*}
	where we used Young's inequality with exponents $ (\frac{p}{(p-1) s},\frac{p}{s}, \frac{2}{1-s},\frac{2}{1-s} )$ in the last step.  The estimate for $I_{33}$ uses Holder's inequality with the same exponents, followed by the Sobolev embedding theorem to yield
	\begin{align*}
	I_{33}  
	&\lesssim
	D^{\frac{s}2}\||Y_i|^{p}\|_{L^1}^{\frac{s (p-2)}{2p}}
	\||Y_i|^{p-2}\|_{L^1}^\frac{1-s}{2}
	\\
	&\qquad\times
	\Big(\frac1N\sum_{j=1}^N\|Y_j\|_{H^1}^2\Big)^{s/2}\Big(\frac1N\sum_{j=1}^N\|Y_j^2|Y_i|^p\|_{L^1}\Big)^{\frac{1-s}2}\Big(\frac1N\sum_{j=1}^N\|\Wick{Z_i Z_j}\|_{\bC^{-s}}^2\Big)^{1/2}
	\\
	&\le \frac1{40} D+\frac1{30}\frac1N\sum_{j=1}^N\|Y_j^2Y_i^p\|_{L^1}+\|Y_i \|_{L^p}^{p-2 }
	\\&\qquad +\|Y_i\|_{L^p}^{p-2}
	\Big(\frac1N\sum_{j=1}^N\|Y_j\|_{H^1}^2\Big)^{s/(1-s)}\Big(\frac1N\sum_{j=1}^N\|\Wick{Z_i Z_j}\|_{\bC^{-s}}^2\Big)^{1/(1-s)},
	\end{align*}
	where we used Young's inequality with exponents $ (\frac{2}{s},\frac2s, \frac{2}{1-s},\frac{2}{1-s} )$ in the last step.  Combining the above estimates, \eqref{est:I3} follows for $I_3$ by Young's inequality with exponents $(\frac{p}{p-2},\frac{p}{2})$.

	We now turn to $I_4$ and note that $\|\nabla |Y_i| ^p\|_{L^1}\lesssim \|\nabla |Y_i|^{\frac p 2 }\|_{L^2}\||Y_i|^{\frac p 2 }\|_{L^2}\lesssim D^{1/2}\||Y_i|^{p}\|_{L^1}^{1/2}$, so that
	\begin{align*}
	I_4 =\Big\<\frac1N\sum_{j=1}^N\Wick{Z_j^2} &  ,|Y_i|^p \Big\>
	\lesssim
	\Big\|\frac1N\sum_{j=1}^N\Wick{Z_j^2}\Big\|_{\bC^{-s}}
	\Big(\||Y_i|^p\|_{L^1}+\||Y_i|^p\|_{L^1}^{1-s}\|\nabla |Y_i|^p\|_{L^1}^s\Big)
	\\
	&\lesssim \Big\|\frac1N\sum_{j=1}^N\Wick{Z_j^2}\Big\|_{\bC^{-s}}
	\Big(\|Y_i\|_{L^p}^{p}+\|Y_i\|_{L^p}^{p(1-s/2) }D^{s/2}\Big)
	\\ 
	& \leq \frac1{40}D+C \Big(\Big\|\frac1N\sum_{j=1}^N\Wick{Z_j^2}\Big\|_{\bC^{-s}}^{\frac1{1-s/2}}+1\Big)\|Y_i\|_{L^p}^{p},
	\end{align*}
	by Young's inequality with exponents $(\frac{2}{s},\frac{2}{2-s})$, which implies \eqref{est:I3} for $I_4$.
	Finally, we estimate $I_5$ and note
	\begin{align*}
	I_5&=\Big\<\frac1N\sum_{j=1}^N\Wick{Z_iZ_j^2},|Y_i|^{p-2}Y_i \Big\>
	\\
	& \lesssim \Big\|\frac1N\sum_{j=1}^N\Wick{Z_iZ_j^2}\Big\|_{\bC^{-s}}\Big(\||Y_i|^{p-1}\|_{L^1}+\||Y_i|^{p-1}\|_{L^1}^{1-s}\|\nabla (|Y_i|^{p-2}Y_i)\|_{L^1}^s\Big)
	\\
	& \lesssim \Big\|\frac1N\sum_{j=1}^N\Wick{Z_iZ_j^2}\Big\|_{\bC^{-s}}\Big(\|Y_i\|_{L^{p-1} }^{p-1 }+\|Y_i\|_{L^{p-1} }^{(p-1)(1-s) } D^{\frac s2} \|Y_i\|_{L^{p-2}}^{\frac{(p-2)s}2}\Big),
	\end{align*}
	which gives \eqref{est:I3} for $I_5$ by Young's inequality.
	
	{\sc Step} \arabic{GlobLp} \label{GlobLp} \refstepcounter{GlobLp} (Conclusion)
	We now insert the inequalities \eqref{est:I1}, \eqref{est:I2}, and \eqref{est:I3} into the RHS of \eqref{eq:LpPhi} to obtain
	\begin{align}
	&\frac{1}{p}\frac{\dif}{\dif t} \|Y_i\|_{L^{p}}^{p}
	+\frac{1}{2} \||Y_i|^{p-2}|\nabla Y_i|^2\|_{L^1}
	+\frac{1}{2N}\sum_{j=1}^N\||Y_i|^p Y_j^2\|_{L^1}+m\|Y_i\|_{L^p}^p \nonumber\\
	&\leq C(F+F_{2}) \|Y_{i}\|_{L^{p}}^{p}+CF_{1}+CF_{3}.\label{zm1}
	\end{align}
	First note that by Holder's inequality and Young's inequality, it holds
	\begin{align}
	\|Y_i\|_{L^p}^{p}(F+F_{2})
	\le
	\|Y_i\|_{L^{p+2}}^{p} (F+F_{2})
	\le
	\big (N^{-\frac{p}{p+2} }\|Y_i\|_{L^{p+2}}^{p} \big )\big (N^{\frac{p}{p+2} }(F+F_{2}) \big ) \nonumber\\
	\le
	\frac{1}{4N}\|Y_{i}\|_{L^{p+2}}^{p+2}+C N^{\frac{p}{2}}(F+F_{2}) ^{\frac{p+2}{2}},\label{est:I11}
	\end{align}
	and by choosing $s$ sufficiently small depending on $p$, we may apply Lemma \ref{th:m2} and Lemma \ref{le:ex} to obtain  
	\begin{align*}
	\E (F+F_{2})^{\frac{p+2}{2}}+\E (F_{1}+F_{3}) \lesssim1.
	\end{align*}
	
	By a similar argument as in the proof of Lemma \ref{lem:zz1}, we first obtain $\E \|Y_i(0)\|_{L^p}^p\lesssim C(N)$. In fact, we  choose the solution $\tilde\Phi_i$ to equation \eqref{eq:Phi2d} starting from the stationary solution $\tilde{Z}_i(0)$, so that the process $\tilde Y_i=\tilde\Phi_i-Z_i$ starts from the origin. Using \eqref{zm1} and \eqref{sszz2}, \eqref{sszz3}, Lemma \ref{le:ex} we find for $T\geq 1$
	$$\int_0^T\E\|\tilde Y_i(t)\|_{L^p}^p\lesssim T,$$
	 which implies that $\E \|Y_i(0)\|_{L^p}^p\lesssim C(N)$ by similar argument as in the proof of  Lemma \ref{lem:zz1}.

	 Taking expectation on both sides of \eqref{zm1} and using stationarity of $(Y_j)_j$, we find
	\begin{align}
	&m\E\|Y_i\|_{L^p}^p+\frac{1}{2} \E\||Y_i|^{p-2}|\nabla Y_i|^2\|_{L^1}
	+\frac{1}{4N}\sum_{j=1}^N \E\||Y_i|^p Y_j^2\|_{L^1} \nonumber\\
	&\leq N^{\frac{p}{2}}\E(F+F_{2}) ^{\frac{p+2}{2}}+C\E(F_{1}+F_{3}),	\nonumber
	\end{align}
	which completes the proof.	
\end{proof}

\subsection{Correlations of observables}\label{se:non}

Now we turn to study the statistical property
of the limiting observable, namely, we show that the limiting observables have {\it nontrivial} laws,
in the sense that although $\Phi_i$ converges to the (trivial) stationary solution $Z_i$ (and $\Wick{\Phi_i^2} \to\Wick{Z_i^2}$ as $N\to \infty$ for each $i$),
the observables do not converge to the ones with $\Phi_i$ replaced by $Z_i$. We then write  for shorthand
\begin{equ}[e:-Cw-d2]
	\PPhi^2 \eqdef \sum_{i=1}^N \Phi_i^2,
	\quad
	\Wick{\PPhi^2} \eqdef \sum_{i=1}^N  \Wick{\Phi_i^2},
	\quad
	\mathbf{Z}^2 \eqdef \sum_{i=1}^N Z_i^2,
	\quad
	\Wick{\mathbf{Z}^2} \eqdef \sum_{i=1}^N  \Wick{Z_i^2}
\end{equ}
%Consider the observables
%$$
%\frac{1}{\sqrt{N}} \Wick{\PPhi^2}
%\qquad
%\frac{1}{N} \Wick{(\PPhi^2)^2}\eqdef \frac{1}{N} \Wick{\Big( \sum_{i=1}^N \Phi_i^2 \Big)^2}
%$$
%as in \eqref{ob1}, \eqref{ob2}. 
The two observables in \eqref{obob} can be then written as 
$\frac{1}{\sqrt{N}} \Wick{\PPhi^2}$ and
$\frac{1}{N} \Wick{(\PPhi^2)^2}$.
We are in the same setting as in Section \ref{sec:label}, i.e.  we decompose $\Phi_i=Y_i+Z_i$ with $(Y_i, Z_i)$ stationary and we also consider $Y_i, Z_i$ as stationary process with $Z_i$ as the stationary solution of \eqref{eq:Zm} and $Y_i$ as the  solution of \eqref{eq:22}.

To state such ``nontriviality'' results, we  consider the correlation function
$$
G_N(x-z) = \E \Big[\frac{1}{\sqrt{N}} \Wick{\PPhi^2}(x)\frac{1}{\sqrt{N}} \Wick{\PPhi^2}(z)\Big].
$$
%\zhu{we may change the defintion as below. Do you agree? In appendix we first try to modify the integration by parts formula. }\hao{I would slightly prefer a cleaner notation like $\Wick{\PPhi^2} (\rho_x^\eps)$ because it ``almost look like'' $\Wick{\PPhi^2} (x)$} \zhu{I make some changes. but how about $\< \Wick{ (\PPhi^2)^2},\rho_x^\eps\>$? Do you want to write it as $ \Wick{ (\PPhi^2)^2}(\rho_x^\eps)$, which semms a little bit long? 
%} 
This precisely means the following.
For a smooth function $g$ and a distribution $F$ (such as $\Wick{\PPhi^2}$),  we write $F(g)\eqdef\langle F,g\rangle$.
Then for a smooth test function $f$ the above correlation function $G_N$ is understood as
$$
G_N(f) \eqdef \lim_{\eps\to0} \int\E[\frac{1}{\sqrt{N}} \Wick{\PPhi^2}(\rho_x^\eps)\frac{1}{\sqrt{N}} \Wick{\PPhi^2}(\rho_z^\eps)]f(x-z)\dif z.
$$
%\zhu{We could take a limit in $\bC^{-\kappa}$. It is unclear to me why $G_N$ is a function not a distribution even we decomposing $\Phi=Y+Z$. We can only understand this limit  $\lim_{\eps\to0}\E\<YZ,\rho_x^\eps\>$? in $\bC^{-\kappa}$}% But I think for fix $x$ and $z$ we could prove the right hand $\E [\frac{1}{\sqrt{N}} \langle\Wick{\PPhi^2},\rho_x^\eps\rangle \frac{1}{\sqrt{N}} \langle\Wick{\PPhi^2},\rho_z^\eps\rangle]$ is uniformly bounded w.r.t. $\eps$ by decomposing $\Phi=Y+Z$. For example,
%\begin{align*}\lim_{\eps\to0}\E [ \langle\Wick{\Phi^2},\rho_x^\eps\rangle  ]
%=\E[Y^2(x)]+2\lim_{\eps\to0}\E\<YZ,\rho_x^\eps\>.
%\end{align*}
%Here $\E\<YZ,\rho_x^\eps\>$ is uniformly bounded w.r.t. $x, \eps$? Then we could always find the limit by choosing subsequence, right?
for some mollifier $\rho$ with $\rho_x^\eps(z)=\eps^{-2}\rho(\frac{z-x}\eps)$ and $\rho_x^\eps\to \delta_x$ as $\eps\to0$.
Here by translation invariance of $\nu^N$, $G_N(f)$ does not depend on $x$.  We define the Fourier transform of $G_N$ as $\widehat{G}_N(k)=G_N(e_{-k})$ with $k\in \mZ^2$, with $\{e_k\}$ the Fourier basis.
For comparison,
we first note that
$$
%\lim_{N\to \infty}
\lim_{\eps\to0}\E [\frac{1}{\sqrt N}\Wick{\mathbf Z^2}(\rho_x^\eps) \frac{1}{\sqrt N} \Wick{\mathbf Z^2}(\rho_z^\eps)]
=
2 C(x-z)^2
$$
for any $N$ and $x,z\in \mathbb T^2$, %\scott{A reference for the identity above in the case $N=1$ would be useful if easily available somewhere in the literature.  Should we state also the analogous result for $(Z^{2})^{2}$? },
where $C=\frac12(m-\Delta)^{-1}$, which follows from definition of $\Wick{\mathbf Z^2}$ and Wick's theorem.
Also, $\E  \Wick{(\mathbf{Z}^2)^2}=0$ for any $N$.

We denote $\widehat f$ the Fourier transform of a function $f$.  

\begin{theorem}\label{theo:nontrivial2}
	 Let $m$ as in Lemma \ref{th:m2}. It holds that
	\begin{equs}
		\lim_{N\to \infty} \widehat{G_N} & = 2\widehat{C^2} /(1+2\widehat{C^2}) ,
		\label{e:bubble-chain2}
		\\
		\lim_{N\to \infty}\lim_{\eps\to0}
		\E \frac{1}{N} \<\Wick{ (\PPhi^2)^2} ,\rho_x^\eps\> &=
		-4  \sum_{k\in \mathbb {Z}^2} \widehat{C^2}(k)^2 /(1+2\widehat{C^2}(k)).
	\end{equs}
\end{theorem}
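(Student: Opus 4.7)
My plan is to derive a closed integral equation for the limiting two-point function via Dyson-Schwinger integration-by-parts identities for $\nu^N$ (Appendix \ref{sec:D2}), solve it in Fourier space, and then obtain the second formula by combining the first with a Wick-renormalization identity relating $\E[\frac{1}{N}\Wick{(\PPhi^2)^2}(x)]$ to the short-distance behavior of $G_N$.

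For the first formula, starting from the formal action $S = \sum_j\int(|\nabla\Phi_j|^2 + m\Phi_j^2) + \frac{1}{2N}\int\Wick{(\PPhi^2)^2}$, the basic IBP identity for $\nu^N$ (convolved with $C = \frac{1}{2}(-\Delta+m)^{-1}$) takes the schematic form
\[
\E[F\,\Phi_i(y)] = \big(C * \E[\delta F/\delta\Phi_i]\big)(y) - \tfrac{2}{N}\big(C * \E[F\,\Wick{\PPhi^2\Phi_i}]\big)(y).
\]
I would apply this with $F = \Wick{\Phi_k^2}(x)\Phi_i(y)$. After Wick renormalization (which absorbs the divergent $C(0)\E[\Wick{\Phi_k^2}(x)]$ arising from $\delta\Phi_i(y)/\delta\Phi_i$), summing over $i,k$ and dividing by $N$ yields
\[
G_N(x-y) = \tfrac{2}{N}C(x-y)\sum_i\E[\Phi_i(x)\Phi_i(y)] - \tfrac{2}{N^2}\sum_{i,k}\int C(y,z)\,\E\big[\Wick{\Phi_k^2}(x)\Phi_i(y)\Wick{\PPhi^2\Phi_i}(z)\big]\dif z.
\]
By Theorem \ref{th:m1} and exchangeability, $\frac{1}{N}\sum_i\E[\Phi_i(x)\Phi_i(y)] \to C(x-y)$, so the first term tends to $2C(x-y)^2$. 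For the second, the leading contribution comes from the diagonal pairing $\sum_i\Phi_i(y)\Phi_i(z)\approx NC(y-z)$, which leaves $-\frac{2}{N}\sum_k\int C(y-z)^2\E[\Wick{\Phi_k^2}(x)\Wick{\PPhi^2}(z)]\dif z = -2\int C(y-z)^2 G_N(x-z)\dif z$; the off-diagonal and fluctuation remainders are $o(1)$ in $N$ by the uniform a priori estimates of Lemmas \ref{th:m2} and \ref{th:lp}. Taking the Fourier transform of the resulting renewal equation
\[
G_N(x-y) = 2C(x-y)^2 - 2\int C(y-z)^2 G_N(x-z)\,\dif z + o(1)
\]
gives $\widehat{G_N}(k)(1+2\widehat{C^2}(k)) = 2\widehat{C^2}(k) + o(1)$, hence the first claim.

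For the second formula I would use the algebraic identity
\[
\E\big[\tfrac{1}{N}\Wick{(\PPhi^2)^2}(x)\big] = \lim_{y\to x}\big[G_N(x-y) - 2C(x-y)^2\big] = \sum_{k\in\mathbb{Z}^2}\big[\widehat{G_N}(k) - 2\widehat{C^2}(k)\big],
\]
which expresses the fact that $\Wick{(\PPhi^2)^2}$ is defined by subtracting precisely the free-field ($Z_i$) contractions, which contribute exactly $2NC(x-y)^2$ to $\E[\Wick{\PPhi^2}(x)\Wick{\PPhi^2}(y)]$; the identity can be verified directly from the decomposition \eqref{ob2}. Plugging in the Fourier formula from the first part gives $-4\sum_k\widehat{C^2}(k)^2/(1+2\widehat{C^2}(k))$; the Fourier sum converges because $\widehat{G}(k)-2\widehat{C^2}(k) \sim \widehat{C^2}(k)^2$ decays like $|k|^{-4}$.

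The main obstacle is the $o(1)$ remainder in the Dyson-Schwinger equation. After isolating the diagonal paired contribution, the residual terms involve expressions like $\sum_i\big(\Phi_i(y)\Phi_i(z) - C(y-z)\big)\Wick{\PPhi^2\Phi_i}(z)$ correlated with $\Wick{\Phi_k^2}(x)$; showing these average to zero as $N\to\infty$ requires combining the law-of-large-numbers cancellation (using exchangeability and the Wasserstein convergence from Theorem \ref{th:m1}) with the moment bounds on $Y_i$ from Lemma \ref{th:lp} (whose $N^{p/2}$ growth is offset by the $N^{-2}\sum_{i,k}$ prefactor) and the uniform $L^2$ control from Lemma \ref{th:m2}. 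The largeness of $m$ is essential, as it is what makes those a priori bounds uniform in $N$.
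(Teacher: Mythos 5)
Your derivation of the first formula is essentially the paper's: the Dyson--Schwinger identity you write down with $F=\Wick{\Phi_k^2}(x)\Phi_i(y)$ is (after symmetrization) the paper's identity \eqref{e:IBP2d2}, $\big(\tfrac12+\tfrac{N+2}{N}\widehat{C^2}\big)\widehat{G_N}=\widehat{CC_N}+\widehat{Q_N}/N$, and the remainder you must control is exactly the paper's $Q_N$, handled in Lemmas \ref{le:I} and \ref{lem:q22} via the stationary moment bounds of Lemmas \ref{th:m2}, \ref{th:lp} and \ref{lem:zmm}. You correctly identify both the route and the tools, so modulo carrying out those estimates this part is fine.

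The second formula, however, contains a genuine gap: the point-splitting identity
$\E\big[\tfrac1N\Wick{(\PPhi^2)^2}\big]=\lim_{y\to x}\big[G_N(x-y)-2C(x-y)^2\big]=\sum_k\big[\widehat{G_N}(k)-2\widehat{C^2}(k)\big]$
is false at fixed $N$. Writing out the Wick orderings at the mollified level, $\Wick{(\PPhi_\eps^2)^2}=(\Wick{\PPhi_\eps^2})^2-4a_\eps\Wick{\PPhi_\eps^2}-2Na_\eps^2$, so besides the constant $2Na_\eps^2$ (which matches your subtraction of $2C(x-y)^2$) the renormalization also removes $4a_\eps\Wick{\PPhi_\eps^2}$, whose expectation under $\nu^N$ is $4Na_\eps\,\E\Wick{\Phi_{1,\eps}^2}$ and $\E\Wick{\Phi_1^2}=\E[Y_1^2+2Y_1Z_1]$ is of order $1/N$ but not zero for the interacting measure. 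Equivalently, $G_N(x-y)-2C(x-y)^2$ still carries the subleading singularity $4C(x-y)\,\E\Wick{\Phi_1^2}+o(\log\tfrac1{|x-y|})$ coming from single contractions between $\Wick{\PPhi^2}(x)$ and $\Wick{\PPhi^2}(y)$; in Fourier variables $\widehat{G_N}(k)-2\widehat{C^2}(k)$ contains the term $4\widehat{C(C_N-C)}(k)$ whose sum over $k$ equals $4C(0)\,\E\Wick{\Phi_1^2}=\infty$. So your series diverges for each finite $N$, and taking $N\to\infty$ termwise before summing is precisely the unjustified interchange. The paper avoids this by a \emph{second} integration-by-parts identity, \eqref{e:IBP1d2}, which expresses $-\tfrac1{4N}\E\<\Wick{(\PPhi^2)^2},\rho_x^\eps\>$ as $\tfrac{N+2}{2N}\sum_k\widehat{C^2}(k)\widehat{G_N}(k)+R_N$: the extra weight $\widehat{C^2}(k)$ makes the sum absolutely convergent uniformly in $N$ (via Lemma \ref{lem:CN}), dominated convergence applies, and $R_N\to0$ is proved in Lemma \ref{lem:q22}. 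You need this second identity (or an equivalent subtraction of the $4\widehat{C}(k)\E\Wick{\Phi_1^2}$ counterterm together with a proof that its total contribution cancels); the formula cannot be read off from the two-point function alone in the way you propose.
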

In particular, in view of the discussion above the theorem,
as $N\to \infty$
the limiting law of
$\frac{1}{\sqrt{N}} \Wick{\PPhi^2} $
and $\frac{1}{N} \Wick{(\PPhi^2)^2} $
are different from that of
$\frac{1}{\sqrt{N}} \Wick{\mathbf{Z}^2}$ and
$\frac{1}{N} \Wick{(\mathbf{Z}^2)^2}$.

\begin{proof} %\zhu{The following result should be understood in the sense of approximation and also the result from Appendix \ref{sec:A2}. maybe convolution approximation? Do we need to change everything depending on $\eps$.}\hao{My understanding is, for instance, although $Z_1(x)$ does not make sense, $\E(Z_1(x)Z_1(y))$ is smooth as long as $x\neq y$.
	%I think here $G_N(x-z)$ is smooth function except for diagonal (just like $C(x-z)$), which isn't a priori obvious but could be seen from our proof.
	% and the integrals in \eqref{e:IBP1d2}, \eqref{e:IBP2d2} are integrable. I even think the expectations on the RHS of $R_N$ and $Q_N$ are smooth as long as $z_1,z_2,x$ are different; for instance we can probably see this is true by decomposing into $Z+Y$? So basically my opinion is, is it possible to just write a remark after this proof to mention that these expectations are smooth functions? or a remark which says the expressions in this proof can be understood as limits of some approximation such as convolution?} \zhu{Our question is how to see these functions are smooth except the diagnoal?}
	%\zhu{We rewrite the integration by parts formula in appendix by choosing test functions. But the  limit in \eqref{e:IBP2d2} is still unclear to me, since we don't know $G_N$ is  a function? Our previous result only implies that $G_N$ is a distribution.  Maybe we use Kupi's work? It's also not clear to me why $R_N, Q_N$ are functions. As we prove the result for fourier transform of them, there's no problem here.}
	Integration by parts formula gives us the following
	identities (see  Appendix~\ref{sec:D2})
	\begin{equ}[e:IBP1d2]
		-\frac{1}{4N} \lim_{\eps\to0}\E \< \Wick{ (\PPhi^2)^2},\rho_x^\eps\>
		=\frac{(N+2)}{2N} \sum_{k\in\mZ^2} \widehat{C^2}(k) \widehat{G_N}(k)  + R_N,
	\end{equ}
	\begin{equ}[e:IBP2d2]
		(\frac12+\frac{N+2}{N}\widehat{C^2}) \widehat{G_N}  = \widehat{C C_N} + \widehat{Q_N}/N
	\end{equ}
	with
	\begin{align*}
	C_N(f)&=\lim_{\eps\to0} \int \E  [ \Phi_1(\rho_x^\eps) \Phi_1(\rho_z^\eps)]f(x-z)\dif z,
	\\R_N &= -\frac{1}{N^2} \lim_{\eps\to0}\int C(x-z_1) C(x-z_2) \E \Big[ \Wick{\Phi_1 \PPhi^2} \!(\rho_{z_1}^\eps)
	\Wick{\Phi_1 \PPhi^2} \!(\rho_{z_2}^\eps)
	\Wick{\PPhi^2}\! (\rho_x^\eps)\Big] \dif z_1\dif z_2,
	\\
	Q_N &=-2\lim_{\eps\to0}\int  C(x-y)C(x-z)
	\E \Big[ \Wick{\Phi_1 \PPhi^2} \!(\rho_y^\eps) \,\Phi_1 \!(\rho_z^\eps) \Big]\,\dif y
	\\
	& + \lim_{\eps\to0}\frac{2}{N} \int  C(x-z_1) C(x-z_2)
	%\\ &\qquad  \times
	\E \Big[
	\Wick{\Phi_1 \PPhi^2} \!(\rho_{z_1}^\eps)
	\Wick{\Phi_1 \PPhi^2}\! (\rho_{z_2}^\eps)
	\Wick{\PPhi^2}\!(\rho_z^\eps) \Big] \dif z_1\dif z_2
	\\
	& \eqdef Q^N_1+Q^N_2.
	\end{align*}
	We first use \eqref{e:IBP2d2} and we know $\widehat{C_N}\to \widehat{C}$ as $N\to \infty$ by using Lemma \ref{th:m2} and 
	\begin{align*}\widehat{C_N}(k)
	=&c\E \int Y_1(x)Y_1(z)e_{-k}(x-z)\dif z\dif x+c\E\int Y_1(x)\<Z_1,e_{-k}(x-\cdot)\>\dif x
	\\&+c\E\int Y_1(z)\<Z_1,e_{-k}(\cdot-z)\>\dif z+\widehat{C}(k),\end{align*}
	for constant $c$. %\scott{Might be good to explain this one.  For fixed $\epsilon$ we could use Theorem 5.11 (the part where we say "furthermore..",).  Are we also commuting limits in $\epsilon$ and $N$ here? }. 
	Since $C$ is positive definite, $\widehat C$ is a positive function, and so is $\widehat{C^2}$; this allows us to divide both sides by $\frac12+\widehat{C^2}$.
	In Lemmas \ref{le:I} and \ref{lem:q22} below
	we show that  $\widehat{Q_N}(k)/N$ vanishes  in the limit for every $k$.
	We therefore obtain \eqref{e:bubble-chain2}.
%	In particular,
%	$\lim_{N\to \infty} \widehat{G_N}  \neq 2\widehat{C^2}$,
%	showing that
%	the random fields $\frac{1}{\sqrt{N}} \Wick{\PPhi^2}  $
%	and
%	$\frac{1}{\sqrt{N}} \Wick{\mathbf{Z}^2}$ have different limiting laws.
	
	By Lemma \ref{lem:q22},  $R_N$ converges weakly to $0$ as $N\to \infty$.  However, $R_N$ is independent of $x$ as a consequence of spatial translation invariance, so we also obtain pointwise convergence of $R_{N}$ to zero.
	 By Lemma \ref{lem:CN},  %translation invariance we know for some positive constant $c>0$
	%\begin{align*}
	%\widehat{CC_N}(k)=&\lim_{\eps\to0} \E  \int C(x-z)  \Phi_1(\rho_x^\eps) \Phi_1(\rho_z^\eps)e_k(x-z)\dif z
	%\\=&c\lim_{\eps\to0} \E  \int C(x-z)  \Phi_1(\rho_x^\eps) \Phi_1(\rho_z^\eps)e_k(x-z)\dif z\dif x
	%\\=&c\E    \<\Phi_1 (m-\Delta)^{-1}(\Phi_1e_k),e_k\>,
	%\end{align*}
	$\widehat{CC_N}(k)$ is uniformly bounded by $c(m+|k|^2)^{-1+\kappa}$ for $c>0$ and small $\kappa>0$. By  dominated convergence theorem we have 
	$$\frac{(N+2)}{2N} \sum_{k\in\mZ^2} \widehat{C^2}(k) \frac{\widehat{CC_N}(k)}{(\frac12+\frac{N+2}N\widehat{C^2}(k))}\to   \sum_{k\in\mathbb Z^2} \frac{\widehat{C^2}(k)\cdot \widehat{C^2}(k)} {1+2\widehat{C^2}(k)},\quad \mbox{as} \quad N\to \infty.$$
	By Lemmas \ref{le:I} and \ref{lem:q22} and  dominated convergence theorem we find 
	$$\frac{(N+2)}{2N} \sum_{k\in\mZ^2} \widehat{C^2}(k) \frac{\widehat{Q_N}(k)}{N(\frac12+\frac{N+2}N\widehat{C^2}(k))}\to 0,\quad \mbox{as} \quad N\to \infty,$$
	which combined with  Lemma  \ref{lem:q22} and \eqref{e:IBP1d2}, \eqref{e:IBP2d2} implies that %\zhu{We add the above convergence since we need to take sum w.r.t. $k$.}
	\begin{align*}
	\lim_{N\to \infty}
	\E \frac{1}{4N} \<\Wick{ (\PPhi^2)^2} ,\rho_x^\eps\>
	%=&- \int C(x-z)^2 \lim_{N\to \infty} G_N(x-z)\,\dif z
	=&
	-  \sum_{k\in\mathbb Z^2} \widehat{C^2}(k)\cdot \widehat{C^2}(k) /(1+2\widehat{C^2}(k))
	\end{align*}
	where the sum is over integers (i.e. Fourier variables). 
	This is non-zero, showing that
	the limiting law of  $ \frac1N \Wick{ (\PPhi^2)^2}  $ is different from
	that of $\frac{1}{N} \Wick{(\mathbf{Z}^2)^2}$.
\end{proof}

We will use Lemma   \ref{th:m2}  and \ref{th:lp} to control  the remainder terms from integration by parts formula. In fact, all the remainder terms will be controlled by the following terms:
\begin{align*}
\E A_1^{\ell_1}A_2^{\ell_2}A_3^{\ell_3},
\end{align*}
with $\ell_i\geq0,$ where  (for $s>3\kappa>0$ small enough)
\begin{align*}
A_1:=& \|Y_1\|_{L^2}+\|Z_1\|_{\bC^{-\frac\kappa 3}},\\
A_2:=& \bigg\|\sum_{i=1}^NY_i^2\bigg \|_{L^{1}}+\sum_{i=1}^N\|Y_i\|_{H^{\kappa}}\|Z_i\|_{\bC^{-\frac\kappa2}}+\bigg\|\sum_{i=1}^N\Wick{Z_i^2} \bigg \|_{H^{-\frac\kappa2}},\\
A_3:=&\bigg\|Y_1\sum_{i=1}^NY_i^2 \bigg \|_{L^{1+s}}+\bigg \|\Lambda^{-s}(Z_1\sum_{i=1}^NY_i^2) \bigg \|_{L^{1+s}}
\\&+\bigg \|\Lambda^{-s}(Y_1\sum_{i=1}^NY_iZ_i) \bigg \|_{L^{1+s}}+\bigg \|\Lambda^{-s}(Z_1\sum_{i=1}^NY_iZ_i) \bigg \|_{L^{1+s}}
\\&+\bigg \|\Lambda^{-s}(Y_1\sum_{i=1}^N\Wick{Z_i^2}) \bigg \|_{L^{1+s}}+\bigg \|\sum_{i=1}^N\Wick{Z_1Z_i^2} \bigg \|_{H^{-s}}
:=\sum_{i=1}^6\bar{A}_{3i}.
\end{align*}
%In fact, we have
%\begin{align*}
%\<\Phi_1,e_k\>\lesssim &|k|^s A_1,\quad
%\<\Wick{\PPhi^2},e_k\>\lesssim |k|^s  A_2,\quad
%\<\Wick{\Phi_1\PPhi^2},e_k\>\lesssim |k|^s  A_3.
%\end{align*}

\bl\label{lem:zmm} For $m$ as in Lemma \ref{th:m2} and for each $\ell_i\geq0$ with $\ell_2\frac\kappa2+3\ell_3 s<1$ it holds
\begin{align*}
\E A_1^{\ell_1} A_2^{\ell_2} A_3^{\ell_3} \lesssim N^{(\ell_2+\ell_3)/2},
\end{align*}
where the implicit constant is independent of $N$.
\el
\begin{proof} By Lemma \ref{th:m2} and Lemma \ref{le:ex}, it follows that
	$\E A_1^{\ell_1}\lesssim1$ for every $\ell_1\geq0$.
	Using the interpolation Lemma \ref{lem:interpolation} followed by  H\"older's inequality with exponents $(\frac{2}{\kappa},\frac{2}{1-\kappa},2)$ we find
	\begin{align*}
	A_2\lesssim &\sum_{i=1}^N\|Y_i\|_{L^2}^2+\Big(\sum_{i=1}^N\|Y_i\|_{L^2}^2\Big)^{\frac{1-\kappa}{2} }\Big(\sum_{i=1}^N\|Y_i\|_{H^1}^2\Big)^{ \frac\kappa2 }
	\Big(\sum_{i=1}^N\|Z_i\|_{\bC^{-\frac\kappa2}}^2\Big)^{\frac12}+\Big\|\sum_{i=1}^N \Wick{Z_i^2}\Big\|_{H^{-\frac\kappa2}}
	\\:=&A_{21}+A_{22}+A_{23}.
	\end{align*}
	By Lemma \ref{th:m2} and Lemma \ref{le:ex}, it follows that for all $\ell_2\geq0$ with $\frac\kappa2\ell_2<1$  
	%\hao{Are we also using Lemma~\ref{le:ex} and/or \eqref{eq:Ui}?  (or something like proof of \eqref{mom} ? I don't know. Should the first inequality below be $A_{21}$? } )\zhu{We use Lemma \ref{th:m2} and for $A_{23}$ we used \eqref{eq:Ui}}
	$$\E A_{21}^{\ell_2}\lesssim 1,\quad\E A_{22}^{\ell_2}+ \E A_{23}^{\ell_2}\lesssim N^{\frac{\ell_2}{2}},$$
	where we used \eqref{eq:Ui} and Gaussian hypercontractivity for the bound of $A_{23}$. %give $N^{1/2}$.
	It remains to consider $A_3$. Starting with $\bar{A}_{31}$, let $p \in (1,2)$ and $q>1$ satisfy $\frac1q+\frac1p=\frac1{1+s}$, then use H\"older's inequality and interpolate to obtain
	\begin{align*}
	\bar{A}_{31}\lesssim \|Y_1\|_{L^q}\Big\|\sum_{i=1}^NY_i^2\Big\|_{L^{p}}
	\lesssim \|Y_1\|_{L^q}\Big\|\sum_{i=1}^NY_i^2\Big\|_{L^{2}}^{2-\frac2p}\Big\|\sum_{i=1}^NY_i^2\Big\|_{L^{1}}^{\frac{2}{p}-1}:=A_{31}.
	\end{align*}
	Given $\ell_3\geq 0$, choosing $p$ close enough to $1$ and $q>\ell_3$ to ensure $\ell_{3}(2-\frac{2}{p})\frac{q}{q-\ell_3}<2$,  we may use Lemma \ref{th:lp} and Lemma \ref{th:m2} and H\"older's inequality to obtain the bound
	$$\E A_{31}^{\ell_3}\lesssim N^{\frac{\ell_3}2}.$$
	%\zhu{We change this estimate compared with the estimate before, since we want the exponent for $\Big\|\sum_{i=1}^NY_i^2\Big\|_{L^{2}}$ small enough.} $A_{31}$ gives $N^{1/2}$.
	The next three terms can be estimated by using Lemma \ref{lem:multi} and Lemma \ref{lem:interpolation}.  Specifically, we use that for $s \in (0,1)$ it holds $$\|\Lambda^{-s}(fg)\|_{L^{1+s}} \lesssim \|f \|_{B^{s}_{1+s,1}}\|g\|_{\bC^{-s+\kappa}}\lesssim (\|f\|_{H^{2s}}\wedge \|\Lambda^{3s}f\|_{L^1} )\|g\|_{\bC^{-s+\kappa}},$$ 
	for $s>3\kappa>0$ small enough.
	 %\scott{I did a calculation by duality and the previous inequality seems true, and it seems this is what you used for $\overline{A}_{34}$.  So why is there $H^{2s}$ appearing in the bounds for $\overline{A}_{32}$ and $\overline{A}_{33}$? } \zhu{I make changes above and below.}
	 %\zhu{In fact we choose $f=Y_iY_1$ and we use $\|\Lambda^{-s}(fg)\|_{L^{1+s}} \lesssim \|\Lambda^sf \|_{L^{1+s+\kappa}}\|g\|_{\bC^{-s}}$ and we have $\|\Lambda^s(Y_iY_1)\|_{L^{1+s+\kappa}}$, which can be bounded by $\|Y_i\|_{H^{2s}}\|Y_1\|_{H^{2s}}$.}
	\begin{align*}
	\bar{A}_{32}
	&\lesssim \|Z_1\|_{\bC^{-s+\kappa }}\Big\|\Lambda^{3s}\sum_{i=1}^NY_i^2\Big\|_{L^1}
	\lesssim 
	\|Z_1\|_{\bC^{-s}}\Big(\sum_{i=1}^N\|Y_i\|_{H^1}^2\Big)^{3s}\Big(\sum_{i=1}^N\|Y_i\|_{L^2}^2\Big)^{1-3s}:=A_{32},
	\\
	\bar{A}_{33}
	&\lesssim 
	\sum_{i=1}^N\|Z_i\|_{\bC^{-s+\kappa }}\| Y_i\|_{H^{3s}}\|Y_1\|_{H^{3s}}
	\\
	&\lesssim \Big(\sum_{i=1}^N\|Z_i\|_{\bC^{-s+\kappa }}^2\Big)^{1/2}\|Y_1\|_{H^1}^{3s}\|Y_1\|_{L^2}^{1-3s}\Big(\sum_{i=1}^N\|Y_i\|_{H^1}^2\Big)^{3s/2}
	\Big(\sum_{i=1}^N\|Y_i\|_{L^2}^2\Big)^{\frac{1-3s}{2}}:=A_{33},
	\\
	\bar{A}_{34}
	&\lesssim \sum_{i=1}^N\|\Wick{Z_1Z_i}\|_{\bC^{-s+\kappa }}\|Y_i\|_{H^{2s}}
	\\
	&\lesssim \Big(\sum_{i=1}^N\|\Wick{Z_1Z_i}\|_{\bC^{-s+\kappa}}^2\Big)^{1/2}
	\Big(\sum_{i=1}^N\|Y_i\|_{H^1}^2\Big)^{s}\Big(\sum_{i=1}^N\|Y_i\|_{L^2}^2\Big)^{\frac{1-2s}2}:=A_{34},
	\end{align*}
	and
	%$A_{34}$ gives $N^{1/2}$.
	\begin{align*}
	\bar{A}_{35}\lesssim
	\Big\|\sum_{i=1}^N\Wick{Z_i^2}Y_1\Big\|_{B^{-s}_{1+s,1}}
	\lesssim
	\Big\|\sum_{i=1}^N\Wick{Z_i^2}\Big\|_{H^{-s+\kappa}}
	\|Y_1\|_{H^{3s}}\lesssim \Big\|\sum_{i=1}^N\Wick{Z_i^2}\Big\|_{H^{-s+\kappa}}
	\|Y_1\|_{H^1}^{3s}\|Y_1\|_{L^2}^{1-3s}:=A_{35}.
	\end{align*}
	By Lemma \ref{th:m2} we deduce that  for $3\ell_3s<1$
	$$\E A_{3i}^{\ell_3}\lesssim N^{\frac{\ell_3}2},$$
	with $i=2,\dots,5$. The last term is given as
	\begin{align*}
	A_{36}:=\Big\|\sum_{i=1}^N\Wick{Z_1Z_i^2}\Big\|_{H^{-s}},
	\end{align*}
	which by similar argument as in the proof of \eqref{mom}  implies that
	$\E A_{36}^{\ell_3}\lesssim N^{\frac{\ell_3}2},$
	%	As a result, by Lemma \ref{th:m2} and Lemma \ref{th:lp} and
	Combining the above estimates and H\"older's inequality we obtain
	\begin{align*}
	\E A_1^{\ell_1}A_2^{\ell_2}A_3^{\ell_3}\lesssim \sum_{(i_1,i_2,i_3)}\E A_{1i_1}^{\ell_1}A_{2i_2}^{\ell_2}A_{3i_3}^{\ell_3}\lesssim N^{(\ell_2+\ell_3)/2}.
	\end{align*}
\end{proof}

In the following proof we use $c$ to denote  positive constant, which may change from line to line.

\bl\label{lem:CN}
It holds that
$$|\widehat{CC_N}(k)|\lesssim (1+|k|^2)^{-1+\kappa},$$
for $\kappa>0$, where the proportional constant is independent of $k$. 
\el
\begin{proof}
By translation invariance we know for some positive constant $c>0$ 
\begin{align*}
&\widehat{CC_N}(k)=\lim_{\eps\to0} \E  \int C(x-z)  \Phi_1(\rho_x^\eps) \Phi_1(\rho_z^\eps)e_{-k}(x-z)\dif z
\\=&c\lim_{\eps\to0} \E  \int C(x-z)  \Phi_1(\rho_x^\eps) \Phi_1(\rho_z^\eps)e_{-k}(x-z)\dif z\dif x
=c\E    \<\Phi_1 (m-\Delta)^{-1}(\Phi_1e_k),e_k\>
\\=&c\E \sum_{k_1\in\mZ^2} \widehat{\Phi_1}(k_1) (m+|k-k_1|^2)^{-1}\widehat{\Phi_1}(-k_1)
\\\lesssim& \Big[\E \Big(\sum_{k_1\in\mZ^2} |\widehat{\Phi_1}(k_1)|^2 (m+|k-k_1|^2)^{-1}\Big)\Big]^{\frac12}\Big[\E\Big( \sum_{k_1\in\mZ^2} |\widehat{\Phi_1}(-k_1)|^2 (m+|k-k_1|^2)^{-1}\Big)\Big]^{\frac12}
\\\eqdef& I_1^{\frac12}I_2^\frac12.
\end{align*}
$I_1$ is  bounded by 
$$\E \sum_{k_1\in\mZ^2} |\widehat{Y_1}(k_1)|^2 (m+|k-k_1|^2)^{-1}+\E \sum_{k_1\in\mZ^2} |\widehat{Z_1}(k_1)|^2 (m+|k-k_1|^2)^{-1}.$$
Using Lemma \ref{th:m2} we know
$$\E|\widehat{Y_1}(k_1)|^2\lesssim (1+|k_1|)^{-2}\E\|Y_1\|_{H^1}^2\lesssim (1+|k_1|)^{-2}.$$ Using translation invariance we find
$$\E \sum_{k_1\in\mZ^2} |\widehat{Z_1}(k_1)|^2 (m+|k-k_1|^2)^{-1}=c\widehat{C^2}(k).$$ 
Now the desired bound for $I_1$ follows from Lemma \ref{lem:sum}. Similarly we deduce the required bound for $I_2$ and the result follows.
\end{proof}

\bl\label{le:I}It holds that for every $k\in \mZ^2$
$$|\widehat{Q_1^N}(k)|\lesssim N^{1/2}(1+|k|)^{-\kappa/2},$$
for every $0<\kappa<\frac16$, where the proportional constant is independent of $N$ and $k$. 
\el

\begin{proof} We use translation invariance property to 
	write the Fourier transform of  $Q_1^N$ as
	%\begin{align*}
	%\sum_{i=1}^N\E\Big[(m-\Delta)^{-1} (\Wick{\Phi_1\Phi_i^2})\cdot (m-\Delta)^{-1}\Phi_1\Big],
	%\end{align*}
	%\end{align*}
	\begin{align*}
	\widehat{Q_1^N}(-k)=&\int Q_1^Ne_k (x-z)\dif z =c\int Q_1^Ne_k (x-z)\dif x\dif z
	\\
	=&c\E\Big[\Big\<(m-\Delta)^{-1}(\Wick{\Phi_1\Phi_i^2})\cdot (m-\Delta)^{-1}[\Phi_1\cdot e_{-k}],e_{-k}\Big\>\Big],
	\end{align*}
 	which by Lemma \ref{lem:multi} can be bounded by 
	$$(1+|k|)^{-\kappa}\E\Big[\Big\|(m-\Delta)^{-1}\Big((Y_1+Z_1)\sum_{i=1}^N(Y_i^2+2Y_iZ_i+:Z_i^2:)\Big)\Big\|_{\bC^{\kappa}} \|(m-\Delta)^{-1}[(Y_1+Z_1)e_{-k}]\|_{\bC^\kappa}\Big],$$
	where we used $\|e_k\|_{B^{-\kappa}_{1,1}}\lesssim (1+|k|)^{-\kappa}$, which can be checked by direct calculation.
	 By Besov embedding Lemma \ref{lem:emb} and Schauder estimate  we know that for $\frac12>s>3\kappa>0$ small enough
	\begin{align}\label{eq:s}
	\|(m-\Delta)^{-1}f\|_{\bC^\kappa}\lesssim \|f\|_{L^{1+s}}\wedge \|\Lambda^{-s}f\|_{L^{1+s}}\wedge \|f\|_{H^{-s}}\wedge \|f\|_{\bC^{-\kappa/3}}.\end{align}
	\eqref{eq:s} combined with Lemma \ref{lem:multi}
	implies that the above term %containing $Y_1\sum_{i=1}^NY_i^2$
	could be controlled by
	$\E[A_{3}A_{1}]\|e_k\|_{\bC^{\kappa/2}}$, which by  Lemma \ref{lem:zmm} can be bounded by $N^{\frac12}(1+|k|)^{-\kappa/2}$ %\scott{Here, do you mean to have $+\frac{\kappa}{2}$ from the $\|e_k\|_{\bC^{\kappa/2}}$?  } \zhu{yes.}.

\end{proof}

\bl\label{lem:q22} For every $k\in\mZ^2$
$$|\widehat{Q_2^N}(k)|\lesssim N^{1/2}(1+|k|)^{-\frac\kappa2},\quad |\widehat{R^N}(k)|\lesssim N^{-1/2}(1+|k|)^{\frac\kappa2},$$
for $0<\kappa<\frac2{37}$, where the proportional constant is independent of $N$ and $k$. 
\el

\begin{proof} %It is sufficient to bound $|\widehat{Q_2^N}(k)|$ and $N|\widehat{R^N}(k)|$ satisfies exactly the same bound. 
	We write the Fourier transform of $Q_2^N$ as
	\begin{align*}
	\widehat{Q_2^N}(-k)&=\int Q_2^N e_k(x-z)\dif x=c\int Q_2^N e_k(x-z)\dif x\dif z\\
	&=\frac{c}{N}\E\Big\<\Big(\sum_{i=1}^N(m-\Delta)^{-1}\Phi_1\Phi_i^2\Big)^2,e_{-k}\Big\>
	\Big\<\sum_{i=1}^N\Wick{\Phi_i^2},e_{k}\Big\>
	,
	\end{align*}
	which can be bounded by 
	\begin{align*}
	&\frac{c}{N}\E\Big[\Big\|(m-\Delta)^{-1}\Big((Y_1+Z_1)\sum_{i=1}^N(Y_i^2+2Y_iZ_i+:Z_i^2:)\Big)\Big\|_{\bC^\kappa}^2\|e_{-k}\|_{B^{-\kappa}_{1,1}}
	\\&\qquad\times \Big|\Big\<\Big(\sum_{i=1}^N(Y_i^2+2Y_iZ_i+:Z_i^2:)\Big),e_{-k}\Big\>\Big|\Big].
	\end{align*}
	Using \eqref{eq:s} for the first line and Lemma \ref{lem:multi} for the second line and $\|e_k\|_{B^{-\kappa}_{1,1}}\lesssim (1+|k|)^{-\kappa}$, the above term %containing $Y_1\sum_{i=1}^NY_i^2$
	can be estimated by
	$\frac{1}{N}\E[A_{3}^2A_{2}](|k|+1)^{-\kappa}\|e_{-k}\|_{B^{\frac\kappa2}_{2,1}}$ for $s>3\kappa, 6s+\frac\kappa2<1$, which by  Lemma \ref{lem:zmm} can be bounded by $N^{\frac12}(|k|+1)^{-\frac\kappa2}$.
	
	Similarly we write
	\begin{align*}
	\widehat{R^N}(k)&
	=\frac{c}{N^2}\E\Big\<\Big(\sum_{i=1}^N(m-\Delta)^{-1}\Phi_1\Phi_i^2\bigg)^2\Big(\sum_{i=1}^N\Wick{\Phi_i^2}\Big),e_{k}\Big\>
	,
	\end{align*}
	which by Lemma \ref{lem:multi} can be bounded by 
	\begin{align*}
	&\frac{c(1+|k|)^{\frac\kappa2}}{N^2}\E\Big\|(m-\Delta)^{-1}\Big((Y_1+Z_1)\sum_{i=1}^N(Y_i^2+2Y_iZ_i+:Z_i^2:)\Big)\Big\|_{\bC^\kappa}^2A_2,
	\end{align*}
	for $s>3\kappa>0$ and $6s+\frac\kappa2<1$. By \eqref{eq:s} % Besov embedding Lemma \ref{lem:emb} we know 
	%\begin{align}\label{eq:s1}
	%\|(m-\Delta)^{-1}f\|_{\bC^\kappa}\lesssim \|f\|_{L^{1+s}}\wedge \|\Lambda^{-s}f\|_{L^{1+s}}\wedge \|f\|_{H^{-s}}\wedge \|f\|_{\bC^{-\kappa}},\end{align}
	we know that the above term %containing $Y_1\sum_{i=1}^NY_i^2$
	could be controlled by
	$\frac{1}{N^2}\E[A_{3}^2A_{2}](1+|k|)^{\frac\kappa2}$, which by  Lemma \ref{lem:zmm} can be bounded by $N^{-\frac12}(1+|k|)^{\frac\kappa2}$.
\end{proof}

\def\wk{\mbox{\tiny \textsc{w}}}
 \appendix
  \renewcommand{\appendixname}{Appendix~\Alph{section}}
  \renewcommand{\theequation}{A.\arabic{equation}}
%\section{Appendix}

\section{Notations and Besov spaces}\label{s:not}
Throughout the paper, we use the notation $a\lesssim b$ if there exists a constant $c>0$ such that $a\leq cb$, and we write $a\simeq b$ if $a\lesssim b$ and $b\lesssim a$. Given a Banach space $E$ with a norm $\|\cdot\|_E$ and $T>0$, we write $C_TE=C([0,T];E)$ for the space of continuous functions from $[0,T]$ to $E$, equipped with the supremum norm $\|f\|_{C_TE}=\sup_{t\in[0,T]}\|f(t)\|_{E}$.  For $p\in [1,\infty]$ we write $L^p_TE=L^p([0,T];E)$ for the space of $L^p$-integrable functions from $[0,T]$ to $E$, equipped with the usual $L^p$-norm.
Let $\mathcal{S}'$ be the space of distributions on $\mathbb{T}^d$. We use $(\Delta_{i})_{i\geq -1}$ to denote the Littlewood--Paley blocks for a dyadic partition of unity.
Besov spaces on the torus with general indices $\alpha\in \R$, $p,q\in[1,\infty]$ are defined as
the completion of $C^\infty$ with respect to the norm
$$
\|u\|_{B^\alpha_{p,q}}:=
\Big(\sum_{j\geq-1}(2^{j\alpha}\|\Delta_ju\|_{L^p}^q)\Big)^{1/q},
$$
and the H\"{o}lder-Besov space $\bC^\alpha$ is given by $\bC^\alpha=B^\alpha_{\infty,\infty}$.  We will often write $\|\cdot\|_{\bC^\alpha}$ instead of $\|\cdot\|_{B^\alpha_{\infty,\infty}}$.

Set $\Lambda= (1-\Delta)^{\frac{1}{2}}$. For $s\geq0$, $p\in [1,+\infty]$ we use $H^{s}_p$ to denote the subspace of $L^p$, consisting of all  $f$   which can be written in the form $f=\Lambda^{-s}g, g\in L^p$ and the $H^{s}_p$ norm of $f$ is defined to be the $L^p$ norm of $g$, i.e. $\|f\|_{H^{s}_p}:=\|\Lambda^s f\|_{L^p}$. For $s<0$, $p\in (1,\infty)$, $H^s_p$ is the dual space of $H^{-s}_q$ with $\frac{1}{p}+\frac{1}{q}=1$.  Set $H^s:=H^s_2$.

The following embedding results will  be frequently used (e.g.  \cite{Tri78}).

\bl\label{lem:emb} (i) Let $1\leq p_1\leq p_2\leq\infty$ and $1\leq q_1\leq q_2\leq\infty$, and let $\alpha\in\mathbb{R}$. Then $B^\alpha_{p_1,q_1} \subset B^{\alpha-d(1/p_1-1/p_2)}_{p_2,q_2}$. (cf. \cite[Lemma~A.2]{GIP15})

(ii) Let $s\in \R$, $1<p<\infty$, $\epsilon>0$. Then $H^s_2=B^s_{2,2}$, and
$B^s_{p,1}\subset H^{s}_p\subset B^{s}_{p,\infty}\subset B^{s-\epsilon}_{p,1}$. (cf. \cite[Theorem 4.6.1]{Tri78})
% \hao{reference for $H^{s+\epsilon}_p\subset B^{s}_{p,1}$?}

%(iii) $B^\alpha_{p_1,q_1}\subset B^\beta_{p_2,q_2}$ if $\beta\leq \alpha, p_2\leq p_1$ and $q_2\geq q_1$. \hao{reference? do we ever need this?} \scott{Step 4 of Theorem 5.1 uses the embedding of $H^{-s}$ into $H^{-1}$ }

(iii) Let $1\leq p_1\leq p_2<\infty$ and let $\alpha\in\mathbb{R}$. Then $H^\alpha_{p_1} \subset H^{\alpha-d(1/p_1-1/p_2)}_{p_2}$.

Here  $\subset$ means  continuous and dense embedding.
\el

We recall the following interpolation inequality and  multiplicative inequality for the elements in $H^s_p$:

\bl\label{lem:interpolation}
(i)  Suppose that $s\in (0,1)$ and $p\in (1,\infty)$. Then for $f\in H^1_p$
$$\|f\|_{H^s_p}\lesssim \|f\|_{L^p}^{1-s}\|f\|_{H^1_p}^s.$$
(cf. \cite[Theorem 4.3.1]{Tri78})

(ii) Suppose that $s>0$ and $p\in [1,\infty)$. It holds that
\begin{equation}\label{e:Lambda-prod}
\|\Lambda^s(fg)\|_{L^p}\lesssim\|f\|_{L^{p_1}}\|\Lambda^sg\|_{L^{p_2}}+\|g\|_{L^{p_3}}
\|\Lambda^sf\|_{L^{p_4}},
\end{equation}
with $p_i\in (1,\infty], i=1,...,4$ such that
$$\frac{1}{p}=\frac{1}{p_1}+\frac{1}{p_2}=\frac{1}{p_3}+\frac{1}{p_4}.$$
(cf. see \cite[Theorem~1]{MR3200091})

(iii) (Gagliardo-Nirenberg inequality) For $s\in [0,1)$, $\alpha\in (0,1)$, $r\geq1$,
\begin{equ}[e:Gagliardo]
	\|u\|_{H^s_q}\lesssim \|u\|_{H^1}^\alpha\|u\|_{L^r}^{1-\alpha}
\end{equ}
with
$\frac{1}{q}=\frac{s}{d}+\alpha(\frac12-\frac1d)+\frac{1-\alpha}{r}$.

%(iii) Suppose that $s>0$ and $p\in (1,\infty)$. It holds that
%$$\|\Lambda^s(fg)-f\Lambda^sg\|_{L^p}\lesssim\|\nabla f\|_{L^{p_1}}\|g\|_{H^{s-1}_{p_2}}+\|g\|_{L^{p_4}}
%\|\Lambda^sf\|_{L^{p_3}},$$
%with $p_i\in (1,\infty], i=1,...,4$ such that
%$$\frac{1}{p}=\frac{1}{p_1}+\frac{1}{p_2}=\frac{1}{p_3}+\frac{1}{p_4}.$$
\el

\bl\label{lem:multi}
(i) Let $\alpha,\beta\in\mathbb{R}$ and $p, p_1, p_2, q\in [1,\infty]$ be such that $\frac{1}{p}=\frac{1}{p_1}+\frac{1}{p_2}$.
The bilinear map $(u, v)\mapsto uv$
extends to a continuous map from ${B}^\alpha_{p_1,q}\times {B}^\beta_{p_2,q}$ to ${B}^{\alpha\wedge\beta}_{p,q}$  if $\alpha+\beta>0$. (cf. \cite[Corollary~2]{MW17})

(ii) (Duality.) Let $\alpha\in (0,1)$, $p,q\in[1,\infty]$, $p'$ and $q'$ be their conjugate exponents, respectively. Then the mapping  $(u, v)\mapsto \<u,v\>=\int uv \dif x$  extends to a continuous bilinear form on $B^\alpha_{p,q}\times B^{-\alpha}_{p',q'}$, and one has $|\<u,v\>|\lesssim \|u\|_{B^\alpha_{p,q}}\|v\|_{B^{-\alpha}_{p',q'}}$ (cf.  \cite[Proposition~7]{MW17}).
\el

%In the sequel, whenever $\<f,g\>$ is well-defined we will, with a slight abuse of notation, just write it as $\int fg\,\dif x$, since in the context it will be clear that one of $f,g$ is distribution.\hao{I added this notation remark.} \scott{Probably a good idea: btw, derivative of the energy identity is not quite rigorous for a similar reason }
We recall the following  smoothing effect of the heat flow $S_t=e^{t(\Delta-m)}$, $m\geq0$ (e.g. \cite[Lemma~A.7]{GIP15}, \cite[Proposition~5]{MW17}).
\vskip.10in
\bl\label{lem:heat}  Let $u\in B^{\alpha}_{p,q}$ for some $\alpha\in \mathbb{R}, p,q\in [1,\infty]$. Then for every $\delta\geq0$ and $t\in [0,T]$
$$\|S_tu\|_{B^{\alpha+\delta}_{p,q}}\lesssim t^{-\delta/2}\|u\|_{B^{\alpha}_{p,q}},$$
where the proportionality constant is  independent of $t$.
\el

%\hao{should also include this in a suitable place:}\zhu{It is obvious}

\begin{lemma}\label{lem:dual+MW}
	For $s \in (0,1)$
	\begin{equation*}
	|\<g,f\>| \lesssim \big (\|\nabla g \|_{L^{1}}^{s}\|g\|_{L^{1}}^{1-s}+ \|g\|_{L^{1}} \big )\|f\|_{\bC^{-s}}  .
	\end{equation*}
\end{lemma}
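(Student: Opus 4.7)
The plan is to reduce the estimate to a duality pairing in Besov spaces, then control the norm of $g$ in the right Besov space by interpolation between $L^1$ and $W^{1,1}$.

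First I would apply the duality statement of Lemma \ref{lem:multi}(ii) with $\alpha = s$, $p = 1$, $q = 1$ (so $p' = q' = \infty$) to bound
$$|\langle g, f \rangle| \lesssim \|g\|_{B^s_{1,1}} \, \|f\|_{B^{-s}_{\infty,\infty}} = \|g\|_{B^s_{1,1}} \, \|f\|_{\mathbf{C}^{-s}}.$$
So the lemma reduces to the interpolation inequality
$$\|g\|_{B^s_{1,1}} \lesssim \|\nabla g\|_{L^1}^{s} \|g\|_{L^1}^{1-s} + \|g\|_{L^1}.$$

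For this, I would unfold the definition $\|g\|_{B^s_{1,1}} = \sum_{j \geq -1} 2^{js}\|\Delta_j g\|_{L^1}$ and use the two elementary bounds
$$\|\Delta_j g\|_{L^1} \lesssim \|g\|_{L^1}, \qquad \|\Delta_j g\|_{L^1} \lesssim 2^{-j} \|\nabla g\|_{L^1} \quad (j \geq 0),$$
the second coming from a Bernstein-type bound on the Littlewood--Paley blocks. Then I would split the sum at the threshold $j^*$ defined by $2^{j^*} \simeq \|\nabla g\|_{L^1}/\|g\|_{L^1}$ (treating the degenerate case $\|\nabla g\|_{L^1} \leq \|g\|_{L^1}$ separately, where the bound $\|g\|_{L^1}$ on the right is already enough): for $0 \leq j \leq j^*$ use the first estimate to get a geometric sum bounded by $2^{j^* s}\|g\|_{L^1}$, and for $j > j^*$ use the second estimate to get a geometric sum (convergent since $s<1$) bounded by $2^{j^*(s-1)}\|\nabla g\|_{L^1}$; both pieces equal $\|\nabla g\|_{L^1}^{s}\|g\|_{L^1}^{1-s}$. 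The $j = -1$ block contributes at most $\|g\|_{L^1}$, which accounts for the additive term.

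There is no real obstacle here: the only mild point is the degenerate case where $\|\nabla g\|_{L^1}$ is small compared with $\|g\|_{L^1}$ (so that $j^* < 0$), but then the trivial bound $\|g\|_{B^s_{1,1}} \lesssim \|g\|_{L^1}$, obtained by using only the first inequality for all $j$ and summing $\sum_{j\geq 0} 2^{js} \min(1, 2^{-js})$ after also using the second inequality for large $j$, closes the estimate. Combining the duality step with this interpolation gives the claim.
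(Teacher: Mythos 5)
Your argument is correct and follows the same route as the paper: both proofs first reduce $|\<g,f\>|$ via the Besov duality of Lemma~\ref{lem:multi}(ii) to $\|g\|_{B^s_{1,1}}\|f\|_{\bC^{-s}}$, and then appeal to the interpolation inequality $\|g\|_{B^s_{1,1}}\lesssim \|\nabla g\|_{L^1}^{s}\|g\|_{L^1}^{1-s}+\|g\|_{L^1}$. The only difference is that the paper cites this interpolation inequality directly as \cite[Proposition~8]{MW17}, whereas you supply a self-contained Littlewood--Paley/Bernstein proof of it; your split at $2^{j^*}\simeq \|\nabla g\|_{L^1}/\|g\|_{L^1}$ and your handling of the degenerate case are both sound (modulo the small typo $\min(1,2^{-js})$, which should read $\min(1,2^{-j})$).
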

\begin{proof}
	This follows from Lemma~\ref{lem:multi} which states that $\<g,f\>$ is a continuous bilinear form on
	$B^s_{1,1}\times \bC^{-s} $,
	together with \cite[Proposition~8]{MW17} which states that
	$
	\|g\|_{B^s_{1,1}}
	\lesssim \|\nabla g \|_{L^{1}}^{s}\|g\|_{L^{1}}^{1-s}+ \|g\|_{L^{1}} $.
\end{proof}

We also recall the following comparison test result, which has been proved in \cite[Lemma 3.8]{TW18}.

\bl\label{lem:co} Let $f:[0,T]\to [0,\infty)$ differentiable such that for every $t\in [0,T]$
$$\frac{\dif f}{\dif t}+c_1f^2\leq c_2.$$
Then for $t>0$
$$f(t)\leq \Big (t^{-1}\frac{2}{c_1}\Big)\vee\Big(\frac{2c_2}{c_1}\Big)^{\frac{1}{2}}.$$
\el

We recall the following result for sum  from \cite[Lemma 3.10]{ZZ15}.

\bl\label{lem:sum}
Let $0<l,r<d$, $l+r-d>0$. Then it holds
$$\sum_{k_1\in\mZ^d}(1+|k_1|)^{-l}(1+|k-k_1|)^{-r}\lesssim (1+|k|)^{d-l-r}.$$
\el

\section{Proof of Lemma \ref{lem:Yglobal}}\label{sec:A1}
\begin{proof} For initial value $y_i\in \bC^{\beta}(\mathbb{T}^2), \beta\in (1,2)$ we could use similar argument as in \cite[Theorem 6.1]{MW17} to obtain global solutions $(Y_i)$ to \eqref{eq:22} with each $Y_i\in C_T\bC^\beta$. In fact, we use mild solutions and fixed point argument to obtain unique local solutions. Furthermore, for fixed $N$ we  obtain a global in time $L^p$-estimate, $p>1$, which gives the required global solutions.

Moreover, for general initial data $y_i\in L^2$, we consider smooth approximation $(y_i^\varepsilon)$ to initial data $y_i$. For $(y_i^\varepsilon)$ we construct  solutions $Y_i^\varepsilon\in C_T\bC^\beta$ by the above argument. For $Y_i^\varepsilon$ we could do the uniform estimate as in Lemma \ref{Y:L2} and obtain
\begin{align*}
&\frac{1}{N}\sup_{t\in[0,T]}\sum_{j=1}^{N}\|Y_{j}^\varepsilon\|_{L^{2} }^2+\frac{1}{N}\sum_{j=1}^{N}\| \nabla  Y_{j}^\varepsilon\|_{L^2(0,T;L^2) }^2+\bigg \| \frac{1}{N}\sum_{i=1}^{N}(Y_{i}^\varepsilon)^{2}\bigg \|_{L^2(0,T;L^2) }^{2} \leq C,
\end{align*}
where $C$ is independent of $\varepsilon$. By standard compactness argument we deduce that there exist a sequence $\{\varepsilon_k\}$ and
$Y_i\in L^\infty_TL^2\cap L^2_TH^1\cap L^4_TL^4$ such that $Y^{\varepsilon_k}_i\to Y$ in $L^2_TH^\delta\cap C_TH^{-1}$, $\delta<1$. Furthermore, by similar argument as in the proof of \cite[Theorem 4.3]{RYZ18} we obtain $Y_i\in C_TL^2\cap L^4_TL^4\cap L^2_TH^1$. For the uniqueness part we could do similar estimate as $I_1^N$ and $I_2^N$ for the difference $v_i$ in Section \ref{sec:dif}. From the estimates \eqref{diff1}, \eqref{est55} and \eqref{eq:zz} in Section \ref{sec:dif} the regularity for $Y_i$ is enough for the uniqueness.

\end{proof}

\section{Consequences of Dyson-Schwinger equations}\label{sec:D2}
\renewcommand{\theequation}{C.\arabic{equation}}
Dyson-Schwinger equations are relations between correlation functions of different orders.
Here we derive the identities  \eqref{e:IBP1d2} and \eqref{e:IBP2d2} using Dyson-Schwinger equations;
these are essentially in \cite{MR578040} (Eq~(7)(8) therein), but since we're in slightly different setting, we give some details here to be self-contained. They are  consequences of integration by parts formula (e.g. \cite[Theorem~6.7]{GH18a}
for the $\Phi^4$ model),
In the case of the N-component $\Phi^4$ model (i.e. linear sigma model),
for a fixed $N$, 
$\Phi\thicksim \nu^N$ and writing 
$ \PPhi^2 \eqdef \sum_{i=1}^N \Phi_i^2$ as shorthand,
it is easy to  derive the following integration by parts formula:
% in the direction $\rho_{1,x}^\eps=(\rho_x^\eps,0,\dots,0)$
$$
\E \Big(D_{\rho_{1,x}^\eps} F (\Phi) \Big)
= 2\E \Big( \<\Phi_1,(m - \Delta_x ) \rho_x^\eps\> F(\Phi)\Big)
+ \frac{2}{N}  \E \Big( F(\Phi) \<\Wick{\Phi_1 \PPhi^2},\rho_x^\eps\> \Big)
$$
where $D_{\rho_{1,x}^\eps}F(\Phi)$ denotes the Fr\'echet derivative along $\rho_{1,x}^\eps \eqdef (\rho_x^\eps,0,\dots,0)$ (namely varying only $\Phi_1$ in the direction $\rho_x^\eps$).
In terms of  Green's function $C(x-y) =\frac12 (m - \Delta )^{-1}(x-y)$ we can also write it as 
\begin{equation}\label{e:IBP-Cd2}
\int C(x-z ) \E \Big(D_{\rho_{1,z}^\eps}F(\Phi)\Big) \dif z
= \E \Big( \<\Phi_1,\rho_x^\eps\> F(\Phi)\Big)
+ \frac{2}{N}   \int C(x-z )
\E \Big( F(\Phi) \<\Wick{\Phi_1 \PPhi^2},\rho_z^\eps\> \Big) \dif z
\end{equation}
%Here we will assume $d=1$ and the Wick products are understood as Sec~\ref{sec:d1}.

%The proof of \eqref{e:IBP-Cd2} is standard by using  Gaussian integration by parts.
Here we apply \eqref{e:IBP-Cd2} to prove  \eqref{e:IBP1d2}.
%Recall that  $C_{\wk}=C(0)$.
Taking
$F(\Phi) = \<\Wick{\Phi_1 \PPhi^2},\rho_x^\eps\>$ one has  
\begin{equs}
	\lim_{\eps\to0}\frac{2}{N}
	\int & C(x-z) \E \Big(\<\Wick{\Phi_1 \PPhi^2},\rho_x^\eps\>\<\Wick{\Phi_1 \PPhi^2},\rho_z^\eps\>\Big) \dif z
	\\
	&=
	\lim_{\eps\to0}\Big[\int \frac{N+2}NC(x-z)\E\Big(\<\Wick{ \PPhi^2}\rho_z^\eps,\rho_x^\eps\>  \Big)\dif z
	-\E \Big(\<\Phi_1,\rho_x^\eps\>\<\Wick{\Phi_1 \PPhi^2},\rho_x^\eps\> \Big)\Big]
	\\&=-\lim_{\eps\to0}\frac{1}{N}
	\E  \<\Wick{ (\PPhi^2)^2} ,\rho_x^\eps\>
\end{equs}
using the definition of Wick products and the symmetry (i.e. exchangeability of $(\Phi_i)_i$) and  taking  limit in $\bC^{-\kappa}$. 
\footnote{
For instance, in the first step, recalling the precise definition of
$\Wick{\Phi_1 \PPhi^2}$,
the derivative $D_{\rho_{1,z}^\eps}F(\Phi)$
gives $3\Wick{\Phi_1^2}\rho_z^\eps+\sum_{i=2}^N \Wick{\Phi_i^2}\rho_z^\eps$
which by exchangeability can be rewritten as
$\frac{N+2}{N} \Wick{\PPhi^2}\rho_z^\eps$ inside expectation.
The last step follows similarly; or it could be viewed as an $N$ dimensional generalization of the well-known relation
	$H_{n+1}(x)=xH_n(x) -H_n'(x)$ for Hermite polynomials $H_n$.}

Next, taking $F= \<\Wick{\PPhi^2},\rho_x^\eps\> \<\Wick{\Phi_1\PPhi^2},\rho_z^\eps\>$ one has
\begin{equs}
	&\lim_{\eps\to0}\frac{2}{N} \int C(x-z_1) \E  \Big(\<\Wick{\PPhi^2},\rho_x^\eps\> \<\Wick{\Phi_1\PPhi^2},\rho_{z_1}^\eps\>\<\Wick{\Phi_1\PPhi^2},\rho_z^\eps\>\Big)\dif z_1
	\\
	&=\lim_{\eps\to0}\frac{N+2}{N}\int C(x-z_1)\E\Big( \<\Wick{\PPhi^2},\rho_x^\eps\>
	 \< \Wick{\PPhi^2},\rho_{z_1}^\eps\rho_{z}^\eps\> \Big)\dif z_1
	 \\
	&\qquad+ \lim_{\eps\to0}\Big[2\int C(x-z_1)\E\Big( \<\Phi_1\rho_{z_1}^\eps,\rho_x^\eps\>  \<\Wick{\Phi_1\PPhi^2},\rho_z^\eps\> \Big)\dif z_1
	 - \E\Big(\< \Phi_1,\rho_x^\eps\>  \<\Wick{\PPhi^2},\rho_x^\eps\> \<\Wick{\Phi_1\PPhi^2},\rho_z^\eps\>\Big)\Big]
	\\
	&=\lim_{\eps\to0}\frac{N+2}{N} C(x-z) \E [\<\Wick{\PPhi^2},\rho_x^\eps\> \<\Wick{\PPhi^2},\rho_z^\eps\>]
	- \lim_{\eps\to0}\E \Big(\<\Wick{\Phi_1 \PPhi^2},\rho_x^\eps\> \<\Wick{\Phi_1\PPhi^2},\rho_z^\eps\>\Big)
\end{equs}
where  the limit is understood in $\bC^{-\kappa}$ and  we again used symmetry %, for instance
%we replaced $\frac{\delta  \Wick{\Phi_1\PPhi^2(z)}}{\delta \Phi_1(z)} $ by $\frac{N+2}{N} \Wick{\PPhi^2(z)}$
 under expectation. %, as well as a simple relation %(noting that $\Wick{\Phi^3}=\Phi^3-3C_{\wk} \Phi$)
%$$
%2C_{\wk} \Phi_1(x) - \Phi_1(x) \Wick{\PPhi(x)^2} = - \Wick{\Phi_1(x) \PPhi^2(x)}
%$$
From the above two correlation identities,
we cancel out the 6th order correlation term and then obtain
 \eqref{e:IBP1d2}. Note that we also use Fourier transform to have
$$\lim_{\eps\to0}\int \frac{N+2}{N^2} C(x-z)^2 \E [\<\Wick{\PPhi^2},\rho_x^\eps\> \<\Wick{\PPhi^2},\rho_z^\eps\>]\dif z=\sum_{k\in\mZ^2} \frac{N+2}{N} \widehat{C^2}(k) \widehat{G_N}(k).$$

Next, take $F(\Phi)= \int C(x-y) \<\Wick{\Phi_1\PPhi^2},\rho_y^\eps\>\< \Wick{\PPhi^2},\rho_z^\eps\>\dif y$.
We have, by \eqref{e:IBP-Cd2},
\begin{equs}
	{} &\frac{2}{N} \int C(x-y_1)C(x-y_2) \E  \Big( \<\Wick{\Phi_1\PPhi^2},\rho_{y_1}^\eps\>\<\Wick{\Phi_1\PPhi^2},\rho_{y_2}^\eps\> \<\Wick{\PPhi^2},\rho_z^\eps\> \Big)\dif y_1\dif y_2
	\\
	&=2 \E\Big( \<\Phi_1,C^\eps_x\rho_z^\eps\> \<\Wick{\Phi_1\PPhi^2},C^\eps_x\> \Big)
 +\frac{N+2}N \E\Big( \<\Wick{\PPhi^2},\rho_z^\eps\> \<\Wick{\PPhi^2},(C_x^{\eps})^2\> \Big)
	\\
	&\qquad - \int C(x-y) \E\Big(\< \Phi_1,\rho_x^\eps\> \<\Wick{\Phi_1\PPhi^2},\rho_y^\eps\>\< \Wick{\PPhi^2},\rho_z^\eps\> \Big)\dif y,
\end{equs}
with $C^\eps_x(\cdot)\eqdef (C*\rho_0^\eps)(x-\cdot)$.
Note that as $\eps\to0$, the limit of the LHS is precisely $Q_2^N$ and the limit of the first term on the RHS is just $-Q_1^N$.
The limit of the Fourier transform of the second term on the RHS equals
$$
(N+2) \widehat{C^2}\widehat{G_N}.
$$
To deal with the last term above, taking $F(\Phi)=\<\Phi_1,\rho_x^\eps\>\<\Wick{\PPhi^2},\rho_z^\eps\> $ and applying  \eqref{e:IBP-Cd2}, one has
\begin{equs}
&2	\int  C(x-y) \E\Big( \<\Phi_1,\rho_x^\eps\> \<\Wick{\Phi_1\PPhi^2},\rho_y^\eps\> \<\Wick{\PPhi^2},\rho_z^\eps\> \Big)\dif y
	\\
	& = N \int C(x-y)\<\rho_x^\eps,\rho_y^\eps\> \dif y\E \<\Wick{\PPhi^2},\rho_z^\eps\> + 2N \int C(x-z_1) \E \Big(\<\Phi_1,\rho_x^\eps\>\<\Phi_1,\rho_{z_1}^\eps \rho_z^\eps\>  \Big)\dif z_1
	\\
	&\qquad - N\E[ \<\Phi_1,\rho_x^\eps\>^2 \<\Wick{\PPhi^2},\rho_z^\eps\> ].
\end{equs}
Letting $\eps\to0$ we deduce the limit of the Fourier transform of the RHS is
\begin{align*}
- N\widehat{G_N}
+  2N \widehat{CC_N}.\end{align*}
We then obtain \eqref{e:IBP2d2}.

%\hao{Note that actually $2C (x-z) \E [\Phi_1 (x) \Phi_1(z)]$ here and $2 C(x-z)^2$ in \eqref{e:IBP2} are different, and  \eqref{e:IBP2} does not have the coefficient $\frac{N+2}{N}$. However it doesn't matter in the limit.}\zhu{we could follow Hao's calculation.}

\section{Proof of Step \ref{diffEst5} in the proof of Theorem \ref{th:conv-v}}\label{sec:E2}
\renewcommand{\theequation}{D.\arabic{equation}}

We write $\bar{I}^N$ as $\sum_{i=1}^3\bar{I}^N_{i}$:
\begin{align*}
\bar{I}^N_1&\eqdef-\frac{1}{N}\sum_{i,j=1}^N \Big[\<Y_j^2,v_iu_i\>+2\<Y_jY_iu_j,v_i \>\Big],
\\
\bar{I}^N_{21}&\eqdef-\frac{1}{N}\sum_{i,j=1}^N 2\<Y_jv_i,\Wick{Z_i^NZ_j^N}-\Wick{Z_iZ_j}\>,\\
\bar{I}^N_{22}&\eqdef-\frac{1}{N}\sum_{i,j=1}^N \<Y_iv_i,\Wick{Z_j^{N,2}}-\Wick{Z_j^2}\>\\
\bar{I}^N_3&\eqdef-\frac{1}{N}\sum_{i,j=1}^N
\<v_i,\Wick{Z_i^NZ_j^{N,2}}-\Wick{Z_iZ_j^2}\>.
\end{align*}

In the following we estimate each term and show that for $\delta>0$ small 
\begin{equation}\aligned\label{eq:er}
|\bar{I}^N|\lesssim& \delta \Big(\sum_{i=1}^{N} \|\nabla v_{i}\|_{L^{2}}^{2}+\frac{1}{N}\sum_{i,j=1}^{N}\|Y_{j}v_{i}\|_{L^{2}}^{2}  \Big )+\sum_{i=1}^N\| v_i\|_{L^2}^2
\\&+\Big(\sum_{i=1}^N\|u_i\|_{L^\infty}^2\Big)\Big(\frac{1}{N}\sum_{j=1}^{N}\|Y_{j}\|_{L^{2}}^{2}  \Big )+\tilde{R}_N, \endaligned
\end{equation}
with %$\E \int_0^T R_N\dif t\lesssim 1$ and
\begin{align*}
\tilde{R}_N\eqdef& \frac{1}{N}\sum_{i,j=1}^N\|\Wick{Z_i^NZ_j^{N,2}}-\Wick{Z_iZ_j^2}\|_{\bC^{-s}}^2\\&+ \Big (\frac{1}{N}\sum_{j=1}^{N} \| Y_{j}\|_{H^1}^{2}\Big)^{s}\mathfrak{Z}_N+\mathfrak{Z}_N+\Big (\frac{1}{N}\sum_{i=1}^{N} \| Y_{i}\|_{H^1}^{2}\Big)^{s}\Big (\frac{1}{N}\sum_{i=1}^{N} \| Y_{i}\|_{L^{2}}^{2}\Big)^{1-s}\bar{\mathfrak{Z}}_N,
\end{align*} 
with $\bar{\mathfrak{Z}}_N$ and ${\mathfrak{Z}}_N$ introduced in \eqref{e:z} below.

For $\bar{I}^N_1$ we use Young's inequality to have 
\begin{align*}|\bar{I}_{1}^N|
\lesssim& \delta\frac{1}{N}\sum_{i,j=1}^N\int Y_j^2v_i^2+\frac{1}{N}\sum_{i,j=1}^N\int Y_j^2u_i^2
\\\lesssim&\delta\frac{1}{N}\sum_{i,j=1}^N\int Y_j^2v_i^2+\Big(\frac{1}{N}\sum_{j=1}^N\|Y_j\|_{L^2}^2\Big)\Big(\sum_{i=1}^N\|u_i\|_{L^\infty}^2\Big),\end{align*}
which gives the first contribution to \eqref{eq:er}.

For $\bar{I}^N_3$ we use Lemma \ref{lem:multi}, Lemma \ref{lem:interpolation} and Young's inequality to have
\begin{align*}|\bar{I}_{3}^N|
&\lesssim \frac{1}{N}\sum_{i,j=1}^N\|v_i\|_{H^{2s}}
\|\Wick{Z_i^NZ_j^{N,2}}-\Wick{Z_iZ_j^2}\|_{\bC^{-s}}
\\&\lesssim \delta\sum_{i=1}^N\|\nabla v_i\|_{L^2}^2+ \sum_{i=1}^N\| v_i\|_{L^2}^2
+\frac{1}{N}\sum_{i,j=1}^N\|\Wick{Z_i^NZ_j^{N,2}}-\Wick{Z_iZ_j^2}\|_{\bC^{-s}}^2,\end{align*}
%\scott{I geuss we also need $-s+\kappa$ here also?} \zhu{Since $v_i$ is $2s$, $\bC^{-s}$ is enough.}
which gives the second contribution to \eqref{eq:er}.

For $\bar{I}^N_{2k}$ we set 
\begin{align}\label{e:z}
\mathfrak{Z}_N\eqdef&\frac{1}{N}\sum_{i,j=1}^N\|\Wick{Z_i^NZ_j^N}-\Wick{Z_iZ_j}\|_{\bC^{-s}}^2,\qquad
\bar{\mathfrak{Z}}_N\eqdef\sum_{j=1}^N\|\Wick{Z_j^{N,2}}-\Wick{Z_j^2}\|_{\bC^{-s}}^2.
\end{align}
We use Lemma \ref{lem:dual+MW} and H\"older's inequality to have
\begin{align*}
|\bar{I}^N_{21}|\lesssim&\frac{1}{N}\sum_{i,j=1}^N (\|\nabla (Y_jv_i)\|_{L^1}^s\|Y_jv_i\|_{L^1}^{1-s}+\|Y_jv_i\|_{L^1})\|\Wick{Z_i^NZ_j^N}-\Wick{Z_iZ_j}\|_{\bC^{-s}}
\\\lesssim&\delta
\Big(\frac{1}{N}\sum_{j=1}^N\|v_iY_j\|_{L^2}^2\Big)
+\mathfrak{Z}_N
 \nonumber \\
&+ \Big (\sum_{i=1}^{N} \|v_{i}\|_{L^{2}}^{2}  \Big )^{s/2} \Big (\frac{1}{N}\sum_{j=1}^{N} \|\nabla Y_{j}\|_{L^{2}}^{2}\Big)^{s/2}\Big(\frac{1}{N}\sum_{i,j=1}^{N}\|v_iY_j\|_{L^2}^2  \Big )^{\frac{1-s}2}\mathfrak{Z}_N^{1/2}
\\
&+ \Big (\sum_{i=1}^{N} \|\nabla v_{i}\|_{L^{2}}^{2}  \Big )^{s/2} \Big (\frac{1}{N}\sum_{j=1}^{N} \| Y_{j}\|_{L^{2}}^{2}\Big)^{s/2}\Big(\frac{1}{N}\sum_{i,j=1}^{N}\|v_iY_j\|_{L^2}^2  \Big )^{\frac{1-s}2}\mathfrak{Z}_N^{1/2},
\end{align*}
which by Young's inequality gives 
\begin{align*}
|\bar{I}^N_{21}|\lesssim&
\delta\Big(\frac{1}{N}\sum_{i,j=1}^N\|v_iY_j\|_{L^2}^2\Big)
+\delta(\sum_{i=1}^{N} \|\nabla v_{i}\|_{L^{2}}^{2}  \Big )
\nonumber \\
&+ \Big (\sum_{i=1}^{N} \|v_{i}\|_{L^{2}}^{2}  \Big )+ \Big (\frac{1}{N}\sum_{j=1}^{N} \| Y_{j}\|_{H^1}^{2}\Big)^{s}\mathfrak{Z}_N+\mathfrak{Z}_N
.
\end{align*}
Similarly we deduce
\begin{align*}
|\bar{I}^N_{22}|\lesssim&\frac{1}{N}\sum_{i,j=1}^N (\|\nabla (Y_iv_i)\|_{L^1}^s\|Y_iv_i\|_{L^1}^{1-s}+\|Y_iv_i\|_{L^1})\|\Wick{Z_j^{N,2}}-\Wick{Z_j^{2}}\|_{\bC^{-s}}
\\\lesssim&
\Big(\sum_{i=1}^N\|v_i\|_{L^2}^2\Big)
+\bar{\mathfrak{Z}}_N\Big(\frac{1}{N}\sum_{i=1}^N\|Y_i\|_{L^2}^2\Big)
\nonumber \\
&+ \Big (\sum_{i=1}^{N} \|v_{i}\|_{L^{2}}^{2}  \Big )^{1/2} \Big (\frac{1}{N}\sum_{i=1}^{N} \|\nabla Y_{i}\|_{L^{2}}^{2}\Big)^{s/2}\Big(\frac{1}{N}\sum_{i=1}^{N}\|Y_i\|_{L^2}^2  \Big )^{\frac{1-s}2}\bar{\mathfrak{Z}}_N^{1/2}
\\
&+ \Big (\sum_{i=1}^{N} \|\nabla v_{i}\|_{L^{2}}^{2}  \Big )^{s/2} \Big (\frac{1}{N}\sum_{i=1}^{N} \| Y_{i}\|_{L^{2}}^{2}\Big)^{1/2}\Big(\sum_{i=1}^{N}\|v_i\|_{L^2}^2  \Big )^{\frac{1-s}2}\bar{\mathfrak{Z}}_N^{1/2},
\end{align*}
which implies 
\begin{align*}
|\bar{I}^N_{22}|\lesssim&
\delta(\sum_{i=1}^{N} \|\nabla v_{i}\|_{L^{2}}^{2}  \Big )
+ \Big (\sum_{i=1}^{N} \|v_{i}\|_{L^{2}}^{2}  \Big )+ \Big (\frac{1}{N}\sum_{i=1}^{N} \| Y_{i}\|_{H^1}^{2}\Big)^{s}\Big (\frac{1}{N}\sum_{i=1}^{N} \| Y_{i}\|_{L^{2}}^{2}\Big)^{1-s}\bar{\mathfrak{Z}}_N
.
\end{align*}
Thus we deduce \eqref{eq:er}. By Assumption \ref{a:main} and Lemma \ref{le:ex} it is easy to find $\frac1N\|\tilde{R}_N\|_{L^1_T}\to0$ in $L^1(\Omega)$. The result follows by similar argument as Step \ref{diffEst4} and Step \ref{diffEst6}.

\bibliographystyle{alphaabbr}
\bibliography{Reference}

\end{document}